\documentclass[10pt, a4paper, review]{article}

\usepackage{fourier} 
\usepackage[oldstyle, proportional]{libertine} 

\DeclareMathAlphabet\mathbfcal{OMS}{cmsy}{b}{n} 
\DeclareMathAlphabet{\mathcal}{OMS}{Zplm}{m}{n} 
\SetMathAlphabet{\mathcal}{bold}{OMS}{zplm}{b}{n} 

\usepackage[a4paper, tmargin=1.0in,bmargin=1.0in, lmargin=0.7in, rmargin=0.7in]{geometry}
\usepackage{bm}
\usepackage{graphicx}
\usepackage[dvipsnames]{xcolor}
\usepackage{amsmath}
\usepackage{blindtext}
\usepackage{hyperref}
\usepackage[noabbrev,nameinlink]{cleveref}
\usepackage{amssymb}
\usepackage{amsthm}
\usepackage{amsmath}
\usepackage{placeins}
\usepackage{multirow}
\usepackage{cases}
\usepackage{float}
\usepackage{stmaryrd}

\usepackage{breakcites} 


\newfloat{algorithm}{t}{lop} 



\newcommand{\fd}{Finite Difference~}
\newcommand{\lbm}{lattice Boltzmann~}
\newcommand{\matricial}[1]{\bm{#1}}
\newcommand{\vectorial}[1]{\bm{#1}}
\newcommand{\boldOther}[1]{\bm{#1}}
\newcommand{\discrete}[1]{\mathsf{#1}}
\newcommand{\timeShift}{\discrete{z}}
\newcommand{\identity}{\matricial{I}}
\newcommand{\schemeMatrix}{\boldOther{\discrete{E}}}
\newcommand{\schemeMatrixAsymptotic}{\boldOther{\mathcal{E}}}
\newcommand{\companionMatrixScheme}{\matricial{\discrete{Q}}}
\newcommand{\transportMoment}{\boldOther{\discrete{T}}}
\newcommand{\transportMomentAsymptotic}{\boldOther{\mathcal{T}}}
\newcommand{\asymptoticEquivalence}{\asymp}
\newcommand{\relaxationMatrix}{\matricial{S}}
\newcommand{\momentLetter}{m}
\newcommand{\discreteMoment}{\discrete{\momentLetter}}
\newcommand{\timeVariable}{t}
\newcommand{\spaceVariable}{x}
\newcommand{\spaceStep}{\Delta \spaceVariable}  
\newcommand{\timeStep}{\Delta \timeVariable}
\newcommand{\relatives}{\mathbb{Z}}
\newcommand{\naturals}{\mathbb{N}}
\newcommand{\spatialDimensionality}{d}
\newcommand{\lattice}{\spaceStep \relatives^{\spatialDimensionality}}
\newcommand{\timeLattice}{\timeStep \naturals}
\newcommand{\relaxationParameter}{s}
\newcommand{\integerInterval}[2]{\llbracket #1, #2\rrbracket}
\newcommand{\integerIntervalClosedOpen}[2]{\llbracket #1, #2\llbracket}
\newcommand{\determinant}{\textrm{det}}
\newcommand{\indiceMoments}{i}
\newcommand{\indiceDistributions}{j}
\newcommand{\velocityNumber}{q}
\newcommand{\indiceTime}{n}
\newcommand{\coefficientFDScheme}{\discrete{c}}

\newcommand{\solutionCauchy}{u}
\newcommand{\solutionCauchyInitial}{\solutionCauchy^{\circ}}
\newcommand{\pointWiseDiscretisationInitialDatum}{\discreteMoment_1^{\circ}}
\newcommand{\strong}[1]{\textbf{#1}}
\newcommand{\initialisationOperator}{\discrete{w}}
\newcommand{\initialisationOperatorAsymptotic}{\omega}
\newcommand{\initialisationOperatorCoefficients}{w}
\newcommand{\reals}{\mathbb{R}}
\newcommand{\complex}{\mathbb{C}}
\newcommand{\equilibriumCoefficient}{\epsilon}
\newcommand{\collisionMatrix}{\matricial{K}}
\newcommand{\canonicalBasisVector}{\vectorial{e}}
\newcommand{\termAtOrder}[2]{#1^{(#2)}}
\newcommand{\bigO}[1]{O(#1)}
\newcommand{\duboisOperatorEntry}{\mathcal{G}}
\newcommand{\duboisOperator}{\boldOther{\duboisOperatorEntry}}
\newcommand{\polynomialEquilibrium}{\pi}
\newcommand{\indiceFreeOne}{\ell}
\newcommand{\indiceFreeTwo}{p}
\newcommand{\indiceFreeThree}{r}
\newcommand{\indiceOrder}{h}
\newcommand{\maxOrder}{H}
\newcommand{\indiceColumn}{j}
\newcommand{\latticeVelocity}{\lambda}
\newcommand{\matrixEntries}[3]{#1_{#2 #3}}
\newcommand{\basicShiftLetter}{\discrete{x}}
\newcommand{\scheme}[2]{$\textrm{D}_{#1}\textrm{Q}_{#2}$}
\newcommand{\transportVelocity}{V}
\newcommand{\collided}{\star}
\newcommand{\distributionDensity}{f}
\newcommand{\discreteDistributionDensity}{\discrete{\distributionDensity}}
\newcommand{\momentMatrix}{\matricial{M}}
\newcommand{\diagMatrix}{\text{diag}}
\newcommand{\discreteVelocityNormalized}{c}
\newcommand{\genericFunction}{\phi}
\newcommand{\matrixSpace}[2]{\mathcal{M}_{#1}(#2)}
\newcommand{\nonTriviallyRelaxingMomentsNumber}{Q}
\newcommand{\ringSpaceOperators}{\reals[\basicShiftLetter_1, \basicShiftLetter_1^{-1}, \dots, \basicShiftLetter_{\spatialDimensionality}, \basicShiftLetter_{\spatialDimensionality}^{-1}]}
\newcommand{\ringSpaceOperatorsOneD}{\reals[\basicShiftLetter_1, \basicShiftLetter_1^{-1}]}
\newcommand{\ringTimeSpaceOperators}{\reals[\timeShift, \basicShiftLetter_1, \basicShiftLetter_1^{-1}, \dots, \basicShiftLetter_{\spatialDimensionality}, \basicShiftLetter_{\spatialDimensionality}^{-1}]}
\newcommand{\ringContinuousTimeSpaceOperators}{\reals[\partial_{\timeVariable}, \partial_{\spaceVariable_1}, \dots, \partial_{\spaceVariable_{\spatialDimensionality}}]}
\newcommand{\formalSeries}[2]{#1 \llbracket #2 \rrbracket}
\newcommand{\linearGroup}[2]{\text{GL}_{#1}(#2)}
\newcommand{\boundedDomainNumberPoints}{N_x}
\newcommand{\transpose}[1]{#1^{\mathsf{t}}}
\newcommand{\testFunction}{\phi}
\newcommand{\definitionEquality}{:=}
\newcommand{\conjugate}[1]{\overline{#1}}

\newcommand{\blockNumberGinzburg}{W}
\newcommand{\rvline}{\hspace*{-\arraycolsep}\vline\hspace*{-\arraycolsep}}
\newcommand{\indiceBlock}{\indiceFreeOne}

\newcommand{\adjugateMatrix}{\text{adj}}
\newcommand{\annhilitaingPolyGinzburgWithOrder}[1]{\Psi_{#1}}
\newcommand{\annhilitaingPolyGinzburg}{\annhilitaingPolyGinzburgWithOrder{2}}
\newcommand{\annhilitaingPolyGinzburgWithOrderFourier}[1]{\fourierTransformed{\Psi}_{#1}}
\newcommand{\annhilitaingPolyGinzburgFourier}{\annhilitaingPolyGinzburgWithOrderFourier{2}}

\newcommand{\indiceMultiIndexDiscrete}{\mathfrak{e}}
\newcommand{\indiceMultiIndexDifferential}{\mathfrak{n}}
\newcommand{\nonNegativeReals}{\reals_{+}}
\newcommand{\nonZeroNaturals}{\naturals^{*}}
\newcommand{\genericDiscreteOperator}{\discrete{d}}

\newcommand{\symmetricPart}{\discrete{S}}
\newcommand{\antisymmetricPart}{\discrete{A}}
\newcommand{\eigenvalueLetter}{g}
\newcommand{\fourierTransformed}[1]{\hat{#1}}
\newcommand{\frequency}{\xi}
\newcommand{\amplificationPolynomial}{\Phi}

\newcommand{\iniScheme}{initialisation scheme}
\newcommand{\iniSchemes}{initialisation schemes}
\newcommand{\stScheme}{starting scheme}
\newcommand{\stSchemes}{starting schemes}
\newcommand{\bulkScheme}{bulk \fd scheme}

\newcommand{\amplificationFactorStartingScheme}[1]{\fourierTransformed{\discrete{\eigenvalueLetter}}^{[#1]}}

\newtheorem{theorem}{Theorem}
\newtheorem{proposition}{Proposition}%
\newtheorem{corollary}{Corollary}%
\theoremstyle{remark}
\newtheorem{example}{Example}%
\newtheorem{remark}{Remark}%

\theoremstyle{definition}
\newtheorem{definition}{Definition}%

\floatstyle{ruled}
\newfloat{algorithm}{thp}{lop}
\floatname{algorithm}{Algorithm}



\providecommand{\keywords}[1]{\textbf{\textit{Keywords---}} #1}
\providecommand{\amsCat}[1]{\textbf{\textit{MSC---}} #1}


\usepackage[normalem]{ulem}

\begin{document}

\title{Initialisation from \lbm to multi-step \fd methods: modified equations and discrete observability}

\author{\strong{Thomas Bellotti} (\href{thomas.bellotti@polytechnique.edu}{thomas.bellotti@polytechnique.edu}) \\ CMAP, CNRS, \'Ecole polytechnique, Institut Polytechnique de Paris, 91120 Palaiseau\footnote{Current email: \href{bellotti@math.unistra.fr}{bellotti@math.unistra.fr}. Current affiliation: IRMA, Universit\'e de Strasbourg, 67000 Strasbourg.}}






\maketitle

\begin{abstract}
Latitude on the choice of initialisation is a shared feature between one-step extended state-space and multi-step methods.
The paper focuses on \lbm schemes, which can be interpreted as examples of both previous categories of numerical schemes.
We propose a modified equation analysis of the \iniSchemes{} for \lbm methods, determined by the choice of initial data. 
These modified equations provide guidelines to devise and analyze the initialisation in terms of order of consistency with respect to the target Cauchy problem and time smoothness of the numerical solution.
In detail, the larger the number of matched terms between modified equations for initialisation and bulk methods, the smoother the obtained numerical solution.
This is particularly manifest for numerical dissipation.
Starting from the constraints to achieve time smoothness, which can quickly become prohibitive for they have to take the parasitic modes into consideration, we explain how the distinct lack of observability for certain \lbm schemes---seen as dynamical systems on a commutative ring---can yield rather simple conditions and be easily studied as far as their initialisation is concerned. 
This comes from the reduced number of \iniSchemes{} at the fully discrete level.
These theoretical results are successfully assessed on several lattice Boltzmann methods.
\end{abstract}
\keywords{lattice Boltzmann;  initialisation; Finite Difference; modified equations; observability; magic parameters.} \\
\amsCat{65M75, 65M06, 65M15, 93B07}


\section{Introduction}

Numerical analysis features two notable frameworks where the knowledge of the initial state for numerical schemes is incomplete: one-step extended state-space methods (\emph{e.g.} relaxation schemes, kinetic schemes, gas-kinetic schemes, \emph{etc.}) and multi-step methods.
On the one hand, \lbm schemes have historically been considered in the realm of the one-step extended state-space methods \cite{kuznik2013mesoscopic}. From this standpoint, they have previously been compared \cite{graille2014approximation, simonis2020relaxation} to approximations of systems of conservation laws taking the form of relaxation systems \emph{\`a la} Jin-Xin \cite{jin1995relaxation}, and interpreted as peculiar discretisations of these systems when collision and transport terms are split and the relaxation time tends to zero proportionally to the time step.
Both in the relaxation systems and the \lbm schemes, conserved and non-conserved quantities are present at the same time but only conserved ones appear in the original system of conservation laws at hand.
Although the initialisation of the non-conserved quantities remains free in principle, it has important repercussions on the behaviour of the solution---such as the formation of time boundary layers---both for the relaxation systems and the \lbm schemes.
On the other hand, in recent works \cite{suga2010accurate, dellacherie2014construction, fuvcik2021equivalent, bellotti2021fd, bellotti2021equivalentequations}, \lbm schemes have been thought and recast---as far as the evolution of the conserved quantities of interest is concerned---as multi-step \fd schemes.
Unsurprisingly, multi-step schemes both for Ordinary \cite{ascher1998, hairer2008, hundsdorfer2006monotonicity, hundsdorfer2003monotonicity} and Partial Differential Equations \cite{gustafsson1995time, strikwerda2004finite} need to be properly initialised by some starting procedure with desired features, for example, consistency.
When \lbm schemes are seen in their original formulation, where conserved and non-conserved moments mingle, the initialisation of the non-conserved moments can be freely devised.
Once the \lbm schemes are recast as corresponding multi-step \fd schemes \cite{bellotti2021fd} solely on the conserved moments, the choice of initialisation for the conserved and non-conserved moments determines what the \iniSchemes{} feeding the corresponding \bulkScheme{} at the beginning of the simulation are.

The previous discussion highlights that for numerical methods such as \lbm schemes, the information gap between initial conditions for the target system of conservation laws and the numerical method must be filled and thus the issue of providing decision tools to throng this hollow clearly manifests. 
Furthermore, one must be careful when comparing numerical schemes to the continuous problem they aim at approximating, because: ``Finite difference approximations have a more complicated ``physics'' than the equations they are designed to simulate. The irony is no paradox, however, for finite differences are used not because the numbers they generate have simple properties, but because those numbers are simple to compute'', see \cite[Chapter 5]{trefethen1996}.
Since the seminal paper of Warming and Hyett \cite{warming1974modified}, the method of the modified equation \cite{gustafsson1995time, strikwerda2004finite, carpentier1997derivation} has proved to be a valuable tool to describe such ``complicated physics''.
Moreover, since \lbm schemes (respectively, their corresponding \fd schemes) feature non-physical moments (respectively, parasitic modes/eigenvalues), these terms play a role in the consistency of the initialisation routines---contrarily to what happens in the bulk---creating a rich yet complex dynamics.

In the framework of \lbm schemes, previous efforts \cite{van2009smooth} (under acoustic scaling), \cite{caiazzo2005analysis,junk2015l2, huang2015initial} (under diffusive scaling) have provided the first guidelines to establish the initial conditions, relying on asymptotic expansions both on the conserved and non-conserved variables.
One aim of these studies has been to suppress initial oscillating boundary layers being part of the ``more complicated physics'' of the discrete numerical method evoked in \cite{trefethen1996}, which are however absent in the solution of the target conservation law.
Even if the techniques introduced in these works guarantee the elimination of the initial oscillating boundary phenomena, no precise quantitative analysis of their inner structure has been presented.
Moreover, since the non-conserved moments do not have an analogue in the continuous problem, these procedures are---despite the fact of providing good indications---intrinsically formal. Finally, these works have only addressed the initialisation of specific \lbm schemes, namely the \scheme{1}{2} for \cite{van2009smooth}, the \scheme{1}{2} and \scheme{1}{3} for \cite{junk2015l2} and the \scheme{2}{9} for \cite{caiazzo2005analysis, huang2015initial}.

Inspired by the open questions left by previous works in the literature, the present contribution aims at being the first general study on the initialisation of \lbm schemes.
The pivotal tool that we introduce is a modified equation analysis for the initial conditions/\stSchemes{} and provides explicit constraints for general \lbm schemes that guarantee a sufficient order of consistency of the \iniSchemes{}, thus avoid order reductions of the overall method.
The modified equations are obtained by considering that the choice of initial data shapes the \stSchemes{} on the conserved variables of interest.
Since the non-conserved moments are eliminated, the analyses we perform rely on less formal assumptions than the ones available in the literature.
Pushing this tool one order further in the discretisation parameter, we meticulously describe the internal structure of the initial oscillating boundary layers, caused by incompatible numerical features---in particular, dissipation---between initialisation and bulk schemes. Previous works \cite{van2009smooth} have just certified the existence of these oscillations in numerical simulations. 
Let us insist once again on the fact that the dissipation of the physical (or consistency) mode for the \iniSchemes{} is driven both by the physical and parasitic eigenvalues of the \bulkScheme.
Another novelty in our work is the characterisation---by seeing \lbm methods as dynamical systems on a commutative ring and exploiting the concept of observability---of a vast well-known class of \lbm schemes with a reduced number of \iniSchemes{}, irrespective of the number of non-conserved moments.
The initial motivation to introduce the concept of observability is---for this class of schemes---to successfully determine the constraints needed to eliminate initial oscillating boundary layers due to a dissipation mismatch.

We consider \lbm schemes with one conserved moment, for the sake of keeping the discussion and the notations simple and essential.
The extension to several conserved moments can be envisioned in the spirit of our previous works \cite{bellotti2021fd,bellotti2021equivalentequations}.
We particularly concentrate on the widely adopted acoustic scaling \cite{dubois2019nonlinear} between time and space steps. 
The diffusive scaling \cite{zhao2017maxwell, zhang2019lattice} is succinctly discussed with the very same techniques at the end of the work.
Moreover, we consider linear schemes \cite{van2009smooth}, hence the equilibria for the non-conserved moments are linear functions of the conserved one.
The \lbm schemes we focus on aim at approximating the solution of the following $\spatialDimensionality = 1, 2, 3$ dimensional linear Cauchy problem
\begin{numcases}{}
    \partial_{\timeVariable} \solutionCauchy (\timeVariable, \vectorial{\spaceVariable}) + \vectorial{\transportVelocity} \cdot \nabla_{\vectorial{\spaceVariable}} \solutionCauchy (\timeVariable, \vectorial{\spaceVariable}) = 0, \qquad (\timeVariable, \vectorial{\spaceVariable}) \in \nonNegativeReals \times \reals^{\spatialDimensionality},\label{eq:CauchyEquation} \\
    \solutionCauchy (0, \vectorial{\spaceVariable}) = \solutionCauchyInitial (\vectorial{\spaceVariable}), \qquad \vectorial{\spaceVariable} \in \reals^{\spatialDimensionality}, \label{eq:CauchyInitialDatum}
\end{numcases}
with velocity $\vectorial{\transportVelocity} \in \reals^{\spatialDimensionality}$ and initial datum $\solutionCauchyInitial$, which is a smooth function defined everywhere in $\reals^{\spatialDimensionality}$. 
In this work, we only consider, contrarily to \cite{van2009smooth}, explicit initialisations, to keep the presentation simple.
However, the analysis of implicit initialisations can be done with the same techniques.

The paper is structured as follows.
At the beginning, \Cref{sec:LBMSchemes} introduces the general \lbm schemes treated in our study and \Cref{sec:CorrespondingFD} briefly recalls the main points about the reformulation of \lbm schemes as \fd schemes, away from the initial time.
This reformulation has allowed \cite{bellotti2021equivalentequations} to rigorously study the consistency of \lbm schemes apart from their initialisation and now characterises the number of needed \iniSchemes{}. 
In \Cref{sec:ModifiedEquations}, we introduce the modified equation analysis of these \stSchemes{} and find the constraints under which they are consistent with the same equation as the bulk \fd scheme.
The examples and numerical simulations of \Cref{sec:Illustrations} are introduced to corroborate the theoretical findings of \Cref{sec:ModifiedEquations} and---pushing the computation of the modified equations of the \stSchemes{} one order further---we describe the internal structure of the initial oscillating boundary layers in the case of simple 1d schemes.
One particular scheme also stimulates the discussion of the following \Cref{sec:Ginzburg}, where we re-evaluate the number of initialisation schemes at the discrete level more closely, thanks to the introduction of the notion of observability for the \lbm schemes. This allows to clearly identify and study a category of schemes for which the study of the initial conditions is greatly simplified and thus the constraints to avoid initial oscillating boundary layers can be easily established.
We eventually conclude in \Cref{sec:Conclusions}.

\section{Lattice Boltzmann schemes}\label{sec:LBMSchemes}

To introduce the multiple-relaxation-times \lbm schemes under the D'Humi\`eres formalism \cite{dhumieres1992}, which can be used to handle the previous Cauchy problem \eqref{eq:CauchyEquation}, \eqref{eq:CauchyInitialDatum} and the present paper is concerned by, one considers the following building blocks.
\begin{itemize}
    \item Time and space steps $\timeStep$ and $\spaceStep$, which are linked through a scaling. In the paper, since the target problem \eqref{eq:CauchyEquation} is hyperbolic, the acoustic scaling $\timeStep = \spaceStep/\latticeVelocity$ for a given fixed lattice velocity $\latticeVelocity > 0$ is taken, unless otherwise indicated.
    \item Discrete velocities $\vectorial{\discreteVelocityNormalized}_1, \dots , \vectorial{\discreteVelocityNormalized}_{\velocityNumber} \in \relatives^{\spatialDimensionality}$, with $\velocityNumber \in \nonZeroNaturals$.
    \item An invertible moment matrix $\momentMatrix \in \linearGroup{\velocityNumber}{\reals}$.
    \item A relaxation matrix $\relaxationMatrix = \diagMatrix(\relaxationParameter_1, \relaxationParameter_2, \dots, \relaxationParameter_{\velocityNumber})$, where $\relaxationParameter_{\indiceMoments} \in ]0, 2]$ for $\indiceMoments \in \integerInterval{2}{\velocityNumber}$ and $\relaxationParameter_1 \in \reals$.
    \item The equilibrium coefficients $\boldOther{\equilibriumCoefficient} \in \reals^{\velocityNumber}$ such that $\equilibriumCoefficient_1 = 1$, meaning that the first moment is conserved.
\end{itemize}

\begin{algorithm}
    \begin{itemize}
        \item Given $\boldOther{\discreteMoment}(0, \vectorial{\spaceVariable}) \in \reals^{\velocityNumber}$ for every $\vectorial{\spaceVariable} \in \lattice$.
        \item For $\indiceTime \in \naturals$
        \begin{itemize}
            \item{\strong{Collision}}. Using the collision matrix $\collisionMatrix \definitionEquality \identity - \relaxationMatrix (\identity - \boldOther{\equilibriumCoefficient} \otimes \canonicalBasisVector_1 )$, it reads
            \begin{equation}\label{eq:LBMCollisionPhase}
                \boldOther{\discreteMoment}^{\collided} (\indiceTime\timeStep, \vectorial{\spaceVariable}) = \collisionMatrix \boldOther{\discreteMoment}(\indiceTime\timeStep, \vectorial{\spaceVariable}), \qquad \vectorial{\spaceVariable} \in \lattice.
            \end{equation}
            The post-collision distribution densities are recovered by $\boldOther{\discreteDistributionDensity}^{\collided} (\indiceTime\timeStep, \vectorial{\spaceVariable}) = \momentMatrix^{-1} \boldOther{\discreteMoment}^{\collided} (\indiceTime\timeStep, \vectorial{\spaceVariable})$ on every point $\vectorial{\spaceVariable} \in \lattice$ of the lattice.
            \item{\strong{Transport}}, which reads
            \begin{equation}\label{eq:LBMTransportPhase}
                \discreteDistributionDensity_{\indiceDistributions} ((\indiceTime + 1)\timeStep, \vectorial{\spaceVariable}) = \discreteDistributionDensity_{\indiceDistributions}^{\collided} (\indiceTime\timeStep, \vectorial{\spaceVariable} - \spaceStep\vectorial{\discreteVelocityNormalized}_{\indiceDistributions}), \qquad \vectorial{\spaceVariable} \in \lattice, \quad \indiceDistributions \in \integerInterval{1}{\velocityNumber}.
            \end{equation}
            The moments at the new time step are obtained by $\boldOther{\discreteMoment} ((\indiceTime+1)\timeStep, \vectorial{\spaceVariable}) = \momentMatrix \boldOther{\discreteDistributionDensity} ((\indiceTime+1)\timeStep, \vectorial{\spaceVariable})$ on every point $\vectorial{\spaceVariable} \in \lattice$.
        \end{itemize}
    \end{itemize}
    \caption{\label{alg:LBMScheme}Lattice Boltzmann scheme.}
\end{algorithm}

The \lbm scheme then reads as in \Cref{alg:LBMScheme}.
The way of devising this algorithm---\emph{i.e.} choosing the discrete velocities $\vectorial{\discreteVelocityNormalized}_1, \dots , \vectorial{\discreteVelocityNormalized}_{\velocityNumber}$, the moment matrix $\momentMatrix$, the collision parameters $\relaxationMatrix$, and the equilibrium coefficients $\boldOther{\equilibriumCoefficient}$---in order to obtain consistency with \eqref{eq:CauchyEquation} has been formally studied with different strategies in a myriad of papers \cite{lallemand2000theory, junk2005asymptotic, dubois2008equivalent, yong2016theory, dubois2019nonlinear, chai2020multiple} to cite a few, and rigorously in our recent contribution \cite{bellotti2021equivalentequations}, relying on an exact algebraic elimination of the non-conserved moments which are not present in \eqref{eq:CauchyEquation}.
The present work particularly focuses on the choice of $\boldOther{\discreteMoment}(0, \cdot)$.
For future use, we introduce the number $\nonTriviallyRelaxingMomentsNumber$ of non-conserved moments which do not relax to their equilibrium value during the collision phase \eqref{eq:LBMCollisionPhase}:
\begin{equation}\label{eq:DefinitionNonTrivialRelaxations}
    \nonTriviallyRelaxingMomentsNumber \definitionEquality \# \bigl \{ \relaxationParameter_{\indiceMoments} \neq 1 ~ : ~ \indiceMoments \in \integerInterval{2}{\velocityNumber} \bigr \} \in \integerIntervalClosedOpen{0}{\velocityNumber}.
\end{equation}
Roughly speaking, the larger $\nonTriviallyRelaxingMomentsNumber$, the stronger the ``entanglement'' between moments in the scheme.
Remark that, since the corresponding column in $\collisionMatrix$ is zero, there is even no need to specify the initial value $\discreteMoment_{\indiceMoments}(0, \cdot)$ when $\relaxationParameter_{\indiceMoments} = 1$, for $\indiceMoments \in \integerInterval{2}{\velocityNumber}$. This comes from the fact that the post-collisional values of these moments are entirely determined by their values at equilibrium.

\section{Corresponding Finite Difference schemes and initialisation schemes}\label{sec:CorrespondingFD}

Recasting the \lbm scheme as a multi-step \fd scheme \cite{bellotti2021fd} has allowed to rigorously define the notions of stability and consistency \cite{bellotti2021equivalentequations}, without a precise consideration on the role of the initial conditions, which is indeed the aim of the present paper.
In \Cref{sec:CorrespondingFD}, we briefly recall the essential concepts on the way of rewriting \lbm schemes as \fd ones, showing that the choice of initial data determines what the \iniSchemes{}, used to start up the multi-step \fd scheme, are.

\subsection{Finite Difference scheme in the bulk}

The transport phase \eqref{eq:LBMTransportPhase} can be rewritten on the moments $\boldOther{\discreteMoment}$ by introducing the non-diagonal matrix of operators $\transportMoment \definitionEquality \momentMatrix \diagMatrix (\boldOther{\basicShiftLetter}^{\vectorial{\discreteVelocityNormalized}_1}, \dots, \boldOther{\basicShiftLetter}^{\vectorial{\discreteVelocityNormalized}_{\velocityNumber}}) \momentMatrix^{-1}$, using the multi-index notation, where $\boldOther{\basicShiftLetter} = (\basicShiftLetter_1, \dots, \basicShiftLetter_{\spatialDimensionality})$ and thus $\boldOther{\basicShiftLetter}^{\vectorial{\discreteVelocityNormalized}} = \basicShiftLetter_1^{\discreteVelocityNormalized_1}\cdots \basicShiftLetter_{\spatialDimensionality}^{\discreteVelocityNormalized_{\spatialDimensionality}}$ for any $\vectorial{\discreteVelocityNormalized} \in \relatives^{\spatialDimensionality}$.
In this expression, we have considered the upwind space shift operators $\basicShiftLetter_{\indiceFreeOne}$ for $\indiceFreeOne \in \integerInterval{1}{\spatialDimensionality}$ such that
\begin{equation*}
    (\basicShiftLetter_{\indiceFreeOne}\genericFunction) (\vectorial{\spaceVariable}) = \genericFunction(\vectorial{\spaceVariable} - \spaceStep \canonicalBasisVector_{\indiceFreeOne}), \qquad \vectorial{\spaceVariable} \in \reals^{\spatialDimensionality},
\end{equation*}
with $\canonicalBasisVector_{\indiceFreeOne}$ the $\indiceFreeOne$-th vector of the canonical basis.
Considering also the forward time shift operator $\timeShift$ such that
\begin{equation*}
    (\timeShift \genericFunction) (\timeVariable)  = \genericFunction(\timeVariable + \timeStep), \qquad \timeVariable \in \reals,
\end{equation*}
the whole \lbm scheme on the moments reads
\begin{equation}\label{eq:LBMSchemeMonol}
    (\timeShift \boldOther{\discreteMoment})(\timeVariable, \vectorial{\spaceVariable}) = (\schemeMatrix \boldOther{\discreteMoment})(\timeVariable, \vectorial{\spaceVariable}), \qquad  (\timeVariable, \vectorial{\spaceVariable}) \in \timeLattice \times \lattice,
\end{equation}
where the evolution matrix of the scheme $\schemeMatrix \definitionEquality \transportMoment \collisionMatrix \in \matrixSpace{\velocityNumber}{\ringSpaceOperators}$ has entries in the ring of Laurent polynomials in the indeterminates $\basicShiftLetter_1, \dots, \basicShiftLetter_{\spatialDimensionality}$.
In what follows, we shall remove the parenthesis to indicate the action of operators on functions, for the sake of notation.
For any spatial operator $\genericDiscreteOperator = \genericDiscreteOperator(\boldOther{\basicShiftLetter}) \in \ringSpaceOperators$, we define its conjugate operator by $\conjugate{\genericDiscreteOperator} = \conjugate{\genericDiscreteOperator}(\boldOther{\basicShiftLetter}) \definitionEquality \genericDiscreteOperator(\boldOther{\basicShiftLetter^{-\vectorial{1}}})$, which allows to introduce symmetric and anti-symmetric parts as $\symmetricPart(\genericDiscreteOperator) \definitionEquality ({\genericDiscreteOperator + \conjugate{\genericDiscreteOperator}})/{2}$ and $\antisymmetricPart(\genericDiscreteOperator) \definitionEquality ({\genericDiscreteOperator - \conjugate{\genericDiscreteOperator}})/{2}$.
In \cite{bellotti2021fd}, we have shown that---by means of the Cayley-Hamilton theorem on the commutative ring $\ringSpaceOperators$---the discrete dynamics of the conserved moment $\discreteMoment_1$ computed by \Cref{alg:LBMScheme} or by \eqref{eq:LBMSchemeMonol}---away from the initial time---is the one of the corresponding \fd scheme under the form
\begin{equation}\label{eq:FDCorrespondingOneMomentInitialCondition}
    \timeShift^{\nonTriviallyRelaxingMomentsNumber + 1 - \velocityNumber}\determinant(\timeShift \identity - \schemeMatrix) \discreteMoment_1 (\timeVariable, \vectorial{\spaceVariable}) = \sum_{\indiceTime = 0}^{\velocityNumber} \coefficientFDScheme_{\indiceTime} \timeShift^{\indiceTime + \nonTriviallyRelaxingMomentsNumber + 1 - \velocityNumber} \discreteMoment_1  (\timeVariable, \vectorial{\spaceVariable}) = 0, \qquad (\timeVariable, \vectorial{\spaceVariable}) \in \timeLattice \times \lattice,
\end{equation}
where $(\coefficientFDScheme_{\indiceTime})_{\indiceTime = 0}^{\indiceTime = \velocityNumber} \subset \ringSpaceOperators$ are the coefficients of the characteristic polynomial $\determinant(\timeShift \identity - \schemeMatrix) = \sum\nolimits_{\indiceTime = 0}^{\indiceTime = \velocityNumber} \coefficientFDScheme_{\indiceTime} \timeShift^{\indiceTime}$ of $\schemeMatrix$, which is indeed the amplification polynomial of the scheme.
One can easily see that $\coefficientFDScheme_{\indiceTime} = 0$ for $\indiceTime \in \integerInterval{0}{\velocityNumber - \nonTriviallyRelaxingMomentsNumber - 2}$, whence the important role played by $\nonTriviallyRelaxingMomentsNumber$.
Furthermore, since the characteristic polynomial is monic, \emph{i.e.} $\coefficientFDScheme_{\velocityNumber} = 1$, the scheme is explicit, thus can be recast into
\begin{equation*}
    \timeShift \discreteMoment_1 (\timeVariable, \vectorial{\spaceVariable}) =  - \sum_{\indiceTime = \velocityNumber - \nonTriviallyRelaxingMomentsNumber - 1}^{\velocityNumber-1} \coefficientFDScheme_{\indiceTime} \timeShift^{\indiceTime + 1 - \velocityNumber} \discreteMoment_1  (\timeVariable, \vectorial{\spaceVariable}), \qquad (\timeVariable, \vectorial{\spaceVariable}) \in \timeStep \integerIntervalClosedOpen{\nonTriviallyRelaxingMomentsNumber}{+\infty}\times \lattice.
\end{equation*}
We call this scheme corresponding \bulkScheme{} acting on the bulk time steps $\integerIntervalClosedOpen{\nonTriviallyRelaxingMomentsNumber}{+\infty}$, which is a multi-step scheme with $\nonTriviallyRelaxingMomentsNumber + 2$ stages (or indeed $\nonTriviallyRelaxingMomentsNumber + 1$ steps).
We remark the need for initialisation data through $\nonTriviallyRelaxingMomentsNumber$ \iniSchemes{}, that we shall analyze in what follows.

\subsection{Initialisation schemes}

The \iniSchemes{}---the outcome of which eventually ``nourishes'' the \bulkScheme{}---are determined by the choice of initial datum $\boldOther{\discreteMoment}(0, \cdot)$.
They are:
\begin{equation*}
    \discreteMoment_1(\indiceTime \timeStep, \vectorial{\spaceVariable}) = (\schemeMatrix^{\indiceTime} \boldOther{\discreteMoment})_1 (0, \vectorial{\spaceVariable}), \qquad \indiceTime \in \integerInterval{1}{\nonTriviallyRelaxingMomentsNumber}, \quad \vectorial{\spaceVariable} \in \lattice.
\end{equation*}
The formulation that we have proposed is provided in an abstract yet general form.
In order to make the link with well-known \lbm schemes and illustrate our purpose, let us introduce the following example.
More of them are provided in \Cref{sec:Illustrations} and \Cref{sec:Ginzburg}.
\begin{example}[\scheme{1}{2}]\label{ex:D1Q2}
Consider the \scheme{1}{2} scheme as \cite{van2009smooth, dellacherie2014construction, graille2014approximation, caetano2019result}, where $\spatialDimensionality = 1$, $\velocityNumber = 2$, $\discreteVelocityNormalized_1 = 1$, $\discreteVelocityNormalized_2 = -1$, and the moment matrix is
\begin{equation*}
    \momentMatrix = 
    \begin{bmatrix}
        1 & 1 \\
        1 & -1
    \end{bmatrix}. \quad \text{Therefore} \quad
    \transportMoment = 
    \begin{bmatrix}
        \symmetricPart (\basicShiftLetter_1) & \antisymmetricPart(\basicShiftLetter_1)\\
        \antisymmetricPart(\basicShiftLetter_1 ) & \symmetricPart (\basicShiftLetter_1)
    \end{bmatrix}, \quad \text{and} \quad
    \collisionMatrix = 
    \begin{bmatrix}
        1 & 0 \\
        \relaxationParameter_2 \equilibriumCoefficient_2 & 1 - \relaxationParameter_2
    \end{bmatrix}.
\end{equation*}
The \bulkScheme{} comes from $\determinant(\timeShift \identity - \schemeMatrix) = \timeShift^2  +  ( (\relaxationParameter_2 - 2) \symmetricPart(\basicShiftLetter_1) - {\relaxationParameter_2 \equilibriumCoefficient_2}  \antisymmetricPart(\basicShiftLetter_1 )  )\timeShift + (1 - \relaxationParameter_2)$, encompassing the result from \cite{dellacherie2014construction}, and hence reads
\begin{equation}\label{eq:FDschemeD1Q2}
    \discreteMoment_1((\indiceTime + 1) \timeStep, {\spaceVariable}) = ((2-\relaxationParameter_2)\symmetricPart(\basicShiftLetter_1) + {\relaxationParameter_2 \equilibriumCoefficient_2}  \antisymmetricPart(\basicShiftLetter_1 )) \discreteMoment_1(\indiceTime  \timeStep, {\spaceVariable}) + (\relaxationParameter_2 - 1) \discreteMoment_1((\indiceTime - 1)  \timeStep, {\spaceVariable}),
\end{equation}
for $\indiceTime \in \integerIntervalClosedOpen{\nonTriviallyRelaxingMomentsNumber}{+\infty}$ and ${\spaceVariable} \in \spaceStep \relatives$.
This is a Lax-Friedrichs scheme when $\relaxationParameter_2 = 1$---which is first-order consistent with the transport equation at velocity $\latticeVelocity \equilibriumCoefficient_2$---and a leap-frog scheme when $\relaxationParameter_2 = 2$, which is second-order consistent.
Thus, to approximate the solution of \eqref{eq:CauchyEquation} by $\discreteMoment_1 \approx \solutionCauchy$, the choice of equilibrium is $\equilibriumCoefficient_2 = \transportVelocity/\latticeVelocity$.
The \bulkScheme{} \eqref{eq:FDschemeD1Q2} is multi-step with $\nonTriviallyRelaxingMomentsNumber = 1$ when $\relaxationParameter_2 \neq 1$: in this case, one needs to specify one \iniScheme{}, which is
\begin{equation*}
    \discreteMoment_1(\timeStep, \spaceVariable) =  ( \symmetricPart(\basicShiftLetter_1) + \relaxationParameter_2 \equilibriumCoefficient_2  \antisymmetricPart(\basicShiftLetter_1)  ) \discreteMoment_1 (0, \spaceVariable) + (1-\relaxationParameter_2) \antisymmetricPart(\basicShiftLetter_1) \discreteMoment_2(0, \spaceVariable), \qquad \spaceVariable \in \spaceStep \relatives.
\end{equation*}
We see that both the choice of the conserved moment $\discreteMoment_1 (0, \cdot)$ and the non-conserved moment $\discreteMoment_2 (0, \cdot)$ with respect to $\solutionCauchyInitial$ determine the initial scheme.
Unsurprisingly, this scheme coincides with the bulk scheme when $\relaxationParameter_2 = 1$.
\end{example}

\subsection{Overall scheme}

The \bulkScheme{} supplemented by the \iniSchemes{} reads as in \Cref{alg:CorrespondingFDScheme}.
\begin{algorithm}
    \begin{itemize}
        \item Given $\boldOther{\discreteMoment}(0, \vectorial{\spaceVariable})$ for every $\vectorial{\spaceVariable} \in \lattice$.
        \item{\strong{Initialisation schemes}}. For $\indiceTime \in \integerInterval{1}{\nonTriviallyRelaxingMomentsNumber}$
        \begin{equation}\label{eq:InitialisationSchemes}
            \discreteMoment_1(\indiceTime \timeStep, \vectorial{\spaceVariable}) = (\schemeMatrix^{\indiceTime} \boldOther{\discreteMoment})_1 (0, \vectorial{\spaceVariable}), \qquad \vectorial{\spaceVariable} \in\lattice.
        \end{equation}
        \item{\strong{Corresponding bulk \fd scheme}}. For $\indiceTime \in \integerIntervalClosedOpen{\nonTriviallyRelaxingMomentsNumber}{+\infty}$
        \begin{equation}\label{eq:BulkSchemes}
            \discreteMoment_1((\indiceTime + 1) \timeStep, \vectorial{\spaceVariable}) = -  \sum_{\indiceFreeOne = \velocityNumber - \nonTriviallyRelaxingMomentsNumber - 1}^{\velocityNumber-1} \coefficientFDScheme_{\indiceFreeOne} \discreteMoment_1 ((\indiceTime + \indiceFreeOne + 1 - \velocityNumber)\timeStep, \vectorial{\spaceVariable}), \qquad \vectorial{\spaceVariable} \in \lattice.
        \end{equation}
    \end{itemize}
    \caption{\label{alg:CorrespondingFDScheme}Corresponding \fd scheme.}
\end{algorithm}
\begin{figure} 
    \begin{center}
        \begin{footnotesize}
            \includegraphics[width=1.\textwidth]{./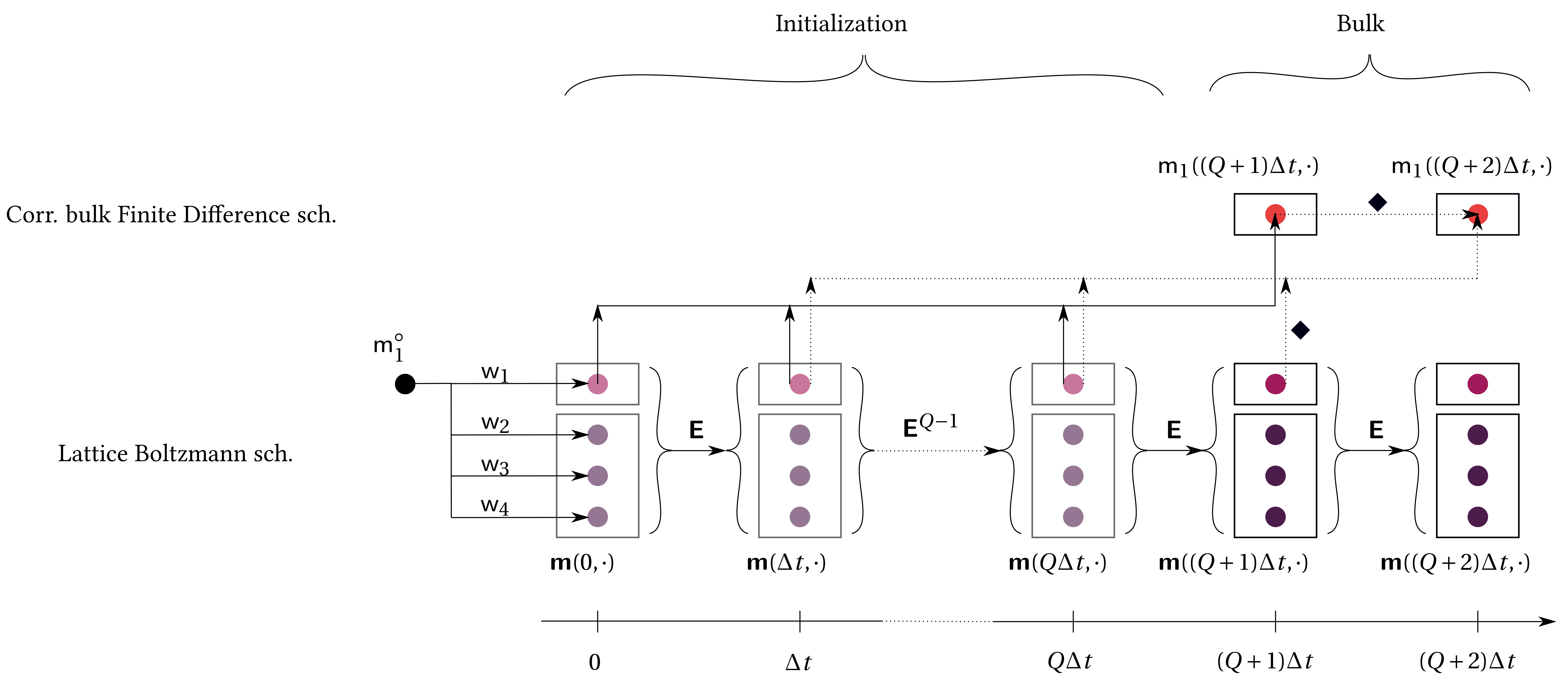}
        \end{footnotesize}
    \end{center}\caption{\label{fig:schemes} Illustration of the way of working of the \lbm scheme (bottom) and the \bulkScheme{} (top). The former acts both on the conserved (light violet) and the non-conserved (dark violet) moments. The latter implies only the conserved moment, drawn in light violet in the initialisation layer and in red in the bulk. Remark that to compute the conserved moment for the \bulkScheme{} at time $(\nonTriviallyRelaxingMomentsNumber + 2)\timeStep$, one can either rely on the information at time $(\nonTriviallyRelaxingMomentsNumber + 1)\timeStep$ in light violet (from the \lbm scheme) or on the one in red (from the \fd scheme), as highlighted by the symbol \protect\rotatebox{45}{{\tiny $\blacksquare$}}. This holds because these quantities are equal for any time step in the bulk, for they stem from a common initialisation process. Partial transparency is used to denote the initialisation steps.}
  \end{figure}
We stress that \Cref{alg:CorrespondingFDScheme} is the corresponding scheme of \Cref{alg:LBMScheme} in the sense that they issue the same discrete dynamics of the conserved moment $\discreteMoment_1$ approximating $\solutionCauchy$, see \Cref{fig:schemes}.
Of course, the non-conserved moments $\discreteMoment_2, \dots, \discreteMoment_{\velocityNumber}$ have been eliminated, at the price of handling a multi-step \fd scheme.
They still remain in the initialisation (\emph{cf.} \Cref{ex:D1Q2}), giving a first intuition of why we claimed that non-physical modes---associated with non-conserved moments---play a role in this topic.
\begin{remark}
    It is worthwhile observing that even if the \iniSchemes{} \eqref{eq:InitialisationSchemes} are considered here close to the initial time, \emph{i.e.} for $\indiceTime \in \integerInterval{1}{\nonTriviallyRelaxingMomentsNumber}$, they also represent the action of the \lbm scheme through its evolution operator $\schemeMatrix$ away from the initial time, that is, when $\indiceTime > \nonTriviallyRelaxingMomentsNumber$.
    In the sequel, we shall employ the following nomenclature:
    \begin{itemize}
        \item ``\strong{\iniSchemes{}}'', to indicate \eqref{eq:InitialisationSchemes} for $\indiceTime \in \integerInterval{1}{\nonTriviallyRelaxingMomentsNumber}$;
        \item ``\strong{\stSchemes{}}'', to indicate \eqref{eq:InitialisationSchemes} for any $\indiceTime \in \nonZeroNaturals$. 
    \end{itemize}
    Hence, the \iniSchemes{} are a proper subset of the \stSchemes{}.
    Indeed, in \Cref{sec:ModifiedEquations} and \Cref{sec:Illustrations}, we shall also consider the behaviour of \eqref{eq:InitialisationSchemes} for $\indiceTime > \nonTriviallyRelaxingMomentsNumber$, aiming at analysing the agreement between the behaviour of the numerical schemes inside the initial layer and the one purely in the bulk.
    This idea of matching is reminiscent of the singularly perturbed dynamical systems, see \cite{o1991singular, bender1999advanced}.
\end{remark}

\section{Modified equation analysis of the initial conditions under acoustic scaling}\label{sec:ModifiedEquations}

The study of the consistency of the initialisation schemes is crucial---especially when one wants to reach high-order accuracy.
For the overall method, \cite[Theorem 10.6.2]{strikwerda2004finite} states that, under acoustic scaling, if the initialisation of a stable multi-step scheme is obtained using schemes of accuracy $\maxOrder - 1$ in $\spaceStep$, where $\maxOrder$ is the accuracy of the multi-step scheme without accounting for the initialisation, then for smooth initial data, the order of accuracy of the multi-step scheme accounting for the initialisation remains $\maxOrder$.

In what follows, we shall make use of the notion of asymptotic equivalence \cite{bellotti2021equivalentequations} between discrete operators in the ring $\ringTimeSpaceOperators$ and formal series of continuous differential operators.
\begin{definition}[Asymptotic equivalence]\label{def:AsymptoticEquivalence}
    Considering a scaling between $\timeStep$ and $\spaceStep$, so that we take $\spaceStep$ as driving discretisation parameter, for any discrete time-space operator $\genericDiscreteOperator \in \ringTimeSpaceOperators$, we indicate $\genericDiscreteOperator \asymptoticEquivalence \delta$ where 
    $\delta \in \formalSeries{(\ringContinuousTimeSpaceOperators)}{\spaceStep}$ is a formal series in $\spaceStep$ with coefficients in the ring of time-space differential operators $\ringContinuousTimeSpaceOperators$, if for every smooth $\genericFunction: \reals \times \reals^{\spatialDimensionality} \to \reals$
    \begin{equation*}
        \genericDiscreteOperator \genericFunction (\timeVariable, \vectorial{\spaceVariable}) = \sum_{\indiceOrder = 0}^{+\infty} \spaceStep^{\indiceOrder} \termAtOrder{\delta}{\indiceOrder} \genericFunction  (\timeVariable, \vectorial{\spaceVariable}), \qquad (\timeVariable, \vectorial{\spaceVariable}) \in \reals \times \reals^{\spatialDimensionality},
    \end{equation*}
    in the limit $\spaceStep \to 0$.
\end{definition}
To perform the consistency analysis of the schemes \emph{via} the modified equation \cite{warming1974modified,strikwerda2004finite,gustafsson1995time}, one practical way of proceeding is to deploy the scheme on smooth functions over $\reals \times \reals^{\spatialDimensionality}$ instead of on grid functions defined over $\timeLattice \times \lattice$, and use truncated asymptotic equivalents according to \Cref{def:AsymptoticEquivalence}.
The scaling assumptions \cite{bellotti2021equivalentequations} the whole work will rely on are---unless further notice---that $\momentMatrix$, $\relaxationMatrix$, and $\boldOther{\equilibriumCoefficient}$ are independent of $\spaceStep$ as $\spaceStep \to 0$.
Following \cite{dubois2019nonlinear}, we have introduced \cite{bellotti2021equivalentequations} the matrix of first-order space differential operators
\begin{equation*}
    \duboisOperator = \momentMatrix \sum_{|\boldOther{\indiceMultiIndexDifferential}| = 1} \diagMatrix (\vectorial{\discreteVelocityNormalized}_1^{\boldOther{\indiceMultiIndexDifferential}}, \dots, \vectorial{\discreteVelocityNormalized}_{\velocityNumber}^{\boldOther{\indiceMultiIndexDifferential}}) \partial_{\vectorial{\spaceVariable}}^{\boldOther{\indiceMultiIndexDifferential}}  \momentMatrix^{-1} \in \matrixSpace{\velocityNumber}{\ringContinuousTimeSpaceOperators},
\end{equation*}
influenced both by the choice of discrete velocities and the moment matrix at hand.
The entries of this matrix shall be used to write the modified equations for general \lbm schemes.
\begin{example}
    Coming back to the context of \Cref{ex:D1Q2}, we have that
    \begin{equation*}
        \duboisOperator = 
        \begin{bmatrix}
            0 & \partial_{\spaceVariable_1} \\
            \partial_{\spaceVariable_1} & 0
        \end{bmatrix}.
    \end{equation*}
\end{example}

\subsection{Review on the modified equation in the bulk}

The consistency of the \bulkScheme{} \eqref{eq:BulkSchemes} is described in the following result.
\begin{theorem}[\cite{bellotti2021equivalentequations} Modified equation of the bulk scheme]\label{thm:EquivEqBulk}
    Under acoustic scaling, that is, when $\latticeVelocity > 0$ is fixed as $\spaceStep \to 0$, the modified equation for the \bulkScheme{} \eqref{eq:BulkSchemes} is given by
    \begin{equation}\label{eq:BulkEquivalentEquation}
        \partial_{\timeVariable}\testFunction (\timeVariable, \vectorial{\spaceVariable}) + \latticeVelocity \Bigl ( \matrixEntries{\duboisOperatorEntry}{1}{1} + \sum_{\indiceFreeThree = 2}^{\velocityNumber}  \matrixEntries{\duboisOperatorEntry}{1}{\indiceFreeThree} \equilibriumCoefficient_{\indiceFreeThree} \Bigr ) \testFunction (\timeVariable, \vectorial{\spaceVariable}) 
        - \latticeVelocity\spaceStep \sum_{\indiceMoments = 2}^{\velocityNumber} \Bigl ( \frac{1}{\relaxationParameter_{\indiceMoments}} - \frac{1}{2}\Bigr ) \duboisOperatorEntry_{1\indiceMoments} \Bigl ( \duboisOperatorEntry_{\indiceMoments 1}  +  \sum_{\indiceFreeThree = 2}^{\velocityNumber} \duboisOperatorEntry_{\indiceMoments \indiceFreeThree} \equilibriumCoefficient_{\indiceFreeThree} - \Bigl ( \matrixEntries{\duboisOperatorEntry}{1}{1} + \sum_{\indiceFreeThree = 2}^{\velocityNumber}  \matrixEntries{\duboisOperatorEntry}{1}{\indiceFreeThree} \equilibriumCoefficient_{\indiceFreeThree} \Bigr )  \equilibriumCoefficient_{\indiceMoments} \Bigr )\testFunction (\timeVariable, \vectorial{\spaceVariable})  = \bigO{\spaceStep^2},
    \end{equation}
    for $(\timeVariable, \vectorial{\spaceVariable}) \in \nonNegativeReals \times \reals^{\spatialDimensionality}$.
\end{theorem}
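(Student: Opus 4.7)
My plan is to bypass the direct expansion of $\determinant(\timeShift\identity - \schemeMatrix)$ -- which is rank-deficient at $\spaceStep=0$ and hence would require a delicate cofactor analysis -- and instead carry out a Chapman-Enskog-type expansion on the full kinetic identity $(\timeShift\identity - \schemeMatrix)\boldOther{\discreteMoment} = 0$, from which the equation on $\discreteMoment_1 \approx \testFunction$ emerges by isolating the first row. First, I establish under acoustic scaling $\timeStep = \spaceStep/\latticeVelocity$ the asymptotic equivalents $\timeShift \asymptoticEquivalence \identity + (\spaceStep/\latticeVelocity)\partial_{\timeVariable} + (\spaceStep^2/(2\latticeVelocity^2))\partial_{\timeVariable}^2 + \bigO{\spaceStep^3}$ and $\basicShiftLetter_{\indiceFreeOne} \asymptoticEquivalence \identity - \spaceStep \partial_{\spaceVariable_{\indiceFreeOne}} + \bigO{\spaceStep^2}$, which by the definition of $\duboisOperator$ give $\transportMoment \asymptoticEquivalence \identity - \spaceStep\duboisOperator + \bigO{\spaceStep^2}$ and hence $\schemeMatrix \asymptoticEquivalence \collisionMatrix - \spaceStep \duboisOperator\collisionMatrix + \bigO{\spaceStep^2}$. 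Two algebraic facts are then crucial: since $\equilibriumCoefficient_1 = 1$, the first row of $\collisionMatrix$ equals $\transpose{\canonicalBasisVector_1}$ (conservation of $\discreteMoment_1$) and $\boldOther{\equilibriumCoefficient} \in \ker(\identity - \collisionMatrix)$; furthermore on non-conserved rows $\indiceMoments \in \integerInterval{2}{\velocityNumber}$ one has $((\identity - \collisionMatrix)v)_{\indiceMoments} = \relaxationParameter_{\indiceMoments}(v_{\indiceMoments} - \equilibriumCoefficient_{\indiceMoments} v_1)$.

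Next, I postulate the ansatz $\boldOther{\discreteMoment} \asymptoticEquivalence \testFunction \boldOther{\equilibriumCoefficient} + \spaceStep \boldOther{\discreteMoment}^{(1)} + \bigO{\spaceStep^2}$ for a smooth $\testFunction$, plug into $(\timeShift\identity - \schemeMatrix)\boldOther{\discreteMoment}=0$, and read off the first row. Using $\collisionMatrix\boldOther{\equilibriumCoefficient} = \boldOther{\equilibriumCoefficient}$, the order-$\spaceStep$ contribution is
\begin{equation*}
(1/\latticeVelocity)\partial_{\timeVariable}\testFunction + \bigl(\matrixEntries{\duboisOperatorEntry}{1}{1} + \sum_{\indiceFreeThree=2}^{\velocityNumber}\matrixEntries{\duboisOperatorEntry}{1}{\indiceFreeThree}\equilibriumCoefficient_{\indiceFreeThree}\bigr)\testFunction = 0,
\end{equation*}
which after multiplication by $\latticeVelocity$ is the $\bigO{1}$ transport part of \eqref{eq:BulkEquivalentEquation}. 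I then solve the order-$\spaceStep$ non-conserved rows for $\boldOther{\discreteMoment}^{(1)}_{\indiceMoments} - \equilibriumCoefficient_{\indiceMoments}\boldOther{\discreteMoment}^{(1)}_1$ by inverting the block structure of $\identity - \collisionMatrix$; this introduces the $1/\relaxationParameter_{\indiceMoments}$ factor and, after eliminating $\partial_{\timeVariable}\testFunction$ using the leading-order equation, produces an expression proportional to $\duboisOperatorEntry_{\indiceMoments 1} + \sum_{\indiceFreeThree \geq 2}\duboisOperatorEntry_{\indiceMoments\indiceFreeThree}\equilibriumCoefficient_{\indiceFreeThree} - (\matrixEntries{\duboisOperatorEntry}{1}{1} + \sum_{\indiceFreeThree\geq 2}\matrixEntries{\duboisOperatorEntry}{1}{\indiceFreeThree}\equilibriumCoefficient_{\indiceFreeThree})\equilibriumCoefficient_{\indiceMoments}$, precisely the parenthesized quantity in \eqref{eq:BulkEquivalentEquation}. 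Substituting back into the first row at order $\spaceStep^2$, and using the leading-order equation once more to rewrite the $\spaceStep^2\partial_{\timeVariable}^2\testFunction/(2\latticeVelocity^2)$ piece from $\timeShift$ as a purely spatial operator, assembles the dissipative term with the characteristic factor $(1/\relaxationParameter_{\indiceMoments} - 1/2)$.

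The hardest step will be the careful bookkeeping of that $-1/2$ contribution: the $\spaceStep^2\partial_{\timeVariable}^2$ piece from $\timeShift$ must be re-expressed through the leading-order transport identity and then has to pair precisely with the $1/\relaxationParameter_{\indiceMoments}$ coming from the inversion of $\identity - \collisionMatrix$ on the non-conserved block, so that the two combine into the \lbm dissipation signature $(1/\relaxationParameter_{\indiceMoments} - 1/2)$. A mild technical point is that the entries of $\duboisOperator$ are differential operators, so all manipulations above have to be read on the commutative polynomial ring $\ringContinuousTimeSpaceOperators$ rather than on $\reals$; this is legitimate thanks to \Cref{def:AsymptoticEquivalence} and the scaling assumptions on $\momentMatrix$, $\relaxationMatrix$ and $\boldOther{\equilibriumCoefficient}$.
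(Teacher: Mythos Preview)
The paper does not prove this theorem here; it is imported from \cite{bellotti2021equivalentequations}, whose method works directly on the closed multi-step scheme \eqref{eq:BulkSchemes} on $\discreteMoment_1$ by expanding the amplification polynomial $\timeShift^{\nonTriviallyRelaxingMomentsNumber + 1 - \velocityNumber}\determinant(\timeShift\identity - \schemeMatrix)$ and isolating the unique ``consistency'' eigenvalue $\fourierTransformed{\discrete{\eigenvalueLetter}}_1(\frequency\spaceStep) = 1 + \bigO{|\frequency\spaceStep|}$; the modified equation is then read off from $\partial_{\timeVariable} = (\latticeVelocity/\spaceStep)\log \fourierTransformed{\discrete{\eigenvalueLetter}}_1$. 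Your Chapman--Enskog route is genuinely different: you go back to the full kinetic system $(\timeShift\identity - \schemeMatrix)\boldOther{\discreteMoment} = 0$ and postulate an enslaving ansatz $\boldOther{\discreteMoment} \asymptoticEquivalence \testFunction\boldOther{\equilibriumCoefficient} + \spaceStep\boldOther{\discreteMoment}^{(1)} + \cdots$, solve the non-conserved rows for $\boldOther{\discreteMoment}^{(1)}$, and feed this back into the first row. The algebra you outline is correct and the $(1/\relaxationParameter_{\indiceMoments} - 1/2)$ bookkeeping does close exactly as you describe.

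The gap is that the theorem, as stated, is about the \bulkScheme{} \eqref{eq:BulkSchemes}, which is a closed scheme on $\discreteMoment_1$ alone: the non-conserved moments have been \emph{eliminated algebraically} and no longer appear. Your argument re-introduces them through an ansatz that is not part of the data of \eqref{eq:BulkSchemes}; it therefore proves a statement about the one-step LB system, not directly about the multi-step FD scheme. To make your proof complete you must argue that the Chapman--Enskog enslaved mode coincides with the consistency eigenvalue of $\determinant(\timeShift\identity - \schemeMatrix)$, i.e.\ that projecting on your ansatz selects precisely the root $\fourierTransformed{\discrete{\eigenvalueLetter}}_1$ among the $\nonTriviallyRelaxingMomentsNumber + 1$ roots of the amplification polynomial. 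This is true, and is essentially what the paper and \cite{bellotti2021equivalentequations} establish by other means (see also the eigenvalue arguments in the proof of \Cref{prop:MatchEventually}), but it is the step that turns your formal Chapman--Enskog computation into a rigorous statement about \eqref{eq:BulkSchemes}. The paper's own emphasis is precisely that working on the FD scheme bypasses such formal ansatz assumptions on the non-conserved moments.
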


Comparing \eqref{eq:BulkEquivalentEquation} and \eqref{eq:CauchyEquation}, the consistency with the equation of the Cauchy problem shall be enforced selecting the components of the \lbm scheme such that $\latticeVelocity (\matrixEntries{\duboisOperatorEntry}{1}{1} + \sum\nolimits_{\indiceFreeThree = 2}^{\indiceFreeThree = \velocityNumber}  \matrixEntries{\duboisOperatorEntry}{1}{\indiceFreeThree} \equilibriumCoefficient_{\indiceFreeThree}) = \vectorial{V} \cdot \nabla_{\vectorial{\spaceVariable}}$.
Since we shall employ the expression ``at order $\bigO{\spaceStep^{\indiceOrder}}$'' in the following discussion, let us specify what we mean, by taking advantage of the claim from \Cref{thm:EquivEqBulk}.
The terms $\partial_{\timeVariable}$ and $\latticeVelocity (\matrixEntries{\duboisOperatorEntry}{1}{1} + \sum\nolimits_{\indiceFreeThree = 2}^{\indiceFreeThree = \velocityNumber}  \matrixEntries{\duboisOperatorEntry}{1}{\indiceFreeThree} \equilibriumCoefficient_{\indiceFreeThree})$ appear at order $\bigO{\spaceStep}$ when the actual proof of \Cref{thm:EquivEqBulk} is done, thus we call them ``$\bigO{\spaceStep}$ terms''. Then, these terms appear at leading order in \eqref{eq:BulkEquivalentEquation} because all the $\bigO{1}$ terms simplify on both sides of the equation.
The remaining term $\bigO{\spaceStep}$ in \eqref{eq:BulkEquivalentEquation} originally shows at order $\bigO{\spaceStep^2}$ and is made up of numerical diffusion.

\subsection{Linking the discrete initial datum with the one of the Cauchy problem}

We now adapt the same techniques to concentrate on the role of the initial data.
From the initial datum of the Cauchy problem $\solutionCauchyInitial$, we consider its point-wise discretisation with a lattice function $\pointWiseDiscretisationInitialDatum$ such that $ \pointWiseDiscretisationInitialDatum(\vectorial{\spaceVariable}) = \solutionCauchyInitial (\vectorial{\spaceVariable})$ for $\vectorial{\spaceVariable} \in \lattice$.
Coherently with the fact of considering a linear problem and because the equilibria of the non-conserved moments are linear functions of the conserved one through $\vectorial{\equilibriumCoefficient}$, a linear initialisation reads
\begin{equation}\label{eq:LinearInitialisation}
    \boldOther{\discreteMoment}(0, \vectorial{\spaceVariable}) = \boldOther{\initialisationOperator} \pointWiseDiscretisationInitialDatum  (\vectorial{\spaceVariable}), \qquad \vectorial{\spaceVariable} \in \lattice,
\end{equation}
where $\boldOther{\initialisationOperator}$ can be chosen in two different fashions. 
\begin{itemize}
    \item If $\boldOther{\initialisationOperator} \in \reals^{\velocityNumber}$ is considered, we obtain what we call ``\strong{local initialisation}''. However, in order to gain more freedom on the initialisation and achieve desired numerical properties, another choice is possible.
    \item If $\boldOther{\initialisationOperator} \in (\ringSpaceOperators)^{\velocityNumber}$ is considered, we obtain the ``\strong{prepared initialisation}'', where we allow for an initial rearrangement of the information issued from the initial datum of the Cauchy problem between neighboring sites of the lattice.
\end{itemize}
It can be observed that the local initialisation is only a particular case of prepared initialisation using constant polynomials, since $\reals$ is a sub-ring of $\ringSpaceOperators$.
By allowing $\initialisationOperator_1 \in \ringSpaceOperators$, we also permit to perform a preliminary modification of the point-wise discretisation of the initial datum \eqref{eq:CauchyInitialDatum} of the Cauchy problem, which can also be interpreted as an initial filtering of the datum, before assigning it to $\discreteMoment_1$.
For example, when $\spatialDimensionality = 1$, considering $\initialisationOperator_1 = \symmetricPart(\basicShiftLetter_1)$ yields $\discreteMoment_1(0, \spaceVariable) = (\solutionCauchyInitial(\spaceVariable - \spaceStep) + \solutionCauchyInitial(\spaceVariable + \spaceStep) ) / 2$ for every $\spaceVariable \in \spaceStep \relatives$.
Observe that the following developments can be easily adapted to deal with implicit initialisations \cite{van2009smooth} of the form ${\initialisationOperator}_{\indiceMoments}  {\discreteMoment}_{\indiceMoments}(0, \vectorial{\spaceVariable}) = \discrete{b}_{\indiceMoments} \pointWiseDiscretisationInitialDatum  (\vectorial{\spaceVariable})$ with $\discrete{b}_{\indiceMoments} \in \ringSpaceOperators$ for $\indiceMoments \in \integerInterval{1}{\velocityNumber}$.

\subsection{Modified equations for the initialisation schemes: local initialisation}

 Let us now compute the modified equations for the \stSchemes{} when a local initialisation is considered.
  In the general framework, we shall stop at order $\bigO{\spaceStep}$ for two reasons.
  The first one is that we are not aware of any stable \lbm scheme which---under acoustic scaling---would be third-order consistent in the bulk with the target equation \eqref{eq:CauchyEquation} and therefore would call for second-order accurate \iniSchemes{}.
  Second, the expressions for higher order terms are excessively involved to be written down in a convenient form as functions of $\indiceTime \in \integerInterval{1}{\nonTriviallyRelaxingMomentsNumber}$ for general schemes.
  Again, this is due to the role played by the non-physical eigenvalues of $\schemeMatrix$.
  Still, one more order in the expansion shall be needed to analyse the smooth initialisation proposed by \cite{van2009smooth,junk2015l2}, as we shall do in \Cref{sec:Illustrations} for some particularly simple yet instructive examples and for a more general class of schemes in \Cref{sec:Ginzburg}.
  
  \begin{proposition}[Modified equation of the \stSchemes{} with local initialisation]\label{prop:EquivalentEquationInitialisationLocal}
    Under acoustic scaling, that is, when $\latticeVelocity > 0$ is fixed as $\spaceStep \to 0$, considering a local initialisation, \emph{i.e.} $\boldOther{\initialisationOperator} \in \reals^{\velocityNumber}$, the modified equations for the \stSchemes{} are, for any $ \indiceTime \in \nonZeroNaturals$
      \begin{align}
        \testFunction (0, \vectorial{\spaceVariable}) &+ \indiceTime  \frac{\spaceStep}{\latticeVelocity} \partial_{\timeVariable}\testFunction (0, \vectorial{\spaceVariable}) + \bigO{\spaceStep^2} \label{eq:ExpandedInitialisation} \\
        = \initialisationOperator_1 \testFunction (0,\vectorial{\spaceVariable})  &-  \indiceTime \spaceStep \Bigl ( \duboisOperatorEntry_{11}\initialisationOperator_1 + \sum_{\indiceFreeThree = 2}^{\velocityNumber}  \duboisOperatorEntry_{1\indiceFreeThree} \initialisationOperator_{\indiceFreeThree} + \frac{1}{\indiceTime} \sum_{\indiceFreeThree = 2}^{\velocityNumber}  \duboisOperatorEntry_{1\indiceFreeThree} (\equilibriumCoefficient_{\indiceFreeThree} \initialisationOperator_1 - \initialisationOperator_{\indiceFreeThree}) \sum_{\indiceFreeOne = 0}^{\indiceTime - 1} \polynomialEquilibrium_{\indiceTime - \indiceFreeOne}(\relaxationParameter_{\indiceFreeThree}) \Bigr ) \testFunction (0, \vectorial{\spaceVariable}) + \bigO{\spaceStep^2}, \qquad \vectorial{\spaceVariable} \in \reals^{\spatialDimensionality}, \nonumber
    \end{align}
    where $\polynomialEquilibrium_{\indiceFreeOne}(X) = 1 - (1-X)^{\indiceFreeOne}$ for $\indiceFreeOne \in \naturals$.
  \end{proposition}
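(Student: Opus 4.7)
The plan is to compute, for any $\indiceTime \in \nonZeroNaturals$, an asymptotic equivalent of the operator appearing in the starting scheme \eqref{eq:InitialisationSchemes}, \emph{i.e.} $(\schemeMatrix^{\indiceTime} \boldOther{\initialisationOperator})_1 \in \ringSpaceOperators$, up to order $\bigO{\spaceStep}$, and to match it with the Taylor expansion of the left-hand side $\testFunction(\indiceTime\timeStep, \vectorial{\spaceVariable})$ using the acoustic scaling $\timeStep = \spaceStep/\latticeVelocity$, which immediately gives the stated $\testFunction(0,\vectorial{\spaceVariable}) + \indiceTime \frac{\spaceStep}{\latticeVelocity} \partial_{\timeVariable} \testFunction(0,\vectorial{\spaceVariable}) + \bigO{\spaceStep^2}$ on the left.

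First, Taylor expanding the shift $\basicShiftLetter_{\indiceFreeOne}$ on smooth test functions yields $\transportMoment \asymptoticEquivalence \identity - \spaceStep \duboisOperator + \bigO{\spaceStep^2}$ in the sense of \Cref{def:AsymptoticEquivalence}. Since $\collisionMatrix$ is independent of $\spaceStep$, this gives $\schemeMatrix = \transportMoment \collisionMatrix \asymptoticEquivalence \collisionMatrix - \spaceStep \duboisOperator \collisionMatrix + \bigO{\spaceStep^2}$. Iterating the product $\indiceTime$ times and retaining only first-order terms yields
\begin{equation*}
    \schemeMatrix^{\indiceTime} \asymptoticEquivalence \collisionMatrix^{\indiceTime} - \spaceStep \sum_{\indiceFreeOne = 0}^{\indiceTime - 1} \collisionMatrix^{\indiceFreeOne} \duboisOperator \collisionMatrix^{\indiceTime - \indiceFreeOne} + \bigO{\spaceStep^2}.
\end{equation*}
The key structural fact then to exploit is that $\equilibriumCoefficient_1 = 1$ forces the first row of $\collisionMatrix$ to equal $\transpose{\canonicalBasisVector_1}$, while the lower-right $(\velocityNumber-1)\times(\velocityNumber-1)$ block is the diagonal matrix $\diagMatrix(1 - \relaxationParameter_2, \dots, 1 - \relaxationParameter_{\velocityNumber})$ and the first column below row~1 has entries $\relaxationParameter_{\indiceMoments} \equilibriumCoefficient_{\indiceMoments}$. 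A direct induction on $\indiceFreeOne$ together with the geometric sum $\sum_{k=0}^{\indiceFreeOne - 1}(1-\relaxationParameter_{\indiceMoments})^k = \polynomialEquilibrium_{\indiceFreeOne}(\relaxationParameter_{\indiceMoments})/\relaxationParameter_{\indiceMoments}$ then shows that $\collisionMatrix^{\indiceFreeOne}$ has first row $\transpose{\canonicalBasisVector_1}$, lower-right block $\diagMatrix((1-\relaxationParameter_2)^{\indiceFreeOne}, \dots, (1-\relaxationParameter_{\velocityNumber})^{\indiceFreeOne})$, and first column below row 1 with entries $\equilibriumCoefficient_{\indiceMoments} \polynomialEquilibrium_{\indiceFreeOne}(\relaxationParameter_{\indiceMoments})$.

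Armed with this closed form, applying $\collisionMatrix^{\indiceTime - \indiceFreeOne}$ to $\boldOther{\initialisationOperator}$ and extracting the components gives, for $\indiceMoments \geq 2$, the entry $\equilibriumCoefficient_{\indiceMoments} \polynomialEquilibrium_{\indiceTime - \indiceFreeOne}(\relaxationParameter_{\indiceMoments}) \initialisationOperator_1 + (1-\relaxationParameter_{\indiceMoments})^{\indiceTime - \indiceFreeOne} \initialisationOperator_{\indiceMoments}$, and first entry $\initialisationOperator_1$. Because the first row of $\collisionMatrix^{\indiceFreeOne}$ is $\transpose{\canonicalBasisVector_1}$, taking the first entry of $\collisionMatrix^{\indiceFreeOne} \duboisOperator \collisionMatrix^{\indiceTime-\indiceFreeOne} \boldOther{\initialisationOperator}$ amounts to taking the first entry of $\duboisOperator \collisionMatrix^{\indiceTime-\indiceFreeOne} \boldOther{\initialisationOperator}$, that is
\begin{equation*}
    \duboisOperatorEntry_{11} \initialisationOperator_1 + \sum_{\indiceMoments = 2}^{\velocityNumber} \duboisOperatorEntry_{1\indiceMoments} \bigl( \equilibriumCoefficient_{\indiceMoments} \polynomialEquilibrium_{\indiceTime - \indiceFreeOne}(\relaxationParameter_{\indiceMoments}) \initialisationOperator_1 + (1-\relaxationParameter_{\indiceMoments})^{\indiceTime - \indiceFreeOne} \initialisationOperator_{\indiceMoments} \bigr).
\end{equation*}
Summing over $\indiceFreeOne = 0, \dots, \indiceTime - 1$ and using the telescoping identity $\polynomialEquilibrium_{\indiceTime-\indiceFreeOne}(\relaxationParameter_{\indiceMoments}) + (1-\relaxationParameter_{\indiceMoments})^{\indiceTime-\indiceFreeOne} = 1$, which gives $\sum_{\indiceFreeOne=0}^{\indiceTime-1}(1-\relaxationParameter_{\indiceMoments})^{\indiceTime-\indiceFreeOne} = \indiceTime - \sum_{\indiceFreeOne=0}^{\indiceTime-1} \polynomialEquilibrium_{\indiceTime - \indiceFreeOne}(\relaxationParameter_{\indiceMoments})$, one can rearrange the $\initialisationOperator_{\indiceMoments}$ term as $\indiceTime \duboisOperatorEntry_{1\indiceMoments} \initialisationOperator_{\indiceMoments} - \duboisOperatorEntry_{1\indiceMoments} \initialisationOperator_{\indiceMoments} \sum_{\indiceFreeOne} \polynomialEquilibrium_{\indiceTime - \indiceFreeOne}(\relaxationParameter_{\indiceMoments})$, so that factoring $-\indiceTime \spaceStep$ produces exactly the right-hand side of \eqref{eq:ExpandedInitialisation}.

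The bookkeeping of the sandwich sum $\sum \collisionMatrix^{\indiceFreeOne} \duboisOperator \collisionMatrix^{\indiceTime - \indiceFreeOne}$ is the main technical step to get right: $\duboisOperator$ and $\collisionMatrix$ do not commute a priori, so one has to resist collapsing the expansion prematurely, and only after using the special structure of the first row of $\collisionMatrix^{\indiceFreeOne}$ (which is $\spaceStep$-independent and scalar) can the left powers of $\collisionMatrix$ be eliminated. The algebraic identity $\polynomialEquilibrium_{\indiceFreeOne}(X) + (1-X)^{\indiceFreeOne} = 1$ is what makes the final regrouping of the $\equilibriumCoefficient_{\indiceMoments} \initialisationOperator_1$ and the $\initialisationOperator_{\indiceMoments}$ contributions possible into the compact factor $(\equilibriumCoefficient_{\indiceMoments} \initialisationOperator_1 - \initialisationOperator_{\indiceMoments})$ displayed in the statement.
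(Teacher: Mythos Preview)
Your proof is correct and follows essentially the same approach as the paper: both expand $\schemeMatrix^{\indiceTime} \asymptoticEquivalence \collisionMatrix^{\indiceTime} - \spaceStep \sum_{\indiceFreeOne=0}^{\indiceTime-1} \collisionMatrix^{\indiceFreeOne} \duboisOperator \collisionMatrix^{\indiceTime-\indiceFreeOne} + \bigO{\spaceStep^2}$, exploit the explicit closed form of $\collisionMatrix^{\indiceFreeOne}$ (first row $\transpose{\canonicalBasisVector_1}$, diagonal entries $(1-\relaxationParameter_{\indiceMoments})^{\indiceFreeOne}$, sub-first-column entries $\equilibriumCoefficient_{\indiceMoments}\polynomialEquilibrium_{\indiceFreeOne}(\relaxationParameter_{\indiceMoments})$), and then use the identity $\polynomialEquilibrium_{\indiceFreeOne}(X) + (1-X)^{\indiceFreeOne} = 1$ to regroup the sum into the stated form.
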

  \begin{proof}
      We start by describing the particular structure of the powers of collision matrix $\collisionMatrix$.
      It is straightforward to see that we obtain an upper-triangular matrix with
      \begin{equation}\label{eq:StructurePowersCollisionMatrix}
          \collisionMatrix^{\indiceFreeOne} = 
          \begin{bmatrix}
              1 & 0 & 0 & \cdots & \cdots & 0 \\
              \polynomialEquilibrium_{\indiceFreeOne}(\relaxationParameter_2) \equilibriumCoefficient_2 & (1 - \relaxationParameter_2)^{\indiceFreeOne} & 0 & & & \vdots \\
              \polynomialEquilibrium_{\indiceFreeOne}(\relaxationParameter_3) \equilibriumCoefficient_3 & 0 & (1 - \relaxationParameter_3)^{\indiceFreeOne} & \ddots & & \vdots \\
              \polynomialEquilibrium_{\indiceFreeOne}(\relaxationParameter_4) \equilibriumCoefficient_4 & 0  & 0 & \ddots & \ddots & \vdots \\
              \vdots & \vdots & \vdots & \ddots & \ddots & 0 \\
              \polynomialEquilibrium_{\indiceFreeOne}(\relaxationParameter_{\velocityNumber}) \equilibriumCoefficient_{\velocityNumber} & 0 & 0 & \cdots & 0 & (1 - \relaxationParameter_{\velocityNumber})^{\indiceFreeOne}
          \end{bmatrix}, \qquad \indiceFreeOne \in \nonZeroNaturals, 
      \end{equation}
      where the polynomials $\polynomialEquilibrium_{\indiceFreeOne}$ are defined recursively as $\polynomialEquilibrium_0 (X) \definitionEquality 0$ and $\polynomialEquilibrium_{\indiceFreeOne + 1} (X) \definitionEquality X + (1-X) \polynomialEquilibrium_{\indiceFreeOne} (X)$ for $\indiceFreeOne \in \naturals$. Therefore $\polynomialEquilibrium_{\indiceFreeOne}(X) = 1 - (1-X)^{\indiceFreeOne}$ for $\indiceFreeOne \in \naturals$.
      The \stSchemes{} read
  \begin{equation}\label{eq:InitialSchemes}
      \timeShift^{\indiceTime} \discreteMoment_1 (0, \vectorial{\spaceVariable}) = (\schemeMatrix^{\indiceTime} \boldOther{\initialisationOperator} )_1  \pointWiseDiscretisationInitialDatum(\vectorial{\spaceVariable}), \qquad \indiceTime \in \nonZeroNaturals, \quad \vectorial{\spaceVariable} \in \lattice.
  \end{equation}
  Concerning the time shifts on the left hand side of \eqref{eq:InitialSchemes}, we have $\timeShift^{\indiceTime} \asymptoticEquivalence \text{exp}({\indiceTime \tfrac{\spaceStep}{\latticeVelocity} \partial_{\timeVariable}}) = 1 + \indiceTime \tfrac{\spaceStep}{\latticeVelocity} \partial_{\timeVariable} + \bigO{\spaceStep^2}$ for $ \indiceTime \in \naturals$.
  For the right hand side of \eqref{eq:InitialSchemes}, we have that $\schemeMatrix \asymptoticEquivalence \schemeMatrixAsymptotic = \transportMomentAsymptotic \collisionMatrix$ where $\transportMoment \asymptoticEquivalence \transportMomentAsymptotic = \text{exp}({-{\spaceStep} \duboisOperator} )= \identity - \spaceStep \duboisOperator + \bigO{\spaceStep^2}$, see \cite{bellotti2021equivalentequations}, and for $\indiceTime \in \nonZeroNaturals$
  \begin{align}
      \schemeMatrixAsymptotic^{\indiceTime} &=  (\termAtOrder{\schemeMatrixAsymptotic}{0} + \spaceStep \termAtOrder{\schemeMatrixAsymptotic}{1}  + \bigO{\spaceStep^2})^{\indiceTime}\nonumber \\
      &= (\termAtOrder{\schemeMatrixAsymptotic}{0} )^{\indiceTime} + \spaceStep \sum \{ \text{permutations of }\termAtOrder{\schemeMatrixAsymptotic}{0} \text{ (}\indiceTime-1\text{ times) and }\termAtOrder{\schemeMatrixAsymptotic}{1} \text{ (once)}\} + \bigO{\spaceStep^2} \nonumber \\
      &= (\termAtOrder{\schemeMatrixAsymptotic}{0} )^{\indiceTime}  + \spaceStep \sum\nolimits_{\indiceFreeOne = 0}^{\indiceFreeOne = \indiceTime - 1} (\termAtOrder{\schemeMatrixAsymptotic}{0})^{\indiceFreeOne} \termAtOrder{\schemeMatrixAsymptotic}{1} (\termAtOrder{\schemeMatrixAsymptotic}{0})^{\indiceTime - 1 - \indiceFreeOne} + \bigO{\spaceStep^2} = \collisionMatrix^{\indiceTime}  - \spaceStep \sum\nolimits_{\indiceFreeOne = 0}^{\indiceFreeOne = \indiceTime - 1} \collisionMatrix^{\indiceFreeOne} \duboisOperator \collisionMatrix^{\indiceTime - \indiceFreeOne} + \bigO{\spaceStep^2}, \label{eq:schemeMatrixPowerDevelopment}
  \end{align}
  where we use the fact that $\termAtOrder{\schemeMatrixAsymptotic}{\indiceOrder} = \termAtOrder{\transportMomentAsymptotic}{\indiceOrder} \collisionMatrix$ for $\indiceOrder \in \naturals$.
  Plugging into \eqref{eq:InitialSchemes}, employing a smooth function $\testFunction$ instead of $\discreteMoment_{1}$ and $\pointWiseDiscretisationInitialDatum$ and using the fact that the initialisation is local, we have for $\indiceTime \in \nonZeroNaturals$
  \begin{equation*}
      \testFunction (0, \vectorial{\spaceVariable}) + \indiceTime  \frac{\spaceStep}{\latticeVelocity} \partial_{\timeVariable}\testFunction (0, \vectorial{\spaceVariable}) + \bigO{\spaceStep^2} = (\collisionMatrix^{\indiceTime} \boldOther{\initialisationOperator} )_1 \testFunction (0,\vectorial{\spaceVariable})  - \spaceStep \Bigl ( \sum_{\indiceFreeOne = 0}^{\indiceTime - 1} \collisionMatrix^{\indiceFreeOne} \duboisOperator \collisionMatrix^{\indiceTime - \indiceFreeOne}  \boldOther{\initialisationOperator} \Bigr )_1   \testFunction (0, \vectorial{\spaceVariable}) + \bigO{\spaceStep^2}, \qquad  \vectorial{\spaceVariable} \in \reals^{\spatialDimensionality}.
  \end{equation*}
  We have that $(\collisionMatrix^{\indiceTime} \boldOther{\initialisationOperator})_1 \testFunction (0,\vectorial{\spaceVariable}) = \initialisationOperator_1 \testFunction (0,\vectorial{\spaceVariable})$ thanks to \eqref{eq:StructurePowersCollisionMatrix} and for $\indiceColumn \in \integerInterval{1}{\velocityNumber}$
  \begin{align*}
    \matrixEntries{(\collisionMatrix^{\indiceFreeOne} \duboisOperator \collisionMatrix^{\indiceTime - \indiceFreeOne})}{1}{\indiceColumn} = \sum_{\indiceFreeTwo = 1}^{\velocityNumber} \sum_{\indiceFreeThree = 1}^{\velocityNumber} \matrixEntries{(\collisionMatrix^{\indiceFreeOne})}{1}{\indiceFreeTwo} \matrixEntries{\duboisOperatorEntry}{\indiceFreeTwo}{\indiceFreeThree} \matrixEntries{(\collisionMatrix^{\indiceTime - \indiceFreeOne})}{\indiceFreeThree}{\indiceColumn} &=  \sum_{\indiceFreeThree = 1}^{\velocityNumber}  \matrixEntries{\duboisOperatorEntry}{1}{\indiceFreeThree} \matrixEntries{(\collisionMatrix^{\indiceTime - \indiceFreeOne})}{\indiceFreeThree}{\indiceColumn} 
    \\
    &= \matrixEntries{\duboisOperatorEntry}{1}{1} \delta_{1 \indiceColumn} +  \sum_{\indiceFreeThree = 2}^{\velocityNumber}  \matrixEntries{\duboisOperatorEntry}{1}{\indiceFreeThree} (\polynomialEquilibrium_{\indiceTime - \indiceFreeOne}(\relaxationParameter_{\indiceFreeThree}) \equilibriumCoefficient_{\indiceFreeThree} \delta_{1\indiceColumn} + (1-\relaxationParameter_{\indiceFreeThree})^{\indiceTime - \indiceFreeOne} \delta_{\indiceFreeThree \indiceColumn}).
  \end{align*}
  Therefore for $\indiceTime \in \nonZeroNaturals$
  \begin{align*}
    \sum_{\indiceFreeOne = 0}^{\indiceTime - 1} (\collisionMatrix^{\indiceFreeOne} \duboisOperator \collisionMatrix^{\indiceTime - \indiceFreeOne} \boldOther{\initialisationOperator})_1 &= \indiceTime \matrixEntries{\duboisOperatorEntry}{1}{1} \initialisationOperator_1 + \sum_{\indiceFreeThree = 2}^{\velocityNumber}  \matrixEntries{\duboisOperatorEntry}{1}{\indiceFreeThree}  \sum_{\indiceFreeOne = 0}^{\indiceTime - 1} (\polynomialEquilibrium_{\indiceTime - \indiceFreeOne}(\relaxationParameter_{\indiceFreeThree}) \equilibriumCoefficient_{\indiceFreeThree} + (1-\relaxationParameter_{\indiceFreeThree})^{\indiceTime - \indiceFreeOne} \initialisationOperator_{\indiceFreeThree}) \\
    &= \indiceTime \Bigl ( \duboisOperatorEntry_{11}\initialisationOperator_1 + \sum_{\indiceFreeThree = 2}^{\velocityNumber}  \duboisOperatorEntry_{1\indiceFreeThree} \initialisationOperator_{\indiceFreeThree} + \frac{1}{\indiceTime} \sum_{\indiceFreeThree = 2}^{\velocityNumber}  \duboisOperatorEntry_{1\indiceFreeThree} (\equilibriumCoefficient_{\indiceFreeThree} \initialisationOperator_1 - \initialisationOperator_{\indiceFreeThree}) \sum_{\indiceFreeOne = 0}^{\indiceTime - 1} \polynomialEquilibrium_{\indiceTime - \indiceFreeOne}(\relaxationParameter_{\indiceFreeThree}) \Bigr ),
\end{align*}
where we have used that by the definition of $\polynomialEquilibrium_{\indiceFreeOne}$, $(\indiceTime - \sum_{\indiceFreeOne = 0}^{\indiceFreeOne = \indiceTime - 1} \polynomialEquilibrium_{\indiceTime - \indiceFreeOne}(\relaxationParameter_{\indiceFreeThree}))/\sum_{\indiceFreeOne = 0}^{\indiceFreeOne = \indiceTime - 1} (1-\relaxationParameter_{\indiceFreeThree})^{\indiceTime - \indiceFreeOne} = 1$ for every $\indiceTime \in \nonZeroNaturals$, yielding the claim.
  \end{proof}

  \begin{figure} 
    \begin{center}
        \includegraphics[width=0.8\textwidth]{./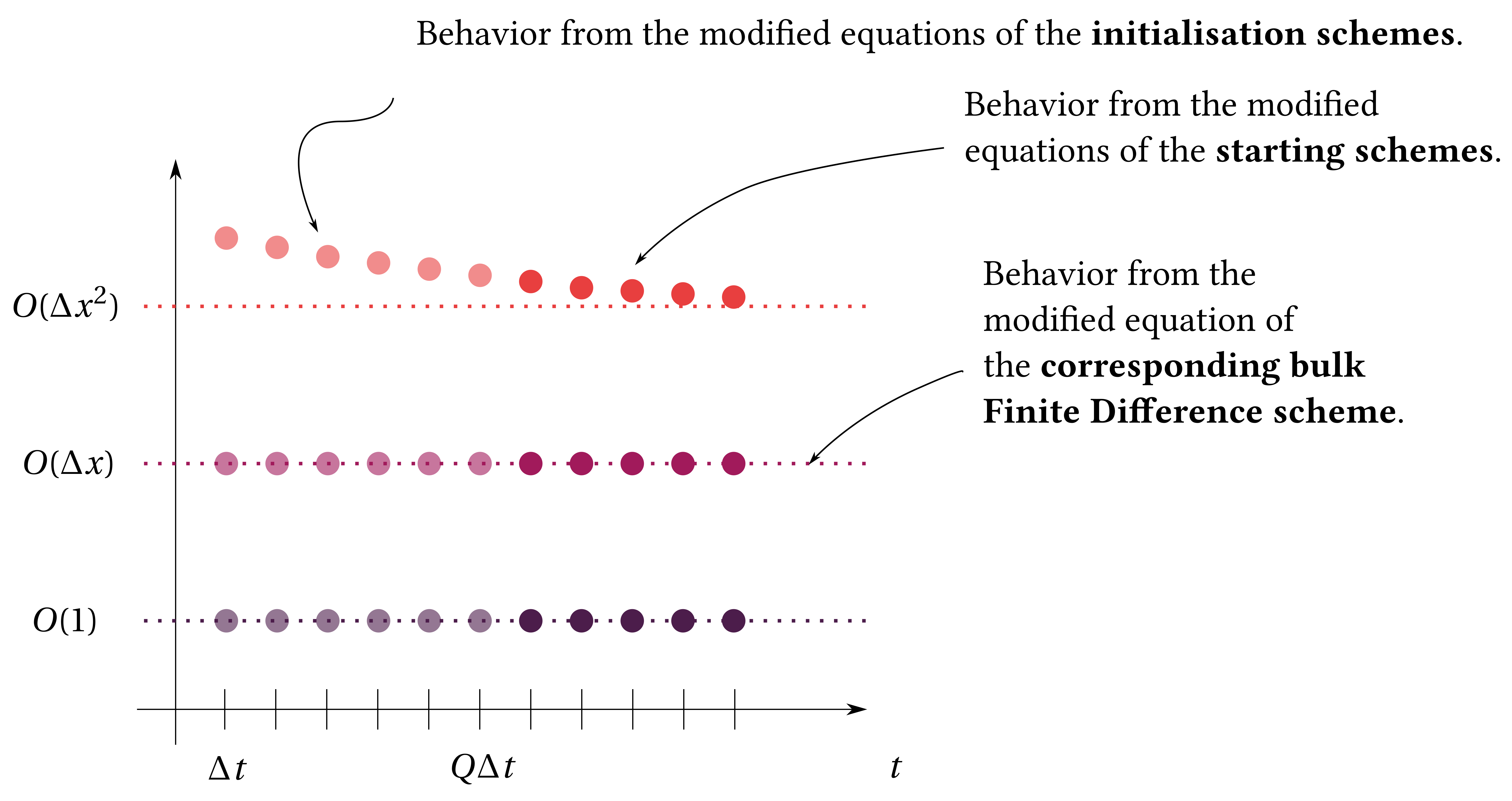}
    \end{center}\caption{\label{fig:matching}Example of behaviour of the inner expansion (dots, concerning the \stSchemes{}) and the outer expansion (dashed lines, relative to the \bulkScheme{}) at different orders in $\spaceStep$ for $\spaceStep \to 0$.}
  \end{figure}

  With \Cref{prop:EquivalentEquationInitialisationLocal}, we can now compare the modified equation for the \bulkScheme{} and the modified equations of the \stSchemes{}, so respectively the dashed lines and the dots in \Cref{fig:matching} at order $\bigO{1}$ and $\bigO{\spaceStep}$.

\subsection{Consistency of the initialisation schemes: local initialisation}

The agreement between the terms at these two orders takes place under the following conditions.
\begin{corollary}[Consistency of the \stSchemes{} with local initialisation]\label{prop:LocalInitialisation}
    Under acoustic scaling, that is, when $\latticeVelocity > 0$ is fixed as $\spaceStep \to 0$, considering a local initialisation, \emph{i.e.} $\vectorial{\initialisationOperator} \in \reals^{\velocityNumber}$, under the conditions
    \begin{align}
        &\initialisationOperator_1 = 1, \label{eq:LocalInitial} \\
        \text{for} \quad \indiceFreeThree \in \integerInterval{2}{\velocityNumber}, \quad \text{if} \quad \matrixEntries{\duboisOperatorEntry}{1}{\indiceFreeThree} \neq 0, \quad \text{then} \quad &\initialisationOperator_{\indiceFreeThree} = \equilibriumCoefficient_{\indiceFreeThree}, \label{eq:LocalBulk}
    \end{align}
    where $\boldOther{\equilibriumCoefficient}$ are the equilibrium coefficients, the \stSchemes{} are consistent with the modified equation \eqref{eq:BulkEquivalentEquation} of the \bulkScheme{} at order $\bigO{\spaceStep}$. Moreover, the initial datum feeding the \bulkScheme{} and the \stSchemes{} is consistent with the initial datum \eqref{eq:CauchyInitialDatum} of the Cauchy problem.
\end{corollary}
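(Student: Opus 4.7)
The plan is to match, order by order in $\spaceStep$, the modified equations \eqref{eq:ExpandedInitialisation} of the starting schemes produced by \Cref{prop:EquivalentEquationInitialisationLocal} against the bulk modified equation \eqref{eq:BulkEquivalentEquation} of \Cref{thm:EquivEqBulk} evaluated at $\timeVariable = 0$, and to extract the constraints on $\boldOther{\initialisationOperator} \in \reals^{\velocityNumber}$ under which the match holds uniformly in $\indiceTime \in \nonZeroNaturals$ up to and including order $\bigO{\spaceStep}$.

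At order $\bigO{1}$, both sides of \eqref{eq:ExpandedInitialisation} reduce to $\testFunction (0, \vectorial{\spaceVariable}) = \initialisationOperator_1 \testFunction (0, \vectorial{\spaceVariable})$. Since $\testFunction$ is arbitrary, this forces $\initialisationOperator_1 = 1$, which is \eqref{eq:LocalInitial}. Transported back to the lattice, this yields $\discreteMoment_1(0, \vectorial{\spaceVariable}) = \initialisationOperator_1 \pointWiseDiscretisationInitialDatum(\vectorial{\spaceVariable}) = \solutionCauchyInitial(\vectorial{\spaceVariable})$ for every $\vectorial{\spaceVariable} \in \lattice$, which is the announced consistency of the discrete initial datum with \eqref{eq:CauchyInitialDatum}.

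For the $\bigO{\spaceStep}$ analysis, I would substitute $\initialisationOperator_1 = 1$ into \eqref{eq:ExpandedInitialisation}, divide through by $\indiceTime \spaceStep / \latticeVelocity$, and on the left replace $\partial_{\timeVariable} \testFunction (0, \vectorial{\spaceVariable})$ by the value assigned to it by \eqref{eq:BulkEquivalentEquation} at $\timeVariable = 0$, namely $-\latticeVelocity ( \duboisOperatorEntry_{11} + \sum_{\indiceFreeThree = 2}^{\velocityNumber} \duboisOperatorEntry_{1\indiceFreeThree} \equilibriumCoefficient_{\indiceFreeThree} ) \testFunction (0, \vectorial{\spaceVariable}) + \bigO{\spaceStep}$. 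After cancelling the $\duboisOperatorEntry_{11}$ contribution present on both sides and rearranging, the residual matching condition collapses to
\begin{equation*}
    \sum_{\indiceFreeThree = 2}^{\velocityNumber} \duboisOperatorEntry_{1 \indiceFreeThree} (\initialisationOperator_{\indiceFreeThree} - \equilibriumCoefficient_{\indiceFreeThree}) \Bigl ( 1 - \frac{1}{\indiceTime} \sum_{\indiceFreeOne = 0}^{\indiceTime - 1} \polynomialEquilibrium_{\indiceTime - \indiceFreeOne}(\relaxationParameter_{\indiceFreeThree}) \Bigr ) \testFunction (0, \vectorial{\spaceVariable}) = 0.
\end{equation*}
The key observation is that each summand carries the common prefactor $\duboisOperatorEntry_{1\indiceFreeThree} (\initialisationOperator_{\indiceFreeThree} - \equilibriumCoefficient_{\indiceFreeThree})$ multiplied by an $\indiceTime$-dependent bracket; imposing \eqref{eq:LocalBulk}, namely $\initialisationOperator_{\indiceFreeThree} = \equilibriumCoefficient_{\indiceFreeThree}$ exactly when $\duboisOperatorEntry_{1\indiceFreeThree} \neq 0$, annihilates this prefactor in every summand simultaneously and independently of the bracket, giving the desired identity for every $\indiceTime \in \nonZeroNaturals$.

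The main subtlety is the uniformity in $\indiceTime$: the bracket $1 - \tfrac{1}{\indiceTime}\sum_{\indiceFreeOne = 0}^{\indiceTime - 1} \polynomialEquilibrium_{\indiceTime - \indiceFreeOne}(\relaxationParameter_{\indiceFreeThree})$ genuinely varies with $\indiceTime$ (and vanishes identically only in the degenerate case $\relaxationParameter_{\indiceFreeThree} = 1$, which does not even contribute to $\nonTriviallyRelaxingMomentsNumber$), so one cannot hope for an $\indiceTime$-independent cancellation coming from the bracket, and the $\indiceTime$-independent prefactor itself must be killed. Eliminating it through \eqref{eq:LocalBulk} is precisely the natural sufficient condition the corollary records; the remaining manipulations are the straightforward algebraic rearrangements already visible in the statement of \Cref{prop:EquivalentEquationInitialisationLocal}, so no further technical obstacle arises.
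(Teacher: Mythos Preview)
Your proposal is correct and follows essentially the same order-by-order matching argument as the paper: extract $\initialisationOperator_1 = 1$ at order $\bigO{1}$, then compare the $\bigO{\spaceStep}$ terms of \eqref{eq:ExpandedInitialisation} with \eqref{eq:BulkEquivalentEquation} evaluated at $\timeVariable = 0$ to force \eqref{eq:LocalBulk}. Your explicit factorisation of the residual as $\sum_{\indiceFreeThree} \duboisOperatorEntry_{1\indiceFreeThree}(\initialisationOperator_{\indiceFreeThree} - \equilibriumCoefficient_{\indiceFreeThree})\bigl(1 - \tfrac{1}{\indiceTime}\sum_{\indiceFreeOne} \polynomialEquilibrium_{\indiceTime-\indiceFreeOne}(\relaxationParameter_{\indiceFreeThree})\bigr)$ and the accompanying remark on the $\indiceTime$-dependence of the bracket make the sufficiency of \eqref{eq:LocalBulk} slightly more transparent than in the paper's terser presentation, but the underlying logic is identical.
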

The first condition \eqref{eq:LocalInitial} implies that the initial datum for $\discreteMoment_1$, used both by the \stSchemes{} and the \bulkScheme{}, is left untouched compared to the one of the Cauchy problem.
The second condition \eqref{eq:LocalBulk} is expected: for the non-conserved moments involved in the modified equation \eqref{eq:BulkEquivalentEquation} at leading order, we need to consider the initial datum at equilibrium.  
It is also to observe that this requirement does not \emph{a priori} fix all the initialisation parameters, contrarily to \Cref{ex:D1Q2}, because some of them can affect only higher orders in the developments, \emph{i.e.} $\matrixEntries{\duboisOperatorEntry}{1}{\indiceFreeThree} = 0$ for some $\indiceFreeThree \in \integerInterval{1}{\velocityNumber}$.

\begin{proof}[Proof of \Cref{prop:LocalInitialisation}]
The proof proceeds order-by-order in $\spaceStep$.
\begin{itemize}
\item $\bigO{1}$.
This order indicates that the initial datum for the conserved moment has to be consistent with the one of the Cauchy problem \eqref{eq:CauchyInitialDatum}.
From \Cref{prop:EquivalentEquationInitialisationLocal}, it reads
\begin{equation}\label{eq:ExpansionLocalOrderZero}
    \testFunction (0, \vectorial{\spaceVariable}) = \initialisationOperator_1  \testFunction (0, \vectorial{\spaceVariable})  + \bigO{\spaceStep}, \qquad \indiceTime \in \nonZeroNaturals, \quad \vectorial{\spaceVariable} \in \reals^{\spatialDimensionality},
\end{equation}
hence we enforce $\initialisationOperator_1 = 1$.
Remark that \eqref{eq:ExpansionLocalOrderZero} is satisfied both for $\indiceTime \in \integerInterval{1}{\nonTriviallyRelaxingMomentsNumber}$ and for $\indiceTime > \nonTriviallyRelaxingMomentsNumber$, that is, both for \iniSchemes{} and \stSchemes{}.
This condition being fulfilled, the next order to check is
\begin{equation}\label{eq:EquivalentEquationFirstOrderInitialTime}
     \partial_{\timeVariable}\testFunction (0, \vectorial{\spaceVariable}) + \latticeVelocity \Bigl ( \duboisOperatorEntry_{11} + \sum_{\indiceFreeThree = 2}^{\velocityNumber}  \duboisOperatorEntry_{1\indiceFreeThree} \initialisationOperator_{\indiceFreeThree} + \frac{1}{\indiceTime} \sum_{\indiceFreeThree = 2}^{\velocityNumber}  \duboisOperatorEntry_{1\indiceFreeThree} (\equilibriumCoefficient_{\indiceFreeThree}  - \initialisationOperator_{\indiceFreeThree}) \sum_{\indiceFreeOne = 0}^{\indiceTime - 1} \polynomialEquilibrium_{\indiceTime - \indiceFreeOne}(\relaxationParameter_{\indiceFreeThree}) \Bigr ) = \bigO{\spaceStep}, \qquad \indiceTime \in \nonZeroNaturals, \quad \vectorial{\spaceVariable} \in \reals^{\spatialDimensionality}. 
\end{equation}

\item $\bigO{\spaceStep}$.
Evaluating the bulk modified equation \eqref{eq:BulkEquivalentEquation} at time $\timeVariable = 0$ gives
\begin{equation}\label{eq:BulkEquivalentEquationDominantOrderInitial}
    \partial_{\timeVariable}\testFunction (0, \vectorial{\spaceVariable}) + \latticeVelocity \Bigl ( \matrixEntries{\duboisOperatorEntry}{1}{1} + \sum_{\indiceFreeThree = 2}^{\velocityNumber}  \matrixEntries{\duboisOperatorEntry}{1}{\indiceFreeThree} \equilibriumCoefficient_{\indiceFreeThree} \Bigr ) \testFunction (0, \vectorial{\spaceVariable}) = \bigO{\spaceStep}, \qquad  \vectorial{\spaceVariable} \in \reals^{\spatialDimensionality},
\end{equation}
and trying to match each term with \eqref{eq:EquivalentEquationFirstOrderInitialTime} yields the condition
\begin{equation*}
    \text{for} \quad \indiceFreeThree \in \integerInterval{2}{\velocityNumber}, \quad \text{if} \quad \matrixEntries{\duboisOperatorEntry}{1}{\indiceFreeThree} \neq 0, \quad \text{then} \quad \initialisationOperator_{\indiceFreeThree} = \equilibriumCoefficient_{\indiceFreeThree}.
\end{equation*}
\end{itemize}
\end{proof}

\subsection{Modified equations for the initialisation schemes: prepared initialisation}

Now that the principles concerning the computation of modified equations for the \stSchemes{} and the way of matching terms with the modified equation of the \bulkScheme{} are clarified, we can tackle the case of prepared initialisations.
\begin{proposition}[Modified equation of the \stSchemes{} with prepared initialisation]\label{prop:ModifiedEquationPrepared}
    Under acoustic scaling, that is, when $\latticeVelocity > 0$ is fixed as $\spaceStep \to 0$, considering a prepared initialisation, \emph{i.e.} $\boldOther{\initialisationOperator} \in (\ringSpaceOperators)^{\velocityNumber}$, which can be put under the form
    \begin{equation}\label{eq:FormInitilisationNonLocal}
        \initialisationOperator_{\indiceMoments} = \sum_{\boldOther{\indiceMultiIndexDiscrete}} \initialisationOperatorCoefficients_{\indiceMoments, \boldOther{\indiceMultiIndexDiscrete}} \boldOther{\basicShiftLetter}^{\boldOther{\indiceMultiIndexDiscrete}}, \qquad \indiceMoments \in \integerInterval{1}{\velocityNumber},
    \end{equation}
    where the sequences of coefficients $(\initialisationOperatorCoefficients_{\indiceMoments, \boldOther{\indiceMultiIndexDiscrete}})_{\boldOther{\indiceMultiIndexDiscrete}} \subset \reals$ are compactly supported, the modified equations for the \stSchemes{} are, for any $\indiceTime \in \nonZeroNaturals$ and $ \vectorial{\spaceVariable} \in \reals^{\spatialDimensionality}$ 
    \begin{align*}
        \testFunction (0, \vectorial{\spaceVariable}) &+ \indiceTime  \frac{\spaceStep}{\latticeVelocity} \partial_{\timeVariable}\testFunction (0, \vectorial{\spaceVariable}) + \bigO{\spaceStep^2} = \termAtOrder{\initialisationOperatorAsymptotic}{0}_1 \testFunction (0,\vectorial{\spaceVariable}) \\
        &-  \indiceTime \spaceStep \Bigl ( \duboisOperatorEntry_{11}  \termAtOrder{\initialisationOperatorAsymptotic}{0}_1 + \sum_{\indiceFreeThree = 2}^{\velocityNumber}  \duboisOperatorEntry_{1\indiceFreeThree}  \termAtOrder{\initialisationOperatorAsymptotic}{0}_{\indiceFreeThree} + \frac{1}{\indiceTime} \sum_{\indiceFreeThree = 2}^{\velocityNumber}  \duboisOperatorEntry_{1\indiceFreeThree} (\equilibriumCoefficient_{\indiceFreeThree}  \termAtOrder{\initialisationOperatorAsymptotic}{0}_1 -  \termAtOrder{\initialisationOperatorAsymptotic}{0}_{\indiceFreeThree}) \sum_{\indiceFreeOne = 0}^{\indiceTime - 1} \polynomialEquilibrium_{\indiceTime - \indiceFreeOne}(\relaxationParameter_{\indiceFreeThree}) - \frac{1}{\indiceTime}  \termAtOrder{\initialisationOperatorAsymptotic}{1}_1 \Bigr ) \testFunction (0, \vectorial{\spaceVariable}) + \bigO{\spaceStep^2},
    \end{align*}
    where 
    \begin{equation*}
        \termAtOrder{\initialisationOperatorAsymptotic}{0}_{\indiceMoments} = \sum_{\boldOther{\indiceMultiIndexDiscrete}} \initialisationOperatorCoefficients_{\indiceMoments, \boldOther{\indiceMultiIndexDiscrete}}, \qquad 
        \termAtOrder{\initialisationOperatorAsymptotic}{1}_{\indiceMoments} = 
        -\sum_{|\boldOther{\indiceMultiIndexDifferential}| = 1} \Bigl ( \sum_{\boldOther{\indiceMultiIndexDiscrete}} \initialisationOperatorCoefficients_{\indiceMoments, \boldOther{\indiceMultiIndexDiscrete}}  \boldOther{\indiceMultiIndexDiscrete}^{\boldOther{\indiceMultiIndexDifferential}} \Bigl ) \partial_{\vectorial{\spaceVariable}}^{\boldOther{\indiceMultiIndexDifferential}}, \qquad \indiceMoments \in \integerInterval{1}{\velocityNumber},
    \end{equation*}
    and such that $\initialisationOperator_{\indiceMoments} \asymptoticEquivalence \termAtOrder{\initialisationOperatorAsymptotic}{0}_{\indiceMoments}  + \spaceStep \termAtOrder{\initialisationOperatorAsymptotic}{1}_{\indiceMoments} + \bigO{\spaceStep}$ and $\polynomialEquilibrium_{\indiceFreeOne}(X) = 1 - (1-X)^{\indiceFreeOne}$ for $\indiceFreeOne \in \naturals$. 
\end{proposition}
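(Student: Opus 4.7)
The plan is to follow the same scheme as the proof of \Cref{prop:EquivalentEquationInitialisationLocal}, treating the prepared case as a first-order correction to the local one coming from the $\spaceStep$-expansion of the shift operators inside $\boldOther{\initialisationOperator}$. The starting point is again the identity \eqref{eq:InitialSchemes}, namely $\timeShift^{\indiceTime}\discreteMoment_1(0, \vectorial{\spaceVariable}) = (\schemeMatrix^{\indiceTime}\boldOther{\initialisationOperator})_1 \pointWiseDiscretisationInitialDatum(\vectorial{\spaceVariable})$, where now $\boldOther{\initialisationOperator} \in (\ringSpaceOperators)^{\velocityNumber}$. The expansion of the left-hand side is unchanged: applied to a smooth test function $\testFunction$, we have $\timeShift^{\indiceTime} \testFunction(0, \vectorial{\spaceVariable}) = \testFunction(0, \vectorial{\spaceVariable}) + \indiceTime\frac{\spaceStep}{\latticeVelocity}\partial_{\timeVariable}\testFunction(0, \vectorial{\spaceVariable}) + \bigO{\spaceStep^2}$, and the expansion \eqref{eq:schemeMatrixPowerDevelopment} of $\schemeMatrixAsymptotic^{\indiceTime}$ as $\collisionMatrix^{\indiceTime} - \spaceStep\sum_{\indiceFreeOne=0}^{\indiceTime-1}\collisionMatrix^{\indiceFreeOne}\duboisOperator\collisionMatrix^{\indiceTime-\indiceFreeOne} + \bigO{\spaceStep^2}$ remains valid verbatim.

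The new ingredient is the asymptotic development of the components \eqref{eq:FormInitilisationNonLocal} of $\boldOther{\initialisationOperator}$. Since $\boldOther{\basicShiftLetter}^{\boldOther{\indiceMultiIndexDiscrete}}\testFunction(\vectorial{\spaceVariable}) = \testFunction(\vectorial{\spaceVariable} - \spaceStep \boldOther{\indiceMultiIndexDiscrete}) = \testFunction(\vectorial{\spaceVariable}) - \spaceStep\sum_{|\boldOther{\indiceMultiIndexDifferential}|=1}\boldOther{\indiceMultiIndexDiscrete}^{\boldOther{\indiceMultiIndexDifferential}}\partial_{\vectorial{\spaceVariable}}^{\boldOther{\indiceMultiIndexDifferential}}\testFunction(\vectorial{\spaceVariable}) + \bigO{\spaceStep^2}$ by Taylor expansion, summing against the compactly supported coefficients $(\initialisationOperatorCoefficients_{\indiceMoments, \boldOther{\indiceMultiIndexDiscrete}})$ gives precisely $\initialisationOperator_{\indiceMoments} \asymptoticEquivalence \termAtOrder{\initialisationOperatorAsymptotic}{0}_{\indiceMoments} + \spaceStep\,\termAtOrder{\initialisationOperatorAsymptotic}{1}_{\indiceMoments} + \bigO{\spaceStep^2}$, with the coefficients displayed in the statement (the minus sign being absorbed into the definition of $\termAtOrder{\initialisationOperatorAsymptotic}{1}_{\indiceMoments}$).

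I then multiply the two formal series and retain only terms up to $\bigO{\spaceStep}$:
\begin{equation*}
\schemeMatrixAsymptotic^{\indiceTime} \boldOther{\initialisationOperatorAsymptotic} = \collisionMatrix^{\indiceTime}\termAtOrder{\initialisationOperatorAsymptotic}{0} + \spaceStep\Bigl(\collisionMatrix^{\indiceTime}\termAtOrder{\initialisationOperatorAsymptotic}{1} - \sum_{\indiceFreeOne=0}^{\indiceTime-1}\collisionMatrix^{\indiceFreeOne}\duboisOperator\collisionMatrix^{\indiceTime-\indiceFreeOne}\termAtOrder{\initialisationOperatorAsymptotic}{0}\Bigr) + \bigO{\spaceStep^2}.
\end{equation*}
All operators involved are Laurent polynomials in the shifts or constant-coefficient differential operators, which live in commutative rings, so the reorderings done in the proof of \Cref{prop:EquivalentEquationInitialisationLocal} carry over without modification. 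Taking the first component and using the upper-triangular shape \eqref{eq:StructurePowersCollisionMatrix} (the first row of $\collisionMatrix^{\indiceFreeOne}$ being $(1,0,\dots,0)$), I obtain $(\collisionMatrix^{\indiceTime}\termAtOrder{\initialisationOperatorAsymptotic}{0})_1 = \termAtOrder{\initialisationOperatorAsymptotic}{0}_1$ and $(\collisionMatrix^{\indiceTime}\termAtOrder{\initialisationOperatorAsymptotic}{1})_1 = \termAtOrder{\initialisationOperatorAsymptotic}{1}_1$, while the sum $\sum_{\indiceFreeOne=0}^{\indiceTime-1}(\collisionMatrix^{\indiceFreeOne}\duboisOperator\collisionMatrix^{\indiceTime-\indiceFreeOne}\termAtOrder{\initialisationOperatorAsymptotic}{0})_1$ is computed exactly as in \Cref{prop:EquivalentEquationInitialisationLocal} with $\boldOther{\initialisationOperator}$ replaced by $\termAtOrder{\initialisationOperatorAsymptotic}{0}$, giving the bracketed expression involving $\duboisOperatorEntry_{1\indiceFreeThree}$, the equilibria and the $\polynomialEquilibrium_{\indiceTime-\indiceFreeOne}$. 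Assembling the three contributions yields the claimed modified equation.

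The only genuine subtlety, and the main point to check carefully, is that the new term $\spaceStep\, \termAtOrder{\initialisationOperatorAsymptotic}{1}_1$ (which is absent in the local case) is correctly placed: it enters the bracket of the statement with a $-1/\indiceTime$ factor so that, after multiplication by the prefactor $-\indiceTime\spaceStep$, it contributes precisely $+\spaceStep\,\termAtOrder{\initialisationOperatorAsymptotic}{1}_1 \testFunction(0,\vectorial{\spaceVariable})$. Sanity check: when $\initialisationOperatorCoefficients_{\indiceMoments, \boldOther{\indiceMultiIndexDiscrete}}$ is supported on $\boldOther{\indiceMultiIndexDiscrete}=\vectorial{0}$, we have $\termAtOrder{\initialisationOperatorAsymptotic}{1}_{\indiceMoments}=0$ and $\termAtOrder{\initialisationOperatorAsymptotic}{0}_{\indiceMoments}=\initialisationOperator_{\indiceMoments}\in\reals$, recovering \Cref{prop:EquivalentEquationInitialisationLocal}.
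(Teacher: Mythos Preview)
Your proof is correct and follows essentially the same approach as the paper's own proof: expand $\boldOther{\initialisationOperator}$ via Taylor, form the Cauchy product with the expansion \eqref{eq:schemeMatrixPowerDevelopment} of $\schemeMatrixAsymptotic^{\indiceTime}$, and observe that the $\bigO{\spaceStep}$ term splits into the local-initialisation contribution (handled by the computation of \Cref{prop:EquivalentEquationInitialisationLocal} with $\termAtOrder{\initialisationOperatorAsymptotic}{0}$ in place of $\boldOther{\initialisationOperator}$) plus the new term $(\collisionMatrix^{\indiceTime}\termAtOrder{\initialisationOperatorAsymptotic}{1})_1=\termAtOrder{\initialisationOperatorAsymptotic}{1}_1$. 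Your version is in fact slightly more explicit than the paper's, which simply invokes the Cauchy product and defers to the earlier computation.
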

\begin{proof}
    The asymptotic equivalent of the initialisation $\boldOther{\initialisationOperator}$ reads
\begin{equation*}
    \initialisationOperator_{\indiceMoments} \asymptoticEquivalence \initialisationOperatorAsymptotic_{\indiceMoments} =  \sum_{\boldOther{\indiceMultiIndexDiscrete}} \initialisationOperatorCoefficients_{\indiceMoments, \boldOther{\indiceMultiIndexDiscrete}} - \spaceStep \sum_{|\boldOther{\indiceMultiIndexDifferential}| = 1} \Bigl ( \sum_{\boldOther{\indiceMultiIndexDiscrete}} \initialisationOperatorCoefficients_{\indiceMoments, \boldOther{\indiceMultiIndexDiscrete}}  \boldOther{\indiceMultiIndexDiscrete}^{\boldOther{\indiceMultiIndexDifferential}} \Bigl ) \partial_{\vectorial{\spaceVariable}}^{\boldOther{\indiceMultiIndexDifferential}} + \bigO{\spaceStep^2}, \qquad \indiceMoments \in \integerInterval{1}{\velocityNumber}.
\end{equation*}
Using the Cauchy product between formal series, we have $\schemeMatrix^{\indiceTime} \boldOther{\initialisationOperator} \asymptoticEquivalence \schemeMatrixAsymptotic^{\indiceTime} \boldOther{\initialisationOperatorAsymptotic} = \termAtOrder{(\schemeMatrixAsymptotic^{\indiceTime})}{0} \termAtOrder{\boldOther{\initialisationOperatorAsymptotic}}{0} + \spaceStep (\termAtOrder{(\schemeMatrixAsymptotic^{\indiceTime})}{1} \termAtOrder{\boldOther{\initialisationOperatorAsymptotic}}{0} + \termAtOrder{(\schemeMatrixAsymptotic^{\indiceTime})}{0} \termAtOrder{\boldOther{\initialisationOperatorAsymptotic}}{1}) + \bigO{\spaceStep^2}$ for $\indiceTime \in \nonZeroNaturals$.
The $\bigO{\spaceStep}$ term in the previous expansion is made up of two contributions.
The first one is $\termAtOrder{(\schemeMatrixAsymptotic^{\indiceTime})}{1} \termAtOrder{\boldOther{\initialisationOperatorAsymptotic}}{0}$ and is not influenced by the ``prepared'' character of the initialisation, because it was also present for the local initialisation.
The second one is inherent to the prepared initialisation.
The result comes from the very same computations as \Cref{prop:EquivalentEquationInitialisationLocal}.
\end{proof}

\subsection{Consistency of the initialisation schemes: prepared initialisation}

\begin{corollary}[Consistency of the \stSchemes{} with prepared initialisation]\label{prop:PreparedInitialisation}
    Under acoustic scaling, that is, when $\latticeVelocity > 0$ is fixed as $\spaceStep \to 0$, considering a prepared initialisation, \emph{i.e.} $\boldOther{\initialisationOperator} \in (\ringSpaceOperators)^{\velocityNumber}$, with \eqref{eq:FormInitilisationNonLocal}, under the conditions
    \begin{align}
        &\sum_{\boldOther{\indiceMultiIndexDiscrete}} \initialisationOperatorCoefficients_{1, \boldOther{\indiceMultiIndexDiscrete}} = 1, \label{eq:PreparedInitial} \\
        \text{for every}\quad |\boldOther{\indiceMultiIndexDifferential}| = 1, \quad &\sum_{\boldOther{\indiceMultiIndexDiscrete}} \initialisationOperatorCoefficients_{1, \boldOther{\indiceMultiIndexDiscrete}}  \boldOther{\indiceMultiIndexDiscrete}^{\boldOther{\indiceMultiIndexDifferential}} = 0, \label{eq:PreparedInitial2} \\
        \text{for} \quad \indiceFreeThree \in \integerInterval{2}{\velocityNumber}, \quad \text{if} \quad \matrixEntries{\duboisOperatorEntry}{1}{\indiceFreeThree} \neq 0, \quad \text{then} \quad &\sum_{\boldOther{\indiceMultiIndexDiscrete}} \initialisationOperatorCoefficients_{\indiceFreeThree, \boldOther{\indiceMultiIndexDiscrete}} = \equilibriumCoefficient_{\indiceFreeThree}, \label{eq:PreparedBulk}
    \end{align}
    the \stSchemes{} are consistent with the modified equation \eqref{eq:BulkEquivalentEquation} of the \bulkScheme{} at order $\bigO{\spaceStep}$. Moreover, the initial datum feeding the \bulkScheme{} and the \stSchemes{} is consistent with the initial datum \eqref{eq:CauchyInitialDatum} of the Cauchy problem up to order $\bigO{\spaceStep^2}$.
\end{corollary}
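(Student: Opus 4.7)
The plan is to mirror the proof of \Cref{prop:LocalInitialisation}, replacing the local expansion with the prepared expansion provided by \Cref{prop:ModifiedEquationPrepared}, and then to match orders in $\spaceStep$ between the modified equations of the \stSchemes{} and the modified equation \eqref{eq:BulkEquivalentEquation} of the \bulkScheme{} evaluated at $\timeVariable = 0$. The only genuine novelty compared to the local case is that the first-order term $\termAtOrder{\initialisationOperatorAsymptotic}{1}_1$ -- absent for a local initialisation -- now enters the expansion with a factor $1/\indiceTime$, so the matching must be performed uniformly in $\indiceTime \in \nonZeroNaturals$.

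At order $\bigO{1}$, \Cref{prop:ModifiedEquationPrepared} yields $\testFunction(0,\vectorial{\spaceVariable}) = \termAtOrder{\initialisationOperatorAsymptotic}{0}_1 \testFunction(0,\vectorial{\spaceVariable}) + \bigO{\spaceStep}$, forcing $\termAtOrder{\initialisationOperatorAsymptotic}{0}_1 = 1$, which rewrites as \eqref{eq:PreparedInitial}. At order $\bigO{\spaceStep}$, after dividing by $\indiceTime \spaceStep/\latticeVelocity$, one gets
\begin{equation*}
    \partial_{\timeVariable}\testFunction (0,\vectorial{\spaceVariable}) + \latticeVelocity \Bigl ( \duboisOperatorEntry_{11} + \sum_{\indiceFreeThree = 2}^{\velocityNumber}  \duboisOperatorEntry_{1\indiceFreeThree}  \termAtOrder{\initialisationOperatorAsymptotic}{0}_{\indiceFreeThree} + \frac{1}{\indiceTime} \sum_{\indiceFreeThree = 2}^{\velocityNumber}  \duboisOperatorEntry_{1\indiceFreeThree} (\equilibriumCoefficient_{\indiceFreeThree} - \termAtOrder{\initialisationOperatorAsymptotic}{0}_{\indiceFreeThree}) \sum_{\indiceFreeOne = 0}^{\indiceTime - 1} \polynomialEquilibrium_{\indiceTime - \indiceFreeOne}(\relaxationParameter_{\indiceFreeThree}) - \frac{1}{\indiceTime}  \termAtOrder{\initialisationOperatorAsymptotic}{1}_1 \Bigr ) \testFunction (0,\vectorial{\spaceVariable}) = \bigO{\spaceStep},
\end{equation*}
which must coincide with \eqref{eq:BulkEquivalentEquationDominantOrderInitial} for every $\indiceTime \in \nonZeroNaturals$ and every smooth test function. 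The term $\termAtOrder{\initialisationOperatorAsymptotic}{1}_1/\indiceTime$ has no counterpart in the bulk expansion and exhibits a genuine $\indiceTime$-dependence, hence one is forced to impose $\termAtOrder{\initialisationOperatorAsymptotic}{1}_1 = 0$; recalling the expression of $\termAtOrder{\initialisationOperatorAsymptotic}{1}_1$ in \Cref{prop:ModifiedEquationPrepared}, this amounts to cancelling the coefficient of each first-order differential operator independently, which is precisely condition \eqref{eq:PreparedInitial2}. Similarly, for indices $\indiceFreeThree$ with $\matrixEntries{\duboisOperatorEntry}{1}{\indiceFreeThree} \neq 0$, the $\indiceTime$-dependent coefficient $\indiceTime^{-1}\sum_{\indiceFreeOne = 0}^{\indiceTime - 1} \polynomialEquilibrium_{\indiceTime - \indiceFreeOne}(\relaxationParameter_{\indiceFreeThree})$ forces $\equilibriumCoefficient_{\indiceFreeThree} - \termAtOrder{\initialisationOperatorAsymptotic}{0}_{\indiceFreeThree} = 0$, giving \eqref{eq:PreparedBulk}; once this holds, the remaining contribution $\sum_{\indiceFreeThree \geq 2} \duboisOperatorEntry_{1\indiceFreeThree}\termAtOrder{\initialisationOperatorAsymptotic}{0}_{\indiceFreeThree}$ coincides with the bulk term $\sum_{\indiceFreeThree \geq 2} \duboisOperatorEntry_{1\indiceFreeThree}\equilibriumCoefficient_{\indiceFreeThree}$.

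The improved consistency of the initial datum up to order $\bigO{\spaceStep^2}$ follows as a byproduct: since $\initialisationOperator_1 \asymptoticEquivalence \termAtOrder{\initialisationOperatorAsymptotic}{0}_1 + \spaceStep \termAtOrder{\initialisationOperatorAsymptotic}{1}_1 + \bigO{\spaceStep^2}$, conditions \eqref{eq:PreparedInitial}--\eqref{eq:PreparedInitial2} yield $\initialisationOperator_1 \pointWiseDiscretisationInitialDatum(\vectorial{\spaceVariable}) = \solutionCauchyInitial(\vectorial{\spaceVariable}) + \bigO{\spaceStep^2}$ for smooth initial data, in contrast with the $\bigO{\spaceStep}$ accuracy of the local case. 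The main subtlety, as anticipated, lies in the uniform-in-$\indiceTime$ matching: one must recognise that the two $\indiceTime$-dependent contributions arising from $\termAtOrder{\initialisationOperatorAsymptotic}{1}_1$ and from the $\polynomialEquilibrium_{\indiceTime - \indiceFreeOne}$ sums are linearly independent functions of $\indiceTime$ (for generic data), so they must be killed separately; this is exactly what yields the three distinct conditions \eqref{eq:PreparedInitial}, \eqref{eq:PreparedInitial2} and \eqref{eq:PreparedBulk} rather than a single compound constraint.
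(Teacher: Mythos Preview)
Your proof is correct and follows essentially the same approach as the paper: order-by-order matching of the expansion from \Cref{prop:ModifiedEquationPrepared} against \eqref{eq:BulkEquivalentEquationDominantOrderInitial}, first yielding \eqref{eq:PreparedInitial} at order $\bigO{1}$, then forcing $\termAtOrder{\initialisationOperatorAsymptotic}{1}_1 = 0$ and $\termAtOrder{\initialisationOperatorAsymptotic}{0}_{\indiceFreeThree} = \equilibriumCoefficient_{\indiceFreeThree}$ at order $\bigO{\spaceStep}$ by requiring the match to hold for every $\indiceTime \in \nonZeroNaturals$. The paper frames the need for $\termAtOrder{\initialisationOperatorAsymptotic}{1}_1 = 0$ slightly differently---emphasising that the multi-step \bulkScheme{} is also fed with $\discreteMoment_1(0,\cdot) = \initialisationOperator_1 \pointWiseDiscretisationInitialDatum$, so a first-order drift on this datum would pollute the method regardless of the \stSchemes{}---but the resulting conditions and the overall structure are identical.
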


Condition \eqref{eq:PreparedInitial} is the analogue of \eqref{eq:LocalInitial}. However, since the initialisation of the conserved moment can also be prepared, an additional condition \eqref{eq:PreparedInitial2} has to be taken into account.
This guarantees, in particular, that the initial datum of the Cauchy problem used for $\discreteMoment_1$ is not perturbed by some drift term at order $\bigO{\spaceStep}$.
This is useful because of the multi-step nature of the \bulkScheme{} \eqref{eq:BulkSchemes}, which shall also be fed with \eqref{eq:LinearInitialisation}.
Finally, \eqref{eq:PreparedBulk} has to be compared with \eqref{eq:LocalBulk}.
This condition maintains that the non-conserved moments participating to the consistency at leading order have to be chosen---at leading order---at equilibrium.

\begin{proof}[Proof of \Cref{prop:PreparedInitialisation}]
Proceeding order-by-order in $\spaceStep$, we obtain:
\begin{itemize}
\item $\bigO{1}$.
The dominant order in the analogous of \eqref{eq:ExpandedInitialisation}.
Hence the consistency with the datum of the Cauchy problem reads $\termAtOrder{\initialisationOperatorAsymptotic_1}{0} = \sum\nolimits_{\boldOther{\indiceMultiIndexDiscrete}} \initialisationOperatorCoefficients_{1, \boldOther{\indiceMultiIndexDiscrete}} = 1$.
\item $\bigO{\spaceStep}$.
We see that now, there is the additional term associated with $\termAtOrder{\initialisationOperatorAsymptotic_1}{1}$ corresponding to a drift term in the initialisation of the conserved moment.
In general, we now have wider possibilities in terms of how to initialise, still remaining consistent with the modified equation of the \bulkScheme{} at the desired order, at least for the \iniSchemes{} (\emph{i.e.} $\indiceTime \in \integerInterval{1}{\nonTriviallyRelaxingMomentsNumber}$).
Indeed, it is sufficient to enforce that 
\begin{equation*}
    \duboisOperatorEntry_{11}   + \sum_{\indiceFreeThree = 2}^{\velocityNumber}  \duboisOperatorEntry_{1\indiceFreeThree}  \termAtOrder{\initialisationOperatorAsymptotic}{0}_{\indiceFreeThree} + \frac{1}{\indiceTime} \sum_{\indiceFreeThree = 2}^{\velocityNumber}  \duboisOperatorEntry_{1\indiceFreeThree} (\equilibriumCoefficient_{\indiceFreeThree}  -  \termAtOrder{\initialisationOperatorAsymptotic}{0}_{\indiceFreeThree}) \sum_{\indiceFreeOne = 0}^{\indiceTime - 1} \polynomialEquilibrium_{\indiceTime - \indiceFreeOne}(\relaxationParameter_{\indiceFreeThree}) - \frac{1}{\indiceTime}  \termAtOrder{\initialisationOperatorAsymptotic}{1}_1   = \matrixEntries{\duboisOperatorEntry}{1}{1} + \sum_{\indiceFreeThree = 2}^{\velocityNumber}  \matrixEntries{\duboisOperatorEntry}{1}{\indiceFreeThree} \equilibriumCoefficient_{\indiceFreeThree}.
\end{equation*}
Occasionally, for some $\indiceTime \in \integerInterval{1}{\nonTriviallyRelaxingMomentsNumber}$, the previous equality can be satisfied even if $\termAtOrder{\initialisationOperatorAsymptotic_1}{1} \neq 0$, see examples in \Cref{sec:Illustrations}.
However, we are interested in enforcing it for every $\indiceTime \in \nonZeroNaturals$---that is---for all \stSchemes{}. This comes, as previously claimed, from the multi-step nature of the \bulkScheme{}: we have to ensure that the order of consistency with the initial datum \eqref{eq:CauchyInitialDatum} of the Cauchy problem is high enough not to lower the overall order of the method.
Hence, suppressing the drift term for the conserved moment, thus enforcing $\termAtOrder{\initialisationOperatorAsymptotic_1}{1} = 0$, we have  
\begin{align*}
    &\text{for every}\quad |\boldOther{\indiceMultiIndexDifferential}| = 1, \quad \sum_{\boldOther{\indiceMultiIndexDiscrete}} \initialisationOperatorCoefficients_{1, \boldOther{\indiceMultiIndexDiscrete}}  \boldOther{\indiceMultiIndexDiscrete}^{\boldOther{\indiceMultiIndexDifferential}} = 0, \quad \text{and} \quad \text{for} \quad \indiceFreeThree \in \integerInterval{2}{\velocityNumber}, \quad \text{if} \quad \matrixEntries{\duboisOperatorEntry}{1}{\indiceFreeThree} \neq 0, \quad \text{then} \quad \sum_{\boldOther{\indiceMultiIndexDiscrete}} \initialisationOperatorCoefficients_{\indiceFreeThree, \boldOther{\indiceMultiIndexDiscrete}} = \equilibriumCoefficient_{\indiceFreeThree}.
\end{align*}
\end{itemize}
\end{proof}

\subsection{Initialisation schemes \emph{versus} starting schemes}

Before proceeding to some numerical illustrations, we point out important facts concerning the match of terms between the \bulkScheme{} and the \iniSchemes{}/\stSchemes{}.

\begin{proposition}[Control on the \iniSchemes{} leads control on the \stSchemes{}]\label{prop:MatchEventually}
    Let $\maxOrder \in \nonZeroNaturals$. Assume that
    \begin{itemize}
        \item $\termAtOrder{\initialisationOperatorAsymptotic_1}{0} = 1$ and $\termAtOrder{\initialisationOperatorAsymptotic_1}{\indiceOrder} = 0$ for $\indiceOrder \in \integerInterval{1}{\maxOrder}$.
        \item The modified equations of the \iniSchemes{} (\eqref{eq:InitialisationSchemes} for $\indiceTime \in \integerInterval{1}{\nonTriviallyRelaxingMomentsNumber}$) match the one of the \bulkScheme{} \eqref{eq:BulkSchemes} at any order $\indiceOrder \in \integerInterval{1}{\maxOrder}$.
    \end{itemize}
    Then, the modified equations of the \stSchemes{} (\eqref{eq:InitialisationSchemes} for $\indiceTime > \nonTriviallyRelaxingMomentsNumber$) match the one of the \bulkScheme{} \eqref{eq:BulkSchemes} at any order $\indiceOrder \in \integerInterval{1}{\maxOrder}$.
\end{proposition}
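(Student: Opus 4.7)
The plan is to use an induction on $\indiceTime$ that exploits the recurrence defining the \bulkScheme{}. By \eqref{eq:InitialisationSchemes}, the \stSchemes{} are $\discreteMoment_1(\indiceTime \timeStep, \vectorial{\spaceVariable}) = (\schemeMatrix^{\indiceTime} \boldOther{\initialisationOperator})_1 \pointWiseDiscretisationInitialDatum(\vectorial{\spaceVariable})$. The Cayley-Hamilton argument recalled in \Cref{sec:CorrespondingFD} shows that for every $\indiceTime \geq \nonTriviallyRelaxingMomentsNumber$ the coefficient $(\schemeMatrix^{\indiceTime + 1} \boldOther{\initialisationOperator})_1$ satisfies exactly the same multi-step recurrence as \eqref{eq:BulkSchemes}. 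Hence the \stSchemes{} for $\indiceTime > \nonTriviallyRelaxingMomentsNumber$ are the iterates of the \bulkScheme{} applied to the previous $\nonTriviallyRelaxingMomentsNumber + 2$ starting values. This is the algebraic backbone that makes the induction possible.

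The base case gathers $\nonTriviallyRelaxingMomentsNumber + 1$ consecutive matched time levels. For $\indiceTime = 0$, the value is $\discreteMoment_1(0, \vectorial{\spaceVariable}) = \initialisationOperator_1 \pointWiseDiscretisationInitialDatum(\vectorial{\spaceVariable})$; using \Cref{def:AsymptoticEquivalence}, the asymptotic expansion reads $\initialisationOperator_1 \testFunction(0, \vectorial{\spaceVariable}) = \sum_{\indiceOrder \geq 0} \spaceStep^{\indiceOrder} \termAtOrder{\initialisationOperatorAsymptotic_1}{\indiceOrder} \testFunction(0, \vectorial{\spaceVariable})$, which the first hypothesis collapses to $\testFunction(0, \vectorial{\spaceVariable}) + \bigO{\spaceStep^{\maxOrder + 1}}$. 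This trivially matches the bulk modified equation at $\indiceTime = 0$ up to order $\maxOrder$. The second hypothesis directly supplies the matching at orders $\indiceOrder \in \integerInterval{1}{\maxOrder}$ for $\indiceTime \in \integerInterval{1}{\nonTriviallyRelaxingMomentsNumber}$.

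For the inductive step, fix $\indiceTime > \nonTriviallyRelaxingMomentsNumber$ and assume the matching at orders $\integerInterval{1}{\maxOrder}$ has been established for all times in $\integerInterval{\indiceTime - \nonTriviallyRelaxingMomentsNumber}{\indiceTime} \timeStep$. Deploying the bulk recurrence \eqref{eq:BulkSchemes} on a smooth test function $\testFunction$ and expanding the coefficients $\coefficientFDScheme_{\indiceFreeOne} \in \ringSpaceOperators$ in $\spaceStep$ via \Cref{def:AsymptoticEquivalence}, the resulting expansion at time $(\indiceTime + 1) \timeStep$ reproduces, order by order, the modified equation of \Cref{thm:EquivEqBulk} evaluated at that time, because each lower-time contribution on the right-hand side already matches by induction and the bulk scheme is, by construction, consistent with its own modified equation at every order. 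This transports the matching to time $(\indiceTime + 1)\timeStep$ and closes the induction.

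The main obstacle is the order-by-order bookkeeping in the inductive step: one needs to verify that, at each order $\indiceOrder \in \integerInterval{1}{\maxOrder}$, the Cauchy product between the expansion of the discrete operators $\coefficientFDScheme_{\indiceFreeOne}$ and the already-matched expansions of $\discreteMoment_1$ at the previous $\nonTriviallyRelaxingMomentsNumber + 2$ time levels truly reproduces the corresponding term from \eqref{eq:BulkEquivalentEquation}, with no leftover contributions from the unmatched higher-order terms of the initialisation. Linearity of the scheme and the formal-series structure of \Cref{def:AsymptoticEquivalence} make this a direct consequence of \Cref{thm:EquivEqBulk} applied afresh at each time step; the residual terms are confined to $\bigO{\spaceStep^{\maxOrder + 1}}$, precisely as required.
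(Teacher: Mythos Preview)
Your overall strategy---induction on $\indiceTime$ via the Cayley--Hamilton recurrence that underlies \eqref{eq:BulkSchemes}---is exactly the skeleton of the paper's proof. The base case and the use of the first hypothesis on $\initialisationOperator_1$ are also handled the same way. Where your argument and the paper's diverge is in how the inductive step is made precise.

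The gap in your inductive step is the phrase ``each lower-time contribution on the right-hand side already matches by induction''. The inductive hypothesis asserts that the \emph{modified equation} (a PDE obtained after reinjecting lower orders to eliminate higher time derivatives) of the $k$-th \stScheme{} coincides with that of the bulk scheme; it is not a pointwise statement about $(\schemeMatrix^{k}\boldOther{\initialisationOperator})_1\testFunction(0,\cdot)$ that can be substituted directly into the recurrence. To carry the induction you need a translation of ``matching at orders $\integerInterval{1}{\maxOrder}$'' into an algebraic identity that survives linear combinations with the operator coefficients $\coefficientFDScheme_{\indiceFreeOne}$. Your write-up does not provide this, and appealing to ``the bulk scheme is consistent with its own modified equation'' does not by itself close the loop, because the reinjection step mixes orders in a way you have not tracked.

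The paper fills this gap by passing to Fourier in space and using the logarithm formula $\partial_{\timeVariable} = \tfrac{\latticeVelocity}{\spaceStep}\tfrac{1}{\indiceTime}\log\fourierTransformed{\discrete{\eigenvalueLetter}}_{[\indiceTime]}(\frequency\spaceStep)$, where $\fourierTransformed{\discrete{\eigenvalueLetter}}_{[\indiceTime]} \definitionEquality (\fourierTransformed{\schemeMatrix}^{\indiceTime}\fourierTransformed{\boldOther{\initialisationOperator}})_1$. This converts ``modified equations match up to order $\maxOrder$'' into the purely algebraic statement $\fourierTransformed{\discrete{\eigenvalueLetter}}_{[\indiceTime]}(\frequency\spaceStep) = \fourierTransformed{\discrete{\eigenvalueLetter}}_1(\frequency\spaceStep)^{\indiceTime} + \bigO{|\frequency\spaceStep|^{\maxOrder+1}}$, with $\fourierTransformed{\discrete{\eigenvalueLetter}}_1$ the consistency root of the amplification polynomial. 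In this form the recurrence acts linearly, and since $\fourierTransformed{\discrete{\eigenvalueLetter}}_1$ is a root of $\fourierTransformed{\amplificationPolynomial}$, one gets $\fourierTransformed{\discrete{\eigenvalueLetter}}_{[\nonTriviallyRelaxingMomentsNumber+1]} = \fourierTransformed{\discrete{\eigenvalueLetter}}_1^{\nonTriviallyRelaxingMomentsNumber+1} + \bigO{|\frequency\spaceStep|^{\maxOrder+1}}$ directly. This is the missing ingredient in your proof: once you have it, your induction goes through verbatim.
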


\Cref{prop:MatchEventually} does not provide indications on how to equate the order $\indiceOrder \in \integerInterval{1}{\maxOrder}$---\emph{i.e.} how to fulfill its assumptions---contrarily to what \Cref{prop:LocalInitialisation} and \Cref{prop:PreparedInitialisation} do for $\maxOrder = 1$.
Again, this is due to the fact that the general expression of the asymptotic expansion of $(\schemeMatrix^{\indiceTime} \boldOther{\initialisationOperator})_1$ can quickly become messy as the considered order increases.
Still, \Cref{prop:MatchEventually} claims that if one is able to match the modified equation of the \iniSchemes{} with the one of the \bulkScheme{} until a given order $\maxOrder$ (as we shall do for specific schemes in \Cref{sec:Illustrations} with $\maxOrder = 2$), then this guarantees the same property on the \stSchemes{}.
Otherwise said---referring to \Cref{fig:matching}---if one is able to ensure that the terms represented by the dots lie on the corresponding dashed line for $\indiceTime \in \integerInterval{1}{\nonTriviallyRelaxingMomentsNumber}$, then one will be sure that these dots will lie on the very same line for any $\indiceTime \in \nonZeroNaturals$.
This result seems intuitively reasonable by virtue of the Cayley-Hamilton theorem, which allows to recast any power $\schemeMatrix^{\indiceTime}$ for $\indiceTime \geq \nonTriviallyRelaxingMomentsNumber+1$ as a combination of $\identity, \schemeMatrix, \dots, \schemeMatrix^{\nonTriviallyRelaxingMomentsNumber}$.

\begin{proof}[Proof of \Cref{prop:MatchEventually}]

Let us consider $\spatialDimensionality = 1$ for the sake of notation: for $\spatialDimensionality > 1$, the multi-index notation would suffice.
Consider a one-step linear \fd scheme on the lattice function $\discrete{\solutionCauchy}$, under the form $\timeShift \discrete{\solutionCauchy}(\timeVariable, {\spaceVariable}) = \discrete{\eigenvalueLetter}_1 \discrete{\solutionCauchy}(\timeVariable, {\spaceVariable})$ for $(\timeVariable, {\spaceVariable}) \in \timeLattice \times \spaceStep \relatives$, where $\discrete{\eigenvalueLetter}_1 \in \reals [\basicShiftLetter_1, \basicShiftLetter_1^{-1}]$.
This can be rewritten using the Fourier transform in space, that is 
\begin{equation}\label{eq:tmp1}
    \timeShift \discrete{\fourierTransformed{\solutionCauchy}}(\timeVariable, {\frequency}\spaceStep) = \fourierTransformed{\discrete{\eigenvalueLetter}}_1({\frequency} \spaceStep) \fourierTransformed{\discrete{\solutionCauchy}}(\timeVariable, {\frequency}\spaceStep), \qquad (\timeVariable, {\frequency}) \in \timeLattice \times [-\pi/\spaceStep, \pi/\spaceStep]. 
\end{equation}
The frequency-dependent eigenvalue $\fourierTransformed{\discrete{\eigenvalueLetter}}_1({\frequency} \spaceStep) \in \complex$ shall be a Laurent polynomial in the indeterminate $e^{i \frequency \spaceStep}$ and encodes both the stability features of the method, for every ${\frequency} \in [-\pi/\spaceStep, \pi/\spaceStep]$ and the consistency features, in the low-frequency limit $|{\frequency} \spaceStep | \ll 1$.
In particular, to be consistent with an equation of the form \eqref{eq:CauchyEquation} with a first-order derivative in time, one can easily see that
\begin{equation}\label{eq:physicalEigenvalueCharact}
    \fourierTransformed{\discrete{\eigenvalueLetter}}_1({\frequency} \spaceStep) = 1 + \bigO{|\frequency \spaceStep|} \qquad \text{in the limit} \quad |{\frequency} \spaceStep | \ll 1.
\end{equation}
Applying the scheme \eqref{eq:tmp1} $\indiceTime \in \nonZeroNaturals$ times provides a sort of multi-step scheme which we shall compare to the \stSchemes{} \eqref{eq:InitialisationSchemes}
\begin{equation}\label{eq:tmp2}
    \timeShift^{\indiceTime} \discrete{\fourierTransformed{\solutionCauchy}}(\timeVariable, {\frequency} \spaceStep) = \fourierTransformed{\discrete{\eigenvalueLetter}}_1({\frequency} \spaceStep)^{\indiceTime} \fourierTransformed{\discrete{\solutionCauchy}}(\timeVariable, {\frequency} \spaceStep), \qquad (\timeVariable, {\frequency} ) \in \timeLattice \times [-\pi/\spaceStep, \pi/\spaceStep],
\end{equation}
with associated amplification polynomial
\begin{equation}\label{eq:tmp3}
    \fourierTransformed{\tilde{\amplificationPolynomial}}(\timeShift, {\frequency}\spaceStep) = \timeShift^{\indiceTime} - \fourierTransformed{\discrete{\eigenvalueLetter}}_1({\frequency} \spaceStep)^{\indiceTime} = (\timeShift - \fourierTransformed{\discrete{\eigenvalueLetter}}_1({\frequency} \spaceStep)) \sum_{\indiceFreeOne = 0}^{\indiceTime - 1} \fourierTransformed{\discrete{\eigenvalueLetter}}_1({\frequency} \spaceStep)^{\indiceFreeOne} \timeShift^{\indiceTime - 1 - \indiceFreeOne} = (\timeShift - \fourierTransformed{\discrete{\eigenvalueLetter}}_1({\frequency} \spaceStep)) \prod_{\indiceFreeOne = 2}^{\indiceTime} (\timeShift - \fourierTransformed{\tilde{\discrete{\eigenvalueLetter}}}_{\indiceFreeOne}(\frequency \spaceStep)), 
\end{equation}
having roots $\fourierTransformed{\tilde{\discrete{\eigenvalueLetter}}}_1 = \fourierTransformed{\discrete{\eigenvalueLetter}}_1$ and $\fourierTransformed{\tilde{\discrete{\eigenvalueLetter}}}_2, \dots, \fourierTransformed{\tilde{\discrete{\eigenvalueLetter}}}_{\indiceTime}$ (recall that $\complex$ is an algebraically closed field).
By differentiating the amplification polynomial \eqref{eq:tmp3} using the rule for the derivative of a product, we get 
\begin{equation*}
    \frac{\text{d}\fourierTransformed{\tilde{\amplificationPolynomial}}(\timeShift, {\frequency}\spaceStep)}{\text{d}\timeShift} = \indiceTime \timeShift^{\indiceTime - 1} = \prod_{\indiceFreeOne = 2}^{\indiceTime} (\timeShift - \fourierTransformed{\tilde{\discrete{\eigenvalueLetter}}}_{\indiceFreeOne}(\frequency \spaceStep)) +  (\timeShift - \fourierTransformed{\discrete{\eigenvalueLetter}}_1({\frequency} \spaceStep)) + \sum_{\indiceFreeTwo = 2}^{\indiceTime}\prod_{\substack{\indiceFreeOne = 2\\ \indiceFreeOne \neq \indiceFreeTwo}}^{\indiceTime} (\timeShift - \fourierTransformed{\tilde{\discrete{\eigenvalueLetter}}}_{\indiceFreeOne}(\frequency \spaceStep)).
\end{equation*}
Taking $\timeShift = 1$ in the limit $|{\frequency} \spaceStep | \ll 1$ gives $0 \neq \indiceTime = \prod\nolimits_{\indiceFreeOne = 2}^{\indiceFreeOne = \indiceTime} (1 - \termAtOrder{\fourierTransformed{\tilde{{\eigenvalueLetter}}}_{\indiceFreeOne}}{0})$, where $\fourierTransformed{\tilde{\discrete{\eigenvalueLetter}}}_{\indiceFreeOne}(\frequency\spaceStep) = \termAtOrder{\fourierTransformed{\tilde{{\eigenvalueLetter}}}_{\indiceFreeOne}}{0} + \bigO{|\frequency \spaceStep|}$, thanks to \eqref{eq:physicalEigenvalueCharact}, thus all the other eigenvalues $\fourierTransformed{\tilde{\discrete{\eigenvalueLetter}}}_2, \dots, \fourierTransformed{\tilde{\discrete{\eigenvalueLetter}}}_{\indiceTime}$ are not equal to one for small frequencies and thus are not linked with consistency. The only which matters is $\fourierTransformed{\tilde{\discrete{\eigenvalueLetter}}}_1 = \fourierTransformed{\discrete{\eigenvalueLetter}}_1$, thus the scheme \eqref{eq:tmp2} with amplification polynomial \eqref{eq:tmp3} has the same modified equations as \eqref{eq:tmp1}.
An alternative way of seeing this is to use the approach from the proof of \cite[Proposition 1]{carpentier1997derivation}, which aims at automatically handling the ``reinjection'' of the previous orders in the expansions to eliminate time derivatives above first order.
Inserting the asymptotic equivalent $\text{exp}({\indiceTime \tfrac{\spaceStep}{\latticeVelocity} \partial_{\timeVariable}}) \asymptoticEquivalence \timeShift^{\indiceTime}$ into \eqref{eq:tmp2} using a smooth ``test'' function $\fourierTransformed{\testFunction}$ gives $\text{exp}({\indiceTime \tfrac{\spaceStep}{\latticeVelocity} \partial_{\timeVariable}}) \fourierTransformed{\testFunction}(\timeVariable, {\frequency}) = \fourierTransformed{\discrete{\eigenvalueLetter}}_1({\frequency} \spaceStep)^{\indiceTime} \fourierTransformed{\testFunction}(\timeVariable, {\frequency})$ for $(\timeVariable, {\frequency}) \in \nonNegativeReals \times \reals$, which means that if we do not want $\fourierTransformed{\testFunction}$ to trivially vanish, we must enforce the formal identity $\text{exp}({\indiceTime \tfrac{\spaceStep}{\latticeVelocity} \partial_{\timeVariable}}) = \fourierTransformed{\discrete{\eigenvalueLetter}}_1({\frequency} \spaceStep)^{\indiceTime}$.
Since the exponential is bijective close to zero (here we are considering the limit $|\frequency \spaceStep| \ll 1$), we can take the logarithm to yield: 
\begin{equation*}
    \partial_{\timeVariable} = \frac{\indiceTime}{\indiceTime} \frac{\latticeVelocity}{\spaceStep}\log(\fourierTransformed{\discrete{\eigenvalueLetter}}_1({\frequency} \spaceStep)),
\end{equation*}
which is thus independent of $\indiceTime$.

Differently, a $(\nonTriviallyRelaxingMomentsNumber + 2)$-stage \fd scheme, like the \bulkScheme{} \eqref{eq:BulkSchemes}, has associated amplification polynomial
\begin{equation}\label{eq:tmp4}
    \fourierTransformed{\amplificationPolynomial} (\timeShift, {\frequency}\spaceStep) \definitionEquality \frac{1}{\timeShift^{\velocityNumber - \nonTriviallyRelaxingMomentsNumber - 1}} \determinant (\timeShift \identity - \fourierTransformed{\schemeMatrix}(\frequency \spaceStep)) = \timeShift^{\nonTriviallyRelaxingMomentsNumber + 1} + \sum_{\indiceTime = 0}^{\nonTriviallyRelaxingMomentsNumber} \fourierTransformed{\coefficientFDScheme}_{\indiceTime + \velocityNumber - \nonTriviallyRelaxingMomentsNumber - 1} (\frequency \spaceStep) \timeShift^{\indiceTime} = \prod_{\indiceFreeOne = 1}^{\nonTriviallyRelaxingMomentsNumber + 1} (\timeShift - \fourierTransformed{\discrete{\eigenvalueLetter}}_{\indiceFreeOne}(\frequency \spaceStep)).
\end{equation}
Out of the roots in \eqref{eq:tmp4}, we shall number the (unique) eigenvalue providing the modified equation \eqref{eq:BulkEquivalentEquation}, \emph{i.e.} such that \eqref{eq:physicalEigenvalueCharact} holds, by $\fourierTransformed{\discrete{\eigenvalueLetter}}_1$. 
This is the amplification factor of the so-called ``pseudo-scheme'' \cite{strikwerda2004finite}.
Furthermore, the higher-order terms in the modified equation of the \bulkScheme{} stem from $\fourierTransformed{\discrete{\eigenvalueLetter}}_1(\frequency \spaceStep) = 1 + \sum_{\indiceOrder = 1}^{\indiceOrder = \maxOrder}  (\frequency \spaceStep)^{\indiceOrder}  \termAtOrder{\fourierTransformed{{\eigenvalueLetter}}_1}{\indiceOrder}+ \bigO{|\frequency \spaceStep|^{\maxOrder + 1}}$ in the limit $|\frequency\spaceStep| \ll 1$.
The \iniSchemes{} read
\begin{equation}\label{eq:tmp5}
    \timeShift^{\indiceTime} \fourierTransformed{\discreteMoment}_1(0, \frequency\spaceStep) = \underbrace{(\fourierTransformed{\schemeMatrix}(\frequency\spaceStep)^{\indiceTime} \fourierTransformed{\boldOther{\initialisationOperator}}(\frequency\spaceStep))_1}_{=: \amplificationFactorStartingScheme{\indiceTime}  (\frequency\spaceStep)} \fourierTransformed{\pointWiseDiscretisationInitialDatum}(\frequency\spaceStep), \qquad \indiceTime \in \integerInterval{1}{\nonTriviallyRelaxingMomentsNumber},
\end{equation}
with $\frequency \in [-\pi/\spaceStep, \pi/\spaceStep]$.
Using the assumption that $\termAtOrder{\initialisationOperatorAsymptotic_1}{0} = 1$, the proof of \Cref{prop:ModifiedEquationPrepared} naturally entails that $\amplificationFactorStartingScheme{\indiceTime}  (\frequency\spaceStep) = 1 + \bigO{|\frequency\spaceStep|}$ for $|\frequency\spaceStep| \ll 1$.
Comparing \eqref{eq:tmp2} and \eqref{eq:tmp5}, we cannot employ the same trick without a deeper discussion.
We have
\begin{equation*}
    \partial_{\timeVariable} = \frac{\latticeVelocity}{\spaceStep} \log(\fourierTransformed{\discrete{\eigenvalueLetter}}_1({\frequency} \spaceStep))\qquad \text{and} \qquad  \partial_{\timeVariable} = \frac{1}{\indiceTime} \frac{\latticeVelocity}{\spaceStep} \log(\amplificationFactorStartingScheme{\indiceTime}  (\frequency\spaceStep)), \qquad \indiceTime \in \integerInterval{1}{\nonTriviallyRelaxingMomentsNumber}, 
\end{equation*}
where the first equation comes from the modified equation of the \bulkScheme{} and the second one from \eqref{eq:tmp5}.
Since the \iniSchemes{} and the \bulkScheme{} have the same modified equations up to order $\maxOrder$, then we have, in the limit $|\frequency\spaceStep| \ll 1$
\begin{equation*}
    \indiceTime \log(\fourierTransformed{\discrete{\eigenvalueLetter}}_1({\frequency} \spaceStep)) = \log((\fourierTransformed{\discrete{\eigenvalueLetter}}_1({\frequency} \spaceStep))^{\indiceTime}) = \log(\amplificationFactorStartingScheme{\indiceTime}  (\frequency\spaceStep)) + \bigO{|\frequency\spaceStep|^{\maxOrder + 1}}, \qquad \indiceTime \in \integerInterval{1}{\nonTriviallyRelaxingMomentsNumber},
\end{equation*}
hence we deduce that $\amplificationFactorStartingScheme{\indiceTime}  (\frequency\spaceStep) = \fourierTransformed{\discrete{\eigenvalueLetter}}_1({\frequency} \spaceStep)^{\indiceTime} + \bigO{|\frequency\spaceStep|^{\maxOrder + 1}}$ for $\indiceTime \in \integerInterval{1}{\nonTriviallyRelaxingMomentsNumber}$.
To finish the proof, we now consider \eqref{eq:InitialisationSchemes} for $\indiceTime = \nonTriviallyRelaxingMomentsNumber + 1$
\begin{align}
    \timeShift^{\nonTriviallyRelaxingMomentsNumber + 1} \fourierTransformed{\discreteMoment}_1(0, \frequency\spaceStep) &= - \sum_{\indiceTime = 0}^{\nonTriviallyRelaxingMomentsNumber} \fourierTransformed{\coefficientFDScheme}_{\indiceTime + \velocityNumber - \nonTriviallyRelaxingMomentsNumber - 1} (\frequency \spaceStep) \timeShift^{\indiceTime} \fourierTransformed{\discreteMoment}_1(0, \frequency\spaceStep) \label{eq:tmp6}\\
    &= \underbrace{(\fourierTransformed{\schemeMatrix}(\frequency\spaceStep)^{\nonTriviallyRelaxingMomentsNumber + 1} \fourierTransformed{\boldOther{\initialisationOperator}}(\frequency\spaceStep))_1}_{=: \amplificationFactorStartingScheme{\nonTriviallyRelaxingMomentsNumber + 1} (\frequency\spaceStep)} \fourierTransformed{\pointWiseDiscretisationInitialDatum}(\frequency\spaceStep). \label{eq:tmp7}
\end{align}
We compute the modified equation of \eqref{eq:tmp7}, yielding the thesis, by using \eqref{eq:tmp6}.
We have
\begin{align*}
    \timeShift^{\nonTriviallyRelaxingMomentsNumber + 1} \fourierTransformed{\discreteMoment}_1(0, \frequency\spaceStep) &= - \sum_{\indiceTime = 1}^{\nonTriviallyRelaxingMomentsNumber} \fourierTransformed{\coefficientFDScheme}_{\indiceTime + \velocityNumber - \nonTriviallyRelaxingMomentsNumber - 1} (\frequency \spaceStep) \timeShift^{\indiceTime} \fourierTransformed{\discreteMoment}_1(0, \frequency\spaceStep) - \fourierTransformed{\coefficientFDScheme}_{\velocityNumber - \nonTriviallyRelaxingMomentsNumber - 1} (\frequency \spaceStep) \fourierTransformed{\initialisationOperator}_1 (\frequency \spaceStep) \fourierTransformed{\pointWiseDiscretisationInitialDatum}(\frequency\spaceStep) \\
    &= - \sum_{\indiceTime = 1}^{\nonTriviallyRelaxingMomentsNumber} \fourierTransformed{\coefficientFDScheme}_{\indiceTime + \velocityNumber - \nonTriviallyRelaxingMomentsNumber - 1} (\frequency \spaceStep) \amplificationFactorStartingScheme{\indiceTime}  (\frequency\spaceStep) \fourierTransformed{\pointWiseDiscretisationInitialDatum}(\frequency\spaceStep) - \fourierTransformed{\coefficientFDScheme}_{\velocityNumber - \nonTriviallyRelaxingMomentsNumber - 1} (\frequency \spaceStep) \fourierTransformed{\initialisationOperator}_1 (\frequency \spaceStep)  \fourierTransformed{\pointWiseDiscretisationInitialDatum}(\frequency\spaceStep).
\end{align*}
In the limit $|\frequency \spaceStep| \ll 1$, we have $\fourierTransformed{\initialisationOperator}_1 (\frequency \spaceStep)  = 1 + \bigO{|\frequency \spaceStep|^{\maxOrder + 1}}$ and $\amplificationFactorStartingScheme{\indiceTime}  (\frequency\spaceStep) = \fourierTransformed{\discrete{\eigenvalueLetter}}_1({\frequency} \spaceStep)^{\indiceTime} + \bigO{|\frequency\spaceStep|^{\maxOrder + 1}}$ for $\indiceTime \in \integerInterval{1}{\nonTriviallyRelaxingMomentsNumber}$, thanks to the assumption on $\initialisationOperator_1$ and to the previous computations.
In the limit $|\frequency \spaceStep| \ll 1$, we have to consider the amplification polynomial
\begin{equation*}
    \fourierTransformed{\tilde{\amplificationPolynomial}}(\timeShift, \frequency \spaceStep) = \timeShift^{\nonTriviallyRelaxingMomentsNumber + 1} + \sum_{\indiceTime = 0}^{\nonTriviallyRelaxingMomentsNumber} \fourierTransformed{\coefficientFDScheme}_{\indiceTime + \velocityNumber - \nonTriviallyRelaxingMomentsNumber - 1} (\frequency \spaceStep) \fourierTransformed{\discrete{\eigenvalueLetter}}_1({\frequency} \spaceStep)^{\indiceTime} + \bigO{|\frequency \spaceStep|^{\maxOrder + 1}} = \timeShift^{\nonTriviallyRelaxingMomentsNumber + 1} -  \fourierTransformed{\discrete{\eigenvalueLetter}}_1({\frequency} \spaceStep)^{\nonTriviallyRelaxingMomentsNumber + 1}  + \bigO{|\frequency \spaceStep|^{\maxOrder + 1}},
\end{equation*}
using the fact that $\fourierTransformed{\discrete{\eigenvalueLetter}}_1$ is a root of \eqref{eq:tmp4}.
We are therefore, up to terms $\bigO{|\frequency \spaceStep|^{\maxOrder + 1}}$, in the same setting as \eqref{eq:tmp2} and \eqref{eq:tmp3}, hence with the usual trick, we gain
\begin{equation*}
    \partial_{\timeVariable} = \frac{\nonTriviallyRelaxingMomentsNumber + 1}{\nonTriviallyRelaxingMomentsNumber + 1} \frac{\latticeVelocity}{\spaceStep} \log(\fourierTransformed{\discrete{\eigenvalueLetter}}_1({\frequency} \spaceStep)) + \bigO{|\frequency \spaceStep|^{\maxOrder + 1}}, 
\end{equation*}
hence also that $\amplificationFactorStartingScheme{\nonTriviallyRelaxingMomentsNumber + 1}  (\frequency\spaceStep) = \fourierTransformed{\discrete{\eigenvalueLetter}}_1({\frequency} \spaceStep) ^{\nonTriviallyRelaxingMomentsNumber + 1} + \bigO{|\frequency \spaceStep|^{\maxOrder + 1}}$.
This concludes the proof. The case $\indiceTime > \nonTriviallyRelaxingMomentsNumber + 1$ is done analogously.
\end{proof}

\newcommand{\greenFunctionTime}[2]{\fourierTransformed{\discrete{G}}_{#1}^{[#2]}}
\newcommand{\greenFunctionMoments}[2]{\fourierTransformed{\discrete{M}}_{#1}^{[#2]}}

In order to understand why it is difficult to study the \stSchemes{} above first-order in full generality, we consider the Fourier's setting introduced in the previous proof. To isolate the role of each initialisation scheme, we introduce the $\indiceFreeOne$-th time Green functions $\greenFunctionTime{\indiceFreeOne}{\indiceTime} = \greenFunctionTime{\indiceFreeOne}{\indiceTime}(\vectorial{\frequency}\spaceStep)$, for $\indiceFreeOne \in \integerInterval{0}{\nonTriviallyRelaxingMomentsNumber}$, see \cite{cheng1999general}, defined by
\begin{equation*}
    \begin{cases}
        \greenFunctionTime{\indiceFreeOne}{\indiceTime + 1} = -\sum_{\indiceFreeTwo = \velocityNumber - \nonTriviallyRelaxingMomentsNumber - 1}^{\indiceFreeTwo = \velocityNumber - 1} \fourierTransformed{\coefficientFDScheme}_{\indiceFreeTwo} \greenFunctionTime{\indiceFreeOne}{\indiceTime + \indiceFreeTwo - \velocityNumber + 1} = \transpose{\canonicalBasisVector_1} \fourierTransformed{\companionMatrixScheme} \transpose{ \left [ \greenFunctionTime{\indiceFreeOne}{\indiceTime} , \dots, \greenFunctionTime{\indiceFreeOne}{\indiceTime - \nonTriviallyRelaxingMomentsNumber} \right ]}, &\qquad \text{for} \quad \indiceTime \geq \nonTriviallyRelaxingMomentsNumber, \\
        \greenFunctionTime{\indiceFreeOne}{\indiceFreeTwo} = \delta_{\indiceFreeOne, \indiceFreeTwo}, &\qquad \text{for} \quad \indiceFreeTwo \in \integerInterval{0}{\nonTriviallyRelaxingMomentsNumber},
    \end{cases}
\end{equation*}
where $\companionMatrixScheme$ is the companion matrix of size $\nonTriviallyRelaxingMomentsNumber$ associated with the monic polynomial $\timeShift^{\nonTriviallyRelaxingMomentsNumber + 1 - \velocityNumber} \determinant(\timeShift \identity - \schemeMatrix)$ of degree $\nonTriviallyRelaxingMomentsNumber + 1$.
We can also associate a moment Green function $\greenFunctionMoments{\indiceMoments}{\indiceTime} = \greenFunctionMoments{\indiceMoments}{\indiceTime}(\vectorial{\frequency}\spaceStep)$ for the $\indiceMoments\in\integerInterval{1}{\velocityNumber}$ moment, defined by $\greenFunctionMoments{\indiceMoments}{\indiceTime} = \transpose{\canonicalBasisVector_1} \fourierTransformed{\schemeMatrix}^{\indiceTime} \canonicalBasisVector_{\indiceMoments}$.
The numerical solution can then be written---for $\indiceTime \geq \nonTriviallyRelaxingMomentsNumber + 1$---using the following decompositions:
\begin{align*}
    \fourierTransformed{\discreteMoment}_{1}(\indiceTime \timeStep, \vectorial{\frequency}\spaceStep) = \overbrace{\amplificationFactorStartingScheme{\indiceTime} (\vectorial{\frequency}\spaceStep) \fourierTransformed{\pointWiseDiscretisationInitialDatum} (\vectorial{\frequency}\spaceStep)}^{\text{(I) starting schemes}} = \overbrace{ \sum_{\indiceFreeOne = 1}^{\nonTriviallyRelaxingMomentsNumber + 1} \fourierTransformed{\discrete{W}}_{\indiceFreeOne}(\vectorial{\frequency}\spaceStep) \fourierTransformed{\discrete{\eigenvalueLetter}}_{\indiceFreeOne}(\vectorial{\frequency}\spaceStep)^{\indiceTime}}^{\text{(II) spectrum of }\schemeMatrix} &= \overbrace{ \sum_{\indiceFreeOne = 0}^{\nonTriviallyRelaxingMomentsNumber} \greenFunctionTime{\indiceFreeOne}{\indiceTime} (\vectorial{\frequency}\spaceStep)  \amplificationFactorStartingScheme{\indiceFreeOne} (\vectorial{\frequency}\spaceStep)}^{\text{(III) time Green functions}} \\
    &= \underbrace{\Bigl ( \greenFunctionMoments{1}{\indiceTime}(\vectorial{\frequency}\spaceStep) \fourierTransformed{\initialisationOperator}_1(\vectorial{\frequency}\spaceStep) + \sum_{\substack{\indiceMoments = 2 \\ \relaxationParameter_{\indiceMoments \neq 1}}}^{\velocityNumber} \greenFunctionMoments{\indiceMoments}{\indiceTime}(\vectorial{\frequency}\spaceStep) \fourierTransformed{\initialisationOperator}_{\indiceMoments}(\vectorial{\frequency}\spaceStep) \Bigr ) \fourierTransformed{\pointWiseDiscretisationInitialDatum} (\vectorial{\frequency}\spaceStep)}_{\text{(IV) moment Green functions}},
\end{align*}
where the decomposition (II) holds under the assumption that the roots of $\timeShift^{\nonTriviallyRelaxingMomentsNumber + 1 - \velocityNumber} \determinant(\timeShift \identity - \fourierTransformed{\schemeMatrix})$ are simple for any wavenumber. The decomposition (II) makes it clear that the consistency for the initial conditions also depends on the non-physical eigenvalues $\fourierTransformed{\discrete{\eigenvalueLetter}}_2, \dots, \fourierTransformed{\discrete{\eigenvalueLetter}}_{\nonTriviallyRelaxingMomentsNumber + 1}$ of $\fourierTransformed{\schemeMatrix}$, and the role of the choice of initial datum influences the weights $\fourierTransformed{\discrete{W}}_{\indiceFreeOne}$ for $\indiceFreeOne\in\integerInterval{1}{\nonTriviallyRelaxingMomentsNumber + 1}$ in a non-trivial way. For the same reasons, it is difficult to analyse (III) and (IV), for the involved Green functions can be difficult to compute, even in the small wavenumber limit, due to the influence of all the modes of the system.
Finally, let us insist on the fact that the Cayley-Hamilton theorem entails that the amplification factors of the starting schemes which are not initialisation schemes can be computed in two ways, which read
\begin{equation*}
    \amplificationFactorStartingScheme{\indiceTime} = \transpose{\canonicalBasisVector_1} \fourierTransformed{\schemeMatrix}^{\indiceTime} \fourierTransformed{\vectorial{\initialisationOperator}} = [\greenFunctionMoments{1}{\indiceTime}, \dots, \greenFunctionMoments{\velocityNumber}{\indiceTime}] \fourierTransformed{\vectorial{\initialisationOperator}} = \transpose{\canonicalBasisVector_1} \fourierTransformed{\companionMatrixScheme}^{\indiceTime - \nonTriviallyRelaxingMomentsNumber} \transpose{[\amplificationFactorStartingScheme{\nonTriviallyRelaxingMomentsNumber}, \dots, \amplificationFactorStartingScheme{1}, \fourierTransformed{\initialisationOperator}_1]},
\end{equation*}
for $\indiceTime \geq \nonTriviallyRelaxingMomentsNumber + 1$.

A second result states that there is little interest in considering the formal limit $\indiceTime \to +\infty$ in the modified equations of the \stSchemes{}.
\begin{proposition}[Long-time behavior: limits for $\indiceTime \to +\infty$]\label{prop:LongTimebehaviour}
    Assume that the scheme is $L^2$ stable, meaning that the roots of the amplification polynomial $\timeShift^{\nonTriviallyRelaxingMomentsNumber + 1 - \velocityNumber} \determinant(\timeShift \identity - \fourierTransformed{\schemeMatrix})$ are inside or on the unit circle and those on the unit circle are simple. Also assume that $\termAtOrder{\initialisationOperatorAsymptotic_1}{0} = 1$. Then:
    \begin{itemize}
        \item If $|1-\relaxationParameter_{\indiceMoments}|< 1$ for $\indiceMoments \in \integerInterval{2}{\velocityNumber}$, or equivalently $\relaxationParameter_{\indiceMoments} \in ]0, 2[$ for $\indiceMoments \in \integerInterval{2}{\velocityNumber}$, then the modified equations of the \stSchemes{} in the formal long-time limit $\indiceTime \to +\infty$ coincide at any order with the one of the \bulkScheme{}. 
        \item If it exists $\tilde{\indiceMoments} \in \integerInterval{2}{\velocityNumber}$ such that $|1-\relaxationParameter_{\tilde{\indiceMoments}}| = 1$, thus equivalently $\relaxationParameter_{\tilde{\indiceMoments}} = 2$. Let $\maxOrder \in \naturals$. Provided that $\termAtOrder{\initialisationOperatorAsymptotic_1}{\indiceOrder} = 0$ for $\indiceOrder \in \integerInterval{1}{\maxOrder}$ and the $\nonTriviallyRelaxingMomentsNumber$ modified equations of the  \iniSchemes{} coincide with the one of the \bulkScheme{} at any order $\indiceOrder \in \integerInterval{1}{\maxOrder}$, then the modified equations of the \stSchemes{} in the formal long-time limit $\indiceTime \to +\infty$ coincide at any order $\indiceOrder \in \integerInterval{1}{\maxOrder + 1}$ with the one of the \bulkScheme{}. 
    \end{itemize}
\end{proposition}
The first case in \Cref{prop:LongTimebehaviour} ensures that all the parasitic modes are damped. The second one deals with the case of undamped parasitic modes (\emph{i.e.} leap--frog-like schemes).
\Cref{prop:LongTimebehaviour} means that the effect of the initialisation decays, provided that the initial filtering on the datum of the Cauchy problem \eqref{eq:CauchyInitialDatum} preserves it at leading order and that either the parasitic modes damp in time, or if the parasitic modes are oscillatory, the \iniSchemes{} are accurate enough. 
The situation is the one depicted in \Cref{fig:matching}, where the dots asymptotically reach the dashed lines.
Let us point out that the assumption concerning stability may not be optimal, in the sense that we can find unstable schemes (for example, violating the Courant-Friedrichs-Lewy condition, but not having relaxation parameters exceeding $2$) for which the modified equations of the \stSchemes{} asymptotically reach those of the \bulkScheme{}. However, these schemes are practically useless.
    \begin{example}
        In order to illustrate \Cref{prop:LongTimebehaviour}, consider \Cref{ex:D1Q2}. 
        For this scheme, we consider $\initialisationOperator_1 = 1$ and $\initialisationOperator_2 = 0$.
        This choice does not satisfy \Cref{prop:LocalInitialisation} and the resulting initialization scheme is not consistent with the target equation, thus $\maxOrder = 0$.
        According to the choice of $\relaxationParameter_2$, we obtain the following modified equations up to order two:
        \begin{center}
            \addtolength{\tabcolsep}{-0.4em}
            \begin{tabular}{c|l|l}
                $\indiceTime$ & Mod. eq. \stSchemes{} for $\relaxationParameter_2 = 3/2$ & Mod. eq. \stSchemes{} for $\relaxationParameter_2 = 2$ \\
                \hline 
                \hline
                $2$  & { \footnotesize $\partial_{\timeVariable}\testFunction = -\latticeVelocity \tfrac{9}{8}    \equilibriumCoefficient_2              \partial_{\spaceVariable} \testFunction - \latticeVelocity \spaceStep \bigl ( \tfrac{9}{64}                           \equilibriumCoefficient_2^2 - \tfrac{1}{4}              \bigr ) \partial_{\spaceVariable\spaceVariable} \testFunction + \bigO{\spaceStep^2} $ } & { \footnotesize $\partial_{\timeVariable}\testFunction =                -   \latticeVelocity \equilibriumCoefficient_2 \partial_{\spaceVariable} \testFunction                          +\latticeVelocity\spaceStep\equilibriumCoefficient_2^2                            \partial_{\spaceVariable\spaceVariable} \testFunction + \bigO{\spaceStep^2}   $ } \\
                $3$  & { \footnotesize $\partial_{\timeVariable}\testFunction = -\latticeVelocity \tfrac{9}{8}    \equilibriumCoefficient_2              \partial_{\spaceVariable} \testFunction - \latticeVelocity \spaceStep \bigl ( \tfrac{51}{128}                         \equilibriumCoefficient_2^2 - \tfrac{1}{4}              \bigr ) \partial_{\spaceVariable\spaceVariable} \testFunction + \bigO{\spaceStep^2} $ } & { \footnotesize $\partial_{\timeVariable}\testFunction = -\tfrac{4}{3}      \latticeVelocity \equilibriumCoefficient_2 \partial_{\spaceVariable} \testFunction -  \latticeVelocity\spaceStep\bigl( \tfrac{4}{3}    \equilibriumCoefficient_2^2 - \tfrac{1}{6}     \bigr ) \partial_{\spaceVariable\spaceVariable} \testFunction + \bigO{\spaceStep^2}   $ } \\
                $4$  & { \footnotesize $\partial_{\timeVariable}\testFunction = -\latticeVelocity \tfrac{69}{64}    \equilibriumCoefficient_2            \partial_{\spaceVariable} \testFunction - \latticeVelocity \spaceStep \bigl ( \tfrac{441}{2048}                       \equilibriumCoefficient_2^2 - \tfrac{7}{32}             \bigr ) \partial_{\spaceVariable\spaceVariable} \testFunction + \bigO{\spaceStep^2} $ } & { \footnotesize $\partial_{\timeVariable}\testFunction =                -   \latticeVelocity \equilibriumCoefficient_2 \partial_{\spaceVariable} \testFunction                          +\latticeVelocity\spaceStep\equilibriumCoefficient_2^2                            \partial_{\spaceVariable\spaceVariable} \testFunction + \bigO{\spaceStep^2}   $ } \\
                $5$  & { \footnotesize $\partial_{\timeVariable}\testFunction = -\latticeVelocity \tfrac{171}{160}    \equilibriumCoefficient_2          \partial_{\spaceVariable} \testFunction - \latticeVelocity \spaceStep \bigl ( \tfrac{549}{2048}                       \equilibriumCoefficient_2^2 - \tfrac{17}{80}            \bigr ) \partial_{\spaceVariable\spaceVariable} \testFunction + \bigO{\spaceStep^2} $ } & { \footnotesize $\partial_{\timeVariable}\testFunction = -\tfrac{6}{5}      \latticeVelocity \equilibriumCoefficient_2 \partial_{\spaceVariable} \testFunction -  \latticeVelocity\spaceStep\bigl( \tfrac{6}{5}    \equilibriumCoefficient_2^2 - \tfrac{1}{10}    \bigr ) \partial_{\spaceVariable\spaceVariable} \testFunction + \bigO{\spaceStep^2}   $ } \\
                $6$  & { \footnotesize $\partial_{\timeVariable}\testFunction = -\latticeVelocity \tfrac{135}{128}    \equilibriumCoefficient_2          \partial_{\spaceVariable} \testFunction - \latticeVelocity \spaceStep \bigl ( \tfrac{3603}{16384}                     \equilibriumCoefficient_2^2 - \tfrac{13}{64}            \bigr ) \partial_{\spaceVariable\spaceVariable} \testFunction + \bigO{\spaceStep^2} $ } & { \footnotesize $\partial_{\timeVariable}\testFunction =                -   \latticeVelocity \equilibriumCoefficient_2 \partial_{\spaceVariable} \testFunction                          +\latticeVelocity\spaceStep\equilibriumCoefficient_2^2                            \partial_{\spaceVariable\spaceVariable} \testFunction + \bigO{\spaceStep^2}   $ } \\
                $7$  & { \footnotesize $\partial_{\timeVariable}\testFunction = -\latticeVelocity \tfrac{939}{896}    \equilibriumCoefficient_2          \partial_{\spaceVariable} \testFunction - \latticeVelocity \spaceStep \bigl ( \tfrac{52281}{229376}                   \equilibriumCoefficient_2^2 - \tfrac{89}{448}           \bigr ) \partial_{\spaceVariable\spaceVariable} \testFunction + \bigO{\spaceStep^2} $ } & { \footnotesize $\partial_{\timeVariable}\testFunction = -\tfrac{8}{7}      \latticeVelocity \equilibriumCoefficient_2 \partial_{\spaceVariable} \testFunction -  \latticeVelocity\spaceStep\bigl( \tfrac{8}{7}    \equilibriumCoefficient_2^2 - \tfrac{1}{14}    \bigr ) \partial_{\spaceVariable\spaceVariable} \testFunction + \bigO{\spaceStep^2}   $ } \\
                $8$  & { \footnotesize $\partial_{\timeVariable}\testFunction = -\latticeVelocity \tfrac{2133}{2048}    \equilibriumCoefficient_2        \partial_{\spaceVariable} \testFunction - \latticeVelocity \spaceStep \bigl ( \tfrac{222777}{1048576}                 \equilibriumCoefficient_2^2 - \tfrac{199}{1024}         \bigr ) \partial_{\spaceVariable\spaceVariable} \testFunction + \bigO{\spaceStep^2} $ } & { \footnotesize $\partial_{\timeVariable}\testFunction =                -   \latticeVelocity \equilibriumCoefficient_2 \partial_{\spaceVariable} \testFunction                          +\latticeVelocity\spaceStep\equilibriumCoefficient_2^2                            \partial_{\spaceVariable\spaceVariable} \testFunction + \bigO{\spaceStep^2}   $ } \\
                $9$  & { \footnotesize $\partial_{\timeVariable}\testFunction = -\latticeVelocity \tfrac{531}{512}    \equilibriumCoefficient_2          \partial_{\spaceVariable} \testFunction - \latticeVelocity \spaceStep \bigl ( \tfrac{110769}{524288}                  \equilibriumCoefficient_2^2 - \tfrac{49}{256}           \bigr ) \partial_{\spaceVariable\spaceVariable} \testFunction + \bigO{\spaceStep^2} $ } & { \footnotesize $\partial_{\timeVariable}\testFunction = -\tfrac{10}{9}     \latticeVelocity \equilibriumCoefficient_2 \partial_{\spaceVariable} \testFunction -  \latticeVelocity\spaceStep\bigl( \tfrac{10}{9}   \equilibriumCoefficient_2^2 - \tfrac{1}{18}   \bigr )  \partial_{\spaceVariable\spaceVariable} \testFunction + \bigO{\spaceStep^2}   $ } \\
                $10$ & { \footnotesize $\partial_{\timeVariable}\testFunction = -\latticeVelocity \tfrac{10581}{10240}    \equilibriumCoefficient_2      \partial_{\spaceVariable} \testFunction - \latticeVelocity \spaceStep \bigl ( \tfrac{4296249}{20971520}               \equilibriumCoefficient_2^2 - \tfrac{967}{5120}         \bigr ) \partial_{\spaceVariable\spaceVariable} \testFunction + \bigO{\spaceStep^2} $ } & { \footnotesize $\partial_{\timeVariable}\testFunction =                -   \latticeVelocity \equilibriumCoefficient_2 \partial_{\spaceVariable} \testFunction                          +\latticeVelocity\spaceStep\equilibriumCoefficient_2^2                            \partial_{\spaceVariable\spaceVariable} \testFunction + \bigO{\spaceStep^2}   $ } \\
                $11$ & { \footnotesize $\partial_{\timeVariable}\testFunction = -\latticeVelocity \tfrac{23211}{22528}    \equilibriumCoefficient_2      \partial_{\spaceVariable} \testFunction - \latticeVelocity \spaceStep \bigl ( \tfrac{18673209}{92274688}              \equilibriumCoefficient_2^2 - \tfrac{2105}{11264}       \bigr ) \partial_{\spaceVariable\spaceVariable} \testFunction + \bigO{\spaceStep^2} $ } & { \footnotesize $\partial_{\timeVariable}\testFunction = -\tfrac{12}{11}    \latticeVelocity \equilibriumCoefficient_2 \partial_{\spaceVariable} \testFunction -  \latticeVelocity\spaceStep\bigl( \tfrac{12}{11}  \equilibriumCoefficient_2^2 - \tfrac{1}{22}  \bigr )   \partial_{\spaceVariable\spaceVariable} \testFunction + \bigO{\spaceStep^2}   $ } \\
                $12$ & { \footnotesize $\partial_{\timeVariable}\testFunction = -\latticeVelocity \tfrac{16839}{16384}    \equilibriumCoefficient_2      \partial_{\spaceVariable} \testFunction - \latticeVelocity \spaceStep \bigl ( \tfrac{26696211}{134217728}             \equilibriumCoefficient_2^2 - \tfrac{1517}{8192}        \bigr ) \partial_{\spaceVariable\spaceVariable} \testFunction + \bigO{\spaceStep^2} $ } & { \footnotesize $\partial_{\timeVariable}\testFunction =                -   \latticeVelocity \equilibriumCoefficient_2 \partial_{\spaceVariable} \testFunction                          +\latticeVelocity\spaceStep\equilibriumCoefficient_2^2                            \partial_{\spaceVariable\spaceVariable} \testFunction + \bigO{\spaceStep^2}   $ } \\
                $13$ & { \footnotesize $\partial_{\timeVariable}\testFunction = -\latticeVelocity \tfrac{109227}{106496}    \equilibriumCoefficient_2    \partial_{\spaceVariable} \testFunction - \latticeVelocity \spaceStep \bigl ( \tfrac{343150137}{1744830464}           \equilibriumCoefficient_2^2 - \tfrac{9785}{53248}       \bigr ) \partial_{\spaceVariable\spaceVariable} \testFunction + \bigO{\spaceStep^2} $ } & { \footnotesize $\partial_{\timeVariable}\testFunction = -\tfrac{14}{13}    \latticeVelocity \equilibriumCoefficient_2 \partial_{\spaceVariable} \testFunction -  \latticeVelocity\spaceStep\bigl( \tfrac{14}{13}  \equilibriumCoefficient_2^2 - \tfrac{1}{26}  \bigr )   \partial_{\spaceVariable\spaceVariable} \testFunction + \bigO{\spaceStep^2}   $ } \\
                $14$ & { \footnotesize $\partial_{\timeVariable}\testFunction = -\latticeVelocity \tfrac{234837}{229376}    \equilibriumCoefficient_2    \partial_{\spaceVariable} \testFunction - \latticeVelocity \spaceStep \bigl ( \tfrac{1461161529}{7516192768}          \equilibriumCoefficient_2^2 - \tfrac{20935}{114688}     \bigr ) \partial_{\spaceVariable\spaceVariable} \testFunction + \bigO{\spaceStep^2} $ } & { \footnotesize $\partial_{\timeVariable}\testFunction =                -   \latticeVelocity \equilibriumCoefficient_2 \partial_{\spaceVariable} \testFunction                          +\latticeVelocity\spaceStep\equilibriumCoefficient_2^2                            \partial_{\spaceVariable\spaceVariable} \testFunction + \bigO{\spaceStep^2}   $ } \\
                $15$ & { \footnotesize $\partial_{\timeVariable}\testFunction = -\latticeVelocity \tfrac{167481}{163840}    \equilibriumCoefficient_2    \partial_{\spaceVariable} \testFunction - \latticeVelocity \spaceStep \bigl ( \tfrac{2068175379}{10737418240}         \equilibriumCoefficient_2^2 - \tfrac{14867}{81920}      \bigr ) \partial_{\spaceVariable\spaceVariable} \testFunction + \bigO{\spaceStep^2} $ } & { \footnotesize $\partial_{\timeVariable}\testFunction = -\tfrac{16}{15}    \latticeVelocity \equilibriumCoefficient_2 \partial_{\spaceVariable} \testFunction -  \latticeVelocity\spaceStep\bigl( \tfrac{16}{15}  \equilibriumCoefficient_2^2 - \tfrac{1}{30}  \bigr )   \partial_{\spaceVariable\spaceVariable} \testFunction + \bigO{\spaceStep^2}   $ } \\
                $16$ & { \footnotesize $\partial_{\timeVariable}\testFunction = -\latticeVelocity \tfrac{1070421}{1048576}    \equilibriumCoefficient_2  \partial_{\spaceVariable} \testFunction - \latticeVelocity \spaceStep \bigl ( \tfrac{26245566009}{137438953472}       \equilibriumCoefficient_2^2 - \tfrac{94663}{524288}     \bigr ) \partial_{\spaceVariable\spaceVariable} \testFunction + \bigO{\spaceStep^2} $ } & { \footnotesize $\partial_{\timeVariable}\testFunction =                -   \latticeVelocity \equilibriumCoefficient_2 \partial_{\spaceVariable} \testFunction                          +\latticeVelocity\spaceStep\equilibriumCoefficient_2^2                            \partial_{\spaceVariable\spaceVariable} \testFunction + \bigO{\spaceStep^2}   $ } \\
                $17$ & { \footnotesize $\partial_{\timeVariable}\testFunction = -\latticeVelocity \tfrac{2271915}{2228224}    \equilibriumCoefficient_2  \partial_{\spaceVariable} \testFunction - \latticeVelocity \spaceStep \bigl ( \tfrac{110717799993}{584115552256}      \equilibriumCoefficient_2^2 - \tfrac{200249}{1114112}   \bigr ) \partial_{\spaceVariable\spaceVariable} \testFunction + \bigO{\spaceStep^2} $ } & { \footnotesize $\partial_{\timeVariable}\testFunction = -\tfrac{18}{17}    \latticeVelocity \equilibriumCoefficient_2 \partial_{\spaceVariable} \testFunction -  \latticeVelocity\spaceStep\bigl( \tfrac{18}{17}  \equilibriumCoefficient_2^2 - \tfrac{1}{34}  \bigr )   \partial_{\spaceVariable\spaceVariable} \testFunction + \bigO{\spaceStep^2}   $ } \\
                $18$ & { \footnotesize $\partial_{\timeVariable}\testFunction = -\latticeVelocity \tfrac{533997}{524288}    \equilibriumCoefficient_2    \partial_{\spaceVariable} \testFunction - \latticeVelocity \spaceStep \bigl ( \tfrac{51750979761}{274877906944}       \equilibriumCoefficient_2^2 - \tfrac{46927}{262144}     \bigr ) \partial_{\spaceVariable\spaceVariable} \testFunction + \bigO{\spaceStep^2} $ } & { \footnotesize $\partial_{\timeVariable}\testFunction =                -   \latticeVelocity \equilibriumCoefficient_2 \partial_{\spaceVariable} \testFunction                          +\latticeVelocity\spaceStep\equilibriumCoefficient_2^2                            \partial_{\spaceVariable\spaceVariable} \testFunction + \bigO{\spaceStep^2}   $ } \\
                $19$ & { \footnotesize $\partial_{\timeVariable}\testFunction = -\latticeVelocity \tfrac{10136235}{9961472}    \equilibriumCoefficient_2 \partial_{\spaceVariable} \testFunction - \latticeVelocity \spaceStep \bigl ( \tfrac{1954701086265}{10445360463872}   \equilibriumCoefficient_2^2 - \tfrac{888377}{4980736}   \bigr ) \partial_{\spaceVariable\spaceVariable} \testFunction + \bigO{\spaceStep^2} $ } & { \footnotesize $\partial_{\timeVariable}\testFunction = -\tfrac{20}{19}    \latticeVelocity \equilibriumCoefficient_2 \partial_{\spaceVariable} \testFunction -  \latticeVelocity\spaceStep\bigl( \tfrac{20}{19}  \equilibriumCoefficient_2^2 - \tfrac{1}{38}  \bigr )   \partial_{\spaceVariable\spaceVariable} \testFunction + \bigO{\spaceStep^2}   $ } \\
                \hline 
                \hline
                Mod. eq. bulk & $\partial_{\timeVariable} \testFunction = -\latticeVelocity \equilibriumCoefficient_2 \partial_{\spaceVariable}\testFunction -\latticeVelocity\spaceStep \bigl ( \tfrac{1}{6} \equilibriumCoefficient_2^2 - \tfrac{1}{6} \bigr ) \partial_{\spaceVariable\spaceVariable}\testFunction + \bigO{\spaceStep^2}$ & $\partial_{\timeVariable} \testFunction = -\latticeVelocity \equilibriumCoefficient_2 \partial_{\spaceVariable}\testFunction  + \bigO{\spaceStep^2}$
            \end{tabular}            
        \end{center}
        For $\relaxationParameter_2 = 3/2$, we observe convergence at any order, whereas for $\relaxationParameter_2 = 2$, we see that convergence takes place until order $\maxOrder + 1 = 1$ (transport) and $\maxOrder + 2 = 2$ (diffusion) is not converging, as claimed by \Cref{prop:LongTimebehaviour}.
    \end{example}

\begin{proof}[Proof of \Cref{prop:LongTimebehaviour}]
        Again, we consider $\spatialDimensionality = 1$ only to simplify notations.
        Let us formulate a preliminary remark: we consider formal series in the limit $|\frequency\spaceStep|\ll 1$. Therefore, the possibility of diagonalise $\fourierTransformed{\companionMatrixScheme} (\frequency\spaceStep)$ in this limit---or alternatively being obliged to deal with a true Jordan canonical form---is determined by $\fourierTransformed{\companionMatrixScheme} (0)$, \emph{i.e.} the leading-order term in the formal series.
        We assume---without loss of generality---that all $\fourierTransformed{\discrete{\eigenvalueLetter}}_{\indiceFreeOne}(0)$ for $\indiceFreeOne \in \integerInterval{1}{\nonTriviallyRelaxingMomentsNumber + 1}$ are simple, even those strictly inside the unit circle, so that we can diagonalise the companion matrix in the desired limit. If this does not hold, for example for a SRT (or BGK) scheme, one can easily go through the same proof using the well-known expression for the powers of Jordan blocks.

        Let us start the proof. Let $\fourierTransformed{\matricial{\discrete{V}}} = \fourierTransformed{\matricial{\discrete{V}}}(\frequency\spaceStep)$ be the Vandermonde matrix associated with $\fourierTransformed{\discrete{\eigenvalueLetter}}_1 = \fourierTransformed{\discrete{\eigenvalueLetter}}_1(\frequency\spaceStep), \dots, \fourierTransformed{\discrete{\eigenvalueLetter}}_{\nonTriviallyRelaxingMomentsNumber + 1} = \fourierTransformed{\discrete{\eigenvalueLetter}}_{\nonTriviallyRelaxingMomentsNumber + 1}(\frequency\spaceStep)$. It is well-known that this Vandermonde matrix diagonalises the companion matrix $\fourierTransformed{\companionMatrixScheme} (\frequency\spaceStep)$, thus for $\indiceTime \geq \nonTriviallyRelaxingMomentsNumber + 1$
        \begin{align}
            \amplificationFactorStartingScheme{\indiceTime}(\frequency\spaceStep) &= \transpose{\canonicalBasisVector_1} \fourierTransformed{\companionMatrixScheme}(\frequency\spaceStep)^{\indiceTime - \nonTriviallyRelaxingMomentsNumber} \transpose{[\amplificationFactorStartingScheme{\nonTriviallyRelaxingMomentsNumber}(\frequency\spaceStep), \dots, \amplificationFactorStartingScheme{1}(\frequency\spaceStep), \fourierTransformed{\initialisationOperator}_1(\frequency\spaceStep)]} \label{eq:powerIteration} \\
            &= \transpose{\canonicalBasisVector_1}  \fourierTransformed{\matricial{\discrete{V}}}(\frequency\spaceStep) \diagMatrix (\fourierTransformed{\discrete{\eigenvalueLetter}}_1(\frequency\spaceStep)^{\indiceTime - \nonTriviallyRelaxingMomentsNumber}, \dots, \fourierTransformed{\discrete{\eigenvalueLetter}}_{\nonTriviallyRelaxingMomentsNumber + 1}(\frequency\spaceStep)^{\indiceTime - \nonTriviallyRelaxingMomentsNumber}) \fourierTransformed{\matricial{\discrete{V}}}(\frequency\spaceStep)^{-1} \transpose{[\amplificationFactorStartingScheme{\nonTriviallyRelaxingMomentsNumber}(\frequency\spaceStep), \dots, \amplificationFactorStartingScheme{1}(\frequency\spaceStep), \fourierTransformed{\initialisationOperator}_1(\frequency\spaceStep)]}. \nonumber
        \end{align}
        The idea of the proof is that the amplification factors associated with the \iniSchemes{} form an approximation of the eigenvector of $\fourierTransformed{\companionMatrixScheme} (\frequency\spaceStep)$ relative to the consistency eigenvalue $\fourierTransformed{\discrete{\eigenvalueLetter}}_{1}$, so that the power iteration \eqref{eq:powerIteration} converges for $\indiceTime \to +\infty$ up to some order.
        Up to a re-ordering of the non-conserved moments---in order to start with those which do not relax on the equilibrium, for notational ease---the lower-triangular structure of the collision matrix $\collisionMatrix$ entails that $\fourierTransformed{\discrete{\eigenvalueLetter}}_{\indiceFreeOne}(0) = 1-\relaxationParameter_{\indiceFreeOne}$ for $\indiceFreeOne \in \integerInterval{2}{\nonTriviallyRelaxingMomentsNumber + 1}$. Moreover, we have that $\fourierTransformed{\discrete{\eigenvalueLetter}}_{1}(0) = 1$.
        \begin{itemize}
            \item  Using the assumption on the relaxation parameters, we have $|\fourierTransformed{\discrete{\eigenvalueLetter}}_{\indiceFreeOne}(0)| < 1$ for $\indiceFreeOne \in \integerInterval{2}{\nonTriviallyRelaxingMomentsNumber + 1}$.
            Using the assumption $\termAtOrder{\initialisationOperatorAsymptotic_1}{0} = 1$ (\emph{i.e.} $\fourierTransformed{\initialisationOperator}_1(\frequency \spaceStep) = 1+\bigO{|\frequency\spaceStep|}$), \Cref{prop:ModifiedEquationPrepared} provides $\amplificationFactorStartingScheme{\indiceFreeOne}  (\frequency\spaceStep) = 1 + \bigO{|\frequency\spaceStep|}$ for $\indiceFreeOne \in \integerInterval{1}{\nonTriviallyRelaxingMomentsNumber}$. Therefore
            \begin{equation*}
                [\amplificationFactorStartingScheme{\nonTriviallyRelaxingMomentsNumber}(\frequency\spaceStep), \dots, \amplificationFactorStartingScheme{1}(\frequency\spaceStep), \fourierTransformed{\initialisationOperator}_1(\frequency\spaceStep)] = [\fourierTransformed{\discrete{\eigenvalueLetter}}_1(\frequency\spaceStep)^{\nonTriviallyRelaxingMomentsNumber}, \dots, \fourierTransformed{\discrete{\eigenvalueLetter}}_1(\frequency\spaceStep), 1] + \bigO{|\frequency\spaceStep|},
            \end{equation*}
            which means that the amplification factors of the \iniSchemes{} are the eigenvector of $\fourierTransformed{\companionMatrixScheme} (\frequency\spaceStep)$ associated with $\fourierTransformed{\discrete{\eigenvalueLetter}}_1(\frequency\spaceStep)$ at leading order.
            Back in \eqref{eq:powerIteration}, this gives that 
            \begin{multline*}
                \amplificationFactorStartingScheme{\indiceTime}(\frequency\spaceStep) = \transpose{\canonicalBasisVector_1}  \fourierTransformed{\matricial{\discrete{V}}}(\frequency\spaceStep) \diagMatrix (\fourierTransformed{\discrete{\eigenvalueLetter}}_1(\frequency\spaceStep)^{\indiceTime - \nonTriviallyRelaxingMomentsNumber}, \dots, \fourierTransformed{\discrete{\eigenvalueLetter}}_{\nonTriviallyRelaxingMomentsNumber + 1}(\frequency\spaceStep)^{\indiceTime - \nonTriviallyRelaxingMomentsNumber}) (\canonicalBasisVector_1 + \bigO{|\frequency\spaceStep|}) \\
                = \transpose{\canonicalBasisVector_1} 
                \begin{bmatrix}
                    \fourierTransformed{\discrete{\eigenvalueLetter}}_1(\frequency\spaceStep)^{\nonTriviallyRelaxingMomentsNumber} & \cdots & \fourierTransformed{\discrete{\eigenvalueLetter}}_{\nonTriviallyRelaxingMomentsNumber + 1}(\frequency\spaceStep)^{\nonTriviallyRelaxingMomentsNumber} \\
                    \vdots & & \vdots \\
                    \fourierTransformed{\discrete{\eigenvalueLetter}}_1(\frequency\spaceStep) & \cdots & \fourierTransformed{\discrete{\eigenvalueLetter}}_{\nonTriviallyRelaxingMomentsNumber + 1}(\frequency\spaceStep) \\
                    1 & \cdots & 1
                \end{bmatrix}
                \diagMatrix (\fourierTransformed{\discrete{\eigenvalueLetter}}_1(\frequency\spaceStep)^{\indiceTime - \nonTriviallyRelaxingMomentsNumber} (1+ \bigO{|\frequency\spaceStep|}), \fourierTransformed{\discrete{\eigenvalueLetter}}_2(\frequency\spaceStep)^{\indiceTime - \nonTriviallyRelaxingMomentsNumber} \bigO{|\frequency\spaceStep|}, \dots, \fourierTransformed{\discrete{\eigenvalueLetter}}_{\nonTriviallyRelaxingMomentsNumber + 1}(\frequency\spaceStep)^{\indiceTime - \nonTriviallyRelaxingMomentsNumber} \bigO{|\frequency\spaceStep|}) \\
                = \fourierTransformed{\discrete{\eigenvalueLetter}}_1(\frequency\spaceStep)^{\indiceTime} (1+ \bigO{|\frequency\spaceStep|}) + \fourierTransformed{\discrete{\eigenvalueLetter}}_2(\frequency\spaceStep)^{\indiceTime} \bigO{|\frequency\spaceStep|} + \dots + \fourierTransformed{\discrete{\eigenvalueLetter}}_{\nonTriviallyRelaxingMomentsNumber + 1}(\frequency\spaceStep)^{\indiceTime} \bigO{|\frequency\spaceStep|}),
            \end{multline*}
            where it is important to observe that the $\bigO{|\frequency\spaceStep|}$-terms are independent of $\indiceTime$.
            Considering that $|\fourierTransformed{\discrete{\eigenvalueLetter}}_{\indiceFreeOne}(0)| < 1$ for $\indiceFreeOne \in \integerInterval{2}{\nonTriviallyRelaxingMomentsNumber + 1}$, we deduce that $\lim_{\indiceTime \to +\infty} \fourierTransformed{\discrete{\eigenvalueLetter}}_{\indiceFreeOne}(\frequency\spaceStep)^{\indiceTime} = 0$ for $\indiceFreeOne \in \integerInterval{2}{\nonTriviallyRelaxingMomentsNumber + 1}$. Of course, convergence can be slow for high orders in the formal series. This entails that we have 
            \begin{equation*}
                \amplificationFactorStartingScheme{\indiceTime}(\frequency\spaceStep) = \fourierTransformed{\discrete{\eigenvalueLetter}}_1(\frequency\spaceStep)^{\indiceTime} (1 + \fourierTransformed{{\discrete{r}}}^{[\indiceTime]} (\frequency\spaceStep)),
            \end{equation*}
            where the residual $\fourierTransformed{{\discrete{r}}}^{[\indiceTime]} (\frequency\spaceStep) = \bigO{|\frequency\spaceStep|}$ is such that it converges to a fixed formal series for $\indiceTime \to +\infty$.
            The usual trick provides
            \begin{equation*}
                \lim_{\indiceTime \to +\infty} \partial_{\timeVariable} = \frac{\latticeVelocity}{\spaceStep} \lim_{\indiceTime \to +\infty} \frac{1}{\indiceTime} \log (\amplificationFactorStartingScheme{\indiceTime}(\frequency\spaceStep)) = \frac{\latticeVelocity}{\spaceStep} \Bigl ( \log (\fourierTransformed{\discrete{\eigenvalueLetter}}_1(\frequency\spaceStep)) + \lim_{\indiceTime \to +\infty} \frac{1}{\indiceTime} \log (1 + \fourierTransformed{{\discrete{r}}}^{[\indiceTime]} (\frequency\spaceStep)\Bigr ) = \frac{\latticeVelocity}{\spaceStep} \log (\fourierTransformed{\discrete{\eigenvalueLetter}}_1(\frequency\spaceStep)).
            \end{equation*}
            \item Observe that thanks to the stability assumption, there can be only one relaxation parameter $\relaxationParameter_{\tilde{\indiceMoments}} = 2$. Otherwise, there would be a multiple eigenvalue on the unit circle for $\frequency\spaceStep = 0$, contradicting the stability assumption whilst generating polynomial (in contrast with exponential) instabilities. Up to a rearrangement of the moments, we have $\tilde{\indiceMoments} = 2$.
            By the assumptions on $\initialisationOperator_1$ and the \iniSchemes{}, we deduce that 
            \begin{equation*}
                [\amplificationFactorStartingScheme{\nonTriviallyRelaxingMomentsNumber}(\frequency\spaceStep), \dots, \amplificationFactorStartingScheme{1}(\frequency\spaceStep), \fourierTransformed{\initialisationOperator}_1(\frequency\spaceStep)] = [\fourierTransformed{\discrete{\eigenvalueLetter}}_1(\frequency\spaceStep)^{\nonTriviallyRelaxingMomentsNumber}, \dots, \fourierTransformed{\discrete{\eigenvalueLetter}}_1(\frequency\spaceStep), 1] + \bigO{|\frequency\spaceStep|^{\maxOrder + 1}},
            \end{equation*}
            which means that the amplification factors of the \iniSchemes{} are the eigenvector of $\fourierTransformed{\companionMatrixScheme} (\frequency\spaceStep)$ associated with $\fourierTransformed{\discrete{\eigenvalueLetter}}_1(\frequency\spaceStep)$ up to order $\bigO{|\frequency\spaceStep|^{\maxOrder + 1}}$.
            Into \eqref{eq:powerIteration}, this yields
            \begin{align*}
                \amplificationFactorStartingScheme{\indiceTime}(\frequency\spaceStep) &= \transpose{\canonicalBasisVector_1}  \fourierTransformed{\matricial{\discrete{V}}}(\frequency\spaceStep) \diagMatrix (\fourierTransformed{\discrete{\eigenvalueLetter}}_1(\frequency\spaceStep)^{\indiceTime - \nonTriviallyRelaxingMomentsNumber}, \fourierTransformed{\discrete{\eigenvalueLetter}}_2(\frequency\spaceStep)^{\indiceTime - \nonTriviallyRelaxingMomentsNumber}, \dots, \fourierTransformed{\discrete{\eigenvalueLetter}}_{\nonTriviallyRelaxingMomentsNumber + 1}(\frequency\spaceStep)^{\indiceTime - \nonTriviallyRelaxingMomentsNumber}) (\canonicalBasisVector_1 + \bigO{|\frequency\spaceStep|^{\maxOrder + 1}}) \\
                &= \fourierTransformed{\discrete{\eigenvalueLetter}}_1(\frequency\spaceStep)^{\indiceTime} (1+ \bigO{|\frequency\spaceStep|^{\maxOrder+1}}) + \fourierTransformed{\discrete{\eigenvalueLetter}}_2(\frequency\spaceStep)^{\indiceTime} \bigO{|\frequency\spaceStep|^{\maxOrder+1}} + \dots + \fourierTransformed{\discrete{\eigenvalueLetter}}_{\nonTriviallyRelaxingMomentsNumber + 1}(\frequency\spaceStep)^{\indiceTime} \bigO{|\frequency\spaceStep|^{\maxOrder+1}}),
            \end{align*}
            where all the $\bigO{|\frequency\spaceStep|^{\maxOrder+1}}$-terms are independent of $\indiceTime$.
            Due to the fact that $\fourierTransformed{\discrete{\eigenvalueLetter}}_2(0) = 1-\relaxationParameter_2 = -1$, the formal series $\fourierTransformed{\discrete{\eigenvalueLetter}}_2(\frequency\spaceStep)^{\indiceTime}$ contains terms that can oscillate by featuring expressions involving $(-1)^{\indiceTime}$, and the term at order $\indiceOrder \in \integerIntervalClosedOpen{0}{+\infty}$ grows with $\indiceTime$ at most as a polynomial of degree $\indiceOrder$ in $\indiceTime$. We indicate this fact using the notation $\fourierTransformed{\discrete{\eigenvalueLetter}}_2(\frequency\spaceStep)^{\indiceTime} = \sum_{\indiceOrder = 0}^{\indiceOrder = +\infty} \bigO{\indiceTime^{\indiceOrder}} (\frequency\spaceStep)^{\indiceOrder}$.
            Therefore 
            \begin{equation*}
                \fourierTransformed{\discrete{\eigenvalueLetter}}_2(\frequency\spaceStep)^{\indiceTime} \bigO{|\frequency\spaceStep|^{\maxOrder+1}} =  \sum_{\indiceOrder = \maxOrder+1}^{+\infty} \bigO{\indiceTime^{\indiceOrder - \maxOrder - 1}} (\frequency\spaceStep)^{\indiceOrder}.
            \end{equation*} 
            As previously acknowledged, since $|\fourierTransformed{\discrete{\eigenvalueLetter}}_{\indiceFreeOne}(0)| < 1$ for $\indiceFreeOne \in \integerInterval{3}{\nonTriviallyRelaxingMomentsNumber + 1}$, we deduce that $\lim_{\indiceTime \to +\infty} \fourierTransformed{\discrete{\eigenvalueLetter}}_{\indiceFreeOne}(\frequency\spaceStep)^{\indiceTime} = 0$ for $\indiceFreeOne \in \integerInterval{3}{\nonTriviallyRelaxingMomentsNumber + 1}$. This ensures that 
            \begin{equation*}
                \amplificationFactorStartingScheme{\indiceTime}(\frequency\spaceStep) = \fourierTransformed{\discrete{\eigenvalueLetter}}_1 (\frequency\spaceStep)^{\indiceTime} \Bigl ( 1 + \sum_{\indiceOrder = \maxOrder+1}^{+\infty} \bigO{\indiceTime^{\indiceOrder - \maxOrder - 1}} (\frequency\spaceStep)^{\indiceOrder} \Bigr ).
            \end{equation*}
            Utilising the usual trick, we have
            \begin{multline*}
                \lim_{\indiceTime \to +\infty} \partial_{\timeVariable} = \frac{\latticeVelocity}{\spaceStep} \lim_{\indiceTime \to +\infty} \frac{1}{\indiceTime} \log (\amplificationFactorStartingScheme{\indiceTime}(\frequency\spaceStep)) = \frac{\latticeVelocity}{\spaceStep} \Bigl ( \log (\fourierTransformed{\discrete{\eigenvalueLetter}}_1(\frequency\spaceStep)) + \lim_{\indiceTime \to +\infty} \frac{1}{\indiceTime} \log \Bigl (1 + \sum_{\indiceOrder = \maxOrder+1}^{+\infty} \bigO{\indiceTime^{\indiceOrder - \maxOrder - 1}} (\frequency\spaceStep)^{\indiceOrder}  \Bigr )\Bigr ) \\
                = \frac{\latticeVelocity}{\spaceStep} \Bigl ( \log (\fourierTransformed{\discrete{\eigenvalueLetter}}_1(\frequency\spaceStep)) + \lim_{\indiceTime \to +\infty} \frac{1}{\indiceTime} \sum_{\indiceOrder = \maxOrder+1}^{+\infty} \bigO{\indiceTime^{\indiceOrder - \maxOrder - 1}} (\frequency\spaceStep)^{\indiceOrder}  \Bigr ) 
                = \frac{\latticeVelocity}{\spaceStep} \Bigl ( \log (\fourierTransformed{\discrete{\eigenvalueLetter}}_1(\frequency\spaceStep)) + \lim_{\indiceTime \to +\infty} \sum_{\indiceOrder = \maxOrder+1}^{+\infty} \bigO{\indiceTime^{\indiceOrder - \maxOrder - 2}} (\frequency\spaceStep)^{\indiceOrder}  \Bigr ) \\ 
                = \frac{\latticeVelocity}{\spaceStep} \Bigl ( \log (\fourierTransformed{\discrete{\eigenvalueLetter}}_1(\frequency\spaceStep)) + \lim_{\indiceTime \to +\infty} \Bigl (\bigO{\indiceTime^{-1}} (\frequency\spaceStep)^{\maxOrder + 1} + \sum_{\indiceOrder = \maxOrder+2}^{+\infty} \bigO{\indiceTime^{\indiceOrder - \maxOrder - 2}} (\frequency\spaceStep)^{\indiceOrder}  \Bigr )  \Bigr ) \\
                = \frac{\latticeVelocity}{\spaceStep} \Bigl ( \log (\fourierTransformed{\discrete{\eigenvalueLetter}}_1(\frequency\spaceStep)) + \lim_{\indiceTime \to +\infty} \sum_{\indiceOrder = \maxOrder+2}^{+\infty} \bigO{\indiceTime^{\indiceOrder - \maxOrder - 2}} (\frequency\spaceStep)^{\indiceOrder}  \Bigr ),
            \end{multline*}
            achieving the demonstration.
        \end{itemize}
    
\end{proof}

\subsection{Conclusions}

In this \Cref{sec:ModifiedEquations}, we have proposed a way of linking the initial datum of the \lbm scheme $\boldOther{\discreteMoment}(0, \cdot)$ to the initial datum $\solutionCauchyInitial$ of the Cauchy problem \eqref{eq:CauchyInitialDatum}.
This allowed us to propose a modified equation analysis of the initialisation phase---see \Cref{prop:EquivalentEquationInitialisationLocal} and \Cref{prop:ModifiedEquationPrepared}--- making the study of the real behaviour of the numerical schemes possible and find the constraints---see \Cref{prop:LocalInitialisation} and \Cref{prop:PreparedInitialisation}--- under which the initialisation schemes are consistent with the same equation \eqref{eq:CauchyEquation} as the bulk scheme, preventing from having order reductions. 
We have also stressed that controlling the behaviour of the scheme inside the initialisation layer implies a control on the numerical scheme eventually in time (\Cref{prop:MatchEventually}).
The general computations have been done until order $\bigO{\spaceStep}$ but can be carried further to $\bigO{\spaceStep^2}$ and above for specific schemes, as in \Cref{sec:Illustrations} and \Cref{sec:Ginzburg}.
This provides additional information on other features of the schemes close to the beginning of the simulation, such as dissipation and dispersion.


\section{Examples and numerical simulations}\label{sec:Illustrations}

\Cref{sec:Illustrations} first aims at checking the previously introduced theory concerning consistency on actual numerical simulations on the \scheme{1}{2} scheme (\emph{cf.} \Cref{ex:D1Q2}).
Moreover, the computations of the modified equation shall be pushed one order further providing the dissipation of the \stSchemes{}, the impact of which is precisely quantified on the numerical experiments for a \scheme{1}{2} and \scheme{1}{3} scheme.
Finally,  the example of \scheme{1}{3} scheme paves the way for the general discussion of \Cref{sec:Ginzburg} concerning a more precise counting of the number of initialisation schemes, with important consequences on the dissipation of the numerical schemes. 
We utilize 1d schemes in order to illustrate how to employ the previously developed tools in the most simple fashion. However, they can be used to deal with any spatial dimension $\spatialDimensionality = 1, 2, 3$, with computations becoming more and more involved as the complexity of the scheme grows, due to the role of parasitic eigenvalues.

\subsection{Two-velocities \scheme{1}{2} scheme}\label{sec:D1Q2}

Consider the scheme from \Cref{ex:D1Q2}.
The modified equation of the \bulkScheme{} reads as in \cite{graille2014approximation} and \Cref{thm:EquivEqBulk}
\begin{equation}\label{eq:ModifiedD1Q2Bulk}
    \partial_{\timeVariable} \testFunction(\timeVariable, \spaceVariable) + \latticeVelocity \equilibriumCoefficient_2 \partial_{\spaceVariable}\testFunction(\timeVariable, \spaceVariable) - \latticeVelocity \spaceStep \Bigl (\frac{1}{\relaxationParameter_2} - \frac{1}{2} \Bigr )   (1 - \equilibriumCoefficient_2^2  )  \partial_{\spaceVariable\spaceVariable}\testFunction(\timeVariable, \spaceVariable) = \bigO{\spaceStep^2}, \qquad (\timeVariable, \spaceVariable) \in \nonNegativeReals \times \reals,
\end{equation}
thus to be consistent with the Cauchy problem \eqref{eq:CauchyEquation}, one takes $\equilibriumCoefficient_2 = \transportVelocity/\latticeVelocity$.
For $\relaxationParameter_2 < 2$, the \bulkScheme{} is first-order accurate, thus \iniSchemes{} which are non-consistent with the target conservation law---\emph{i.e.} indeed violating \eqref{eq:LocalBulk} or \eqref{eq:PreparedBulk}---do not degrade the order of convergence. 
For $\relaxationParameter_2 = 2$, the \bulkScheme{} is second-order accurate, thus consistent \iniSchemes{} are needed, \emph{i.e.} verifying  \eqref{eq:LocalBulk} or \eqref{eq:PreparedBulk}.
Observe that the scheme is $L^2$ stable according to \emph{von Neumann} (\emph{i.e.} the roots of the amplification polynomial of the \bulkScheme{} are inside the unit disk and those on the unit disk are simple) under the conditions (\cite{graille2014approximation} and \Cref{app:StabilityD1Q2})
\begin{equation}\label{eq:StabilityConditionsD1Q2}
    \relaxationParameter_2 \in ]0, 2], \qquad \text{and} \qquad
    \begin{cases}
        |\equilibriumCoefficient_2| \leq 1, \qquad &\text{if} \quad \relaxationParameter_2 \in ]0, 2[, \\
        |\equilibriumCoefficient_2| < 1, \qquad &\text{if} \quad \relaxationParameter_2 = 2.
    \end{cases}
\end{equation}
The second conditions are the Courant-Friedrichs-Lewy (CFL) condition, which is known to be strict for the leap-frog scheme.

We consider five different choices of \iniSchemes{}.
They are designed to showcase different facets of the previous theoretical discussion.
More precisely, the first initialisation is the one where all data are taken at equilibrium, which is likely the most common way of initializing \lbm schemes \cite{graille2014approximation,caetano2019result}.
The second and the third initialisations both render a forward centered scheme as \iniScheme{}, which would be unstable if used as bulk scheme. Still, these two initialisations yield different outcomes for the associated numerical simulations, and our theory accounts for this phenomenon.
The fourth initialisation aims at obtaining a Lax-Wendroff \iniScheme{}, which allows to study the effect of a second-order \iniScheme{}.
Finally, the fifth initialisation is inspired by works from the literature \cite{van2009smooth}.
\begin{itemize}
    \item{\strong{Lax-Friedrichs scheme} (LF)}, a first-order consistent scheme which we shall obtain using the local initialisation
    \begin{equation}\label{eq:LaxFriedrichs}
        \initialisationOperator_1 = 1, \qquad \initialisationOperator_2 = \equilibriumCoefficient_2.
    \end{equation}
    Except when $\relaxationParameter_2 = 1$ (where $\nonTriviallyRelaxingMomentsNumber = 0$), the dissipation of the \bulkScheme{} is not matched by the one of the Lax-Friedrichs scheme.

    \item{\strong{Forward centered scheme} (FC)}. This is a first-order consistent scheme which is unstable even under CFL condition \eqref{eq:StabilityConditionsD1Q2} if used as bulk scheme, due to its negative dissipation.
    Still, it is perfectly suitable for the initialisation of the method (see \cite[Chapter 10]{strikwerda2004finite}). Its diffusivity shall not match the one of the \bulkScheme{}, see \eqref{eq:ModifiedD1Q2Bulk}.
    This \iniScheme{} cannot stem from a local initialisation, \emph{i.e.} $\initialisationOperator_1, \initialisationOperator_2 \in \reals$, since the only first-order consistent \iniScheme{} that can be obtained in this way is the Lax-Friedrichs scheme \eqref{eq:LaxFriedrichs}.
    We could unsuccessfully try to generate it by a local initialisation of the conserved moment, that is $\initialisationOperator_1 = 1$ and prepared initialisation of the non-conserved one, thus $\initialisationOperator_2 \in \ringSpaceOperatorsOneD$.
    Considering---see \Cref{app:derivationForwardCenteredD1Q2} for the details---a prepared initialisation for both moments, thus $\initialisationOperator_1, \initialisationOperator_2 \in \ringSpaceOperatorsOneD$, several choices are possible to recover this scheme.
    One is
    \begin{equation}\label{eq:CenteredGood}
        \initialisationOperatorCoefficients_{1, \pm 1} = \frac{1}{2}, \qquad \initialisationOperatorCoefficients_{2, \pm 1} = \mp\frac{ 1  \pm \relaxationParameter_2 \equilibriumCoefficient_2}{2(1 - \relaxationParameter_2)},  \qquad \initialisationOperatorCoefficients_{2, 0} = \frac{\equilibriumCoefficient_2}{1 - \relaxationParameter_2},
    \end{equation}
    with the notation by \eqref{eq:FormInitilisationNonLocal} and agrees with \eqref{eq:PreparedInitial}, \eqref{eq:PreparedInitial2}, and \eqref{eq:PreparedBulk}.
    Another possible choice to obtain the desired scheme would be
    \begin{equation}\label{eq:CenteredBad}
        \initialisationOperatorCoefficients_{1, \pm 2} = \pm \frac{\equilibriumCoefficient_2}{2}, \qquad \initialisationOperatorCoefficients_{1, \pm 1} = \frac{1}{2}, \qquad \initialisationOperatorCoefficients_{2, \pm 2} = -\frac{\equilibriumCoefficient_2(1 \pm \relaxationParameter_2 \equilibriumCoefficient_2)}{2(1-\relaxationParameter_2)}, \qquad \initialisationOperatorCoefficients_{2, \pm 1} = \mp \frac{1 \pm \relaxationParameter_2 \equilibriumCoefficient_2}{2(1 - \relaxationParameter_2)}.
    \end{equation}
    However, this initialisation yields only \eqref{eq:PreparedInitial} but does not fulfill either \eqref{eq:PreparedInitial2} or \eqref{eq:PreparedBulk}.
    This means that in this case $\discreteMoment_1$ is initialized as a first-order perturbation of the datum of the Cauchy problem \eqref{eq:CauchyInitialDatum} and that $\discreteMoment_2$ is not initialized at equilibrium at leading order.

    \item{\strong{Lax-Wendroff scheme} (LW)}. This is a second-order consistent scheme with no dissipation, thus matches the diffusivity of the \bulkScheme{} only when $\relaxationParameter_2 = 2$.
    Remark that since the bulk scheme is at most second-order accurate, it is somehow excessive to initialize with a scheme of the same order.
    Following an analogous procedure to the centered forward scheme, one possible initialisation is $\initialisationOperator_1, \initialisationOperator_2 \in \ringSpaceOperatorsOneD$ with
    \begin{equation}\label{eq:LawWendroff}
        \initialisationOperatorCoefficients_{1, \pm 1} = \frac{1- \equilibriumCoefficient_2^2}{2} , \quad \initialisationOperatorCoefficients_{1, 0} = \equilibriumCoefficient_2^2, \quad \initialisationOperatorCoefficients_{2, \pm 1} = \mp\frac{( 1  \pm \relaxationParameter_2 \equilibriumCoefficient_2)(1- \equilibriumCoefficient_2^2)}{2(1 - \relaxationParameter_2)},  \quad \initialisationOperatorCoefficients_{2, 0} = \frac{\equilibriumCoefficient_2 (1- \relaxationParameter_2 \equilibriumCoefficient_2^2)}{1 - \relaxationParameter_2} ,
    \end{equation}
    according to \eqref{eq:FormInitilisationNonLocal}, which respects \eqref{eq:PreparedInitial}, \eqref{eq:PreparedInitial2}, and \eqref{eq:PreparedBulk}.
    Again, it is also possible to generate initialisations yielding this scheme which do not fulfill \eqref{eq:PreparedInitial2} and \eqref{eq:PreparedBulk}.

    \item{\strong{Smooth initialisation inspired by }\cite{van2009smooth} (RE1)}.
    The idea of this initialisation is to make the most of the terms in the modified equation of the \iniSchemes{} and that of the \bulkScheme{} to match, if possible, without modification of the conserved moment, that is $\initialisationOperator_1 \in \reals$.
    In particular, in our case, this allows to match the numerical diffusion coefficient between the two schemes for every $\relaxationParameter_2 \in ]0, 2]$, as we shall see.
    We adapt Equation (13) from \cite{van2009smooth} by discretising the continuous derivative by a second-order centered formula, having
    \begin{equation}\label{eq:CoefficientInitialisationSmoothInTime}
        \initialisationOperator_1 = 1 \quad \text{and} \quad \initialisationOperator_2 \in \ringSpaceOperatorsOneD, \quad \text{where} \quad \initialisationOperatorCoefficients_{2, \pm 1} = \pm \frac{1 - \equilibriumCoefficient_2^2}{2 \relaxationParameter_2}, \qquad \initialisationOperatorCoefficients_{2, 0} = \equilibriumCoefficient_2,
    \end{equation}
    according to \eqref{eq:FormInitilisationNonLocal}.
    This initialisation fulfills \eqref{eq:PreparedInitial}, \eqref{eq:PreparedInitial2}, and \eqref{eq:PreparedBulk}.
\end{itemize}

\subsubsection{Study of the order of convergence}\label{sec:D1Q2ConvergenceOrder}

To empirically analyze the preservation of the order of the \bulkScheme, we consider the following initial data with different smoothness
\begin{align}
    \text{(a)} \qquad \solutionCauchyInitial (\spaceVariable) &= \chi_{\lvert \spaceVariable \rvert \leq 1/2}(\spaceVariable) \in H^{\sigma}, \quad \text{for any} \quad \sigma < \sigma_0 = 1/2. \label{eq:InitialDatumA}\\
    \text{(b)} \qquad \solutionCauchyInitial (\spaceVariable) &= (1-2\lvert \spaceVariable \rvert)\chi_{\lvert \spaceVariable \rvert \leq 1/2}(\spaceVariable)  \in H^{\sigma}, \quad \text{for any} \quad \sigma < \sigma_0 = 3/2. \label{eq:InitialDatumB}\\
    \text{(c)} \qquad \solutionCauchyInitial (\spaceVariable) &= \cos^2{(\pi \spaceVariable)}\chi_{\lvert \spaceVariable \rvert \leq 1/2}(\spaceVariable) \in H^{\sigma}, \quad \text{for any} \quad \sigma < \sigma_0 = 5/2. \label{eq:InitialDatumC}\\
    \text{(d)} \qquad \solutionCauchyInitial (\spaceVariable) &= \text{exp}\left (-1/{(1- \lvert 2\spaceVariable \rvert^2)} \right )\chi_{\lvert \spaceVariable \rvert \leq 1/2}(\spaceVariable) \in C_{c}^{\infty}, \label{eq:InitialDatumD}
\end{align}
issued from \cite{bellotti2021fd}.
As common in the linear framework, we monitor the $L^2$ errors.
We simulate for $\latticeVelocity = 1$, $\equilibriumCoefficient_2  = \transportVelocity/\latticeVelocity = 1/2$ with final time $1/2$ and on a bounded domain $[-1, 1]$ with periodic boundary conditions.

\begin{table} 
    \caption{\label{tab:orderConvergenceD1Q2}Expected orders of convergence in $\spaceStep$ for the \scheme{1}{2} scheme.}   
    \begin{center}
        \begin{tabular}{c|c|c}
            Test & Bulk scheme 1st order ($0 < \relaxationParameter_2 < 2$) & Bulk scheme 2nd order ($\relaxationParameter_2 = 2$) \\
            \hline
            (a) - \eqref{eq:InitialDatumA} & order $1/4$ & order $1/3$ \\
            (b) - \eqref{eq:InitialDatumB} & order $3/4$ & order $1$ \\
            (c) - \eqref{eq:InitialDatumC} & order $1$   & order $5/3$ \\
            (d) - \eqref{eq:InitialDatumD} & order $1$   & order $2$
        \end{tabular}
    \end{center}     
\end{table}

We expect the scheme  to be convergent following the orders given in \Cref{tab:orderConvergenceD1Q2} \cite{strikwerda2004finite, bellotti2021fd} and observe orders exceeding one provided that both following conditions are met:
\begin{enumerate}
    \item the \iniScheme{} is at least first-order consistent with the Cauchy problem \eqref{eq:CauchyEquation};
    \item the initial filter on the initial datum $\initialisationOperator_1$ is such that $\termAtOrder{\initialisationOperatorAsymptotic_1}{1} = 0$, meaning that it perturbs from $\bigO{\spaceStep^2}$ or for higher orders.
\end{enumerate}

\definecolor{ColorS11}{RGB}{53, 24, 62}
\definecolor{ColorS12}{RGB}{112, 31, 87}
\definecolor{ColorS14}{RGB}{173, 23, 89}
\definecolor{ColorS16}{RGB}{226, 50, 66}
\definecolor{ColorS18}{RGB}{244, 118, 81}
\definecolor{ColorS20}{RGB}{247, 181, 143}

  \begin{figure} 
    \begin{center}
        Initialisation \eqref{eq:CenteredGood}  \\
        \includegraphics{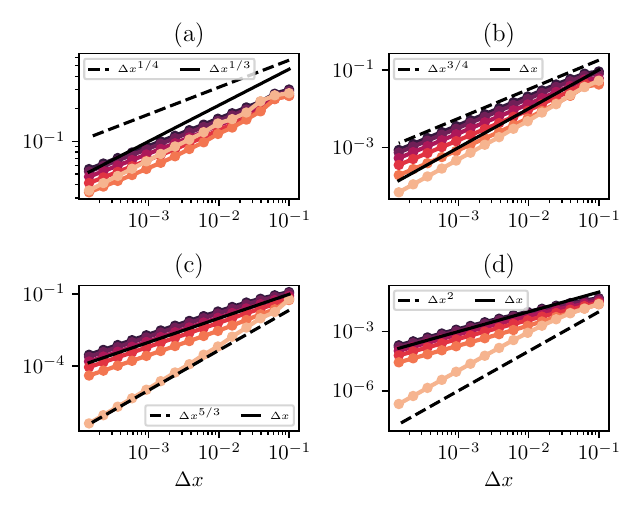} \\
        Initialisation \eqref{eq:CenteredBad} \\
        \includegraphics{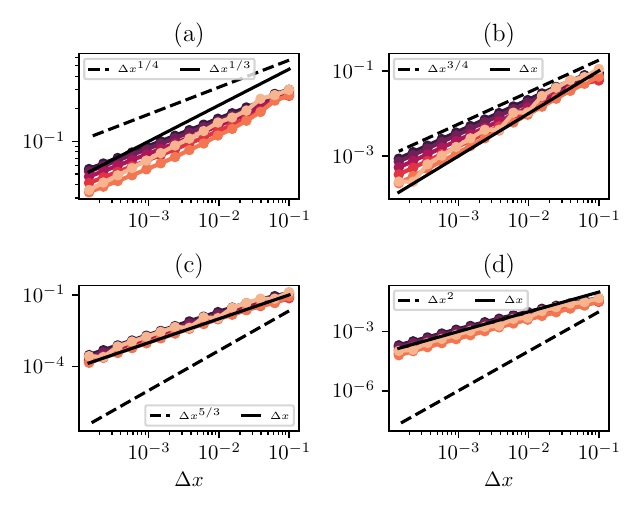}            
    \end{center}\caption{\label{fig:d1q2_centered}$L^2$ errors at the final time for two forward centered initialisations \eqref{eq:CenteredGood} (top) and \eqref{eq:CenteredBad} (bottom). Since the letter irremediably perturbs the conserved moment feeding the \bulkScheme{}, the orders of convergence above one are lowered. Color legend for the relaxation parameter $\relaxationParameter_2$: \textcolor{ColorS11}{{\LARGE $\bullet$} for $\relaxationParameter_2 = 1.1$}, \textcolor{ColorS12}{{\LARGE $\bullet$} for $\relaxationParameter_2 = 1.2$}, \textcolor{ColorS14}{{\LARGE $\bullet$} for $\relaxationParameter_2 = 1.4$}, \textcolor{ColorS16}{{\LARGE $\bullet$} for $\relaxationParameter_2 = 1.6$}, \textcolor{ColorS18}{{\LARGE $\bullet$} for $\relaxationParameter_2 = 1.8$}, and \textcolor{ColorS20}{{\LARGE $\bullet$} for $\relaxationParameter_2 = 2$}.}
  \end{figure}

The results are in agreement with the theory.
We just present few of them for the sake of avoiding redundancy, in particular, those concerning the forward centered initialisation schemes \eqref{eq:CenteredGood} and \eqref{eq:CenteredBad} given in \Cref{fig:d1q2_centered}.
As expected, despite the fact that the obtained \iniScheme{} is the same, \eqref{eq:CenteredBad} pollutes the initial datum with respect to the one from the Cauchy problem \eqref{eq:CauchyInitialDatum} due to a first-order term $\termAtOrder{\initialisationOperatorAsymptotic_1}{1} \neq 0$.
Hence, even for $\relaxationParameter_2 = 2$, the order of convergence is lowered.
We shall reinterpret why  \eqref{eq:CenteredBad} yields a poor behaviour.


\subsubsection{Study of the time smoothness of the numerical solution}\label{sec:TimeSmoothnessD1Q2}

\begin{figure}[h]
    \begin{center}
        \includegraphics[scale=0.99]{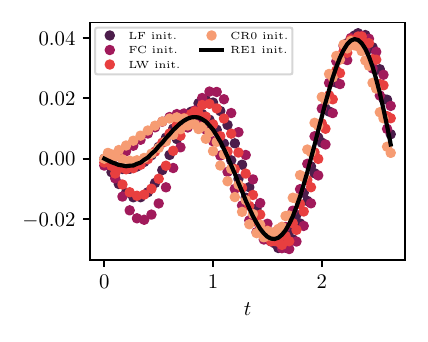}
        \includegraphics[scale=0.99]{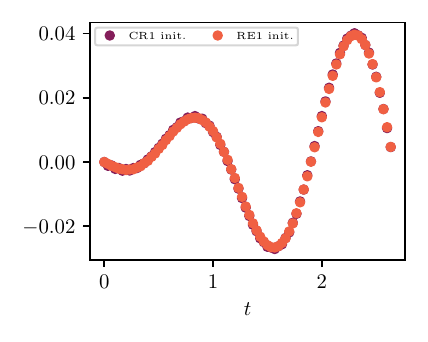}
    \end{center}\caption{\label{fig:InitialError}Test for the smoothness in time close to $\timeVariable = 0$ for $\relaxationParameter_2 = 1.99$: difference between exact and numerical solution at the eighth lattice point.}
  \end{figure}

  \begin{figure}[h]
    \begin{center}
        \includegraphics{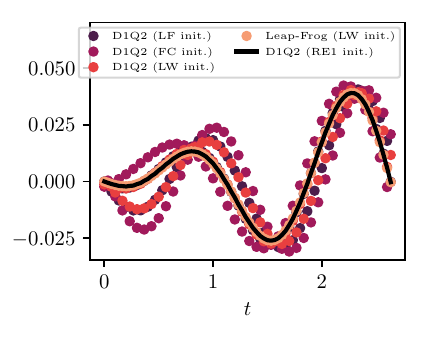}
    \end{center}\caption{\label{fig:InitialErrors2}Test for the smoothness in time close to $\timeVariable = 0$ for $\relaxationParameter_2 = 2$: difference between exact and numerical solution at the eighth lattice point. Compared to \Cref{fig:InitialError}, CR0 and CR1 from \cite{van2009smooth} cannot be used.}
  \end{figure}

We have observed that the only proposed initialisation matching the dissipation of the bulk scheme for every $\relaxationParameter_2 \in ]0, 2]$ is the one given by \eqref{eq:CoefficientInitialisationSmoothInTime}.
To confirm that this is the origin of its good performances in term of time smoothness of the discrete solution close to the initial time, we repeat the numerical experiment found in \cite{van2009smooth}.
The simulation is carried on the periodic domain $[0, 1]$ discretized with $\spaceStep = 1/30$, $\relaxationParameter_2 = 1.99$, $\latticeVelocity = 1$ and $\equilibriumCoefficient_2  = \transportVelocity / \latticeVelocity = 0.66$.
The initial datum of the Cauchy problem is $\solutionCauchyInitial(\spaceVariable) = \cos(2\pi \spaceVariable)$.

We initialize using the Lax-Friedrichs initialisation \eqref{eq:LaxFriedrichs} (coinciding with what \cite{van2009smooth} calls RE0 scheme), the forward centered initialisation \eqref{eq:CenteredGood}, the Lax-Wendroff initialisation \eqref{eq:LawWendroff}, the RE1 initialisation \eqref{eq:CoefficientInitialisationSmoothInTime} and the implicit initialisations CR0 and CR1 proposed in \cite{van2009smooth}, which are not detailed here.
We measure the difference between the exact solution and the approximate solution at the eighth cell of the lattice.
The results are given in \Cref{fig:InitialError} and are in accordance with the previous analysis as well as the computations in \cite{van2009smooth}.
Indeed, since the dissipation of the \bulkScheme{} is almost zero for $\relaxationParameter_2 = 1.99$, the Lax-Wendroff scheme is supposed to almost match this dissipation.
However here, the same phenomenon that took place in \Cref{sec:D1Q2ConvergenceOrder} at leading order for the forward centered initialisation between \eqref{eq:CenteredGood} and \eqref{eq:CenteredBad}, due to the introduction of a first-order perturbation on the conserved moment, now takes place for \eqref{eq:LawWendroff}, because it introduces a second-order perturbation on the conserved moment $\discreteMoment_1$ feeding the multi-step \bulkScheme{} \eqref{eq:BulkSchemes}, namely $\termAtOrder{\initialisationOperatorAsymptotic_1}{2} \neq 0$.
Taking $\relaxationParameter_2 = 2$, hence no dissipation from the bulk scheme, we obtain the result in \Cref{fig:InitialErrors2}, which is not different from the previous one (notice that here the implicit initialisation RE1 cannot be utilized).
In this Figure, we have also repeated the simulation using a leap-frog scheme (coinciding with the \bulkScheme{}) initialized with a Lax-Wendroff scheme, which conversely leads the expected smoothness, since we do not have to filter the initial datum of the Cauchy problem.

%

\subsubsection{Theoretical analysis using the modified equations}\label{sec:ModEqD1Q2order2}

Let us proceed to a more quantitative study of what can be observed in \Cref{fig:InitialError} and \Cref{fig:InitialErrors2}.
To this end, we push the computation of the modified equation of the \stSchemes{} for $\indiceTime \in \nonZeroNaturals$ to order $\bigO{\spaceStep^2}$.
At the end of this section, we will fully justify why this procedure allows us to study the behavior of the numerical solution even when we cannot suppose that it is smooth in the time variable, which seems to be the case for the previous numerical experiments.
To complete the previous computations, we are left to consider
\begin{align*}
    \termAtOrder{(\schemeMatrixAsymptotic^{\indiceTime})}{2} 
    &= \sum \{ \text{per. of }\termAtOrder{\schemeMatrixAsymptotic}{0} \text{ (}\indiceTime-1\text{ tm.) and }\termAtOrder{\schemeMatrixAsymptotic}{2} \text{ (once)}\} + \sum \{ \text{per. of }\termAtOrder{\schemeMatrixAsymptotic}{0} \text{ (}\indiceTime-2\text{ tm.) and }\termAtOrder{\schemeMatrixAsymptotic}{1} \text{ (twice)}\} \\
    &= \sum\nolimits_{\indiceFreeOne = 0}^{\indiceFreeOne = \indiceTime - 1} (\termAtOrder{\schemeMatrixAsymptotic}{0})^{\indiceFreeOne} \termAtOrder{\schemeMatrixAsymptotic}{2} (\termAtOrder{\schemeMatrixAsymptotic}{0})^{\indiceTime - 1 - \indiceFreeOne}  + \sum\nolimits_{\indiceFreeOne = 0}^{\indiceFreeOne = \indiceTime - 2} \sum\nolimits_{\indiceFreeTwo = 0}^{ \indiceFreeTwo = \indiceTime - 2 - \indiceFreeOne}(\termAtOrder{\schemeMatrixAsymptotic}{0})^{\indiceFreeOne} \termAtOrder{\schemeMatrixAsymptotic}{1}  (\termAtOrder{\schemeMatrixAsymptotic}{0})^{\indiceFreeTwo} \termAtOrder{\schemeMatrixAsymptotic}{1}  (\termAtOrder{\schemeMatrixAsymptotic}{0})^{\indiceTime - 2 - \indiceFreeOne - \indiceFreeTwo}.\nonumber \\
\end{align*}
Using the matrix from the particular \scheme{1}{2} scheme, we obtain for every $\indiceTime \in \nonZeroNaturals$
\begin{align*}
    \termAtOrder{(\schemeMatrixAsymptotic^{\indiceTime})}{2}_{11} &= \Bigl ( \frac{\indiceTime}{2}  + \sum\nolimits_{\indiceFreeOne = 0}^{\indiceFreeOne = \indiceTime - 2} \sum\nolimits_{\indiceFreeTwo = 1}^{ \indiceFreeTwo = \indiceTime - 1 - \indiceFreeOne} (1-\relaxationParameter_2)^{\indiceFreeTwo} \\
    &+ \equilibriumCoefficient_2^2 \sum\nolimits_{\indiceFreeOne = 0}^{\indiceFreeOne = \indiceTime - 2} \sum\nolimits_{\indiceFreeTwo = 0}^{ \indiceFreeTwo = \indiceTime - 2 - \indiceFreeOne} \Bigl ( \relaxationParameter_2^2  + \relaxationParameter_2 (1-\relaxationParameter_2) \polynomialEquilibrium_{\indiceTime-2-\indiceFreeOne - \indiceFreeTwo}(\relaxationParameter_2) + (1-\relaxationParameter_2) \polynomialEquilibrium_{\indiceFreeTwo}(\relaxationParameter_2) \polynomialEquilibrium_{\indiceTime-1-\indiceFreeOne - \indiceFreeTwo}(\relaxationParameter_2) \Bigr ) \Bigr )\partial_{\spaceVariable \spaceVariable},
\end{align*}
and 
\begin{equation*}
    \termAtOrder{(\schemeMatrixAsymptotic^{\indiceTime})}{2}_{12} = \equilibriumCoefficient_2\sum_{\indiceFreeOne = 0}^{\indiceTime - 2} \sum_{\indiceFreeTwo = 0}^{\indiceTime - 2 - \indiceFreeOne} (1-\relaxationParameter_2)^{\indiceTime - 1 - \indiceFreeOne - \indiceFreeTwo} \polynomialEquilibrium_{\indiceFreeTwo+1}(\relaxationParameter_2) \partial_{\spaceVariable\spaceVariable}.
\end{equation*}

For all the initialisations we have considered, we have $\termAtOrder{\initialisationOperatorAsymptotic_1}{0} = 1$, corresponding to \eqref{eq:PreparedInitial}.
No other assumption is needed in the following derivation.
With the usual procedure, we obtain for $\indiceTime \in \nonZeroNaturals$ and $\spaceVariable \in \reals$
\begin{align*}
    & \partial_{\timeVariable}\testFunction(0, \spaceVariable)- \frac{\latticeVelocity}{\indiceTime} \Bigl ( \termAtOrder{(\schemeMatrixAsymptotic^{\indiceTime})}{1}_{11} +  \termAtOrder{(\schemeMatrixAsymptotic^{\indiceTime})}{1}_{12} \termAtOrder{\initialisationOperatorAsymptotic_2}{0} + \termAtOrder{\initialisationOperatorAsymptotic_1}{1}\Bigr ) \testFunction(0, \spaceVariable)\\ 
    &+ \indiceTime  \frac{\spaceStep}{2\latticeVelocity} \partial_{\timeVariable\timeVariable}\testFunction(0, \spaceVariable)  -  \frac{\latticeVelocity \spaceStep}{\indiceTime} \Bigl (  \termAtOrder{(\schemeMatrixAsymptotic^{\indiceTime})}{2}_{11} +  \termAtOrder{(\schemeMatrixAsymptotic^{\indiceTime})}{2}_{12} \termAtOrder{\initialisationOperatorAsymptotic_2}{0} + \termAtOrder{(\schemeMatrixAsymptotic^{\indiceTime})}{1}_{11} \termAtOrder{\initialisationOperatorAsymptotic_1}{1} + \termAtOrder{(\schemeMatrixAsymptotic^{\indiceTime})}{1}_{12} \termAtOrder{\initialisationOperatorAsymptotic_2}{1} + \termAtOrder{\initialisationOperatorAsymptotic_1}{2}\Bigr ) \testFunction(0, \spaceVariable) = \bigO{\spaceStep^2}.
\end{align*}

\begin{itemize}
    \item{\strong{Lax-Friedrichs} \eqref{eq:LaxFriedrichs}}.
    \begin{proposition}
        Under acoustic scaling, the modified equations for the \stSchemes{} for the Lax-Friedrichs initialisation given by \eqref{eq:LaxFriedrichs}  are, for $\indiceTime \in \nonZeroNaturals$
        \begin{equation}\label{eq:ModifiedD1Q2InitialLaxFridrichs}
            \partial_{\timeVariable} \testFunction(0, \spaceVariable) + \latticeVelocity \equilibriumCoefficient_2 \partial_{\spaceVariable} \testFunction(0, \spaceVariable) - \latticeVelocity \spaceStep \Bigl ( \frac{1}{2} + \sum_{\indiceFreeOne = 1}^{\indiceTime - 1}\Bigl (1 - \frac{\indiceFreeOne}{\indiceTime}\Bigr )(1-\relaxationParameter_2)^{\indiceFreeOne} \Bigr ) ( 1 - \equilibriumCoefficient_2^2 )\partial_{\spaceVariable\spaceVariable}  \testFunction(0, \spaceVariable) = \bigO{\spaceStep^2}, \qquad \spaceVariable \in \reals. 
        \end{equation}
    \end{proposition}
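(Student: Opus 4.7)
My plan is to specialise the general order-$\bigO{\spaceStep^2}$ template derived at the start of Section~\ref{sec:ModEqD1Q2order2} to the concrete local datum $\initialisationOperator_1 = 1$, $\initialisationOperator_2 = \equilibriumCoefficient_2$. Since this is a local initialisation, every higher-order asymptotic coefficient vanishes: $\termAtOrder{\initialisationOperatorAsymptotic_1}{1} = \termAtOrder{\initialisationOperatorAsymptotic_1}{2} = \termAtOrder{\initialisationOperatorAsymptotic_2}{1} = 0$, while $\termAtOrder{\initialisationOperatorAsymptotic_1}{0} = 1$ and $\termAtOrder{\initialisationOperatorAsymptotic_2}{0} = \equilibriumCoefficient_2$. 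Substituting into the master formula collapses it to the equation
\begin{equation*}
\partial_{\timeVariable}\testFunction - \frac{\latticeVelocity}{\indiceTime}\bigl(\termAtOrder{(\schemeMatrixAsymptotic^{\indiceTime})}{1}_{11} + \equilibriumCoefficient_2\, \termAtOrder{(\schemeMatrixAsymptotic^{\indiceTime})}{1}_{12}\bigr)\testFunction + \indiceTime\frac{\spaceStep}{2\latticeVelocity}\partial_{\timeVariable\timeVariable}\testFunction - \frac{\latticeVelocity\spaceStep}{\indiceTime}\bigl(\termAtOrder{(\schemeMatrixAsymptotic^{\indiceTime})}{2}_{11} + \equilibriumCoefficient_2\, \termAtOrder{(\schemeMatrixAsymptotic^{\indiceTime})}{2}_{12}\bigr)\testFunction = \bigO{\spaceStep^2},
\end{equation*}
so all I need to do is insert the explicit formulas for $\termAtOrder{(\schemeMatrixAsymptotic^{\indiceTime})}{1}$ and $\termAtOrder{(\schemeMatrixAsymptotic^{\indiceTime})}{2}$ already displayed in the surrounding discussion.

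At order $\bigO{\spaceStep}$ I would recover the transport term. Using the expressions derived in the proof of \Cref{prop:EquivalentEquationInitialisationLocal} with $\duboisOperatorEntry_{11} = 0$ and $\duboisOperatorEntry_{12} = \partial_{\spaceVariable}$, one gets $\termAtOrder{(\schemeMatrixAsymptotic^{\indiceTime})}{1}_{11} = -\equilibriumCoefficient_2\,\partial_{\spaceVariable}\sum_{\indiceFreeOne=0}^{\indiceTime-1}\polynomialEquilibrium_{\indiceTime-\indiceFreeOne}(\relaxationParameter_2)$ and $\termAtOrder{(\schemeMatrixAsymptotic^{\indiceTime})}{1}_{12} = -\partial_{\spaceVariable}\sum_{\indiceFreeOne=0}^{\indiceTime-1}(1-\relaxationParameter_2)^{\indiceTime-\indiceFreeOne}$. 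Adding them with weights $1$ and $\equilibriumCoefficient_2$, the telescoping identity $\polynomialEquilibrium_k(\relaxationParameter_2) + (1-\relaxationParameter_2)^k = 1$ turns the bracket into exactly $\indiceTime\,\equilibriumCoefficient_2\,\partial_{\spaceVariable}$, reproducing $\partial_{\timeVariable}\testFunction + \latticeVelocity\equilibriumCoefficient_2\partial_{\spaceVariable}\testFunction = \bigO{\spaceStep}$. This also permits the reinjection $\partial_{\timeVariable\timeVariable}\testFunction = \latticeVelocity^2\equilibriumCoefficient_2^2\,\partial_{\spaceVariable\spaceVariable}\testFunction + \bigO{\spaceStep}$ at the next order.

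At order $\bigO{\spaceStep^2}$ the diffusion coefficient is therefore
\begin{equation*}
\frac{\indiceTime\,\equilibriumCoefficient_2^2}{2} - \frac{1}{\indiceTime}\bigl(A_{\indiceTime} + \equilibriumCoefficient_2^2\, B_{\indiceTime}\bigr),
\end{equation*}
where $A_{\indiceTime}$ is the coefficient in $\termAtOrder{(\schemeMatrixAsymptotic^{\indiceTime})}{2}_{11}$ independent of $\equilibriumCoefficient_2^2$, while $B_{\indiceTime}$ collects the $\equilibriumCoefficient_2^2$ contribution of $\termAtOrder{(\schemeMatrixAsymptotic^{\indiceTime})}{2}_{11}$ together with the cross contribution $\equilibriumCoefficient_2 \cdot \equilibriumCoefficient_2 \sum\sum (1-\relaxationParameter_2)^{\indiceTime-1-\indiceFreeOne-\indiceFreeTwo}\polynomialEquilibrium_{\indiceFreeTwo+1}(\relaxationParameter_2)$ from $\termAtOrder{(\schemeMatrixAsymptotic^{\indiceTime})}{2}_{12}$. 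The goal is to show that $A_{\indiceTime} = B_{\indiceTime}$, so that the factor $(1-\equilibriumCoefficient_2^2)$ emerges, and that the common value equals $\indiceTime\bigl(\tfrac{1}{2} + \sum_{\indiceFreeOne=1}^{\indiceTime-1}(1-\indiceFreeOne/\indiceTime)(1-\relaxationParameter_2)^{\indiceFreeOne}\bigr)$ after dividing by $\indiceTime$ and subtracting $\indiceTime\equilibriumCoefficient_2^2/2$.

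The main obstacle is this combinatorial simplification: one has to expand every $\polynomialEquilibrium_{k}(\relaxationParameter_2) = 1 - (1-\relaxationParameter_2)^{k}$ inside the nested sums, use geometric-series identities to carry out the inner sum in $\indiceFreeTwo$, and then reindex the outer sum in $\indiceFreeOne$. I would proceed by first processing $A_{\indiceTime} = \tfrac{\indiceTime}{2} + \sum_{\indiceFreeOne=0}^{\indiceTime-2}\sum_{\indiceFreeTwo=1}^{\indiceTime-1-\indiceFreeOne}(1-\relaxationParameter_2)^{\indiceFreeTwo}$ via a change of summation order, yielding the clean form $\tfrac{\indiceTime}{2} + \sum_{\indiceFreeOne=1}^{\indiceTime-1}(\indiceTime - \indiceFreeOne)(1-\relaxationParameter_2)^{\indiceFreeOne}$. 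The harder check is showing that $B_{\indiceTime}$, despite its three-term integrand involving products of two $\polynomialEquilibrium$ polynomials, simplifies to the same expression; this amounts to verifying a polynomial identity in $(1-\relaxationParameter_2)$ which I would do by expanding both products, collecting powers of $(1-\relaxationParameter_2)^m$, and matching coefficients against $A_{\indiceTime}$. Once $A_{\indiceTime} = B_{\indiceTime}$ is established, dividing by $\indiceTime$, combining with $\indiceTime\equilibriumCoefficient_2^2/2$, and multiplying by $-\latticeVelocity\spaceStep$ yields the claimed modified equation \eqref{eq:ModifiedD1Q2InitialLaxFridrichs}.
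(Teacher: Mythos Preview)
Your overall strategy is correct and matches the paper's: specialise the master template to $\termAtOrder{\initialisationOperatorAsymptotic_1}{1}=\termAtOrder{\initialisationOperatorAsymptotic_1}{2}=\termAtOrder{\initialisationOperatorAsymptotic_2}{1}=0$, reinject $\partial_{\timeVariable\timeVariable}=\latticeVelocity^2\equilibriumCoefficient_2^2\partial_{\spaceVariable\spaceVariable}+\bigO{\spaceStep}$, and simplify the double sums. Your computation of $A_{\indiceTime}=\tfrac{\indiceTime}{2}+\sum_{\indiceFreeOne=1}^{\indiceTime-1}(\indiceTime-\indiceFreeOne)(1-\relaxationParameter_2)^{\indiceFreeOne}$ is also correct.

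However, the key identity you aim for is wrong: $A_{\indiceTime}\neq B_{\indiceTime}$ in general. For instance at $\indiceTime=1$ one has $A_1=\tfrac12$ and $B_1=0$, and at $\indiceTime=2$ one has $A_2=1+(1-\relaxationParameter_2)$ while $B_2=\relaxationParameter_2$. The paper's computation (expanding each $\polynomialEquilibrium_k(\relaxationParameter_2)=1-(1-\relaxationParameter_2)^k$ and collapsing the double sums) gives
\[
B_{\indiceTime}=\frac{\indiceTime(\indiceTime-1)}{2}-\sum_{\indiceFreeOne=1}^{\indiceTime-1}(\indiceTime-\indiceFreeOne)(1-\relaxationParameter_2)^{\indiceFreeOne},
\]
so the correct relation is $A_{\indiceTime}+B_{\indiceTime}=\tfrac{\indiceTime^2}{2}$. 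The factor $(1-\equilibriumCoefficient_2^2)$ does \emph{not} appear because the two pieces coincide; it appears because the reinjection contributes $-\tfrac{\indiceTime}{2}\equilibriumCoefficient_2^2$ to the diffusion coefficient, and
\[
-\frac{\indiceTime}{2}\equilibriumCoefficient_2^2+\frac{B_{\indiceTime}}{\indiceTime}\,\equilibriumCoefficient_2^2
=\Bigl(-\frac{\indiceTime}{2}+\frac{1}{\indiceTime}\Bigl(\frac{\indiceTime^2}{2}-A_{\indiceTime}\Bigr)\Bigr)\equilibriumCoefficient_2^2
=-\frac{A_{\indiceTime}}{\indiceTime}\,\equilibriumCoefficient_2^2,
\]
which then combines with the $\equilibriumCoefficient_2$-independent term $A_{\indiceTime}/\indiceTime$ to produce $\tfrac{A_{\indiceTime}}{\indiceTime}(1-\equilibriumCoefficient_2^2)$. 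Once you replace ``$A_{\indiceTime}=B_{\indiceTime}$'' with ``$A_{\indiceTime}+B_{\indiceTime}=\indiceTime^2/2$'' and track the $-\tfrac{\indiceTime}{2}\equilibriumCoefficient_2^2$ from the time-derivative reinjection explicitly, the rest of your argument goes through.
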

    \begin{proof}
    This initialisation fulfils the requirements by \Cref{prop:PreparedInitialisation}, which leads to 
    \begin{equation}\label{eq:ModifiedD1Q2InitialLaxFridrichsIncomplete}
        \partial_{\timeVariable}\testFunction(0, \spaceVariable) + \latticeVelocity \equilibriumCoefficient_2 \partial_{\spaceVariable} \testFunction(0, \spaceVariable) 
        + \indiceTime \frac{\spaceStep}{2\latticeVelocity} \partial_{\timeVariable\timeVariable}\testFunction(0, \spaceVariable)  - \frac{\latticeVelocity \spaceStep}{\indiceTime} \Bigl (  \termAtOrder{(\schemeMatrixAsymptotic^{\indiceTime})}{2}_{11} +  \termAtOrder{(\schemeMatrixAsymptotic^{\indiceTime})}{2}_{12} \equilibriumCoefficient_2 \Bigr ) \testFunction(0, \spaceVariable) = \bigO{\spaceStep^2},
    \end{equation}
    for $\indiceTime \in \nonZeroNaturals$ and $\spaceVariable \in \reals$.
    Using the previous order to get rid of the second-order time derivative $\partial_{\timeVariable \timeVariable}$ \cite{warming1974modified,carpentier1997derivation, dubois2008equivalent,dubois2019nonlinear} boils down to
    \begin{equation*}
        \partial_{\timeVariable}\testFunction(0, \spaceVariable) + \latticeVelocity \equilibriumCoefficient_2 \partial_{\spaceVariable} \testFunction(0, \spaceVariable) 
        - \latticeVelocity \spaceStep \Bigl ( - \frac{\indiceTime}{2}\equilibriumCoefficient_2^2  \partial_{\spaceVariable \spaceVariable}+ \frac{1}{\indiceTime} \Bigl (  \termAtOrder{(\schemeMatrixAsymptotic^{\indiceTime})}{2}_{11} +  \termAtOrder{(\schemeMatrixAsymptotic^{\indiceTime})}{2}_{12} \equilibriumCoefficient_2 \Bigr ) \Bigr ) \testFunction(0, \spaceVariable)= \bigO{\spaceStep^2},
    \end{equation*}
    for $\indiceTime \in \nonZeroNaturals$ and $\spaceVariable \in \reals $.
    We are left to deal with the diffusion term, for $\indiceTime \in \nonZeroNaturals$
    \begin{multline*}
        \termAtOrder{(\schemeMatrixAsymptotic^{\indiceTime})}{2}_{11} +  \termAtOrder{(\schemeMatrixAsymptotic^{\indiceTime})}{2}_{12} \equilibriumCoefficient_2 
        = \Bigl ( \frac{\indiceTime}{2}  + \sum_{\indiceFreeOne = 0}^{\indiceTime - 2} \sum_{\indiceFreeTwo = 1}^{ \indiceTime - 1 - \indiceFreeOne} (1-\relaxationParameter_2)^{\indiceFreeTwo} + \equilibriumCoefficient_2^2 \sum_{\indiceFreeOne = 0}^{\indiceTime - 2} \sum_{\indiceFreeTwo = 0}^{ \indiceTime - 2 - \indiceFreeOne} \Bigl ( \relaxationParameter_2^2  + \relaxationParameter_2 (1-\relaxationParameter_2) \polynomialEquilibrium_{\indiceTime-2-\indiceFreeOne - \indiceFreeTwo}(\relaxationParameter_2) \\
        + (1-\relaxationParameter_2) \polynomialEquilibrium_{\indiceFreeTwo}(\relaxationParameter_2) \polynomialEquilibrium_{\indiceTime-1-\indiceFreeOne - \indiceFreeTwo}(\relaxationParameter_2) + (1-\relaxationParameter_2)^{\indiceTime - 1 - \indiceFreeOne - \indiceFreeTwo} \polynomialEquilibrium_{\indiceFreeTwo+1}(\relaxationParameter_2)\Bigr ) \Bigr )\partial_{\spaceVariable \spaceVariable}.
    \end{multline*}
    Using the expression for $\polynomialEquilibrium_{\indiceFreeOne}$ to handle the last term shows that
    \begin{equation*}
        \termAtOrder{(\schemeMatrixAsymptotic^{\indiceTime})}{2}_{11} +  \termAtOrder{(\schemeMatrixAsymptotic^{\indiceTime})}{2}_{12} \equilibriumCoefficient_2 
        = \Bigl ( \frac{\indiceTime}{2}  + \sum_{\indiceFreeOne = 1}^{\indiceTime - 1}(\indiceTime - \indiceFreeOne)(1-\relaxationParameter_2)^{\indiceFreeOne} + \equilibriumCoefficient_2^2  \Bigl (\frac{\indiceTime (\indiceTime - 1)}{2}-\sum_{\indiceFreeOne = 1}^{\indiceTime - 1}(\indiceTime - \indiceFreeOne)(1-\relaxationParameter_2)^{\indiceFreeOne}  \Bigr ) \Bigr )\partial_{\spaceVariable \spaceVariable},  
    \end{equation*}
    for $\indiceTime \in \nonZeroNaturals$.
    Plugging into the expansion \eqref{eq:ModifiedD1Q2InitialLaxFridrichsIncomplete} provides
    \begin{equation*}
        \partial_{\timeVariable} \testFunction(0, \spaceVariable) + \latticeVelocity \equilibriumCoefficient_2 \partial_{\spaceVariable} \testFunction(0, \spaceVariable) - \latticeVelocity \spaceStep \Bigl ( \frac{1}{2} + \sum_{\indiceFreeOne = 1}^{\indiceTime - 1}\Bigl (1 - \frac{\indiceFreeOne}{\indiceTime}\Bigr )(1-\relaxationParameter_2)^{\indiceFreeOne} \Bigr ) ( 1 - \equilibriumCoefficient_2^2 )\partial_{\spaceVariable\spaceVariable}  \testFunction(0, \spaceVariable) = \bigO{\spaceStep^2}, \qquad \indiceTime \in \nonZeroNaturals, 
    \end{equation*}
    for $\spaceVariable \in \reals$.
\end{proof}
    \begin{figure}
        \begin{center}
            \includegraphics{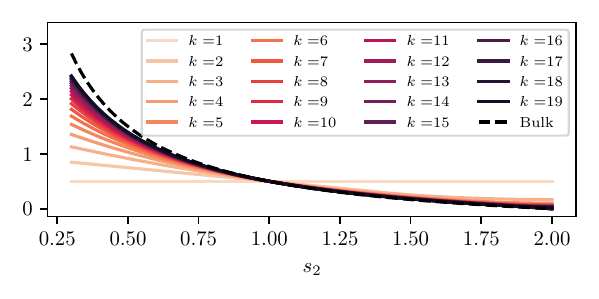}
        \end{center}\caption{\label{fig:d1q2_equilibrium_diffusion}Plot of the polynomial ${1}/{2} + \sum\nolimits_{\indiceFreeOne = 1}^{\indiceFreeOne = \indiceTime - 1} (1 - {\indiceFreeOne}/{\indiceTime} ) (1-\relaxationParameter_2)^{\indiceFreeOne} $ appearing in \eqref{eq:ModifiedD1Q2InitialLaxFridrichs} for different $\indiceTime$ compared to $1/\relaxationParameter_2 - 1/2$ (bulk).}
      \end{figure}
    This proves once more that the origin of the initial boundary layer is the mismatch in the dissipation coefficient of the scheme, see \Cref{fig:d1q2_equilibrium_diffusion}.
    Of course, it must be kept in mind that these expansions are meaningful as long as $\indiceTime \timeStep \ll 1$, this is, for small times.
    However, from the simulations and \Cref{fig:d1q2_equilibrium_diffusion}, we see that the boundary layer damps in time, since the dissipation coefficient  in \eqref{eq:ModifiedD1Q2InitialLaxFridrichs} converges to the bulk one in \eqref{eq:ModifiedD1Q2Bulk} by taking the formal limit $\indiceTime \to +\infty$:
    \begin{equation*}
        \lim_{\indiceTime \to +\infty} \Bigl ( \frac{1}{2} + \sum_{\indiceFreeOne = 1}^{\indiceTime - 1}\Bigl (1 - \frac{\indiceFreeOne}{\indiceTime}  \Bigr ) (1-\relaxationParameter_2)^{\indiceFreeOne} \Bigr )  = \lim_{\indiceTime \to +\infty} \Bigl ( \frac{1}{2} + \frac{(1-\relaxationParameter_2)^{\indiceTime + 1}}{\indiceTime \relaxationParameter_2^2} - \frac{(1-\relaxationParameter_2)}{\indiceTime \relaxationParameter_2^2} + \frac{(1-\relaxationParameter_2)}{\relaxationParameter_2} \Bigr ) = \frac{1}{\relaxationParameter_2} - \frac{1}{2},
    \end{equation*}
    unsurprisingly by virtue of \Cref{prop:LongTimebehaviour}.
    We see that---as previously claimed---this formal limit holds regardless of the fulfilment of the CFL condition. However, it strongly depends on the fact that $\relaxationParameter_2 \leq 2$, otherwise, it would not hold and indeed exponentially diverge.
    We can also study the behaviour for $\relaxationParameter_2 \simeq 2$:
    \begin{equation*}
        \lim_{\relaxationParameter_2 \to 2^{-}} \Bigl ( \frac{1}{2} + \sum_{\indiceFreeOne = 1}^{\indiceTime - 1}\Bigl (1 - \frac{\indiceFreeOne}{\indiceTime}  \Bigr ) (1-\relaxationParameter_2)^{\indiceFreeOne} \Bigr ) =  \frac{1}{2} + \sum_{\indiceFreeOne = 1}^{ \indiceTime - 1}\Bigl (1 - \frac{\indiceFreeOne}{\indiceTime}  \Bigr ) (-1)^{\indiceFreeOne} = \frac{1 - (-1)^{\indiceTime}}{4\indiceTime} = 
        \begin{cases}
            0, \qquad &\text{for }\indiceTime\text{ even}, \\
            \tfrac{1}{2\indiceTime}, \qquad &\text{for }\indiceTime\text{ odd},
        \end{cases}
    \end{equation*}
    for $\indiceTime \in \nonZeroNaturals$.
    This explains why the errors in  \Cref{fig:InitialError} and \Cref{fig:InitialErrors2} are close to the ones of RE1 \eqref{eq:CoefficientInitialisationSmoothInTime} (up to high order contributions) for even time steps.
    On the one hand, for $\indiceTime$ even, the dissipation of the \bulkScheme{} is matched by the \stSchemes{}, producing good agreement. On the other hand, for $\indiceTime$ odd, the dissipation is strictly positive, though decreasing linearly with $\indiceTime$, creating the jumping behaviour of the errors. 
    This suggests that the damping of the initial boundary layer should be proportional to $\timeVariable^{-1}$ and explains the discrepancies with respect to RE1 \eqref{eq:CoefficientInitialisationSmoothInTime} for the odd time steps.
    Finally, observe that this decoupling---even as far as the dissipation is concerned---between even and odd time steps for $\relaxationParameter_2 = 2$ is expected since the \bulkScheme{} is a leap-frog.

    \item{\strong{Forward centered scheme} \eqref{eq:CenteredGood}}. 
    We have, see \Cref{app:DerivationModifiedEquationsD1Q2} for the proof:
    \begin{proposition}
        Under acoustic scaling, the modified equations for the \stSchemes{} for the forward centered initialisation given by \eqref{eq:CenteredGood}  are, for $\indiceTime \in \nonZeroNaturals$
        \begin{align}
            \partial_{\timeVariable} \testFunction(0, \spaceVariable) &+ \latticeVelocity \equilibriumCoefficient_2 \partial_{\spaceVariable} \testFunction(0, \spaceVariable) \label{eq:ModifiedD1Q2InitialCenteredGood} \\
            &- \latticeVelocity \spaceStep \Bigl ( \Bigl ( \frac{1}{2} + \sum_{\indiceFreeOne = 1}^{\indiceTime - 1}\Bigl (1 - \frac{\indiceFreeOne}{\indiceTime}\Bigr )(1-\relaxationParameter_2)^{\indiceFreeOne} \Bigr )  ( 1 - \equilibriumCoefficient_2^2  )+ \frac{1}{2\indiceTime} \Bigl ( 1 - 2\sum_{\indiceFreeOne = 0}^{\indiceTime - 1} (1- \relaxationParameter_2)^{\indiceFreeOne} \Bigr ) \Bigl ) \partial_{\spaceVariable \spaceVariable} \testFunction(0, \spaceVariable) = \bigO{\spaceStep^2},\nonumber
        \end{align}
        with $\spaceVariable \in \reals$.
    \end{proposition}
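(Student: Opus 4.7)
The plan is to mirror the derivation just carried out for the Lax--Friedrichs initialisation, since both prepared initialisations coincide at leading order on the non-conserved moment ($\termAtOrder{\initialisationOperatorAsymptotic_2}{0}=\equilibriumCoefficient_2$), but \eqref{eq:CenteredGood} introduces two new ingredients at the next orders in $\spaceStep$. First, I would verify directly that the coefficients in \eqref{eq:CenteredGood} satisfy \eqref{eq:PreparedInitial}, \eqref{eq:PreparedInitial2} and \eqref{eq:PreparedBulk}: one computes $\sum_{\boldOther{\indiceMultiIndexDiscrete}} \initialisationOperatorCoefficients_{1, \boldOther{\indiceMultiIndexDiscrete}} = 1$, $\sum_{\boldOther{\indiceMultiIndexDiscrete}} \initialisationOperatorCoefficients_{1, \boldOther{\indiceMultiIndexDiscrete}}\boldOther{\indiceMultiIndexDiscrete} = 0$ and $\sum_{\boldOther{\indiceMultiIndexDiscrete}} \initialisationOperatorCoefficients_{2, \boldOther{\indiceMultiIndexDiscrete}} = \equilibriumCoefficient_2$ by brute force. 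By \Cref{prop:PreparedInitialisation} this already pins down both the $\bigO{1}$ datum and the $\bigO{\spaceStep}$ piece of the modified equation to coincide with the bulk equation \eqref{eq:BulkEquivalentEquation} specialized at $\timeVariable=0$, so the only open task is the identification of the $\bigO{\spaceStep}$ residue in the $\partial_{\spaceVariable\spaceVariable}$ coefficient after elimination of $\partial_{\timeVariable\timeVariable}$.

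Second, I would extract the two genuinely new asymptotic coefficients using \Cref{prop:ModifiedEquationPrepared}. Since $\initialisationOperator_1=\symmetricPart(\basicShiftLetter_1)$ is centered, $\termAtOrder{\initialisationOperatorAsymptotic_1}{1}=0$ and a direct Taylor expansion gives $\termAtOrder{\initialisationOperatorAsymptotic_1}{2}=\tfrac{1}{2}\partial_{\spaceVariable\spaceVariable}$. The asymmetry of the coefficients $\initialisationOperatorCoefficients_{2,\pm 1}$ yields $\termAtOrder{\initialisationOperatorAsymptotic_2}{1}=\tfrac{1}{1-\relaxationParameter_2}\partial_{\spaceVariable}$. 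Both are then inserted into the general $\bigO{\spaceStep^2}$-truncated expansion displayed just before the Lax--Friedrichs statement, and the leading-order relation $\partial_{\timeVariable}\testFunction(0,\spaceVariable)=-\latticeVelocity\equilibriumCoefficient_2\partial_{\spaceVariable}\testFunction(0,\spaceVariable)+\bigO{\spaceStep}$ is used to replace $\partial_{\timeVariable\timeVariable}$ by $\latticeVelocity^2\equilibriumCoefficient_2^2\partial_{\spaceVariable\spaceVariable}$.

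Third, the algebraic simplification. The block $\termAtOrder{(\schemeMatrixAsymptotic^{\indiceTime})}{2}_{11}+\termAtOrder{(\schemeMatrixAsymptotic^{\indiceTime})}{2}_{12}\equilibriumCoefficient_2$ is identical to the one processed in the Lax--Friedrichs proof and reproduces verbatim the $(1-\equilibriumCoefficient_2^2)$-weighted sum visible in \eqref{eq:ModifiedD1Q2InitialLaxFridrichs}. Reading off $\termAtOrder{(\schemeMatrixAsymptotic^{\indiceTime})}{1}_{12}=-\partial_{\spaceVariable}\sum_{\indiceFreeOne=1}^{\indiceTime}(1-\relaxationParameter_2)^{\indiceFreeOne}$ from \eqref{eq:StructurePowersCollisionMatrix} and \eqref{eq:schemeMatrixPowerDevelopment}, the two new contributions recombine to
\begin{equation*}
    -\tfrac{\latticeVelocity\spaceStep}{\indiceTime}\bigl(\termAtOrder{(\schemeMatrixAsymptotic^{\indiceTime})}{1}_{12}\termAtOrder{\initialisationOperatorAsymptotic_2}{1} + \termAtOrder{\initialisationOperatorAsymptotic_1}{2}\bigr) = -\tfrac{\latticeVelocity\spaceStep}{2\indiceTime}\Bigl(1 - 2\sum_{\indiceFreeOne=0}^{\indiceTime-1}(1-\relaxationParameter_2)^{\indiceFreeOne}\Bigr)\partial_{\spaceVariable\spaceVariable},
\end{equation*}
which is exactly the extra bracketed term in \eqref{eq:ModifiedD1Q2InitialCenteredGood}. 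The main obstacle is purely bookkeeping: the index shift needed so that the $1/(1-\relaxationParameter_2)$ factor in $\termAtOrder{\initialisationOperatorAsymptotic_2}{1}$ absorbs one power of $(1-\relaxationParameter_2)$ inside the geometric sum coming from $\termAtOrder{(\schemeMatrixAsymptotic^{\indiceTime})}{1}_{12}$, leaving a sum starting at zero rather than at one, and then combining with the $\tfrac{1}{2}\partial_{\spaceVariable\spaceVariable}$ contribution of $\termAtOrder{\initialisationOperatorAsymptotic_1}{2}$. No new conceptual ingredient beyond the Lax--Friedrichs case is required.
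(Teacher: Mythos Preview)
Your proposal is correct and follows essentially the same approach as the paper's own proof: invoke \Cref{prop:PreparedInitialisation} to fix the $\bigO{1}$ and $\bigO{\spaceStep}$ levels, identify the two new asymptotic coefficients $\termAtOrder{\initialisationOperatorAsymptotic_2}{1}={1}/{(1-\relaxationParameter_2)}\partial_{\spaceVariable}$ and $\termAtOrder{\initialisationOperatorAsymptotic_1}{2}=\tfrac{1}{2}\partial_{\spaceVariable\spaceVariable}$, and add their contribution $\termAtOrder{(\schemeMatrixAsymptotic^{\indiceTime})}{1}_{12}\termAtOrder{\initialisationOperatorAsymptotic_2}{1}+\termAtOrder{\initialisationOperatorAsymptotic_1}{2}$ on top of the Lax--Friedrichs diffusion term already computed. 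The index-shift you describe is exactly the mechanism by which the paper passes from the geometric sum in $\termAtOrder{(\schemeMatrixAsymptotic^{\indiceTime})}{1}_{12}$ (via \eqref{eq:OrderOneMatrixD1Q2}) to a sum starting at $\indiceFreeOne=0$.
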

    Again, according to \Cref{prop:LongTimebehaviour}, the bulk viscosity coefficient is asymptotically reached, since    
    \begin{equation*}
        \lim_{\indiceTime \to +\infty} \Bigl ( \Bigl ( \frac{1}{2} + \sum_{\indiceFreeOne = 1}^{\indiceTime - 1}\Bigl (1 - \frac{\indiceFreeOne}{\indiceTime}\Bigr )(1-\relaxationParameter_2)^{\indiceFreeOne} \Bigr )  ( 1 - \equilibriumCoefficient_2^2 ) + \frac{1}{2\indiceTime} \Bigl ( 1 - 2\sum_{\indiceFreeOne = 0}^{\indiceTime - 1} (1- \relaxationParameter_2)^{\indiceFreeOne} \Bigr ) \Bigl ) = \Bigl ( \frac{1}{\relaxationParameter_2} - \frac{1}{2} \Bigr )  ( 1 - \equilibriumCoefficient_2^2  ).
    \end{equation*}
    Concerning the behaviour close to $\relaxationParameter_2 \simeq 2$, we have
    \begin{multline*}
        \lim_{\relaxationParameter_2 \to 2^{-}} \Bigl ( \Bigl ( \frac{1}{2} + \sum_{\indiceFreeOne = 1}^{\indiceTime - 1}\Bigl (1 - \frac{\indiceFreeOne}{\indiceTime}\Bigr )(1-\relaxationParameter_2)^{\indiceFreeOne} \Bigr )  ( 1 - \equilibriumCoefficient_2^2  ) + \frac{1}{2\indiceTime} \Bigl ( 1 - 2\sum_{\indiceFreeOne = 0}^{\indiceTime - 1} (1- \relaxationParameter_2)^{\indiceFreeOne} \Bigr ) \Bigl ) \\
        = \frac{(1 - (-1)^{\indiceTime})}{4\indiceTime}  ( 1 - \equilibriumCoefficient_2^2 ) + \frac{(-1)^{\indiceTime}}{2\indiceTime}= 
        \begin{cases}
            \tfrac{1}{2\indiceTime}, \qquad &\text{for }\indiceTime\text{ even}, \\
            -\tfrac{\equilibriumCoefficient^2_2}{2 \indiceTime}, \qquad &\text{for }\indiceTime\text{ odd},
        \end{cases}
    \end{multline*}
    for $\indiceTime \in \nonZeroNaturals$.
    We observe that the even steps of \stSchemes{} have the same diffusivity as the odd steps for the Lax-Friedrichs initialisation \eqref{eq:LaxFriedrichs}, whereas the odd ones have negative diffusivity, which remains from having an \iniScheme{} with negative dissipation, coupled with the fact that the \bulkScheme{} is a leap-frog scheme.
    The question which might be risen is on how the overall scheme can remain stable.
    In terms of Finite Differences, the choice of initial datum only changes the spectrum of the data feeding the \bulkScheme{}, which is stable under \eqref{eq:StabilityConditionsD1Q2}, for every initial datum.
    Concerning the previous computation, we have that under the CFL condition $-\equilibriumCoefficient_2^2/(2 \indiceTime) \geq -1/(2 \indiceTime)$, hence steps with negative dissipation are compensated by steps with sufficiently positive dissipation, yielding an overall stable scheme.

    \item{\strong{Forward centered scheme} \eqref{eq:CenteredBad}}. For this scheme, it is useless to analyze until second order because we know that issues start at $\bigO{\spaceStep}$, see \Cref{sec:D1Q2ConvergenceOrder}.
    We have, see \Cref{app:DerivationModifiedEquationsD1Q2}:
    \begin{proposition}
        Under acoustic scaling, the modified equations for the \stSchemes{} for the forward centered initialisation given by \eqref{eq:CenteredBad}  are, for $\indiceTime \in \nonZeroNaturals$
        \begin{equation*}
            \partial_{\timeVariable}\testFunction(0, \spaceVariable)+ \latticeVelocity \equilibriumCoefficient_2 \Bigl ( 1 + \frac{2}{\indiceTime} \Bigl ( 1 - \sum_{\indiceFreeOne = 0}^{\indiceTime - 1}(1 - \relaxationParameter_2)^{\indiceFreeOne} \Bigr ) \Bigr ) \partial_{\spaceVariable} \testFunction(0, \spaceVariable)
            = \bigO{\spaceStep}, \qquad \spaceVariable \in \reals.
        \end{equation*}
    \end{proposition}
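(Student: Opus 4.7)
The plan is to specialize \Cref{prop:ModifiedEquationPrepared} to the \scheme{1}{2} scheme (for which $\duboisOperatorEntry_{11} = 0$ and $\duboisOperatorEntry_{12} = \partial_{\spaceVariable}$) with the prepared initialisation \eqref{eq:CenteredBad}, and then to simplify the result at order $\bigO{\spaceStep}$. The computational structure is the same as for \eqref{eq:CenteredGood} in the previous proposition; only the asymptotic expansion of $\boldOther{\initialisationOperator}$ changes.

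First I would extract from \eqref{eq:CenteredBad} the three asymptotic quantities entering \Cref{prop:ModifiedEquationPrepared}. A direct summation gives $\termAtOrder{\initialisationOperatorAsymptotic}{0}_1 = \sum_{\boldOther{\indiceMultiIndexDiscrete}} \initialisationOperatorCoefficients_{1,\boldOther{\indiceMultiIndexDiscrete}} = 1$, so \eqref{eq:PreparedInitial} holds and the leading $\bigO{1}$ balance is trivially satisfied. However the weighted sum yields $\sum_{\boldOther{\indiceMultiIndexDiscrete}} \initialisationOperatorCoefficients_{1,\boldOther{\indiceMultiIndexDiscrete}} \boldOther{\indiceMultiIndexDiscrete} = 2\equilibriumCoefficient_2$, hence $\termAtOrder{\initialisationOperatorAsymptotic}{1}_1 = -2\equilibriumCoefficient_2 \partial_{\spaceVariable} \neq 0$: the initial datum of the conserved moment carries a first-order drift, as anticipated in \Cref{sec:D1Q2ConvergenceOrder}. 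For the non-conserved moment, summing the four coefficients produces $\termAtOrder{\initialisationOperatorAsymptotic}{0}_2 = -\equilibriumCoefficient_2(1+\relaxationParameter_2)/(1-\relaxationParameter_2) \neq \equilibriumCoefficient_2$, so \eqref{eq:PreparedBulk} also fails.

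Next I would plug these three ingredients into the formula of \Cref{prop:ModifiedEquationPrepared}. Matching the $\bigO{\spaceStep}$ contributions and dividing by $\indiceTime\spaceStep/\latticeVelocity$ gives, after factoring $\latticeVelocity\equilibriumCoefficient_2\partial_{\spaceVariable}$,
\begin{equation*}
    \partial_{\timeVariable}\testFunction(0, \spaceVariable) + \latticeVelocity\equilibriumCoefficient_2 \Bigl[-\frac{1+\relaxationParameter_2}{1-\relaxationParameter_2} + \frac{2}{\indiceTime(1-\relaxationParameter_2)}\sum_{\indiceFreeOne=0}^{\indiceTime-1}\polynomialEquilibrium_{\indiceTime-\indiceFreeOne}(\relaxationParameter_2) + \frac{2}{\indiceTime}\Bigr] \partial_{\spaceVariable}\testFunction(0, \spaceVariable) = \bigO{\spaceStep}.
\end{equation*}
The final step is an algebraic rearrangement: using $\polynomialEquilibrium_{\indiceFreeOne}(X) = 1-(1-X)^{\indiceFreeOne}$ and the reindexing $\indiceFreeTwo = \indiceTime-\indiceFreeOne$, one has $\sum_{\indiceFreeOne=0}^{\indiceTime-1}\polynomialEquilibrium_{\indiceTime-\indiceFreeOne}(\relaxationParameter_2) = \indiceTime - (1-\relaxationParameter_2)\sum_{\indiceFreeOne=0}^{\indiceTime-1}(1-\relaxationParameter_2)^{\indiceFreeOne}$. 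Substituting, the $(1-\relaxationParameter_2)^{-1}$ pieces collapse via $(2-(1+\relaxationParameter_2))/(1-\relaxationParameter_2) = 1$, delivering the advertised coefficient $1 + (2/\indiceTime)(1 - \sum_{\indiceFreeOne=0}^{\indiceTime-1}(1-\relaxationParameter_2)^{\indiceFreeOne})$.

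The only real subtlety is the bookkeeping of the singular denominators $1-\relaxationParameter_2$ that pervade \eqref{eq:CenteredBad}: the scheme remains well posed at $\relaxationParameter_2 = 1$, so these denominators must cancel exactly in the modified equation, and the identity above is precisely what enforces this cancellation. Apart from that, no new machinery is needed; in particular, since order reduction already manifests itself at leading order for this initialisation, there is no benefit in pushing the expansion beyond $\bigO{\spaceStep}$.
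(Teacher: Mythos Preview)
Your proof is correct and follows essentially the same route as the paper: you compute $\termAtOrder{\initialisationOperatorAsymptotic}{0}_1 = 1$, $\termAtOrder{\initialisationOperatorAsymptotic}{1}_1 = -2\equilibriumCoefficient_2\partial_{\spaceVariable}$, $\termAtOrder{\initialisationOperatorAsymptotic}{0}_2 = -\equilibriumCoefficient_2(1+\relaxationParameter_2)/(1-\relaxationParameter_2)$ and plug them into the first-order expansion, exactly as the paper does in \Cref{app:DerivationModifiedEquationsD1Q2}. The only cosmetic difference is that you invoke \Cref{prop:ModifiedEquationPrepared} directly, while the paper writes the same expansion in terms of the matrix entries $\termAtOrder{(\schemeMatrixAsymptotic^{\indiceTime})}{1}_{11}$, $\termAtOrder{(\schemeMatrixAsymptotic^{\indiceTime})}{1}_{12}$ from \eqref{eq:OrderOneMatrixD1Q2}; the subsequent algebraic simplification using $\polynomialEquilibrium_{\indiceFreeOne}(X) = 1-(1-X)^{\indiceFreeOne}$ is identical.
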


    Unsurprisingly, the \iniScheme{} is consistent ($\indiceTime = 1$), but the general \stSchemes{} ($\indiceTime > 1$) are not.
    This does not prevent the overall scheme to converge, since $\termAtOrder{\initialisationOperatorAsymptotic_1}{0} = 1$ but only at first-order even when $\relaxationParameter_2 = 2$, see \Cref{fig:d1q2_centered}.
    Following \Cref{prop:LongTimebehaviour}
    \begin{equation*}
        \lim_{\indiceTime \to +\infty}\Bigl ( 1 + \frac{2}{\indiceTime} \Bigl ( 1 - \sum_{\indiceFreeOne = 0}^{\indiceTime - 1}(1 - \relaxationParameter_2)^{\indiceFreeOne} \Bigr ) \Bigr ) = 1.
    \end{equation*}

    \item{\strong{Lax-Wendroff} \eqref{eq:LawWendroff}}. We have, \emph{cf.} \Cref{app:DerivationModifiedEquationsD1Q2}:
    \begin{proposition}
        Under acoustic scaling, the modified equations for the \stSchemes{} for the Lax-Wendroff initialisation given by \eqref{eq:LawWendroff}  are, for $\indiceTime \in \nonZeroNaturals$
        \begin{align}
            \partial_{\timeVariable} \testFunction(0, \spaceVariable) &+ \latticeVelocity \equilibriumCoefficient_2 \partial_{\spaceVariable} \testFunction(0, \spaceVariable) \label{eq:ModifiedD1Q2InitialLaxWendroff} \\
            &- \latticeVelocity \spaceStep \Bigl (\frac{1}{2} + \sum_{\indiceFreeOne = 1}^{\indiceTime - 1}\Bigl (1 - \frac{\indiceFreeOne}{\indiceTime}\Bigr )(1-\relaxationParameter_2)^{\indiceFreeOne} + \frac{1}{2\indiceTime} \Bigl ( 1 - 2\sum_{\indiceFreeOne = 0}^{\indiceTime - 1} (1- \relaxationParameter_2)^{\indiceFreeOne} \Bigr ) \Bigl )  ( 1 -  \equilibriumCoefficient_2^2 ) \partial_{\spaceVariable \spaceVariable} \testFunction(0, \spaceVariable) = \bigO{\spaceStep^2}, \nonumber
        \end{align}
        for $\spaceVariable \in \reals$.
    \end{proposition}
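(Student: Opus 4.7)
The plan is to follow the same template as the two preceding propositions on Lax-Friedrichs and forward centered initialisations, since the general $\bigO{\spaceStep^2}$ expansion displayed at the beginning of \Cref{sec:ModEqD1Q2order2} applies verbatim to any prepared initialisation; only the asymptotic equivalents of $\boldOther{\initialisationOperator}$ change. First I would compute $\termAtOrder{\initialisationOperatorAsymptotic_{\indiceMoments}}{\indiceOrder}$ for $\indiceMoments \in \{1, 2\}$ and $\indiceOrder \in \{0, 1, 2\}$ using the second-order Taylor rule $\basicShiftLetter_1^k \asymptoticEquivalence 1 - k\spaceStep \partial_\spaceVariable + \tfrac{k^2 \spaceStep^2}{2}\partial_{\spaceVariable\spaceVariable} + \bigO{\spaceStep^3}$. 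The symmetric choice $\initialisationOperatorCoefficients_{1, \pm 1} = (1-\equilibriumCoefficient_2^2)/2$ immediately gives $\termAtOrder{\initialisationOperatorAsymptotic_1}{0} = 1$, $\termAtOrder{\initialisationOperatorAsymptotic_1}{1} = 0$ and $\termAtOrder{\initialisationOperatorAsymptotic_1}{2} = \tfrac{1}{2}(1-\equilibriumCoefficient_2^2)\partial_{\spaceVariable\spaceVariable}$. A brief telescoping of the non-conserved coefficients yields $\termAtOrder{\initialisationOperatorAsymptotic_2}{0} = \equilibriumCoefficient_2$, with the factor $1-\relaxationParameter_2$ in the denominator cancelling algebraically, and $\termAtOrder{\initialisationOperatorAsymptotic_2}{1} = \tfrac{1-\equilibriumCoefficient_2^2}{1-\relaxationParameter_2}\partial_\spaceVariable$.

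Since $\termAtOrder{\initialisationOperatorAsymptotic_1}{0} = 1$, $\termAtOrder{\initialisationOperatorAsymptotic_1}{1} = 0$ and $\termAtOrder{\initialisationOperatorAsymptotic_2}{0} = \equilibriumCoefficient_2$, the orders $\bigO{1}$ and $\bigO{\spaceStep}$ match the bulk equation \eqref{eq:ModifiedD1Q2Bulk} by \Cref{prop:PreparedInitialisation}, so the leading identity $\partial_\timeVariable = -\latticeVelocity \equilibriumCoefficient_2 \partial_\spaceVariable + \bigO{\spaceStep}$ is available to replace $\partial_{\timeVariable\timeVariable}$ by $\latticeVelocity^2 \equilibriumCoefficient_2^2 \partial_{\spaceVariable\spaceVariable}$ in the usual way. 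The contribution from $\termAtOrder{(\schemeMatrixAsymptotic^\indiceTime)}{2}_{11} + \termAtOrder{(\schemeMatrixAsymptotic^\indiceTime)}{2}_{12}\equilibriumCoefficient_2$ combined with that $\partial_{\timeVariable\timeVariable}$ substitution is exactly the one already worked out in the Lax-Friedrichs proof, and yields the canonical factor $\bigl(\tfrac{1}{2} + \sum_{\indiceFreeOne=1}^{\indiceTime-1}(1-\tfrac{\indiceFreeOne}{\indiceTime})(1-\relaxationParameter_2)^\indiceFreeOne\bigr)(1-\equilibriumCoefficient_2^2)\partial_{\spaceVariable\spaceVariable}$.

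The only new contributions are $\tfrac{1}{\indiceTime}\termAtOrder{(\schemeMatrixAsymptotic^\indiceTime)}{1}_{12}\termAtOrder{\initialisationOperatorAsymptotic_2}{1}$ and $\tfrac{1}{\indiceTime}\termAtOrder{\initialisationOperatorAsymptotic_1}{2}$. Using the identity $(\collisionMatrix^\indiceFreeOne \duboisOperator \collisionMatrix^{\indiceTime-\indiceFreeOne})_{12} = (1-\relaxationParameter_2)^{\indiceTime-\indiceFreeOne}\partial_\spaceVariable$ derived in the Lax-Friedrichs proof, a reindexing gives $\termAtOrder{(\schemeMatrixAsymptotic^\indiceTime)}{1}_{12} = -(1-\relaxationParameter_2)\sum_{\indiceFreeOne=0}^{\indiceTime-1}(1-\relaxationParameter_2)^\indiceFreeOne \partial_\spaceVariable$, whose prefactor $(1-\relaxationParameter_2)$ cancels the apparent pole carried by $\termAtOrder{\initialisationOperatorAsymptotic_2}{1}$. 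Adding the two new contributions then produces $\tfrac{1-\equilibriumCoefficient_2^2}{2\indiceTime}\bigl(1 - 2\sum_{\indiceFreeOne=0}^{\indiceTime-1}(1-\relaxationParameter_2)^\indiceFreeOne\bigr)\partial_{\spaceVariable\spaceVariable}$, exactly the extra diffusive coefficient distinguishing \eqref{eq:ModifiedD1Q2InitialLaxWendroff} from \eqref{eq:ModifiedD1Q2InitialLaxFridrichs}.

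The only subtlety lies in this bookkeeping: one must correctly separate the Lax-Friedrichs baseline from the purely prepared corrections, and verify that the $(1-\relaxationParameter_2)$ telescoping inside $\termAtOrder{(\schemeMatrixAsymptotic^\indiceTime)}{1}_{12}$ precisely cancels the denominator of $\termAtOrder{\initialisationOperatorAsymptotic_2}{1}$ so that the final expression remains regular on $]0,2]\setminus\{1\}$, consistent with the fact that \eqref{eq:LawWendroff} is itself ill-defined at $\relaxationParameter_2 = 1$ (where $\nonTriviallyRelaxingMomentsNumber = 0$ and no \iniScheme{} is needed anyway).
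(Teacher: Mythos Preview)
Your proposal is correct and follows essentially the same approach as the paper: identify the two asymptotic coefficients that change relative to the Lax-Friedrichs baseline, namely $\termAtOrder{\initialisationOperatorAsymptotic_2}{1} = \tfrac{1-\equilibriumCoefficient_2^2}{1-\relaxationParameter_2}\partial_\spaceVariable$ and $\termAtOrder{\initialisationOperatorAsymptotic_1}{2} = \tfrac{1-\equilibriumCoefficient_2^2}{2}\partial_{\spaceVariable\spaceVariable}$, and plug them into the general $\bigO{\spaceStep^2}$ expansion together with the formula \eqref{eq:OrderOneMatrixD1Q2} for $\termAtOrder{(\schemeMatrixAsymptotic^\indiceTime)}{1}_{12}$. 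Your extra remark on the cancellation of $(1-\relaxationParameter_2)$ between $\termAtOrder{(\schemeMatrixAsymptotic^\indiceTime)}{1}_{12}$ and $\termAtOrder{\initialisationOperatorAsymptotic_2}{1}$ is a nice sanity check that the paper leaves implicit.
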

    As expected, the dissipation coefficients tend to the one of the bulk scheme for $\indiceTime \to +\infty$ and for $\relaxationParameter_2 \simeq 2$, we find
    \begin{equation*}
        \lim_{\relaxationParameter_2 \to 2^{-}} \Bigl (\frac{1}{2} + \sum_{\indiceFreeOne = 1}^{\indiceTime - 1}\Bigl (1 - \frac{\indiceFreeOne}{\indiceTime}\Bigr )(1-\relaxationParameter_2)^{\indiceFreeOne} + \frac{1}{2\indiceTime} \Bigl ( 1 - 2\sum_{\indiceFreeOne = 0}^{\indiceTime - 1} (1- \relaxationParameter_2)^{\indiceFreeOne} \Bigr ) \Bigl ) = \frac{1 + (-1)^{\indiceTime}}{4\indiceTime} = 
        \begin{cases}
            \tfrac{1}{2\indiceTime}, \qquad &\text{for }\indiceTime\text{ even}, \\
            0, \qquad &\text{for }\indiceTime\text{ odd},
        \end{cases}
    \end{equation*}
    for $\indiceTime \in \nonZeroNaturals$.
    This is the opposite situation compared to the Lax-Friedrichs initialisation \eqref{eq:LaxFriedrichs} and again justifies the jumping behaviour compared to RE1 \eqref{eq:CoefficientInitialisationSmoothInTime}, see \Cref{fig:InitialError} and \Cref{fig:InitialErrors2}.
    Moreover, we further understand why we still observe the boundary layer: even if the \iniScheme{} matches the zero diffusivity of the bulk scheme, the second-order modification $\termAtOrder{\initialisationOperatorAsymptotic_1}{2} \neq 0$ we have imposed on the initial datum to obtain such \iniScheme{} reverberates over the following (even) time steps.

    \item{\strong{Smooth initialisation RE1} \eqref{eq:CoefficientInitialisationSmoothInTime}}. 
    \begin{proposition}
        Under acoustic scaling, the modified equations for the \stSchemes{} for the RE1 initialisation given by \eqref{eq:CoefficientInitialisationSmoothInTime}  are, for $\indiceTime \in \nonZeroNaturals$
        \begin{equation*}
            \partial_{\timeVariable} \testFunction(0, \spaceVariable) + \latticeVelocity \equilibriumCoefficient_2 \partial_{\spaceVariable} \testFunction(0, \spaceVariable) - \latticeVelocity \spaceStep \Bigl ( \frac{1}{\relaxationParameter_2} - \frac{1}{2} \Bigr )  ( 1 - \equilibriumCoefficient_2^2 )\partial_{\spaceVariable\spaceVariable}  \testFunction(0, \spaceVariable) = \bigO{\spaceStep^2}, \qquad \spaceVariable \in \reals.
        \end{equation*}
    \end{proposition}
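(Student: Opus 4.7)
The plan is to follow the template used for the Lax--Friedrichs \eqref{eq:ModifiedD1Q2InitialLaxFridrichs} and Lax--Wendroff \eqref{eq:ModifiedD1Q2InitialLaxWendroff} initialisations in \Cref{sec:ModEqD1Q2order2}: plug the RE1 coefficients \eqref{eq:CoefficientInitialisationSmoothInTime} into the generic $\bigO{\spaceStep^2}$ expansion of the \stSchemes{} written just before the present statement, use the leading order to trade $\partial_{\timeVariable\timeVariable}$ for $\latticeVelocity^2 \equilibriumCoefficient_2^2 \partial_{\spaceVariable\spaceVariable}$, and verify that the resulting dissipation coefficient is $\indiceTime$-independent and coincides with the bulk value $(1/\relaxationParameter_2-1/2)(1-\equilibriumCoefficient_2^2)$ of \eqref{eq:ModifiedD1Q2Bulk}.

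First, I would read off from \eqref{eq:CoefficientInitialisationSmoothInTime} the asymptotic equivalents $\termAtOrder{\initialisationOperatorAsymptotic}{0}_1 = 1$ and $\termAtOrder{\initialisationOperatorAsymptotic}{k}_1 = 0$ for every $k \geq 1$ (since $\initialisationOperator_1 = 1$ identically), together with $\termAtOrder{\initialisationOperatorAsymptotic}{0}_2 = \equilibriumCoefficient_2$ and $\termAtOrder{\initialisationOperatorAsymptotic}{1}_2 = -\tfrac{1-\equilibriumCoefficient_2^2}{\relaxationParameter_2}\partial_{\spaceVariable}$. These satisfy the hypotheses \eqref{eq:PreparedInitial}--\eqref{eq:PreparedBulk}, so \Cref{prop:PreparedInitialisation} immediately secures the match at orders $\bigO{1}$ and $\bigO{\spaceStep}$, producing the transport part $\partial_{\timeVariable}\testFunction + \latticeVelocity \equilibriumCoefficient_2 \partial_{\spaceVariable}\testFunction$ for free.

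For $\bigO{\spaceStep^2}$, I would exploit the fact that RE1 and Lax--Friedrichs \eqref{eq:LaxFriedrichs} share the same $\termAtOrder{\initialisationOperatorAsymptotic}{0}$ and differ only through $\termAtOrder{\initialisationOperatorAsymptotic}{1}_2 \neq 0$. Hence the modified equation to be established equals \eqref{eq:ModifiedD1Q2InitialLaxFridrichs} augmented by the single extra second-order contribution $-\tfrac{\latticeVelocity \spaceStep}{\indiceTime}\termAtOrder{(\schemeMatrixAsymptotic^{\indiceTime})}{1}_{12}\termAtOrder{\initialisationOperatorAsymptotic}{1}_2 \testFunction$. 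Using the formula for $\matrixEntries{(\collisionMatrix^{\indiceFreeOne}\duboisOperator\collisionMatrix^{\indiceTime-\indiceFreeOne})}{1}{2}$ from the proof of \Cref{prop:EquivalentEquationInitialisationLocal}, specialised to \scheme{1}{2} where the only relevant off-diagonal entry is $\duboisOperatorEntry_{12} = \partial_{\spaceVariable}$, a direct summation gives $\termAtOrder{(\schemeMatrixAsymptotic^{\indiceTime})}{1}_{12} = -\partial_{\spaceVariable}\sum_{\indiceFreeOne=1}^{\indiceTime}(1-\relaxationParameter_2)^{\indiceFreeOne}$, so that the extra dissipation coefficient to be added to the Lax--Friedrichs one is $\tfrac{1-\equilibriumCoefficient_2^2}{\relaxationParameter_2 \indiceTime}\sum_{\indiceFreeOne=1}^{\indiceTime}(1-\relaxationParameter_2)^{\indiceFreeOne}$.

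The claim then reduces to the algebraic identity
\begin{equation*}
\frac{1}{2} + \sum_{\indiceFreeOne=1}^{\indiceTime-1}\Bigl(1-\frac{\indiceFreeOne}{\indiceTime}\Bigr)(1-\relaxationParameter_2)^{\indiceFreeOne} + \frac{1}{\relaxationParameter_2 \indiceTime}\sum_{\indiceFreeOne=1}^{\indiceTime}(1-\relaxationParameter_2)^{\indiceFreeOne} = \frac{1}{\relaxationParameter_2} - \frac{1}{2}, \qquad \indiceTime \in \nonZeroNaturals,
\end{equation*}
which I would establish by substituting $r = 1-\relaxationParameter_2$ and invoking the closed-form expressions $\sum_{k=0}^{n-1} r^k = (1-r^n)/(1-r)$ and $\sum_{k=1}^{n-1} k r^k = (r - n r^n + (n-1)r^{n+1})/(1-r)^2$; a short telescoping between the two sums collapses the left-hand side to $1/2 + r/(1-r)$, which is exactly $1/\relaxationParameter_2 - 1/2$. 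The main obstacle is thus purely one of bookkeeping, but the structural reason behind the cancellation is that the drift $\termAtOrder{\initialisationOperatorAsymptotic}{1}_2$ in \eqref{eq:CoefficientInitialisationSmoothInTime} is tuned to precisely the Chapman--Enskog-type correction that annihilates the $\indiceTime$-dependence of the initial boundary layer at $\bigO{\spaceStep^2}$, thereby strengthening, for every finite $\indiceTime$, the long-time behaviour already guaranteed by \Cref{prop:LongTimebehaviour}.
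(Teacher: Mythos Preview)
Your argument is correct and follows the same route as the paper: identify that RE1 differs from the Lax--Friedrichs initialisation only through the extra contribution $-\tfrac{\latticeVelocity \spaceStep}{\indiceTime}\termAtOrder{(\schemeMatrixAsymptotic^{\indiceTime})}{1}_{12}\termAtOrder{\initialisationOperatorAsymptotic_2}{1}$, then reduce the claim to an algebraic identity in $\relaxationParameter_2$ and $\indiceTime$. The paper verifies that identity by induction whereas you use the closed-form geometric sums, and your version (with the $+$ sign before the first sum) is the correct one---the minus sign appearing in the paper's displayed identity is a typo, as one checks already at $\indiceTime = 2$.
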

    \begin{proof}
        In \Cref{app:DerivationModifiedEquationsD1Q2}, we obtain that 

    \begin{equation}\label{eq:ModifiedD1Q2InitialSmoothTime}
        \partial_{\timeVariable} \testFunction(0, \spaceVariable) + \equilibriumCoefficient_2 \partial_{\spaceVariable} \testFunction(0, \spaceVariable) - \latticeVelocity \spaceStep \Bigl ( \frac{1}{2} - \sum_{\indiceFreeOne = 1}^{\indiceTime - 1}\Bigl (1 - \frac{\indiceFreeOne}{\indiceTime}\Bigr )(1-\relaxationParameter_2)^{\indiceFreeOne} + \frac{1}{\indiceTime \relaxationParameter_2} \sum_{\indiceFreeOne = 1}^{\indiceTime}(1 - \relaxationParameter_2)^{\indiceFreeOne} \Bigr )  ( 1 - \equilibriumCoefficient_2^2 )\partial_{\spaceVariable\spaceVariable}  \testFunction(0, \spaceVariable) = \bigO{\spaceStep^2}.
    \end{equation}
    One can easily show by induction that
    \begin{equation*}
        \frac{1}{2} - \sum_{\indiceFreeOne = 1}^{\indiceTime - 1}\Bigl (1 - \frac{\indiceFreeOne}{\indiceTime}\Bigr )(1-\relaxationParameter_2)^{\indiceFreeOne} + \frac{1}{\indiceTime \relaxationParameter_2} \sum_{\indiceFreeOne = 1}^{\indiceTime}(1 - \relaxationParameter_2)^{\indiceFreeOne}  = \frac{1}{\relaxationParameter_2} - \frac{1}{2}, \qquad \indiceTime \in \nonZeroNaturals,
    \end{equation*}
    yielding the same modified equation as the \bulkScheme{}.
    \end{proof}
    This explains, once more, the smooth behaviour observed in \Cref{fig:InitialError} and \Cref{fig:InitialErrors2} and also shows an actual application of \Cref{prop:MatchEventually} for $\maxOrder = 2$.
    The smooth behavior comes from the fact that the schemes dissipate in the same way---which is the same as the bulk scheme---regardless of the time indices $\indiceTime$.
\end{itemize}

    \begin{remark}[Justification on the use of the modified equations]
        Observe that we can employ the modified equations---as we did---to assess the behavior of the schemes, in particular as far as time oscillatory (thus non-smooth) boundary layers are concerned. Even if these modified equations boil down to a low-frequency analysis in space, they are obtained without any approximation for the time variable, since we keep the discrete time indices $\indiceTime$, and no smoothness assumption or Taylor expansions with respect to the time variable are done.
    \end{remark}

\newcommand{\basicX}{\basicShiftLetter_1}
\newcommand{\FirstRelParDOneQThree}{\relaxationParameter_2}
\newcommand{\SecondRelParDOneQThree}{\relaxationParameter_3}
\newcommand{\velocityDOneQThree}{\equilibriumCoefficient_2}
\newcommand{\kDOneQThree}{\equilibriumCoefficient_3}
\newcommand{\henonSecond}{\frac{1}{\FirstRelParDOneQThree} - \frac{1}{2}}

\subsection{Three-velocities \scheme{1}{3} scheme}\label{sec:D1Q3}

The previous case of \scheme{1}{2} scheme suggests that particular care must be adopted when prepared initialisations for the conserved moment $\discreteMoment_1$ are used (\emph{i.e.} $\initialisationOperator_1 \in \ringSpaceOperatorsOneD$).
Therefore, in what follows, we treat only local initialisations for any moment.
We are now interested in equating the  dissipation of the initialisation schemes with the one of the bulk scheme for a richer scheme: the \scheme{1}{3}.
In particular, we look for a full characterisation of the conditions under which $\initialisationOperator_1, \initialisationOperator_2, \initialisationOperator_3 \in \reals$ yield initialisation schemes with the same dissipation as the \bulkScheme. 

\subsubsection{Description of the scheme}

We consider the \scheme{1}{3} scheme \cite{dubois2020notion, bellotti2021fd}, having $\spatialDimensionality = 1$, $\velocityNumber = 3$, $\discreteVelocityNormalized_1 = 0$, $\discreteVelocityNormalized_2 = 1$, $\discreteVelocityNormalized_3 = -1$, and
\begin{equation*}
    \momentMatrix = 
    \begin{bmatrix}
        1 & 1 & 1 \\
        0 & 1 & -1 \\
        -2 & 1 & 1
    \end{bmatrix}, ~~ 
    \transportMoment = 
    \begin{bmatrix}
        \frac{1}{3}(2\symmetricPart(\basicX) + 1) &  \antisymmetricPart(\basicX) & \frac{1}{3}(\symmetricPart(\basicX) - 1)\\
        \frac{2}{3} \antisymmetricPart(\basicX) & \symmetricPart (\basicX ) & \frac{1}{3} \antisymmetricPart(\basicX ) \\
        \frac{2}{3}(\symmetricPart(\basicX) - 1) & \antisymmetricPart(\basicX) & \frac{1}{3}(\symmetricPart(\basicX) + 2) 
    \end{bmatrix}, ~~
    \collisionMatrix = 
    \begin{bmatrix}
        1 & 0 & 0 \\
        \FirstRelParDOneQThree \velocityDOneQThree & 1 - \FirstRelParDOneQThree & 0 \\
        \SecondRelParDOneQThree \kDOneQThree & 0 & 1 - \SecondRelParDOneQThree
    \end{bmatrix}.
\end{equation*}
The modified equation of the \bulkScheme{} from \Cref{thm:EquivEqBulk} is
\begin{equation}\label{eq:EquivalenD1Q3Bulk}
    \partial_{\timeVariable} \testFunction (\timeVariable, \spaceVariable) + \latticeVelocity \velocityDOneQThree \partial_{\spaceVariable} \testFunction (\timeVariable, \spaceVariable) - \latticeVelocity \spaceStep \Bigl ( \henonSecond \Bigr )
    \Bigl ( \frac{2}{3} - \velocityDOneQThree^2 + \frac{\kDOneQThree}{3}  \Bigr )  \partial_{\spaceVariable \spaceVariable} \testFunction (\timeVariable, \spaceVariable) = \bigO{\spaceStep^2}, \qquad (\timeVariable, \spaceVariable) \in \nonNegativeReals \times \reals.
\end{equation}
To have a stable bulk method in the $L^2$ metric, the dissipation coefficient must not be negative, hence $\kDOneQThree < -2 + 3 \velocityDOneQThree^2$ is forbidden, because the modulus of the consistency (or ``physical'') eigenvalue would initially increase above one for small wavenumbers, causing the \bulkScheme{} to be unstable.
Sufficient conditions are more involved to determine but can be checked numerically.
Observe that the \eqref{eq:EquivalenD1Q3Bulk} does not depend on the choice of $\SecondRelParDOneQThree$. 
To obtain consistency with \eqref{eq:CauchyEquation}, we have to enforce $\equilibriumCoefficient_2 = \transportVelocity / \latticeVelocity$.
Furthermore, two leverages are available to make the \bulkScheme{} second-order consistent with the \eqref{eq:CauchyEquation}, namely taking $\FirstRelParDOneQThree = 2$ or $\FirstRelParDOneQThree \in ]0, 2[$ and $\kDOneQThree = -2 + 3\velocityDOneQThree^2$.

\subsubsection{Conditions to achieve time smoothness of the numerical solution}

Assuming that $\FirstRelParDOneQThree, \SecondRelParDOneQThree \neq 1$, we have that $\nonTriviallyRelaxingMomentsNumber = 2$, thus two \iniSchemes{} are to consider.
Their modified equations, computed with the previous techniques and considering local initialisations following the conditions by \Cref{prop:LocalInitialisation}---\emph{i.e.} $\initialisationOperator_1 = 1$ and $\initialisationOperator_2 = \velocityDOneQThree$---are as follows.
\begin{itemize}
    \item \strong{First \iniScheme{}}: \eqref{eq:InitialisationSchemes} for $\indiceTime = 1$
    \begin{equation}\label{eq:EquivalenD1Q3First}
        \partial_{\timeVariable} \testFunction (0, \spaceVariable) + \latticeVelocity \velocityDOneQThree \partial_{\spaceVariable} \testFunction (0, \spaceVariable) - \latticeVelocity \spaceStep \Bigl ( \frac{1}{3} -  \frac{\velocityDOneQThree^2}{2} + \frac{\SecondRelParDOneQThree \kDOneQThree}{6} + \frac{(1-\SecondRelParDOneQThree) \initialisationOperator_3}{6} \Bigr )  \partial_{\spaceVariable \spaceVariable} \testFunction (0, \spaceVariable)  = \bigO{\spaceStep^2}, \qquad \spaceVariable \in \reals.
    \end{equation}
    This scheme makes sense as \iniScheme{} unless both $\FirstRelParDOneQThree = \SecondRelParDOneQThree = 1$ (\emph{i.e.} $\nonTriviallyRelaxingMomentsNumber = 0$), where we observe that the diffusion coefficient in \eqref{eq:EquivalenD1Q3First} becomes equal to the one from \eqref{eq:EquivalenD1Q3Bulk}. In this case, the choice of $\initialisationOperator_3$ is unimportant, as expected.

    \item \strong{Second \iniScheme{}}: \eqref{eq:InitialisationSchemes} for $\indiceTime = 2$
    \begin{align}
        &\partial_{\timeVariable} \testFunction (0, \spaceVariable) + \latticeVelocity \velocityDOneQThree \partial_{\spaceVariable} \testFunction (0, \spaceVariable) \nonumber \\
        &- \latticeVelocity \spaceStep \Bigl ( \frac{(2 - \FirstRelParDOneQThree)}{3} + \frac{(\FirstRelParDOneQThree - 2) \velocityDOneQThree^2}{2} + \frac{\SecondRelParDOneQThree (5 - 2\FirstRelParDOneQThree-\SecondRelParDOneQThree) \kDOneQThree}{12}  + \frac{(1-\SecondRelParDOneQThree)(4  - 2\FirstRelParDOneQThree - \SecondRelParDOneQThree)\initialisationOperator_3}{12}\Bigr )  \partial_{\spaceVariable \spaceVariable} \testFunction (0, \spaceVariable)  = \bigO{\spaceStep^2},  \label{eq:EquivalenD1Q3Second}
    \end{align}
    for $\spaceVariable \in \reals$.
    In the case where both $\FirstRelParDOneQThree = \SecondRelParDOneQThree = 1$ ($\nonTriviallyRelaxingMomentsNumber = 0$), we have the previously described situation.
    Taking $\FirstRelParDOneQThree \neq 1$ and $\SecondRelParDOneQThree = 1$ ($\nonTriviallyRelaxingMomentsNumber = 1$), we obtain the modified equation of the first \stScheme{} which is not an \iniScheme{}
    \begin{equation*}
        \partial_{\timeVariable} \testFunction (0, \spaceVariable) + \latticeVelocity \velocityDOneQThree \partial_{\spaceVariable} \testFunction (0, \spaceVariable) - \latticeVelocity \spaceStep \FirstRelParDOneQThree \Bigl ( \henonSecond \Bigr )
        \Bigl ( \frac{2}{3} - \velocityDOneQThree^2 + \frac{\kDOneQThree}{3}  \Bigr )  \partial_{\spaceVariable \spaceVariable} \testFunction (0, \spaceVariable) = \bigO{\spaceStep^2}, \qquad \spaceVariable \in \reals,
    \end{equation*}
    which equals \eqref{eq:EquivalenD1Q3Bulk} up to the multiplication of the diffusion coefficient by $\FirstRelParDOneQThree$.
    This discrepancy is the remaining contribution of the initialisation on the evolution of the solution, as we have already observed for the \scheme{1}{2} in \Cref{sec:D1Q2} scheme for all initialisations except \eqref{eq:CoefficientInitialisationSmoothInTime}.
    Taking $\FirstRelParDOneQThree = 1$ and $\SecondRelParDOneQThree \neq 1$ ($\nonTriviallyRelaxingMomentsNumber = 1$), we have
    \begin{equation*}
        \partial_{\timeVariable} \testFunction (0, \spaceVariable) + \latticeVelocity \velocityDOneQThree \partial_{\spaceVariable} \testFunction (0, \spaceVariable) - \latticeVelocity \spaceStep \Bigl ( \frac{1}{3} - \frac{\velocityDOneQThree^2}{2} + \frac{\SecondRelParDOneQThree (3-\SecondRelParDOneQThree )\kDOneQThree}{12} + \frac{(1-\SecondRelParDOneQThree) (2 - \SecondRelParDOneQThree) \initialisationOperator_3}{12} \Bigr ) \partial_{\spaceVariable \spaceVariable} \testFunction (0, \spaceVariable) = \bigO{\spaceStep^2},
    \end{equation*}
    for $\spaceVariable \in \reals$, which is utterly different from \eqref{eq:EquivalenD1Q3Bulk}: the choice of initialisation $\initialisationOperator_3$ and the relaxation parameter $\SecondRelParDOneQThree$ influence the diffusivity, contrarily to  \eqref{eq:EquivalenD1Q3Bulk}.
\end{itemize}
\begin{remark}
    The previous discussion again confirms that, for \stSchemes{} which are not \iniSchemes{}, the choice of initialisations and relaxation parameters can change the modified equations compared to the \bulkScheme{} and thus the dynamics of the method close to the beginning of the simulation.
    Moreover, even some parameters that do not influence the modified equation of the \bulkScheme{} at a given order (see $\relaxationParameter_3$ in this example) impact the modified equations of the \stSchemes{}.    
    This is due to the role of the parasitic eigenvalues in the initialisation process.
\end{remark}

According to \Cref{prop:MatchEventually}, it is enough to study the order $\bigO{\spaceStep^2}$ for the \iniSchemes{} to deduce the modified equations for any \stScheme{}.
In order to match the diffusivity in both \iniSchemes{}, we set the following system 
\begin{equation}\label{eq:SystemMatchedDiffusionD1Q3}
    \begin{cases}
        \frac{1}{3} -  \frac{\velocityDOneQThree^2}{2} + \frac{\SecondRelParDOneQThree \kDOneQThree}{6} + \frac{(1-\SecondRelParDOneQThree) \initialisationOperator_3}{6} &= \Bigl ( \henonSecond \Bigr ) \Bigl ( \frac{2}{3} - \velocityDOneQThree^2 + \frac{\kDOneQThree}{3}  \Bigr ), \\
        \frac{(2 - \FirstRelParDOneQThree)}{3} + \frac{(\FirstRelParDOneQThree - 2) \velocityDOneQThree^2}{2} + \frac{\SecondRelParDOneQThree (5 - 2\FirstRelParDOneQThree-\SecondRelParDOneQThree) \kDOneQThree}{12}  + \frac{(1-\SecondRelParDOneQThree)(4  - 2\FirstRelParDOneQThree - \SecondRelParDOneQThree)\initialisationOperator_3}{12} &= \Bigl ( \henonSecond \Bigr ) \Bigl ( \frac{2}{3} - {\velocityDOneQThree^2} + \frac{\kDOneQThree}{3}  \Bigr ).
    \end{cases}
\end{equation}
We have to interpret $\velocityDOneQThree$ as fixed by the target problem and $\kDOneQThree$ as well as $\FirstRelParDOneQThree$ by the choice of numerical dissipation of the \bulkScheme{}, \emph{i.e.} the right hand sides in \eqref{eq:SystemMatchedDiffusionD1Q3}.
Therefore, the unknowns (or the leverages) are $\SecondRelParDOneQThree$ and $\initialisationOperator_3$, forming a non-linear system.
Eliminating $\initialisationOperator_3$ from the second equation in \eqref{eq:SystemMatchedDiffusionD1Q3} using the first one yields---following some algebra---the equation for $\SecondRelParDOneQThree$:
\begin{equation*}
    (1-\FirstRelParDOneQThree) \Bigl ( \frac{2}{3} - \velocityDOneQThree^2 + \frac{\kDOneQThree}{3}  \Bigr )\SecondRelParDOneQThree = (2-\FirstRelParDOneQThree)(1-\FirstRelParDOneQThree)  \Bigl ( \frac{2}{3} - \velocityDOneQThree^2 + \frac{\kDOneQThree}{3}  \Bigr ).
\end{equation*}

\begin{table} 
    \caption{Different choices of parameters for the \scheme{1}{3} scheme ensuring match at order $\bigO{\spaceStep^2}$ between \iniSchemes{} and \bulkScheme{}.}
    \begin{center}
    \begin{tabular}{cc|c|c}
        \multicolumn{2}{c|}{Factors controlling dissipation} & Leverages to obtain compatible dissipation & \\
        \hline
        \multirow{2}{*}{$\FirstRelParDOneQThree = 1$} & \multirow{2}{*}{$\kDOneQThree \geq - 2 + 3\velocityDOneQThree^2 $} & $\SecondRelParDOneQThree = 1$, any $\initialisationOperator_3$ & (a)\\ 
        & & $\SecondRelParDOneQThree \neq 1$, $\initialisationOperator_3 = \kDOneQThree$ & (b)\\
        \hline 
        \multirow{3}{*}{$\FirstRelParDOneQThree \neq 1$} & $\kDOneQThree > - 2 + 3\velocityDOneQThree^2 $ & $\SecondRelParDOneQThree = 2 - \FirstRelParDOneQThree$,  $\initialisationOperator_3 = ( 2 (-2 + 3\velocityDOneQThree^2) + (\FirstRelParDOneQThree - 2) \kDOneQThree  )/\FirstRelParDOneQThree$ & (c)\\
        & \multirow{2}{*}{$\kDOneQThree = - 2 + 3\velocityDOneQThree^2 $} & $\SecondRelParDOneQThree = 1$, any $\initialisationOperator_3$ & (d)\\
        & & $\SecondRelParDOneQThree \neq 1$, $\initialisationOperator_3 = \kDOneQThree $ & (e)\\
        \hline
    \end{tabular}
    \end{center}
    \label{tab:choicesParD1Q3}
    \end{table}

We have different cases to discuss which are summarized in \Cref{tab:choicesParD1Q3} and which are obtained as detailed in \Cref{app:ConditionsMatchDissipatinD1Q3}.
We solely comment on (c): in \cite{bellotti2021fd}, we have found that the choice 
\begin{equation}\label{eq:MagicRelation}
    \SecondRelParDOneQThree = 2 - \FirstRelParDOneQThree,
\end{equation}
could yield a \bulkScheme{} with three stages instead of four, in a way reminiscent of \cite{d2009viscosity, kuzmin2011role}.
This phenomenon shall be the focus of \Cref{sec:Ginzburg}.
As far as the stability under this condition is concerned, the analytical conditions in this case are 
\begin{equation*}
    \relaxationParameter_2 \in ]0, 2], \qquad \text{and} \qquad 
    \begin{cases}
        |\velocityDOneQThree| \leq 1, \qquad -2 + 3 \velocityDOneQThree^2 \leq \kDOneQThree \leq 1, \quad &\text{if} \quad \relaxationParameter_2 \in ]0, 2[, \\
        |\velocityDOneQThree| < 1, \qquad &\text{if} \quad \relaxationParameter_2 = 2,
    \end{cases}
\end{equation*}
see \Cref{app:StabilityD1Q3}, where in $-2 + 3 \velocityDOneQThree^2 \leq \kDOneQThree \leq 1$, the left constraint enforces non-negative dissipation (stability for small wavenumbers) whereas the right one concerns large wavenumbers.
Notice that the case $\relaxationParameter_2 = 2$ corresponds to $\relaxationParameter_3 = 0$, meaning that $\discreteMoment_3$ is conserved and thus goes against the assumptions of the paper. This particular occurrence will be discussed in what follows and will prove to be harmless.

\subsubsection{Study of the time smoothness of the numerical solution}

We repeat the numerical experiment by \cite{van2009smooth} introduced in \Cref{sec:TimeSmoothnessD1Q2}.
Only $L^2$ stable configurations are considered.
As long as the dissipation of the \bulkScheme{} is large, time oscillations are damped and thus cannot be observed even if the diffusivities of the \bulkScheme{} and the \iniSchemes{} are not the same.
We therefore look for situations where the numerical diffusion is small or zero.
\begin{figure} 
    \begin{center}
        \includegraphics[scale=0.99]{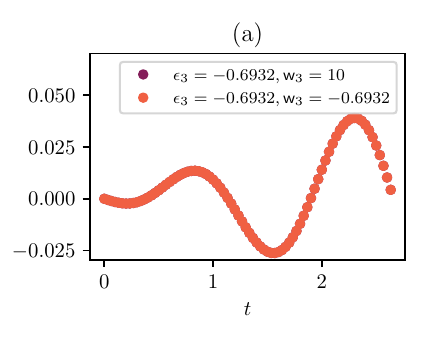}
        \includegraphics[scale=0.99]{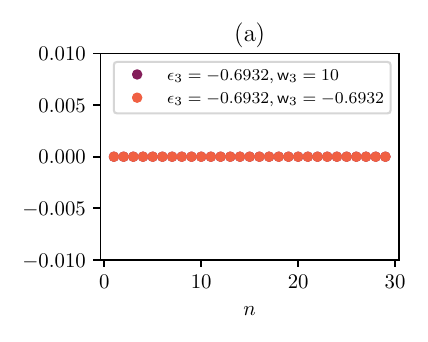}
    \end{center}\caption{\label{fig:a}Left: test for smoothness in time close to $\timeVariable = 0$ for the case (a) in \Cref{tab:choicesParD1Q3}: difference between exact and numerical solution at the eighth lattice point. As expected, regardless of the choice on $\initialisationOperator_3$, the profile is smooth. Right: diffusion coefficient (factor in front of $-\latticeVelocity\spaceStep \partial_{\spaceVariable\spaceVariable}$) in the modified equations for different $\indiceTime$.}
  \end{figure}
  \begin{figure} 
    \begin{center}
        \includegraphics[scale=0.99]{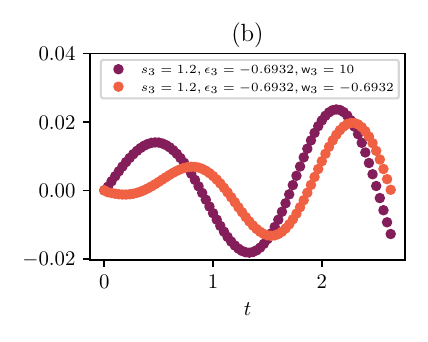}
        \includegraphics[scale=0.99]{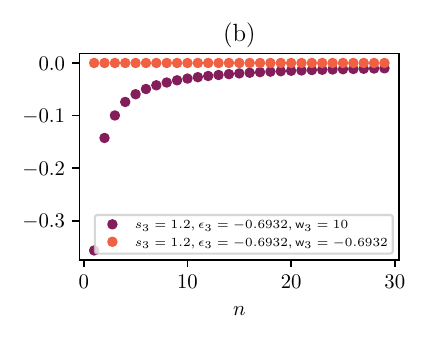}
    \end{center}\caption{\label{fig:b}Left: test for smoothness in time close to $\timeVariable = 0$ for the case (b) in \Cref{tab:choicesParD1Q3} ($\initialisationOperator_3 = -0.6932$) or violating this condition  ($\initialisationOperator_3 = 10$): difference between exact and numerical solution at the eighth lattice point. We observe radical differences in the profiles but the smoothness is not affected. Right: diffusion coefficient (factor in front of $-\latticeVelocity\spaceStep \partial_{\spaceVariable\spaceVariable}$) in the modified equations for different $\indiceTime$.}
  \end{figure}
  \begin{figure} 
    \begin{center}
        \includegraphics[scale=0.99]{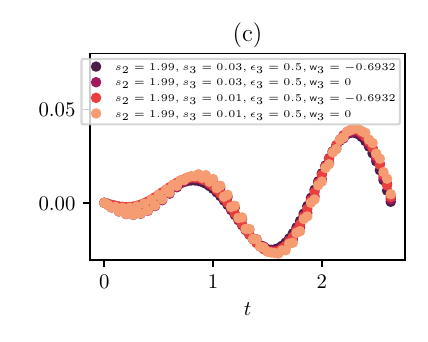}
        \includegraphics[scale=0.99]{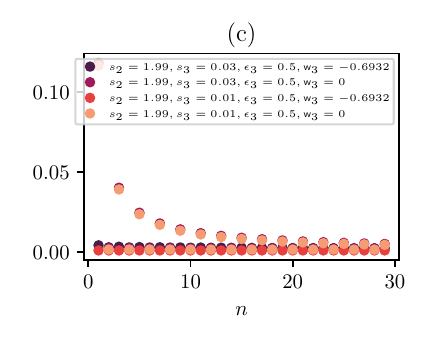}
    \end{center}\caption{\label{fig:c}Left: test for smoothness in time close to $\timeVariable = 0$ for the case (c) in \Cref{tab:choicesParD1Q3}: difference between exact and numerical solution at the eighth lattice point. The cases where $\relaxationParameter_3 = 0.01$ violate the magic relation \eqref{eq:MagicRelation} $\relaxationParameter_2 + \relaxationParameter_3 = 2$ with minor influences on the spurious oscillation, whereas $\initialisationOperator_3 = 0$ violates \eqref{eq:ChoiceInitizialisationMagicD1Q3}, with more tendency towards an initial boundary layer. Right: diffusion coefficient (factor in front of $-\latticeVelocity\spaceStep \partial_{\spaceVariable\spaceVariable}$) in the modified equations for different $\indiceTime$.}
  \end{figure}
\begin{itemize}
    \item $\FirstRelParDOneQThree = 1$, $\kDOneQThree = -2 + 3\velocityDOneQThree^2$, no dissipation, and $\SecondRelParDOneQThree = 1$. This is the framework of (a) (\emph{cf.} \Cref{tab:choicesParD1Q3}), where we can consider arbitrary $\initialisationOperator_3$.
    This case is trivial because $\nonTriviallyRelaxingMomentsNumber = 0$.
    We see in \Cref{fig:a} that the profile remains smooth no matter the choice of $\initialisationOperator_3$, as predicted by the theory.
    \item $\FirstRelParDOneQThree = 1$, $\kDOneQThree = -2 + 3\velocityDOneQThree^2$, no dissipation, and $\SecondRelParDOneQThree = 1.2$, close to one for stability reasons.
    Thus we are in the setting of (b).
    In \Cref{fig:b}, we see that the choice of $\initialisationOperator_3$ changes the outcome, even if the time smoothness seems to be preserved in both cases.
    To explain this, on the one hand, we have to take into account that since we are compelled to take $\SecondRelParDOneQThree$ close to one, we are not far from the previous case.
    On the other hand, even when the dissipation is not matched, it does not oscillate between time steps, unlike many initialisations for the \scheme{1}{2} scheme in \Cref{sec:D1Q2}.
    This is confirmed by the right image in \Cref{fig:b}: the diffusivity behaves smoothly in $\indiceTime$ and tends monotonically and quite rapidly to the bulk vanishing one.
    \item $\FirstRelParDOneQThree = 1.99$, almost zero dissipation. We test (c), since (d) and (e) cannot be considered for stability reasons.
    In \Cref{fig:c}, we observe that violating the magic relation \eqref{eq:MagicRelation} still enforcing \eqref{eq:ChoiceInitizialisationMagicD1Q3} does not produce large spurious oscillations, likely because this has limited effects on the diffusion coefficient. 
    Quite the opposite, violating \eqref{eq:ChoiceInitizialisationMagicD1Q3} both with and without \eqref{eq:MagicRelation} produces an initial oscillating boundary layer.
    This is corroborated by the right image in \Cref{fig:c}, where the reason for the observed oscillations is the highly non-smooth behaviour of the diffusion coefficient in $\indiceTime$, as a result of having taken $\relaxationParameter_2 \simeq 2$.

\end{itemize}

\subsection{Conclusions}

In this \Cref{sec:Illustrations}, we have observed in practice that the conditions to obtain consistent \stSchemes{} found in \Cref{sec:ModifiedEquations} preserve second-order convergence when the bulk scheme is second-order consistent.
Using an additional order for the modified equations introduced in \Cref{sec:ModifiedEquations}, we obtain an extremely precise description of the behaviour of the \scheme{1}{2} scheme close to the initial time, according to the initialisation at hand.
The same has been done for a \scheme{1}{3} scheme.
Finally, discussing the conditions to have the same dissipation between initialisation and bulk schemes for the \scheme{1}{3} scheme has made the magic relations \eqref{eq:MagicRelation} known in the literature \cite{d2009viscosity, kuzmin2011role} turn up once more \cite{bellotti2021fd}. The investigation of these relations is central in the following \Cref{sec:Ginzburg}.


\newcommand{\observabilityMatrix}{\boldOther{\discrete{\Omega}}}
\newcommand{\outputLetter}{y}
\newcommand{\outputMatrix}{\matricial{C}}
\newcommand{\rank}[1]{\text{rank}(#1)}
\newcommand{\observabilityIndex}{o}
\newcommand{\unObservableSubSpace}{\mathcal{N}}
\newcommand{\kernel}[1]{\text{ker}(#1)}
\newcommand{\cokernel}[1]{\text{coker}(#1)}
\newcommand{\annhilitaingPolyGinzburgCoefficientLetter}{p}
\newcommand{\annhilitaingPolyGinzburgCoefficient}{\discrete{\annhilitaingPolyGinzburgCoefficientLetter}}

\section{A more precise evaluation of the number of initialisation schemes}\label{sec:Ginzburg}

In \Cref{sec:ModifiedEquations}, we have observed that describing the behaviour of general \lbm schemes close to the initial time above $\bigO{\spaceStep}$ order---using the modified equations---seems out of reach, due to the presence of many parasitic modes in the system.
The question which we try to answer here---inspired by the findings on the \scheme{1}{3} scheme in \Cref{sec:D1Q3}---concerns the existence of vast classes of \lbm schemes for which a detailed description of the behaviour of the initialisation schemes is indeed possible.
The idea is to investigate the possibility of having, from a purely algebraic standpoint, a very small number of \iniSchemes{} to be considered, or equivalently, a large number of trivial eigenvalues.
For example, this would allow to avoid dealing---when trying to have the same dissipation coefficient between initialisation and bulk---with large non-linear systems such as \eqref{eq:SystemMatchedDiffusionD1Q3}, where the number and the complexity of equations would grow with $\nonTriviallyRelaxingMomentsNumber$.
The conditions to control the initialisation until a certain order in $\spaceStep$ could be simpler thanks to the fact that we have a small number of initialisation steps. In this way, if something similar to \Cref{prop:MatchEventually} were valid, we could conclude that this control is enough to master the dynamics of the scheme at the considered orders eventually in time.

\subsection{Lattice Boltzmann schemes as dynamical systems and observability}

A preliminary step in this direction is to consider any \lbm scheme \Cref{alg:LBMScheme} as a linear time-invariant discrete-time system 
\begin{align*}
    \timeShift \boldOther{\discreteMoment}(\timeVariable, \vectorial{\spaceVariable}) = \schemeMatrix \boldOther{\discreteMoment}(\timeVariable, \vectorial{\spaceVariable}), \qquad  &(\timeVariable, \vectorial{\spaceVariable}) \in \timeLattice \times \lattice, \\
    \boldOther{\discreteMoment}(0,  \vectorial{\spaceVariable}) \qquad &\text{given for }\vectorial{\spaceVariable} \in \lattice,
\end{align*}
where the output is $\boldOther{\discrete{\outputLetter}} = \outputMatrix \boldOther{\discreteMoment}$ with a smatrix $\outputMatrix$ of appropriate dimensions.
Since, from the very beginning of the paper, we are solely interested in the conserved moment $\discreteMoment_1$, we select $\outputMatrix = \transpose{\canonicalBasisVector_1} \in \reals^{\velocityNumber}$.
As we have already pointed out, see \eqref{eq:InitialisationSchemes}
\begin{equation*}
    \discrete{\outputLetter} (\indiceTime \timeStep, \vectorial{\spaceVariable}) = \discreteMoment_1(\indiceTime \timeStep, \vectorial{\spaceVariable}) = (\schemeMatrix^{\indiceTime} \boldOther{\discreteMoment})_1 (0, \vectorial{\spaceVariable}) = \outputMatrix  \schemeMatrix^{\indiceTime} \boldOther{\discreteMoment} (0, \vectorial{\spaceVariable}), \qquad\indiceTime \in \naturals, \quad \vectorial{\spaceVariable} \in \lattice,
\end{equation*}
thus we introduce the observability matrix of the system
\begin{equation*}
    \observabilityMatrix \definitionEquality 
    \begin{bmatrix}
        \outputMatrix \\
        \outputMatrix\schemeMatrix \\
        \vdots \\
        \outputMatrix \schemeMatrix^{\velocityNumber - 1}
    \end{bmatrix} \in \matrixSpace{\velocityNumber}{\ringSpaceOperators}.
\end{equation*}
If the system were set on a field (\emph{e.g.} $\observabilityMatrix \in \matrixSpace{\velocityNumber}{\reals}$ or $\observabilityMatrix \in \matrixSpace{\velocityNumber}{\complex}$), it would be customary to call the system ``observable'' if and only if $\rank{\observabilityMatrix} = \velocityNumber$.
This would mean that we could reconstruct the initial data $\boldOther{\discreteMoment}(0)$ from the observation of $\discrete{\outputLetter} = \discreteMoment_1$ at times $\indiceTime \in \integerIntervalClosedOpen{0}{\velocityNumber}$.
Quite the opposite, in our case, since the non-zero entries of $\observabilityMatrix$ are in general not invertible (for $\spatialDimensionality = 1$, $\symmetricPart (\basicX)$ and $\antisymmetricPart (\basicX)$ are examples of this), we cannot proceed in the same way, because the observability matrix $\observabilityMatrix$ can never be a unit.

For systems over commutative rings, different definitions of observability are available in the literature: we list a few of them in the following Definition.
\begin{definition}[Observability]\label{def:Observability}
    The system is said to be 
    \begin{itemize}
        \item ``observable'' according to \cite[Theorem 2.6]{brewer1986linear}, if the application represented by the left action of $\observabilityMatrix$ is injective.
        \item ``observable'' according to \cite{fliess1998controllability}, if $\observabilityMatrix$ has left inverse.
        \item ``hyper-observable'' according to \cite{fliess1998controllability}, if the unobservable sub-space $\unObservableSubSpace \definitionEquality \kernel{\observabilityMatrix}$---where operators act on lattice functions\footnote{Observe that the kernel is the left null space: indeed the left action of elements in $\ringSpaceOperators$ can operate both on lattice functions and operators in $\ringSpaceOperators$, whereas the right action is reserved for operators in $\ringSpaceOperators$.}---is trivial: $\unObservableSubSpace = \{ \vectorial{0} \}$.
    \end{itemize}
\end{definition}

Furthermore, \cite[Theorem 2.6]{brewer1986linear} gives the following criterion to check observability.
\begin{theorem}[\cite{brewer1986linear} Observability criterion]\label{thm:Observability}
    The system is ``observable'' according to \cite{brewer1986linear} if and only if the ideal of $\ringSpaceOperators$ generated by $\determinant (\observabilityMatrix )$ is such that its annihilator is zero.
\end{theorem}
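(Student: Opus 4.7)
The plan is to recognize the statement as an instance of a classical result in commutative algebra -- McCoy's rank theorem -- applied to the ring $\ringSpaceOperators$. Only two ingredients are needed. For any principal ideal $(d)$ of a commutative ring, the annihilator of the ideal coincides with the annihilator of the generator, so the hypothesis $\text{Ann}((\determinant(\observabilityMatrix))) = 0$ is equivalent to $\determinant(\observabilityMatrix)$ being a regular element (a nonzerodivisor) of $\ringSpaceOperators$. McCoy's rank theorem then asserts that for any square matrix $A \in \matrixSpace{\velocityNumber}{R}$ over a commutative ring $R$, the multiplication map $A\cdot : R^{\velocityNumber} \to R^{\velocityNumber}$ is injective if and only if $\determinant(A)$ is regular. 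This reduces Brewer's theorem to a routine chaining of these two equivalences, and the only serious work lies in one direction of McCoy's theorem.

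First I would handle the easy direction, $\text{Ann}((\determinant(\observabilityMatrix))) = 0 \Rightarrow$ observability. Taking $\vectorial{v} \in (\ringSpaceOperators)^{\velocityNumber}$ with $\observabilityMatrix \vectorial{v} = \vectorial{0}$ and left-multiplying by $\adjugateMatrix(\observabilityMatrix)$, the identity $\adjugateMatrix(\observabilityMatrix) \observabilityMatrix = \determinant(\observabilityMatrix)\identity$ yields $\determinant(\observabilityMatrix)\vectorial{v} = \vectorial{0}$ componentwise. Regularity of $\determinant(\observabilityMatrix)$ then forces every component of $\vectorial{v}$ to vanish, so the left multiplication by $\observabilityMatrix$ is injective. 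The passage to the action on lattice functions is then automatic, since faithfulness of the shift representation can be verified from the Fourier transform.

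For the converse I would proceed by contrapositive: pick $0 \neq r \in \ringSpaceOperators$ with $r \determinant(\observabilityMatrix) = 0$ and exhibit a non-trivial element of $\kernel{\observabilityMatrix}$. The natural candidate is any column of $r \adjugateMatrix(\observabilityMatrix)$, since $\observabilityMatrix \cdot (r \adjugateMatrix(\observabilityMatrix)) = r \determinant(\observabilityMatrix)\identity = 0$. The main obstacle is the degenerate subcase $r \adjugateMatrix(\observabilityMatrix) = 0$, which means $r$ already annihilates every $(\velocityNumber-1)\times(\velocityNumber-1)$ minor of $\observabilityMatrix$. The escape is a descent through minors of decreasing size: let $k$ be the largest integer such that $r$ fails to annihilate every $k \times k$ minor of $\observabilityMatrix$, pick a nonzero $k \times k$ cofactor, and assemble from the corresponding signed minors a nonzero vector lying in $\kernel{\observabilityMatrix}$. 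This combinatorial bookkeeping is precisely the content of McCoy's rank theorem, which I would invoke as a black box rather than re-derive, isolating the only technical step in a single external citation and keeping the proof itself compact.
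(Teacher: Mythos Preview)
The paper does not give a proof of this theorem: it is quoted verbatim as \cite[Theorem 2.6]{brewer1986linear} and used as a black box, so there is no in-paper argument to compare against. Your sketch is a correct reconstruction of the standard proof and identifies the right mechanism, namely that the annihilator condition on the principal ideal is precisely regularity of $\determinant(\observabilityMatrix)$, and that injectivity of left multiplication by a square matrix over a commutative ring is equivalent to regularity of its determinant (McCoy's rank theorem). The adjugate trick for the forward direction and the minor-descent for the converse are exactly the classical steps.

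One remark that could sharpen the write-up: the ring $\ringSpaceOperators$ is the Laurent polynomial ring over $\reals$, hence an integral domain. In that setting the statement degenerates to ``observable $\Leftrightarrow \determinant(\observabilityMatrix)\neq 0$'', and the converse direction becomes elementary: if $\determinant(\observabilityMatrix)=0$, pass to the field of fractions, find a nonzero kernel vector there, and clear denominators. So the full McCoy descent through minors, while correct, is heavier machinery than the concrete ring requires; you may want to mention this simplification after giving the general argument. Your aside about transferring injectivity to the action on lattice functions via the Fourier transform is not needed for Brewer's notion, which is module-theoretic, but it does no harm.
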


We also define the ``observability index'' $\observabilityIndex \leq \nonTriviallyRelaxingMomentsNumber + 1$ mimicking the definition for systems over fields as 
\begin{equation*}
    \observabilityIndex \definitionEquality \max_{\indiceFreeOne \in \naturals} \rank{\observabilityMatrix_{\indiceFreeOne}}, \quad \text{where} \quad \observabilityMatrix_{\indiceFreeOne} \definitionEquality
    \begin{bmatrix}
        \outputMatrix \\
        \outputMatrix\schemeMatrix \\
        \vdots \\
        \outputMatrix \schemeMatrix^{\indiceFreeOne - 1}
    \end{bmatrix} \in \matrixSpace{\indiceFreeOne \times \velocityNumber}{\ringSpaceOperators},
\end{equation*}
and $\rank{\cdot}$ stands for the row rank of a matrix over a ring according to Definition 10.6 in \cite{blyth2018module}.

\begin{example}\label{ex:D1Q2Again}
    Considering \Cref{ex:D1Q2} treated in \Cref{sec:D1Q2}, we have that
    \begin{equation*}
        \observabilityMatrix = 
        \begin{bmatrix}
            1 & 0 \\
            \symmetricPart (\basicShiftLetter_1) + \relaxationParameter_2 \equilibriumCoefficient_2 \antisymmetricPart (\basicShiftLetter_1) & (1-\relaxationParameter_2) \antisymmetricPart (\basicShiftLetter_1)
        \end{bmatrix}, 
    \end{equation*}
    hence $\observabilityIndex = \nonTriviallyRelaxingMomentsNumber + 1 = 2$ if $\relaxationParameter_2 \neq 1$ and $\observabilityIndex = \nonTriviallyRelaxingMomentsNumber + 1 = 1$ if $\relaxationParameter_2 = 1$.
When $\relaxationParameter_2 = 1$, we have $\unObservableSubSpace =  \{ \transpose{(0, \discreteMoment_2)} ~ : ~ \text{for arbitrary }\discreteMoment_2  = \discreteMoment_2({\spaceVariable}) \text{ lattice function}\}$, which adheres to the intuition that we cannot know the non-conserved moment $\discreteMoment_2$ by looking at the conserved moment $\discreteMoment_1$ if the relaxation is made on the equilibrium, regardless of the structure of $\discreteMoment_2$.
When $\relaxationParameter_2 \neq 1$, we have $\unObservableSubSpace =  \{ \transpose{(0, \discreteMoment_2)} ~ : ~ \text{for any }\discreteMoment_2  = \discreteMoment_2({\spaceVariable}) \text{ lattice function such that }\antisymmetricPart (\basicShiftLetter_1)\discreteMoment_2 = 0 \}$.
We see that the unobservable sub-space is non-trivial even when $\observabilityIndex = \velocityNumber = 2$, contrarily to the case of systems with matrix $\schemeMatrix$ and $\observabilityMatrix$ with entries in a field.
The unobservable states are those in which the first component is zero and the discrete derivative $\antisymmetricPart (\basicShiftLetter_1)$ of the second component is zero everywhere, for example because the second component is constant or takes one given value on all even points and another one on all odd points.
The interesting reader can consult \Cref{app:numExpUnobsD1Q2} to find a numerical experiment showcasing the structure of $\unObservableSubSpace$ for this scheme.
We finally comment on the notions from \Cref{def:Observability}.
\begin{itemize}
    \item $\determinant (\observabilityMatrix) = {(1-\relaxationParameter_2)} \antisymmetricPart (\basicShiftLetter_1)$, thus the ideal to consider (\emph{cf.} \Cref{thm:Observability}) is $\{ \genericDiscreteOperator {(1-\relaxationParameter_2)} \antisymmetricPart (\basicShiftLetter_1) ~ : ~ \genericDiscreteOperator \in \ringSpaceOperatorsOneD \}$.
    On the one hand, if $\relaxationParameter_2 = 1$, then any operator in $\ringSpaceOperatorsOneD$ multiplied at the left of any element of the ideal is an annihilator, thus the system is not observable according to \cite{brewer1986linear}. On the other hand, if $\relaxationParameter_2 \neq 1$, then the only element annihilating any element of the ideal is zero, thus the system is observable according to \cite{brewer1986linear}.
    \item For any $\relaxationParameter_2$, we see that $\observabilityMatrix$ does not admit left inverse, therefore it is not observable according to \cite{fliess1998controllability}.
    \item For any $\relaxationParameter_2$, the system is not hyper-observable  according to \cite{fliess1998controllability} due to the non-trivial $\unObservableSubSpace$.
\end{itemize}
For these reasons, we infer that the observability according to \cite{brewer1986linear} is the one more closely adhering---between those issued from \Cref{def:Observability}---to our definition of observability index $\observabilityIndex$.
\end{example}

\begin{remark}[Regularized lattice Boltzmann schemes and similar models]
    We observe that the so-called ``regularized'' lattice Boltzmann schemes \cite{coreixas2017recursive, coreixas2019comprehensive}, where the relaxation rates of the non-hydrodynamic moments are set to one, and approaches where all relaxation parameter equals one, like ``Finite Boltzmann'' schemes \cite{van2006finite}, ``macroscopic lattice Boltzmann'' methods \cite{zhou2020macroscopic} and ``simplified'' lattice Boltzmann methods \cite{chen2017simplified}, are somehow trivial examples of unobservable schemes.
    Non-trivial examples of unobservable schemes will be introduced in \Cref{sec:ReducedNumberInitialisationSchemes} and \Cref{sec:LinkScheme}.
    Indeed, we can see from \Cref{sec:LBMSchemes} and \Cref{sec:CorrespondingFD} that since $\nonTriviallyRelaxingMomentsNumber < \velocityNumber - 1$, because the non-hydrodynamic (respectively, all the non-conserved) moments relax to their equilibrium, we have that $\observabilityIndex < \velocityNumber$ and the  initialization of the non-hydrodynamic (respectively, all the non-conserved) moments does not impact the numerical scheme.
\end{remark}

\subsection{Reduced number of initialisation schemes for non-observable systems}\label{sec:ReducedNumberInitialisationSchemes}

Following the discussion in \cite{bellotti2021fd}, we can introduce $\boldOther{\annhilitaingPolyGinzburgCoefficient}_{\observabilityIndex} \in (\ringSpaceOperators)^{\observabilityIndex}$ such that 
\begin{equation}\label{eq:SystemGinzburgCoefficients}
    \boldOther{\annhilitaingPolyGinzburgCoefficient}_{\observabilityIndex} \observabilityMatrix_{\observabilityIndex} = -\outputMatrix \schemeMatrix^{\observabilityIndex}.    
\end{equation}
The solution of this problem exists thanks to the definition of the observability index $\observabilityIndex$.
We then introduce the monic polynomial (keep in mind that the indices in vectors like $\boldOther{\annhilitaingPolyGinzburgCoefficient}_{\observabilityIndex}$ start from one)
\begin{equation}\label{eq:DefinitionGinzburgPolynomial}
    \annhilitaingPolyGinzburgWithOrder{\observabilityIndex} (\timeShift) \definitionEquality \timeShift^{\observabilityIndex} + \sum_{\indiceTime = 1}^{\observabilityIndex} {\annhilitaingPolyGinzburgCoefficient}_{\observabilityIndex, \indiceTime} \timeShift^{\indiceTime - 1},
\end{equation}
which by construction \eqref{eq:SystemGinzburgCoefficients} annihilates the first row of $\schemeMatrix$, since $\outputMatrix = \transpose{\canonicalBasisVector_1}$.
Moreover, we have shown in \cite{bellotti2021fd} that $\annhilitaingPolyGinzburgWithOrder{\observabilityIndex}  (\timeShift)$ divides $\determinant(\timeShift \identity - \schemeMatrix)$, whence if $\observabilityIndex = \nonTriviallyRelaxingMomentsNumber + 1$, we naturally have $\annhilitaingPolyGinzburgWithOrder{\observabilityIndex}  (\timeShift) = \timeShift^{\nonTriviallyRelaxingMomentsNumber + 1 - \velocityNumber} \determinant(\timeShift \identity - \schemeMatrix)$.
We therefore obtain the following corresponding \bulkScheme{} based on $\annhilitaingPolyGinzburgWithOrder{\observabilityIndex}$ given by \Cref{alg:CorrespondingFDSchemeGinzburh}, coinciding with \Cref{alg:CorrespondingFDScheme} when $\observabilityIndex = \nonTriviallyRelaxingMomentsNumber + 1$.
\begin{algorithm}
    \begin{itemize}
        \item Given $\boldOther{\discreteMoment}(0, \vectorial{\spaceVariable})$ for every $\vectorial{\spaceVariable} \in \lattice$.
        \item{\strong{Initialisation schemes}}. For $\indiceTime \in \integerInterval{1}{\observabilityIndex-1}$
        \begin{equation}\label{eq:InitialisationSchemesGinzburg}
            \discreteMoment_1(\indiceTime \timeStep, \vectorial{\spaceVariable}) = \outputMatrix \schemeMatrix^{\indiceTime} \boldOther{\discreteMoment} (0, \vectorial{\spaceVariable}), \qquad \vectorial{\spaceVariable} \in\lattice.
        \end{equation}
        \item{\strong{Corresponding \bulkScheme{}}}. For $\indiceTime \in \integerIntervalClosedOpen{\observabilityIndex-1}{+\infty}$
        \begin{equation}\label{eq:BulkSchemeGinzburg}
            \discreteMoment_1((\indiceTime + 1) \timeStep, \vectorial{\spaceVariable}) = - \sum_{\indiceFreeOne = \velocityNumber - \observabilityIndex}^{\velocityNumber-1}  \annhilitaingPolyGinzburgCoefficient_{\observabilityIndex, \observabilityIndex + \indiceFreeOne + 1 - \velocityNumber} \discreteMoment_1 ((\indiceTime + \indiceFreeOne + 1 - \velocityNumber)\timeStep, \vectorial{\spaceVariable}) , \qquad \vectorial{\spaceVariable} \in \lattice.
        \end{equation}
    \end{itemize}
    \caption{\label{alg:CorrespondingFDSchemeGinzburh}Corresponding \fd scheme based on $\annhilitaingPolyGinzburgWithOrder{\observabilityIndex}$.}
\end{algorithm}

\newcommand{\transferFunction}{\mathsf{H}}
\newcommand{\schemeMatrixMoments}{\boldOther{\discrete{A}}}
\newcommand{\schemeMatrixEquilibria}{\boldOther{\discrete{B}}}

The lack of observability is indeed the reason why, as previously announced in \Cref{sec:D1Q3}, one can find a \bulkScheme{} with less time steps than what is prescribed by the characteristic polynomial of $\schemeMatrix$.
From a different perspective, this is the so-called ``pole-zero cancellation'' in the transfer function---see for example \cite[Chapter 8.3]{aastrom2008feedback} or \cite[Chapter 3.9]{hendricks2008linear}---associated with the system and taken from control theory.
In our framework, the transfer function is 
\begin{equation*}
    \transferFunction(\timeShift) = \outputMatrix \frac{\overbrace{\adjugateMatrix(\timeShift \identity - \schemeMatrixMoments) \schemeMatrixEquilibria \boldOther{\equilibriumCoefficient}}^{\text{control by equilibria}}}{\underbrace{\determinant(\timeShift \identity - \schemeMatrixMoments)}_{\text{state}}} = \outputMatrix (\timeShift \identity - \schemeMatrixMoments)^{-1} \schemeMatrixEquilibria \boldOther{\equilibriumCoefficient},
\end{equation*}
with $\schemeMatrixMoments \definitionEquality \transportMoment (\identity - \relaxationMatrix)$ and $\schemeMatrixEquilibria \definitionEquality \transportMoment \relaxationMatrix$ defined as in \cite{bellotti2021fd,bellotti2021equivalentequations}, with $\schemeMatrix = \schemeMatrixMoments + \schemeMatrixEquilibria \vectorial{\equilibriumCoefficient} \otimes \canonicalBasisVector_1$.

\begin{example}\label{ex:D1Q3Ginzburg}
    We come back to the scheme of \Cref{sec:D1Q3} where we select $\relaxationParameter_2 + \relaxationParameter_3 = 2$.
    We also assume that $\relaxationParameter_2 \neq 1$ to keep things non-trivial.
    In this case, it can be seen that $\observabilityIndex = 2 < 3$, whereas $\nonTriviallyRelaxingMomentsNumber + 1 = 3$.
    Moreover, we obtain
    \begin{align*}
        \determinant (\timeShift \identity - \schemeMatrix) &= (\timeShift + (1-\relaxationParameter_2)) \annhilitaingPolyGinzburg(\timeShift), \\
        &\text{with} \quad  \annhilitaingPolyGinzburg(\timeShift) = \timeShift^2 +  (- \relaxationParameter_2 \velocityDOneQThree\antisymmetricPart(\basicX) + \tfrac{1}{3}(\relaxationParameter_2 - 2)  (2\symmetricPart(\basicX) + 1) + \tfrac{1}{3} \kDOneQThree (\relaxationParameter_2 - 2) (\symmetricPart(\basicX) - 1) ) \timeShift + (1-\relaxationParameter_2), \\
        &\text{or equivalently} \quad \transferFunction(\timeShift) =  \frac{(\timeShift + (1-\relaxationParameter_2)) (\relaxationParameter_2 \velocityDOneQThree\antisymmetricPart(\basicX) + \tfrac{1}{3} \kDOneQThree (2 - \relaxationParameter_2) (\symmetricPart(\basicX) - 1)) \timeShift }{(\timeShift + (1-\relaxationParameter_2)) (\timeShift^2 + \tfrac{1}{3}(\relaxationParameter_2 - 2)  (2\symmetricPart(\basicX) + 1) \timeShift + (1-\relaxationParameter_2)) }.
    \end{align*}
    The \fd scheme coming from $\annhilitaingPolyGinzburg(\timeShift)$ becomes a leap-frog scheme for $\relaxationParameter_2 = 2$.
    Otherwise, it is a centered discretisation with a certain amount of numerical dissipation.
    A first question which might arise concerns the modified equation for the \bulkScheme{} obtained using $\determinant (\timeShift \identity - \schemeMatrix)$, see \Cref{alg:CorrespondingFDScheme}, \emph{versus} that obtained by $\annhilitaingPolyGinzburg(\timeShift)$, see \Cref{alg:CorrespondingFDSchemeGinzburh}. The answer is that they are same at any order because the eigenvalue $(\relaxationParameter_2 - 1)$ does not contribute to the consistency (being constant through wavenumbers and thus being a mere numerical eigenvalue) and it can be easily checked that $\annhilitaingPolyGinzburg(\timeShift)$ yields the same modified equation, since it contains the consistency eigenvalue \cite{strikwerda2004finite}.
    As far as stability is concerned, the stability constraints for the two \bulkScheme s are the same because $|\relaxationParameter_2 - 1| < 1$ for $\relaxationParameter_2 \in ]0, 2[$.
    The case $\relaxationParameter_2 = 2$ might produce instabilities because of the presence of multiple roots in $\determinant(\timeShift \identity - \fourierTransformed{\schemeMatrix})$ on the unit circle. However, in this case, there is an additional conserved moment $\discreteMoment_3$ and we know that the \emph{von Neumann} condition for systems is that no root is outside the unit circle (no precise indication is provided for those on the unit circle), but this is only necessary for stability \cite[Theorem 5.2.2]{gustafsson1995time}. Therefore, the presence of
    multiple eigenvalues on the unit circle (and in particular those concerning consistency which are now more than one)
    cannot allow to deduce that the scheme is unstable.
    The stability conditions are analytically computed in \Cref{app:StabilityD1Q3}.

    Concerning the notion of observability by \cite{brewer1986linear}, we have that $\determinant (\observabilityMatrix) = 0$, hence the system is not observable, according to \Cref{thm:Observability}.
    If we want to characterize the unobservable sub-space, we have that, since $\relaxationParameter_2 \neq 1$, it is given by $\unObservableSubSpace =  \{ \transpose{(0, \discreteMoment_2, \discreteMoment_3)} ~ : ~ \text{for any }\discreteMoment_2  = \discreteMoment_2({\spaceVariable}), ~\discreteMoment_3  = \discreteMoment_3({\spaceVariable}) \text{ lattice functions such that }\antisymmetricPart (\basicShiftLetter_1)\discreteMoment_2 = {1}/{3} (\symmetricPart(\basicShiftLetter_1) - 1)\discreteMoment_3  \}$.
    Recall that $\antisymmetricPart (\basicShiftLetter_1) = {(\basicShiftLetter_1 - {\basicShiftLetter_1}^{-1})}/{2}$ and $\symmetricPart(\basicShiftLetter_1) - 1 = {(\basicShiftLetter_1 - 2 + {\basicShiftLetter_1}^{-1})}/{2}$, which means that the initial states belonging to $\unObservableSubSpace$ are those with zero first moment everywhere and such that the centered approximation of the first derivative of the second moment is proportional---with ratio $1/6$---to the centered approximation of the second derivative of the third moment, at any point of the lattice. 
    The numerical verification of the expression found for $\unObservableSubSpace$ is provided in \Cref{app:numExpUnobsD1Q3} for the interested reader.
      
\end{example}

As already remarked in \cite{saad1989overview, bellotti2021fd}, the cases where $\nonTriviallyRelaxingMomentsNumber + 1 \neq \observabilityIndex$ are extremely peculiar.
Indeed, the situations described in \Cref{ex:D1Q3Ginzburg} and in the forthcoming \Cref{sec:LinkScheme} are the only examples we were able to find.
Loosely speaking, both $\observabilityIndex$ and $\nonTriviallyRelaxingMomentsNumber$ measure the speed of saturation of the image of the scheme $\schemeMatrix$ concerning the conserved moment. 
Once the generated sub-spaces saturate, the evolution of the conserved moment at the new time-step can be recast as function of itself at the previous steps.
The fact that the Cayley-Hamilton theorem holds (concerning $\nonTriviallyRelaxingMomentsNumber$) and that the polynomial $\annhilitaingPolyGinzburgWithOrder{\observabilityIndex}$ (concerning $\observabilityIndex$) annihilates the first row of $\schemeMatrix$ introduce---as previously shown---a set of linear constraints on $\discreteMoment_1$, solution of the \lbm scheme.

    \begin{remark}[Unobservable schemes are ``weakly-kinetic'' schemes]
        We can interpret unobservable schemes, where $\observabilityIndex < \nonTriviallyRelaxingMomentsNumber + 1$, as being ``very little kinetic'', ``almost non-kinetic'', or finally ``weakly-kinetic''.
        This fact can be exploited to avoid the storage of $\nonTriviallyRelaxingMomentsNumber + 1$ unknowns on every gridpoint, by storing just $\observabilityIndex$ unknowns, and  implement the algorithm using the corresponding Finite Difference scheme.
    \end{remark}

It should be emphasized that \Cref{prop:MatchEventually} is still valid turning $\nonTriviallyRelaxingMomentsNumber$ into $\observabilityIndex - 1$, \eqref{eq:InitialisationSchemes} into \eqref{eq:InitialisationSchemesGinzburg}, and \eqref{eq:BulkSchemes} into \eqref{eq:BulkSchemeGinzburg}.
This is fundamental, because \Cref{prop:MatchEventually} ensures to control the whole dynamics of the scheme by mastering it in the initialisation layer. The aim of studying the observability is to characterise in what case the initialisation layer \eqref{eq:InitialisationSchemesGinzburg}, thus what we need to control, is simple but still determines the dynamics eventually in time.
This property comes from the fact that the root of $\determinant(\timeShift \identity - \fourierTransformed{\schemeMatrix}(\vectorial{\frequency}\spaceStep))$ setting the consistency of the bulk scheme---\emph{i.e.} being one in the low-frequency limit---is also a root of $\annhilitaingPolyGinzburgWithOrderFourier{\observabilityIndex} (\timeShift)$. This is a consequence of the fact that $\annhilitaingPolyGinzburgWithOrderFourier{\observabilityIndex} (\timeShift)$ annihilates the first row of $\fourierTransformed{\schemeMatrix}(\vectorial{\frequency} \spaceStep)$. 
\begin{proposition}
    Let $\fourierTransformed{\discrete{\eigenvalueLetter}}_1 = \fourierTransformed{\discrete{\eigenvalueLetter}}_1 (\vectorial{\frequency}\spaceStep)$ be the unique root of $\determinant(\timeShift \identity - \fourierTransformed{\schemeMatrix}(\vectorial{\frequency}\spaceStep))$ such that 
    \begin{equation}\label{eq:desiredProperty}
        \fourierTransformed{\discrete{\eigenvalueLetter}}_1(\vectorial{\frequency}\spaceStep)= 1 + \bigO{|\vectorial{\frequency}\spaceStep|}
    \end{equation}
    in the limit $|\vectorial{\frequency}\spaceStep| \ll 1$, which is the one determining the consistency and the modified equation of the bulk scheme. 
    Then, $\fourierTransformed{\discrete{\eigenvalueLetter}}_1$  is also a root of $\annhilitaingPolyGinzburgWithOrderFourier{\observabilityIndex} (\timeShift)$.
\end{proposition}
\begin{proof}
Let us show this, using the Fourier representation and considering $\spatialDimensionality = 1$ for the sake of keeping notations simple.
Recall that 
\begin{equation}\label{eq:tmpSameEigenvalue}
    \renewcommand{\arraystretch}{0.7}
    \annhilitaingPolyGinzburgWithOrderFourier{\observabilityIndex} (\fourierTransformed{\schemeMatrix}(\frequency \spaceStep)) = \fourierTransformed{\schemeMatrix}(\frequency \spaceStep)^{\observabilityIndex} + \sum_{\indiceTime = 1}^{\observabilityIndex} \fourierTransformed{\annhilitaingPolyGinzburgCoefficient}_{\observabilityIndex, \indiceTime} (\frequency \spaceStep) \fourierTransformed{\schemeMatrix}(\frequency \spaceStep)^{\indiceTime - 1} = 
    \begin{bmatrix}
        0 & \cdots & 0 \\
        \star & \cdots & \star \\
        \vdots & & \vdots \\
        \star & \cdots & \star 
    \end{bmatrix},
\end{equation}
where the starred $\star$ entries are not necessarily zero.
Notice that, whatever the scaling between time and space, we have that for every $\indiceFreeOne \in \naturals$
\begin{equation}\label{eq:tmpSharingEigenvalue}
    \renewcommand{\arraystretch}{0.7}
    \fourierTransformed{\schemeMatrix}(\frequency \spaceStep)^{\indiceFreeOne} = 
    \begin{bmatrix}
        1 & \cdots & 0 \\
        \star & \cdots & \star \\
        \vdots & & \vdots \\
        \star & \cdots & \star 
    \end{bmatrix} + \bigO{|\frequency \spaceStep|}
\end{equation}
in the limit $|\frequency \spaceStep| \ll 1$.
In particular, for the acoustic scaling, we have $\fourierTransformed{\schemeMatrix}(\frequency \spaceStep)^{\indiceFreeOne} = \collisionMatrix^{\indiceFreeOne}  + \bigO{|\frequency \spaceStep|}$, where $\collisionMatrix^{\indiceFreeOne}$ has the property stated by \eqref{eq:tmpSharingEigenvalue} and $\collisionMatrix$ is the collision matrix.
Again taking  $|\frequency \spaceStep| \ll 1$ and considering that $\fourierTransformed{\annhilitaingPolyGinzburgCoefficient}_{\observabilityIndex, \indiceTime} (\frequency \spaceStep) = \termAtOrder{\fourierTransformed{\annhilitaingPolyGinzburgCoefficientLetter}_{\observabilityIndex, \indiceTime}}{0} + \bigO{|\frequency \spaceStep|}$, selecting the very first entry in \eqref{eq:tmpSameEigenvalue} yields, using \eqref{eq:tmpSharingEigenvalue}
\begin{equation}\label{eq:tmpSameCleDeVoute}
    1 + \sum_{\indiceTime = 1}^{\observabilityIndex} \termAtOrder{\fourierTransformed{\annhilitaingPolyGinzburgCoefficientLetter}_{\observabilityIndex, \indiceTime}}{0} = \bigO{|\frequency \spaceStep|}.
\end{equation}
Since $\complex$ is an algebraically closed field, we can write $\annhilitaingPolyGinzburgWithOrderFourier{\observabilityIndex} (\timeShift) = \prod_{\indiceFreeOne = 1}^{\indiceFreeOne = \observabilityIndex} (\timeShift - \fourierTransformed{\discrete{r}}_{\indiceFreeOne}(\frequency \spaceStep))$, where $\fourierTransformed{\discrete{r}}_{\indiceFreeOne}$ for $\indiceFreeOne \in \integerInterval{1}{\observabilityIndex}$ are the roots of $\annhilitaingPolyGinzburgWithOrderFourier{\observabilityIndex} (\timeShift)$.
These are also part of the roots of $\determinant(\timeShift \identity - \fourierTransformed{\schemeMatrix}(\frequency \spaceStep))$ since $\annhilitaingPolyGinzburgWithOrderFourier{\observabilityIndex} (\timeShift)$ divides $\determinant(\timeShift \identity - \fourierTransformed{\schemeMatrix}(\frequency \spaceStep))$.
The question is whether the roots of $\annhilitaingPolyGinzburgWithOrderFourier{\observabilityIndex} (\timeShift)$ include the one of $\determinant(\timeShift \identity - \fourierTransformed{\schemeMatrix}(\frequency \spaceStep))$, indicated by $\fourierTransformed{\discrete{\eigenvalueLetter}}_1(\frequency\spaceStep)$, being the only one such that $\fourierTransformed{\discrete{\eigenvalueLetter}}_1(\frequency\spaceStep) = 1 + \bigO{|\frequency\spaceStep|}$ in the limit $|\frequency\spaceStep| \ll 1$, see \eqref{eq:physicalEigenvalueCharact}, and which totally dictates consistency (and the modified equations).
Considering $\timeShift = 1$ in \eqref{eq:DefinitionGinzburgPolynomial} gives
\begin{equation*}
    \annhilitaingPolyGinzburgWithOrderFourier{\observabilityIndex} (1) = 1 + \sum_{\indiceTime = 1}^{\observabilityIndex} \fourierTransformed{\annhilitaingPolyGinzburgCoefficient}_{\observabilityIndex, \indiceTime}(\frequency\spaceStep).
\end{equation*}
Taking the limit  $|\frequency\spaceStep| \ll 1$, we are left with 
\begin{equation*}
    \prod_{\indiceFreeOne = 1}^{\observabilityIndex} (1 - \termAtOrder{\fourierTransformed{{r}}_{\indiceFreeOne}}{0}) + \bigO{|\frequency\spaceStep| } = 1 + \sum_{\indiceTime = 1}^{\observabilityIndex} \termAtOrder{\fourierTransformed{\annhilitaingPolyGinzburgCoefficientLetter}_{\observabilityIndex, \indiceTime}}{0} + \bigO{|\frequency\spaceStep| } = \bigO{|\frequency\spaceStep| },
\end{equation*}
thanks to \eqref{eq:tmpSameCleDeVoute}, where $\fourierTransformed{\discrete{r}}_{\indiceFreeOne} = \termAtOrder{\fourierTransformed{{r}}_{\indiceFreeOne}}{0}  +  \bigO{|\frequency\spaceStep| }$.
This gives $\prod_{\indiceFreeOne = 1}^{\indiceFreeOne = \observabilityIndex} (1 - \termAtOrder{\fourierTransformed{{r}}_{\indiceFreeOne}}{0}) = 0$, hence at least one $\termAtOrder{\fourierTransformed{{r}}_{\indiceFreeOne}}{0} = 1$. Since the roots $\fourierTransformed{\discrete{r}}_{\indiceFreeOne}$ for $\indiceFreeOne \in \integerInterval{1}{\observabilityIndex}$  are a subset of those of $\determinant(\timeShift \identity - \fourierTransformed{\schemeMatrix}(\frequency \spaceStep))$, where only one has the desired property \eqref{eq:desiredProperty}, then the latter is also a root of  $\annhilitaingPolyGinzburgWithOrderFourier{\observabilityIndex} (\timeShift)$, let us say $\fourierTransformed{\discrete{r}}_{1} \equiv \fourierTransformed{\discrete{\eigenvalueLetter}}_1$.
\end{proof}

\subsection{An important case: link \scheme{\spatialDimensionality}{1+2\blockNumberGinzburg} two-relaxation-times schemes with magic parameters equal to $1/4$}\label{sec:LinkScheme}

We are now ready to consider a quite wide class of schemes \cite{d2009viscosity} for which very little \iniSchemes{} are to consider, namely $\observabilityIndex$ is particularly small.
The ``observable'' features of these schemes are to some extent independent from $\spatialDimensionality$ and the choice of the $\velocityNumber = 1 + 2\blockNumberGinzburg$ discrete velocities.
This boils down to a quite general application of the ideas of \Cref{sec:ReducedNumberInitialisationSchemes}.

\subsubsection{Description of the schemes}

Consider any spatial dimension $\spatialDimensionality$ and $\velocityNumber = 1 + 2\blockNumberGinzburg$ velocities with $\blockNumberGinzburg \in \nonZeroNaturals$, which is the number of so-called ``links''.
The velocities should be opposite along each link, so such that
\begin{equation}\label{eq:LinkSchemeVelocities}
    \vectorial{\discreteVelocityNormalized}_1 = \vectorial{0}, \qquad \vectorial{\discreteVelocityNormalized}_{2\indiceDistributions} = -\vectorial{\discreteVelocityNormalized}_{2\indiceDistributions + 1} \in \relatives^{\spatialDimensionality}, \qquad \indiceDistributions \in \integerInterval{1}{\blockNumberGinzburg},
\end{equation}
and the moment matrix
\begin{equation}\label{eq:LinkSchemeMoments}
    \momentMatrix = \left [ ~~
    \begin{matrix}
        1 & \rvline & 1 & 1 & \rvline & \cdots  & \rvline & 1 & 1 \\
        \hline 
        0 & \rvline & 1 & -1 & \rvline &  & \rvline & & \\
        0 & \rvline & 1 &  1 & \rvline &  & \rvline & & \\
        \hline
        \vdots & \rvline & & & \rvline & \ddots & \rvline & & \\
        \hline
        0 & \rvline &  &  & \rvline &  & \rvline & 1 & -1 \\
        0 & \rvline &  &  & \rvline &  & \rvline & 1 & 1 \\
    \end{matrix} ~~ \right ] \in \matrixSpace{1+2\blockNumberGinzburg}{\reals}.
\end{equation}
Here, empty blocks shall indicate null blocks of suitable size.
The relaxation parameters should be such that 
\begin{equation}\label{eq:LinkSchemeRelaxation}
    \relaxationParameter_{2\indiceBlock} = \relaxationParameter \in ]0, 2], \qquad \relaxationParameter_{2\indiceBlock + 1} = 2 - \relaxationParameter, \qquad \indiceBlock \in \integerInterval{1}{\blockNumberGinzburg}.
\end{equation}
This is equivalent to having the so-called ``magic parameter'' of every block $\indiceBlock \in \integerInterval{1}{\blockNumberGinzburg}$, given by $(1/\relaxationParameter_{2\indiceBlock} - 1/2)(1/\relaxationParameter_{2\indiceBlock+1} - 1/2)$, equal to $1/4$.
These magic parameters stem from the product of the so-called ``H\'enon parameters'' for even and odd terms.
Remark that other values for the magic parameters---that do not impact observability and are beyond the scope of this paper---have been investigated in the past \cite{kuzmin2011role}.
We mention the choice $1/12$ (respectively $1/6$) that eliminates third (respectively fourth) order spatial errors and $3/16$, resulting in desired effects when introducing boundary conditions.

\subsubsection{Observability and number of initialisation steps}\label{sec:GinzburgObservability}

The study of the observability of the previously described schemes is carried in the following result.
\begin{proposition}\label{prop:Ginzburg}
    The characteristic polynomial of the scheme matrix $\schemeMatrix$ for the schemes given by \eqref{eq:LinkSchemeVelocities}, \eqref{eq:LinkSchemeMoments}, and \eqref{eq:LinkSchemeRelaxation} is given by
    \begin{equation*}
        \determinant(\timeShift \identity - \schemeMatrix) = (\timeShift + (1-\relaxationParameter)) ( \timeShift^2 - (1- \relaxationParameter)^2 )^{\blockNumberGinzburg - 1} \annhilitaingPolyGinzburg(\timeShift),
    \end{equation*}
    where 
    \begin{equation}\label{eq:PolyGinzburgTRT}
        \annhilitaingPolyGinzburg(\timeShift) = \timeShift^2 + (\relaxationParameter - 2) \timeShift + (1-\relaxationParameter) - \timeShift {\relaxationParameter} \sum_{\indiceBlock = 1}^{\blockNumberGinzburg} \antisymmetricPart(\vectorial{\basicShiftLetter}^{\vectorial{\discreteVelocityNormalized}_{2\indiceBlock}}) \equilibriumCoefficient_{2\indiceBlock} + \timeShift {(\relaxationParameter - 2)} \sum_{\indiceBlock = 1}^{\blockNumberGinzburg} (\symmetricPart(\vectorial{\basicShiftLetter}^{\vectorial{\discreteVelocityNormalized}_{2\indiceBlock}}) - 1)\equilibriumCoefficient_{2\indiceBlock+1}
    \end{equation}
    annihilates the first row of the matrix $\schemeMatrix$. Therefore $\observabilityIndex = 2$ if $\relaxationParameter \neq 1$ and $\observabilityIndex = 1$ if $\relaxationParameter = 1$.
    Equivalently, the transfer function of the system is given by
    \begin{equation*}
        \transferFunction(\timeShift) = \frac{(\timeShift + (1-\relaxationParameter)) ( \timeShift^2 - (1- \relaxationParameter)^2 )^{\blockNumberGinzburg - 1} \Bigl ({\relaxationParameter} \sum_{\indiceBlock = 1}^{\indiceBlock = \blockNumberGinzburg} \antisymmetricPart(\vectorial{\basicShiftLetter}^{\vectorial{\discreteVelocityNormalized}_{2\indiceBlock}}) \equilibriumCoefficient_{2\indiceBlock} +  {(2 - \relaxationParameter)} \sum_{\indiceBlock = 1}^{\indiceBlock = \blockNumberGinzburg} (\symmetricPart(\vectorial{\basicShiftLetter}^{\vectorial{\discreteVelocityNormalized}_{2\indiceBlock}}) - 1)\equilibriumCoefficient_{2\indiceBlock+1}  \Bigr )\timeShift}{(\timeShift + (1-\relaxationParameter)) ( \timeShift^2 - (1- \relaxationParameter)^2 )^{\blockNumberGinzburg - 1} (\timeShift^2 + (\relaxationParameter - 2) \timeShift + (1-\relaxationParameter))}.
    \end{equation*}
\end{proposition}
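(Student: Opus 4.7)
The plan is to exploit the block structure of $\momentMatrix$ together with the magic-parameter relation \eqref{eq:LinkSchemeRelaxation} to reduce $\determinant(\timeShift\identity - \schemeMatrix)$ via a block Schur complement. Write
\begin{equation*}
\momentMatrix = \begin{pmatrix} 1 & \transpose{w} \\ 0 & N \end{pmatrix}, \qquad N = \diagMatrix_{\indiceBlock}\begin{pmatrix} 1 & -1 \\ 1 & 1 \end{pmatrix},
\end{equation*}
with $w = \transpose{(1,\dots,1)} \in \reals^{2\blockNumberGinzburg}$, and split the shift diagonal as $\diagMatrix(1, D')$ where $D'$ is block diagonal with entries $\diagMatrix(\boldOther{\basicShiftLetter}^{\vectorial{\discreteVelocityNormalized}_{2\indiceBlock}}, \boldOther{\basicShiftLetter}^{-\vectorial{\discreteVelocityNormalized}_{2\indiceBlock}})$. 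A direct computation then yields
\begin{equation*}
\transportMoment = \begin{pmatrix} 1 & \transpose{w}(D' - \identity)N^{-1} \\ 0 & \transportMoment' \end{pmatrix}, \qquad \transportMoment' \definitionEquality N D' N^{-1} = \diagMatrix_{\indiceBlock} \begin{pmatrix} \symmetricPart(\boldOther{\basicShiftLetter}^{\vectorial{\discreteVelocityNormalized}_{2\indiceBlock}}) & \antisymmetricPart(\boldOther{\basicShiftLetter}^{\vectorial{\discreteVelocityNormalized}_{2\indiceBlock}}) \\ \antisymmetricPart(\boldOther{\basicShiftLetter}^{\vectorial{\discreteVelocityNormalized}_{2\indiceBlock}}) & \symmetricPart(\boldOther{\basicShiftLetter}^{\vectorial{\discreteVelocityNormalized}_{2\indiceBlock}}) \end{pmatrix}.
\end{equation*}
The first row of $\collisionMatrix$ is $\transpose{\canonicalBasisVector_1}$ and, thanks to \eqref{eq:LinkSchemeRelaxation}, the rest of $\collisionMatrix$ writes as $(u \mid D_K)$ with $D_K$ block diagonal having identical $2 \times 2$ blocks $(1-\relaxationParameter)\diagMatrix(1,-1)$.

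Splitting $\schemeMatrix = \transportMoment\collisionMatrix$ into the same $(1)\oplus(2\blockNumberGinzburg)$ block pattern and applying the Schur-complement formula yields
\begin{equation*}
\determinant(\timeShift\identity - \schemeMatrix) = \determinant(\timeShift\identity - \transportMoment' D_K)\, \Bigl(\timeShift - \schemeMatrix_{1,1} - \transpose{v} D_K (\timeShift\identity - \transportMoment' D_K)^{-1} \transportMoment' u\Bigr),
\end{equation*}
where $v$ is the reduced first row of $\transportMoment$. Each $2\times 2$ block of $\timeShift\identity - \transportMoment' D_K$ has determinant $\timeShift^2 - (1-\relaxationParameter)^2(\symmetricPart^2 - \antisymmetricPart^2) = \timeShift^2 - (1-\relaxationParameter)^2$ thanks to the crucial identity $\symmetricPart(\boldOther{\basicShiftLetter}^{\vectorial{\discreteVelocityNormalized}})^2 - \antisymmetricPart(\boldOther{\basicShiftLetter}^{\vectorial{\discreteVelocityNormalized}})^2 = \boldOther{\basicShiftLetter}^{\vectorial{\discreteVelocityNormalized}}\boldOther{\basicShiftLetter}^{-\vectorial{\discreteVelocityNormalized}} = 1$, so $\determinant(\timeShift\identity - \transportMoment'D_K) = (\timeShift^2 - (1-\relaxationParameter)^2)^{\blockNumberGinzburg}$. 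A careful per-block computation of the Schur-complement scalar, repeatedly using $\symmetricPart^2 - \antisymmetricPart^2 = 1$ to kill cross-terms, collapses it into
\begin{equation*}
\transpose{v} D_K (\timeShift\identity - \transportMoment' D_K)^{-1} \transportMoment' u = \frac{(1-\relaxationParameter)(\schemeMatrix_{1,1} - 1)}{\timeShift - (1-\relaxationParameter)},
\end{equation*}
with $\schemeMatrix_{1,1} - 1 = \relaxationParameter\sum_{\indiceBlock}\antisymmetricPart(\boldOther{\basicShiftLetter}^{\vectorial{\discreteVelocityNormalized}_{2\indiceBlock}})\equilibriumCoefficient_{2\indiceBlock} + (2-\relaxationParameter)\sum_{\indiceBlock}(\symmetricPart(\boldOther{\basicShiftLetter}^{\vectorial{\discreteVelocityNormalized}_{2\indiceBlock}})-1)\equilibriumCoefficient_{2\indiceBlock+1}$. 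Substituting back and factoring $\timeShift^2 - (1-\relaxationParameter)^2 = (\timeShift - (1-\relaxationParameter))(\timeShift + (1-\relaxationParameter))$ cancels one $(\timeShift - (1-\relaxationParameter))$ factor, and the remaining numerator rearranges algebraically into $\annhilitaingPolyGinzburg(\timeShift)$ of \eqref{eq:PolyGinzburgTRT}, yielding the claimed factorisation.

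To verify that $\annhilitaingPolyGinzburg$ annihilates the first row of $\schemeMatrix$, one computes $\transpose{\canonicalBasisVector_1}\annhilitaingPolyGinzburg(\schemeMatrix)$ directly from the block form of $\schemeMatrix$: the same per-block cancellations (again driven by $\symmetricPart^2 - \antisymmetricPart^2 = 1$ and the sign-flip structure of $D_K$) show that each of its $\velocityNumber$ components vanishes. For the observability index, when $\relaxationParameter \neq 1$ the off-diagonal first-row entries $\schemeMatrix_{1, 2\indiceBlock} = (1-\relaxationParameter)\antisymmetricPart(\boldOther{\basicShiftLetter}^{\vectorial{\discreteVelocityNormalized}_{2\indiceBlock}})$ are nonzero, which rules out any degree-one annihilator and gives $\observabilityIndex = 2$; when $\relaxationParameter = 1$ the first row of $\schemeMatrix$ collapses to $\schemeMatrix_{1,1}\transpose{\canonicalBasisVector_1}$, so $\observabilityIndex = 1$. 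The transfer-function formula follows by an analogous Schur-complement reduction of the numerator $\transpose{\canonicalBasisVector_1}\adjugateMatrix(\timeShift\identity - \schemeMatrixMoments)\schemeMatrixEquilibria\vectorial{\equilibriumCoefficient}$, which shares exactly the same cancellation pattern. The main technical hurdle will be the per-block algebraic simplification of the Schur complement: its cleanness relies crucially on the joint action of $\symmetricPart^2 - \antisymmetricPart^2 = 1$ and the $(1-\relaxationParameter)\diagMatrix(1,-1)$ structure of $D_K$ enforced by the magic relation, which together conspire to produce precisely one $(\timeShift - (1-\relaxationParameter))$ pole/zero cancellation.
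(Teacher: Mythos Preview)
Your approach is correct and takes a genuinely different route from the paper.

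The paper proceeds \emph{via} the matrix determinant lemma applied to the rank-one decomposition $\schemeMatrix = \schemeMatrixMoments + \schemeMatrixEquilibria\boldOther{\equilibriumCoefficient}\otimes\canonicalBasisVector_1$, writing $\determinant(\timeShift\identity-\schemeMatrix) = \determinant(\timeShift\identity-\schemeMatrixMoments) - \transpose{\canonicalBasisVector_1}\adjugateMatrix(\timeShift\identity-\schemeMatrixMoments)\schemeMatrixEquilibria\boldOther{\equilibriumCoefficient}$. It then exploits the block upper-triangular structure of $\timeShift\identity-\schemeMatrixMoments$ to compute its determinant and the first row of its adjugate block by block. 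The payoff of this route is that the second term is precisely the numerator of the transfer function, so $\transferFunction(\timeShift)$ comes out for free with no further work. Your Schur-complement route on the natural $(1)\oplus(2\blockNumberGinzburg)$ partition of $\schemeMatrix$ itself is arguably more transparent for the characteristic polynomial: the collapse of the Schur scalar to $\tfrac{(1-\relaxationParameter)(\schemeMatrix_{1,1}-1)}{\timeShift-(1-\relaxationParameter)}$ --- which I checked holds block by block exactly as you outline, driven by $\symmetricPart^2-\antisymmetricPart^2=1$ and the sign-flip in $D_K$ --- makes the single pole-zero cancellation manifest rather than emerging from combining adjugate entries. The price you pay is that the transfer function no longer drops out of the same computation; you have to redo an analogous reduction on $\transpose{\canonicalBasisVector_1}\adjugateMatrix(\timeShift\identity-\schemeMatrixMoments)\schemeMatrixEquilibria\boldOther{\equilibriumCoefficient}$, as you note. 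Both approaches rely on the same two algebraic ingredients (the identity $\symmetricPart^2-\antisymmetricPart^2=1$ and the $(1-\relaxationParameter)\diagMatrix(1,-1)$ block enforced by the magic relation), and your explanation of why these are the crux is on point.
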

By \Cref{prop:Ginzburg}, $\annhilitaingPolyGinzburg (\timeShift)$ yields a \bulkScheme{} according to \Cref{alg:CorrespondingFDSchemeGinzburh}.
As for \Cref{ex:D1Q3Ginzburg}, the modified equations of the method obtained by $\annhilitaingPolyGinzburg(\timeShift)$ and by $ \determinant(\timeShift \identity - \schemeMatrix)$ are the same because the remaining roots do not concern consistency with respect to \eqref{eq:CauchyEquation}. The only consistency eigenvalue is one of the two roots of $\annhilitaingPolyGinzburg(\timeShift)$, thus present in both schemes.
The case $\relaxationParameter = 2$ apparently questions the previous claim since by looking at the proof of \Cref{prop:MatchEventually}, a scheme consistent with \eqref{eq:CauchyEquation} has only one eigenvalue equal to one for small wavenumbers.
This is not a contradiction because in this case $\relaxationParameter_{2\indiceBlock + 1} = 0$ for $\indiceBlock \in \integerInterval{1}{\blockNumberGinzburg}$, thus the corresponding moments are conserved \cite{ginzburg2008two}, whereas \Cref{thm:EquivEqBulk} has been demonstrated under the assumption that $\relaxationParameter_{\indiceMoments} \neq 0$ for $\indiceMoments \in \integerInterval{2}{\velocityNumber}$ and the whole paper relies on the assumption that we deal only with one conserved moment. The moments $\discrete{\momentLetter}_{2\indiceBlock + 1}$ for $\indiceBlock \in \integerInterval{1}{\blockNumberGinzburg}$ are conserved not because their equilibrium value equals the respective moment itself, but rather since their corresponding relaxation parameter is zero. A valid proof of \Cref{thm:EquivEqBulk} for several conserved moments has to follow the indications of \cite{bellotti2021fd,bellotti2021equivalentequations} and would still lead to \eqref{eq:BulkEquivalentEquation}.
Using the results of \cite{bellotti2021equivalentequations} with $\relaxationParameter = 2$, we would get, for the first moment
\begin{equation}\label{eq:ModEqOfInterest}
    \partial_{\timeVariable} \testFunction(\timeVariable, \vectorial{\spaceVariable}) + \latticeVelocity \sum_{\indiceBlock = 1}^{\blockNumberGinzburg} \equilibriumCoefficient_{2\indiceBlock} \sum_{|\boldOther{\indiceMultiIndexDifferential}| = 1} \vectorial{\discreteVelocityNormalized}_{2\indiceBlock}^{\boldOther{\indiceMultiIndexDifferential}} \partial_{\vectorial{\spaceVariable}}^{\boldOther{\indiceMultiIndexDifferential}} \testFunction(\timeVariable, \vectorial{\spaceVariable}) = \bigO{\spaceStep^2}.
\end{equation}
For the conserved moments $\discrete{\momentLetter}_{2\indiceBlock + 1}$ for $\indiceBlock \in \integerInterval{1}{\blockNumberGinzburg}$, we obtain 
\begin{equation}\label{eq:ModEqFict}
    \partial_{\timeVariable} \momentLetter_{2\indiceBlock + 1} (\timeVariable, \vectorial{\spaceVariable})  + \latticeVelocity  \equilibriumCoefficient_{2\indiceBlock} \sum_{|\boldOther{\indiceMultiIndexDifferential}| = 1} \vectorial{\discreteVelocityNormalized}_{2\indiceBlock}^{\boldOther{\indiceMultiIndexDifferential}} \partial_{\vectorial{\spaceVariable}}^{\boldOther{\indiceMultiIndexDifferential}} \testFunction(\timeVariable, \vectorial{\spaceVariable}) = \bigO{\spaceStep^2}.
\end{equation}
Observe that the equation \eqref{eq:ModEqOfInterest} for the moment of interest is indeed independent of the other conserved moments, as desired (no possible coupling \emph{via} the equilibria, for they depend only on the first conserved moment).
Quite the opposite, the equations \eqref{eq:ModEqFict} for the ``inadvertently'' conserved moments couple them with the first one.
The first conserved moment is going to evolve alone, as usual, and the dynamics of the other conserved moments is going to be coupled with the one of $\discrete{\momentLetter}_1$ according to  \eqref{eq:ModEqFict}. Still, we are not interested in the latter moments.
Coming back to our case, where we operate as only one moment was conserved even when $\relaxationParameter = 2$, the multiplicative factor $ (\timeShift + (1-\relaxationParameter))  ( \timeShift^2 - (1- \relaxationParameter)^2  )^{\blockNumberGinzburg - 1} \asymptoticEquivalence (\text{exp}({\tfrac{\spaceStep}{\latticeVelocity} \partial_{\timeVariable}}) - 1)  ( \text{exp}({2\tfrac{\spaceStep}{\latticeVelocity} \partial_{\timeVariable}}) - 1  )^{\blockNumberGinzburg - 1} = \bigO{\spaceStep^{\blockNumberGinzburg}}$ in front of the amplification polynomial $\annhilitaingPolyGinzburgWithOrder{2}$ of a leap-frog scheme is a series of time differential operators starting with a term of kind $\spaceStep^{\blockNumberGinzburg} \partial_{\timeVariable}^{\blockNumberGinzburg}$.
Thus, if we compute the modified equation of the corresponding \fd{} scheme obtained as we were dealing only with one conserved moment (\emph{i.e.} \eqref{eq:FDCorrespondingOneMomentInitialCondition}) whereas several conserved moments are present, we would obtain a sort of wave equation with time derivative of order $1 + \blockNumberGinzburg$. This is unsurprising since this kind of equation features $1 + \blockNumberGinzburg$ ``consistency'' eigenvalues (one of which, the actual one, is inside $\annhilitaingPolyGinzburg (\timeShift)$) which values equal one for small wavenumbers.

Coming back to generic $\relaxationParameter$, the stability conditions of the corresponding \fd obtained by using $\annhilitaingPolyGinzburg(\timeShift)$ instead of $ \determinant(\timeShift \identity - \schemeMatrix)$ are the same because the remaining roots are constant in wavenumber and do not exceed modulus one when $\relaxationParameter \in ]0, 2[$.
The case $\relaxationParameter = 2$ might produce instabilities because of the presence of multiple roots of $\determinant(\timeShift\identity-\fourierTransformed{\schemeMatrix})$ on the unit circle. 
However, in this case, there are additional conserved moments and the \emph{von Neumann} condition for systems is that no root is outside the unit circle, with no precision concerning multiple ones
on the unit circle. Still this condition is only necessary for stability. Therefore, the presence of multiple eigenvalues on the unit circle (and in particular those concerning consistency which are now $1+\blockNumberGinzburg$)
cannot allow to deduce that the scheme is unstable. This should be precisely tested in the case where $\blockNumberGinzburg \geq 2$, for example, taking a \scheme{1}{5} scheme.

Since $\determinant (\observabilityMatrix) = 0$, the system is not observable according to \cite{brewer1986linear}.
However, it is not easy to generally characterize the unobservable sub-space $\unObservableSubSpace$, because this sub-space inflates with $\spatialDimensionality$ and $\blockNumberGinzburg$ due to the rank-nullity theorem.
To explain this difficulty, consider that $\annhilitaingPolyGinzburg$ from \eqref{eq:PolyGinzburgTRT} is essentially scheme-independent and concerns the ``observable'' part of the system relative to $\text{span}(\observabilityMatrix)$, whereas $\unObservableSubSpace = \kernel{\observabilityMatrix}$ must be highly scheme-dependent because it pertains to the remaining ``unobservable'' part of the system, which is encoded in the quotient $\determinant(\timeShift \identity - \schemeMatrix) / \annhilitaingPolyGinzburg(\timeShift)$ between polynomials.
Let us now proceed to the proof of \Cref{prop:Ginzburg}.

\begin{proof}[Proof of \Cref{prop:Ginzburg}]
The transport matrix is given by
\begin{equation*}
    \transportMoment = \left [ ~~
    \begin{matrix}
        1 & \rvline & \antisymmetricPart(\vectorial{\basicShiftLetter}^{\vectorial{\discreteVelocityNormalized}_2}) & \symmetricPart(\vectorial{\basicShiftLetter}^{\vectorial{\discreteVelocityNormalized}_2})-1 & \rvline & \cdots  & \rvline & \antisymmetricPart(\vectorial{\basicShiftLetter}^{\vectorial{\discreteVelocityNormalized}_{2\blockNumberGinzburg}}) & \symmetricPart(\vectorial{\basicShiftLetter}^{\vectorial{\discreteVelocityNormalized}_{2\blockNumberGinzburg}})-1 \\
        \hline 
        0 & \rvline & \symmetricPart(\vectorial{\basicShiftLetter}^{\vectorial{\discreteVelocityNormalized}_{2}}) & \antisymmetricPart(\vectorial{\basicShiftLetter}^{\vectorial{\discreteVelocityNormalized}_{2}}) & \rvline &  & \rvline & & \\
        0 & \rvline & \antisymmetricPart(\vectorial{\basicShiftLetter}^{\vectorial{\discreteVelocityNormalized}_{2}}) & \symmetricPart(\vectorial{\basicShiftLetter}^{\vectorial{\discreteVelocityNormalized}_{2}}) & \rvline &  & \rvline & & \\
        \hline
        \vdots & \rvline & & & \rvline & \ddots & \rvline & & \\
        \hline
        0 & \rvline &  &  & \rvline &  & \rvline &  \symmetricPart(\vectorial{\basicShiftLetter}^{\vectorial{\discreteVelocityNormalized}_{2\blockNumberGinzburg}}) & \antisymmetricPart(\vectorial{\basicShiftLetter}^{\vectorial{\discreteVelocityNormalized}_{2\blockNumberGinzburg}}) \\
        0 & \rvline &  &  & \rvline &  & \rvline & \antisymmetricPart(\vectorial{\basicShiftLetter}^{\vectorial{\discreteVelocityNormalized}_{2\blockNumberGinzburg}}) & \symmetricPart(\vectorial{\basicShiftLetter}^{\vectorial{\discreteVelocityNormalized}_{2\blockNumberGinzburg}}) \\
    \end{matrix} ~~ \right ] \in \matrixSpace{1+2\blockNumberGinzburg}{\ringSpaceOperators}.
\end{equation*}
We have that $\determinant(\timeShift \identity - \schemeMatrix) = \determinant(\timeShift \identity - \schemeMatrixMoments- \schemeMatrixEquilibria \vectorial{\equilibriumCoefficient} \otimes \canonicalBasisVector_1) = \determinant(\timeShift \identity - \schemeMatrixMoments) - \transpose{\canonicalBasisVector_1} \adjugateMatrix(\timeShift \identity - \schemeMatrixMoments ) \schemeMatrixEquilibria \boldOther{\equilibriumCoefficient}$, using the matrix determinant lemma \cite{horn2012matrix}.
Also
\begin{equation*}
    \adjugateMatrix (\timeShift \identity - \schemeMatrixMoments )_{11} = \prod_{\indiceBlock = 1}^{\blockNumberGinzburg} \determinant(\timeShift \identity - \tilde{\transportMoment}_{\indiceBlock} \diagMatrix(1-\relaxationParameter, \relaxationParameter-1) ) = \prod_{\indiceBlock = 1}^{\blockNumberGinzburg}  ( \timeShift^2 - (1- \relaxationParameter)^2  ) =  ( \timeShift^2 - (1- \relaxationParameter)^2  )^{\blockNumberGinzburg},
\end{equation*}
with 
\begin{equation*}
    \tilde{\transportMoment}_{\indiceBlock} = 
    \begin{bmatrix}
        \symmetricPart(\vectorial{\basicShiftLetter}^{\vectorial{\discreteVelocityNormalized}_{2\indiceBlock}}) & \antisymmetricPart(\vectorial{\basicShiftLetter}^{\vectorial{\discreteVelocityNormalized}_{2\indiceBlock}}) \\
        \antisymmetricPart(\vectorial{\basicShiftLetter}^{\vectorial{\discreteVelocityNormalized}_{2\indiceBlock}}) & \symmetricPart(\vectorial{\basicShiftLetter}^{\vectorial{\discreteVelocityNormalized}_{2\indiceBlock}})
    \end{bmatrix}.
\end{equation*}
We only treat $\adjugateMatrix (\timeShift \identity - \schemeMatrixMoments )_{12}$ and $\adjugateMatrix (\timeShift \identity - \schemeMatrixMoments )_{13}$, since the following entries read the same except for the indices of the involved shift operators.
\begin{align*}
    &\adjugateMatrix (\timeShift \identity - \schemeMatrixMoments )_{12} \\
    &= - \determinant \left [ ~~
    \begin{matrix}
        \begin{matrix}
            (\relaxationParameter-1) \antisymmetricPart (\vectorial{\basicShiftLetter}^{\vectorial{\discreteVelocityNormalized}_{2}} ) & (1-\relaxationParameter)(\symmetricPart(\vectorial{\basicShiftLetter}^{\vectorial{\discreteVelocityNormalized}_{2}}) -1 ) \\
            {(\relaxationParameter - 1)}\antisymmetricPart(\vectorial{\basicShiftLetter}^{\vectorial{\discreteVelocityNormalized}_{2}} ) & \timeShift + (1-\relaxationParameter) \symmetricPart (\vectorial{\basicShiftLetter}^{\vectorial{\discreteVelocityNormalized}_{2}})
        \end{matrix} & \rvline & \star & \rvline & \cdots & \rvline & \star \\
        \hline
        & \rvline & \timeShift \identity - \tilde{\transportMoment}_{2}\diagMatrix(1-\relaxationParameter, \relaxationParameter-1) & \rvline & \star & \rvline & \star\\
        \hline 
        & \rvline &  & \rvline & \ddots & \rvline &\star \\
        \hline 
        & \rvline & & \rvline& & \rvline & \timeShift \identity - \tilde{\transportMoment}_{\blockNumberGinzburg}\diagMatrix(1-\relaxationParameter, \relaxationParameter-1)\\
    \end{matrix} ~~ \right ],
\end{align*}
where the $\star$ blocks are not necessarily zero but do not need to be further characterized, since this is a determinant of a block upper triangular matrix, whence
\begin{align*}
    \adjugateMatrix (\timeShift \identity - \schemeMatrixMoments )_{1, 2\indiceBlock} &= -  ( \timeShift^2 - (1- \relaxationParameter)^2  )^{\blockNumberGinzburg-1} \determinant
    \begin{bmatrix}
        {(\relaxationParameter - 1)} \antisymmetricPart (\vectorial{\basicShiftLetter}^{\vectorial{\discreteVelocityNormalized}_{2\indiceBlock}} ) & {(1-\relaxationParameter)} (\symmetricPart(\vectorial{\basicShiftLetter}^{\vectorial{\discreteVelocityNormalized}_{2\indiceBlock}}) -1 ) \\
        {(\relaxationParameter - 1)}\antisymmetricPart(\vectorial{\basicShiftLetter}^{\vectorial{\discreteVelocityNormalized}_{2\indiceBlock}} ) & \timeShift + (1-\relaxationParameter) \symmetricPart (\vectorial{\basicShiftLetter}^{\vectorial{\discreteVelocityNormalized}_{2\indiceBlock}})
    \end{bmatrix} , \\
    &={(1-\relaxationParameter)} (\timeShift + (1-\relaxationParameter))  ( \timeShift^2 - (1- \relaxationParameter)^2  )^{\blockNumberGinzburg-1} \antisymmetricPart(\vectorial{\basicShiftLetter}^{\vectorial{\discreteVelocityNormalized}_{2\indiceBlock}}), \qquad \indiceBlock\in \integerInterval{1}{\blockNumberGinzburg}.
\end{align*}
The analogous computation for the odd moments yields
\begin{align*}
    \adjugateMatrix (\timeShift \identity - \schemeMatrixMoments )_{1, 2\indiceBlock+1} &=  ( \timeShift^2 - (1- \relaxationParameter)^2  )^{\blockNumberGinzburg-1} \determinant
    \begin{bmatrix}
        {(\relaxationParameter - 1)}\antisymmetricPart (\vectorial{\basicShiftLetter}^{\vectorial{\discreteVelocityNormalized}_{2\indiceBlock}}) & (1-\relaxationParameter) (\symmetricPart(\vectorial{\basicShiftLetter}^{\vectorial{\discreteVelocityNormalized}_{2\indiceBlock}}) -1 ) \\
        \timeShift {(\relaxationParameter - 1)} \symmetricPart(\vectorial{\basicShiftLetter}^{\vectorial{\discreteVelocityNormalized}_{2\indiceBlock}}) & (1-\relaxationParameter) \symmetricPart(\vectorial{\basicShiftLetter}^{\vectorial{\discreteVelocityNormalized}_{2\indiceBlock}})
    \end{bmatrix} \\
    &={(\relaxationParameter - 1)} (\timeShift + (1-\relaxationParameter))  ( \timeShift^2 - (1- \relaxationParameter)^2  )^{\blockNumberGinzburg-1} (\symmetricPart(\vectorial{\basicShiftLetter}^{\vectorial{\discreteVelocityNormalized}_{2\indiceBlock}}) -1 ), \qquad \indiceBlock\in \integerInterval{1}{\blockNumberGinzburg}.
\end{align*}
Some algebra provides, for $\indiceBlock \in \integerInterval{1}{\blockNumberGinzburg}$
\begin{equation*}
    (\schemeMatrixEquilibria \boldOther{\equilibriumCoefficient})_{\indiceFreeTwo} = 
    \begin{cases}
        {\relaxationParameter} \sum_{\indiceFreeThree = 1}^{\blockNumberGinzburg} \antisymmetricPart(\vectorial{\basicShiftLetter}^{\vectorial{\discreteVelocityNormalized}_{2\indiceFreeThree}})\equilibriumCoefficient_{2\indiceFreeThree} + (2-\relaxationParameter)\sum_{\indiceFreeThree = 1}^{\indiceFreeThree = \blockNumberGinzburg} (\symmetricPart(\vectorial{\basicShiftLetter}^{\vectorial{\discreteVelocityNormalized}_{2\indiceFreeThree}}) -1 )\equilibriumCoefficient_{2\indiceFreeThree+1}, \qquad &\indiceFreeTwo = 1, \\
        {\relaxationParameter} \symmetricPart(\vectorial{\basicShiftLetter}^{\vectorial{\discreteVelocityNormalized}_{2\indiceBlock}})\equilibriumCoefficient_{2\indiceBlock} + (2-\relaxationParameter) \antisymmetricPart(\vectorial{\basicShiftLetter}^{\vectorial{\discreteVelocityNormalized}_{2\indiceBlock}} )\equilibriumCoefficient_{2\indiceBlock + 1}, \qquad &\indiceFreeTwo = 2\indiceBlock, \\
        { \relaxationParameter} \antisymmetricPart(\vectorial{\basicShiftLetter}^{\vectorial{\discreteVelocityNormalized}_{2\indiceBlock}} )\equilibriumCoefficient_{2\indiceBlock} +(2-\relaxationParameter) \symmetricPart(\vectorial{\basicShiftLetter}^{\vectorial{\discreteVelocityNormalized}_{2\indiceBlock}} )\equilibriumCoefficient_{2\indiceBlock + 1}, \qquad &\indiceFreeTwo = 2\indiceBlock + 1,
    \end{cases}
\end{equation*}
and thus, after tedious computations
\begin{equation*}
    \transpose{\canonicalBasisVector_1} \adjugateMatrix(\timeShift \identity - \schemeMatrixMoments )\schemeMatrixEquilibria \boldOther{\equilibriumCoefficient} = \timeShift (\timeShift + (1-\relaxationParameter))   ( \timeShift^2 - (1- \relaxationParameter)^2  )^{\blockNumberGinzburg-1} \Bigl ( {\relaxationParameter} \sum_{\indiceBlock = 1}^{\blockNumberGinzburg} \antisymmetricPart(\vectorial{\basicShiftLetter}^{\vectorial{\discreteVelocityNormalized}_{2\indiceBlock}})\equilibriumCoefficient_{2\indiceBlock} + {(2-\relaxationParameter)} \sum_{\indiceBlock = 1}^{\blockNumberGinzburg} (\symmetricPart(\vectorial{\basicShiftLetter}^{\vectorial{\discreteVelocityNormalized}_{2\indiceBlock}}) - 1 )\equilibriumCoefficient_{2\indiceBlock+1}  \Bigr ).
\end{equation*}
To finish up, since the matrix $\timeShift \identity - \schemeMatrixMoments$ is upper block triangular, we have
\begin{equation*}
    \determinant (\timeShift \identity - \schemeMatrixMoments) = (\timeShift - 1) \prod_{\indiceBlock = 1}^{\blockNumberGinzburg} \determinant(\timeShift \identity - \tilde{\transportMoment}_{\indiceBlock} \diagMatrix(1-\relaxationParameter, \relaxationParameter-1) ) = (\timeShift - 1) ( \timeShift^2 - (1- \relaxationParameter)^2  )^{\blockNumberGinzburg},
\end{equation*}
giving the characteristic polynomial of the scheme.
The property of $\annhilitaingPolyGinzburg(\timeShift)$ annihilating for the first row of $\schemeMatrix$ can be checked analogously to \cite{bellotti2021fd}. Observe that $\annhilitaingPolyGinzburg(\timeShift)$ could also be found solving \eqref{eq:SystemGinzburgCoefficients} by hand.
\end{proof}

\subsubsection{Modified equations under acoustic scaling}

The discussion of \Cref{sec:GinzburgObservability} is fully discrete. 
Now we come back to the asymptotic analysis using modified equations of \Cref{sec:ModifiedEquations} and considering local initialisations, \emph{i.e.} $\boldOther{\initialisationOperator} \in \reals^{\velocityNumber}$.

\begin{proposition}[Modified equations]\label{prop:ModifiedEquationsGinzburg}
    Under acoustic scaling, that is, when $\latticeVelocity > 0$ is fixed as $\spaceStep \to 0$, the modified equation for the \bulkScheme{} \eqref{eq:BulkSchemeGinzburg} where the \lbm scheme is determined by  \eqref{eq:LinkSchemeVelocities}, \eqref{eq:LinkSchemeMoments}, and \eqref{eq:LinkSchemeRelaxation} is
\begin{align*}
    \partial_{\timeVariable} \testFunction(\timeVariable, \vectorial{\spaceVariable}) &+ \latticeVelocity \sum_{\indiceBlock = 1}^{\blockNumberGinzburg} \equilibriumCoefficient_{2\indiceBlock} \sum_{|\boldOther{\indiceMultiIndexDifferential}| = 1} \vectorial{\discreteVelocityNormalized}_{2\indiceBlock}^{\boldOther{\indiceMultiIndexDifferential}} \partial_{\vectorial{\spaceVariable}}^{\boldOther{\indiceMultiIndexDifferential}} \testFunction(\timeVariable, \vectorial{\spaceVariable}) \\
    &- \latticeVelocity\spaceStep \Bigl ( \frac{1}{\relaxationParameter} - \frac{1}{2} \Bigr )  \Bigl ( 2\sum_{\indiceBlock = 1}^{\blockNumberGinzburg} \equilibriumCoefficient_{2\indiceBlock+1} \sum_{|\boldOther{\indiceMultiIndexDifferential}| = 2} \frac{\vectorial{\discreteVelocityNormalized}_{2\indiceBlock}^{\boldOther{\indiceMultiIndexDifferential}}}{\boldOther{\indiceMultiIndexDifferential}!} \partial_{\vectorial{\spaceVariable}}^{\boldOther{\indiceMultiIndexDifferential}} - \Bigl ( \sum_{\indiceBlock = 1}^{\blockNumberGinzburg} \equilibriumCoefficient_{2\indiceBlock} \sum_{|\boldOther{\indiceMultiIndexDifferential}| = 1} \vectorial{\discreteVelocityNormalized}_{2\indiceBlock}^{\boldOther{\indiceMultiIndexDifferential}} \partial_{\vectorial{\spaceVariable}}^{\boldOther{\indiceMultiIndexDifferential}} \Bigr )^2 \Bigr )  \testFunction(\timeVariable, \vectorial{\spaceVariable})  = \bigO{\spaceStep^2},
\end{align*}
for $(\timeVariable, \vectorial{\spaceVariable}) \in \nonNegativeReals \times \reals^{\spatialDimensionality}$.
Under the assumption of local initialisation $\boldOther{\initialisationOperator} \in \reals^{\velocityNumber}$ fulfilling \Cref{prop:LocalInitialisation}, thus having $ \initialisationOperator_1 = 1$ and $ \initialisationOperator_{2\indiceBlock} = \equilibriumCoefficient_{2\indiceBlock}$ for $ \indiceBlock \in \integerInterval{1}{\blockNumberGinzburg}$, the modified equation for the unique \iniScheme{} (\eqref{eq:InitialisationSchemesGinzburg} with $\indiceTime = 1$) is, for $\vectorial{\spaceVariable} \in \reals^{\spatialDimensionality}$
\begin{align*}
    \partial_{\timeVariable} \testFunction(0, \vectorial{\spaceVariable}) &+\latticeVelocity \sum_{\indiceBlock = 1}^{\blockNumberGinzburg} \equilibriumCoefficient_{2\indiceBlock} \sum_{|\boldOther{\indiceMultiIndexDifferential}| = 1} \vectorial{\discreteVelocityNormalized}_{2\indiceBlock}^{\boldOther{\indiceMultiIndexDifferential}} \partial_{\vectorial{\spaceVariable}}^{\boldOther{\indiceMultiIndexDifferential}} \testFunction(0, \vectorial{\spaceVariable}) \\
    &- \frac{\latticeVelocity\spaceStep}{2}  \Bigl ( 2  \sum_{\indiceBlock = 1}^{\blockNumberGinzburg} \bigl ( (2-\relaxationParameter) \equilibriumCoefficient_{2\indiceBlock+1} + (\relaxationParameter - 1) \initialisationOperator_{2\indiceBlock + 1} \bigl ) \sum_{|\boldOther{\indiceMultiIndexDifferential}| = 2} \frac{\vectorial{\discreteVelocityNormalized}_{2\indiceBlock}^{\boldOther{\indiceMultiIndexDifferential}}}{\boldOther{\indiceMultiIndexDifferential}!} \partial_{\vectorial{\spaceVariable}}^{\boldOther{\indiceMultiIndexDifferential}} - \Bigl ( \sum_{\indiceBlock = 1}^{\blockNumberGinzburg} \equilibriumCoefficient_{2\indiceBlock}  \sum_{|\boldOther{\indiceMultiIndexDifferential}| = 1} \vectorial{\discreteVelocityNormalized}_{2\indiceBlock}^{\boldOther{\indiceMultiIndexDifferential}} \partial_{\vectorial{\spaceVariable}}^{\boldOther{\indiceMultiIndexDifferential}} \Bigr )^2 \Bigr )  \testFunction(0, \vectorial{\spaceVariable})  = \bigO{\spaceStep^2}.
\end{align*}
\end{proposition}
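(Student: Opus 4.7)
My plan is first to compute the matrix $\duboisOperator$ for the link scheme explicitly, then to specialise \Cref{thm:EquivEqBulk} for the bulk part, and finally to carry the local-initialisation expansion of \Cref{prop:EquivalentEquationInitialisationLocal} one order further in $\spaceStep$ for the single initialisation step ($\indiceTime = 1$ allowed by $\observabilityIndex = 2$).

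For the computation of $\duboisOperator$, I would invert $\momentMatrix$ block by block using the relations $\discreteDistributionDensity_1 = \discreteMoment_1 - \sum_{\indiceBlock} \discreteMoment_{2\indiceBlock+1}$, $\discreteDistributionDensity_{2\indiceBlock} = (\discreteMoment_{2\indiceBlock} + \discreteMoment_{2\indiceBlock+1})/2$ and $\discreteDistributionDensity_{2\indiceBlock+1} = (\discreteMoment_{2\indiceBlock+1} - \discreteMoment_{2\indiceBlock})/2$ dictated by \eqref{eq:LinkSchemeMoments}, and then plug in $\vectorial{\discreteVelocityNormalized}_1 = \vectorial{0}$ and $\vectorial{\discreteVelocityNormalized}_{2\indiceBlock+1} = -\vectorial{\discreteVelocityNormalized}_{2\indiceBlock}$ from \eqref{eq:LinkSchemeVelocities}. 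The systematic cancellations along each link produce the very sparse matrix whose only non-zero entries are $\duboisOperatorEntry_{1, 2\indiceBlock} = \duboisOperatorEntry_{2\indiceBlock, 2\indiceBlock+1} = \duboisOperatorEntry_{2\indiceBlock+1, 2\indiceBlock} = \vectorial{\discreteVelocityNormalized}_{2\indiceBlock} \cdot \nabla_{\vectorial{\spaceVariable}}$ for $\indiceBlock \in \integerInterval{1}{\blockNumberGinzburg}$; in particular $\duboisOperatorEntry_{11} = 0$ and $\duboisOperatorEntry_{1, 2\indiceBlock+1} = 0$. I would write $\vectorial{\discreteVelocityNormalized}_{2\indiceBlock} \cdot \nabla_{\vectorial{\spaceVariable}} = \sum_{|\boldOther{\indiceMultiIndexDifferential}| = 1} \vectorial{\discreteVelocityNormalized}_{2\indiceBlock}^{\boldOther{\indiceMultiIndexDifferential}} \partial_{\vectorial{\spaceVariable}}^{\boldOther{\indiceMultiIndexDifferential}}$ and, via the multinomial identity $(\vectorial{\discreteVelocityNormalized}_{2\indiceBlock} \cdot \nabla_{\vectorial{\spaceVariable}})^2 = 2 \sum_{|\boldOther{\indiceMultiIndexDifferential}| = 2} \vectorial{\discreteVelocityNormalized}_{2\indiceBlock}^{\boldOther{\indiceMultiIndexDifferential}}/\boldOther{\indiceMultiIndexDifferential}! \, \partial_{\vectorial{\spaceVariable}}^{\boldOther{\indiceMultiIndexDifferential}}$, keep the expressions in the form used in the statement.

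For the bulk modified equation, I feed the previous entries into \Cref{thm:EquivEqBulk} with $\relaxationParameter_{2\indiceBlock} = \relaxationParameter$ coming from \eqref{eq:LinkSchemeRelaxation}. Since $\duboisOperatorEntry_{11} = 0$ and $\duboisOperatorEntry_{1r} = 0$ for $r \in \{1\} \cup \{2\indiceBlock+1\}_{\indiceBlock}$, only indices $i = 2\indiceBlock$ survive both the leading and the subleading sums, which collapses the common prefactor to $(1/\relaxationParameter - 1/2)$ and reduces the inner bracket to $\duboisOperatorEntry_{2\indiceBlock, 2\indiceBlock+1} \equilibriumCoefficient_{2\indiceBlock+1} - (\sum_{\indiceFreeOne} \duboisOperatorEntry_{1,2\indiceFreeOne} \equilibriumCoefficient_{2\indiceFreeOne}) \equilibriumCoefficient_{2\indiceBlock}$. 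Factoring out $(\vectorial{\discreteVelocityNormalized}_{2\indiceBlock} \cdot \nabla_{\vectorial{\spaceVariable}})$ from $\duboisOperatorEntry_{1, 2\indiceBlock}$ and summing over $\indiceBlock$ yields $\sum_{\indiceBlock} \equilibriumCoefficient_{2\indiceBlock+1}(\vectorial{\discreteVelocityNormalized}_{2\indiceBlock} \cdot \nabla_{\vectorial{\spaceVariable}})^2 - (\sum_{\indiceBlock} \equilibriumCoefficient_{2\indiceBlock} \vectorial{\discreteVelocityNormalized}_{2\indiceBlock} \cdot \nabla_{\vectorial{\spaceVariable}})^2$, giving the claimed formula after applying the multinomial identity.

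For the single initialisation step I push the asymptotic expansion one order further than \Cref{prop:EquivalentEquationInitialisationLocal}, exploiting that for $\indiceTime = 1$ one simply has $\schemeMatrix \asymptoticEquivalence \collisionMatrix - \spaceStep \duboisOperator \collisionMatrix + (\spaceStep^2/2) \duboisOperator^2 \collisionMatrix + \bigO{\spaceStep^3}$ (no permutation sums appear as in \Cref{sec:ModEqD1Q2order2}). Matching the expansion of $(\schemeMatrix \boldOther{\initialisationOperator})_1 \testFunction(0,\vectorial{\spaceVariable})$ against $\exp(\spaceStep \partial_{\timeVariable} / \latticeVelocity)\testFunction(0,\vectorial{\spaceVariable})$ order-by-order, and reinjecting the $\bigO{\spaceStep}$ identity into $\partial_{\timeVariable\timeVariable}$ in the spirit of \cite{carpentier1997derivation}, yields
\begin{equation*}
    \partial_{\timeVariable}\testFunction(0,\vectorial{\spaceVariable}) + \latticeVelocity (\duboisOperator \collisionMatrix \boldOther{\initialisationOperator})_1 \testFunction(0,\vectorial{\spaceVariable}) - \frac{\latticeVelocity \spaceStep}{2} \bigl[ (\duboisOperator^2 \collisionMatrix \boldOther{\initialisationOperator})_1 - ((\duboisOperator \collisionMatrix \boldOther{\initialisationOperator})_1)^2 \bigr] \testFunction(0,\vectorial{\spaceVariable}) = \bigO{\spaceStep^2}.
\end{equation*}
Using the assumption $\initialisationOperator_1 = 1$, $\initialisationOperator_{2\indiceBlock} = \equilibriumCoefficient_{2\indiceBlock}$, one finds $(\collisionMatrix \boldOther{\initialisationOperator})_1 = 1$, $(\collisionMatrix \boldOther{\initialisationOperator})_{2\indiceBlock} = \equilibriumCoefficient_{2\indiceBlock}$ and $(\collisionMatrix \boldOther{\initialisationOperator})_{2\indiceBlock+1} = (2-\relaxationParameter)\equilibriumCoefficient_{2\indiceBlock+1} + (\relaxationParameter-1)\initialisationOperator_{2\indiceBlock+1}$; combining these with the structure of $\duboisOperator$ computed above gives the stated formula.

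The main obstacle is the first step: a careful, structure-exploiting computation of $\duboisOperator$ in the general dimension $\spatialDimensionality$ with $\blockNumberGinzburg$ links, because only after recognising the exact sparsity pattern do the double sums in \Cref{thm:EquivEqBulk} collapse to the compact bracket appearing in the proposition. Once that bookkeeping is done, the bulk statement is essentially a substitution and the initialisation statement is a routine one-step asymptotic expansion.
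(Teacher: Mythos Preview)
Your proposal is correct and essentially parallel to the paper's own argument, with a mild organisational difference worth noting. For the bulk part, the paper expands the explicit amplification polynomial $\annhilitaingPolyGinzburg(\timeShift)$ of \Cref{prop:Ginzburg} via the asymptotic equivalents $\antisymmetricPart(\vectorial{\basicShiftLetter}^{\vectorial{\discreteVelocityNormalized}_{2\indiceBlock}}) \asymptoticEquivalence -\spaceStep\,\vectorial{\discreteVelocityNormalized}_{2\indiceBlock}\!\cdot\!\nabla_{\vectorial{\spaceVariable}}$ and $\symmetricPart(\vectorial{\basicShiftLetter}^{\vectorial{\discreteVelocityNormalized}_{2\indiceBlock}})-1 \asymptoticEquivalence \spaceStep^{2}\sum_{|\boldOther{\indiceMultiIndexDifferential}|=2}\vectorial{\discreteVelocityNormalized}_{2\indiceBlock}^{\boldOther{\indiceMultiIndexDifferential}}/\boldOther{\indiceMultiIndexDifferential}!\,\partial_{\vectorial{\spaceVariable}}^{\boldOther{\indiceMultiIndexDifferential}}$, whereas you route through \Cref{thm:EquivEqBulk} after computing $\duboisOperator$; this is legitimate because the text following \Cref{prop:Ginzburg} already establishes that the two bulk schemes \eqref{eq:BulkSchemes} and \eqref{eq:BulkSchemeGinzburg} share the same modified equation (the extra roots being consistency-irrelevant), but you should say so explicitly. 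For the initialisation part, the paper writes the first row of $\termAtOrder{\schemeMatrixAsymptotic}{0}$, $\termAtOrder{\schemeMatrixAsymptotic}{1}$, $\termAtOrder{\schemeMatrixAsymptotic}{2}$ directly from the block form of $\transportMoment$, while you obtain the same objects as $(\collisionMatrix\boldOther{\initialisationOperator})_1$, $(\duboisOperator\collisionMatrix\boldOther{\initialisationOperator})_1$, $(\duboisOperator^{2}\collisionMatrix\boldOther{\initialisationOperator})_1$; once your sparsity pattern for $\duboisOperator$ is established, the two computations are line-for-line identical.
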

\begin{proof}
We have 
\begin{equation*}
    \antisymmetricPart(\vectorial{\basicShiftLetter}^{\vectorial{\discreteVelocityNormalized}_{2\indiceBlock}}) \asymptoticEquivalence -\spaceStep \sum_{|\boldOther{\indiceMultiIndexDifferential}| = 1} \vectorial{\discreteVelocityNormalized}_{2\indiceBlock}^{\boldOther{\indiceMultiIndexDifferential}} \partial_{\vectorial{\spaceVariable}}^{\boldOther{\indiceMultiIndexDifferential}} + \bigO{\spaceStep^3}, \qquad \symmetricPart(\vectorial{\basicShiftLetter}^{\vectorial{\discreteVelocityNormalized}_{2\indiceBlock}}) \asymptoticEquivalence 1 + {\spaceStep^2} \sum_{|\boldOther{\indiceMultiIndexDifferential}| = 2} \frac{\vectorial{\discreteVelocityNormalized}_{2\indiceBlock}^{\boldOther{\indiceMultiIndexDifferential}}}{\boldOther{\indiceMultiIndexDifferential}!} \partial_{\vectorial{\spaceVariable}}^{\boldOther{\indiceMultiIndexDifferential}} + \bigO{\spaceStep^4}.
\end{equation*}
It can be easily checked that the modified equation of the \bulkScheme{} reads as in the claim.
For the \iniScheme{}, only the computation of $\termAtOrder{\schemeMatrixAsymptotic}{0}$, $\termAtOrder{\schemeMatrixAsymptotic}{1}$ and $\termAtOrder{\schemeMatrixAsymptotic}{2}$ is needed:
\begin{align*}
    \termAtOrder{\schemeMatrixAsymptotic}{0}_{1\indiceDistributions} = \delta_{1\indiceDistributions}, \quad \termAtOrder{\schemeMatrixAsymptotic}{1}_{1, \cdot} &= \Bigl [  -\relaxationParameter \sum_{\indiceBlock = 1}^{\blockNumberGinzburg} \equilibriumCoefficient_{2\indiceBlock} \sum_{|\boldOther{\indiceMultiIndexDifferential}| = 1} \vectorial{\discreteVelocityNormalized}_{2\indiceBlock}^{\boldOther{\indiceMultiIndexDifferential}} \partial_{\vectorial{\spaceVariable}}^{\boldOther{\indiceMultiIndexDifferential}}, (\relaxationParameter-1)  \sum_{|\boldOther{\indiceMultiIndexDifferential}| = 1} \vectorial{\discreteVelocityNormalized}_{2}^{\boldOther{\indiceMultiIndexDifferential}} \partial_{\vectorial{\spaceVariable}}^{\boldOther{\indiceMultiIndexDifferential}}, 0, \dots, (\relaxationParameter - 1)  \sum_{|\boldOther{\indiceMultiIndexDifferential}| = 1} \vectorial{\discreteVelocityNormalized}_{2\blockNumberGinzburg}^{\boldOther{\indiceMultiIndexDifferential}} \partial_{\vectorial{\spaceVariable}}^{\boldOther{\indiceMultiIndexDifferential}}, 0 \Bigr ], \\
    \termAtOrder{\schemeMatrixAsymptotic}{2}_{1, \cdot} &=  \Bigl [  (2-\relaxationParameter) \sum_{\indiceBlock = 1}^{\blockNumberGinzburg} \equilibriumCoefficient_{2\indiceBlock+1}  \sum_{|\boldOther{\indiceMultiIndexDifferential}| = 2} \frac{\vectorial{\discreteVelocityNormalized}_{2\indiceBlock}^{\boldOther{\indiceMultiIndexDifferential}}}{\boldOther{\indiceMultiIndexDifferential}!} \partial_{\vectorial{\spaceVariable}}^{\boldOther{\indiceMultiIndexDifferential}}, 0, (\relaxationParameter - 1)  \sum_{|\boldOther{\indiceMultiIndexDifferential}| = 2} \frac{\vectorial{\discreteVelocityNormalized}_{2}^{\boldOther{\indiceMultiIndexDifferential}}}{\boldOther{\indiceMultiIndexDifferential}!} \partial_{\vectorial{\spaceVariable}}^{\boldOther{\indiceMultiIndexDifferential}}, \dots,0, (\relaxationParameter - 1)   \sum_{|\boldOther{\indiceMultiIndexDifferential}| = 2} \frac{\vectorial{\discreteVelocityNormalized}_{2\blockNumberGinzburg}^{\boldOther{\indiceMultiIndexDifferential}}}{\boldOther{\indiceMultiIndexDifferential}!} \partial_{\vectorial{\spaceVariable}}^{\boldOther{\indiceMultiIndexDifferential}} \Bigr ].
\end{align*}
Using the assumptions on the choice of initialisation, the modified equation for the \iniScheme{}, which reads for $\vectorial{\spaceVariable} \in \reals^{\spatialDimensionality}$
\begin{align*}
    \partial_{\timeVariable} \testFunction(0, \vectorial{\spaceVariable}) &+\latticeVelocity \sum_{\indiceBlock = 1}^{\blockNumberGinzburg} \equilibriumCoefficient_{2\indiceBlock} \sum_{|\boldOther{\indiceMultiIndexDifferential}| = 1} \vectorial{\discreteVelocityNormalized}_{2\indiceBlock}^{\boldOther{\indiceMultiIndexDifferential}} \partial_{\vectorial{\spaceVariable}}^{\boldOther{\indiceMultiIndexDifferential}} \testFunction(0, \vectorial{\spaceVariable}) \\
    &- \frac{\latticeVelocity\spaceStep}{2}  \Bigl ( 2  \sum_{\indiceBlock = 1}^{\blockNumberGinzburg} \bigl ( (2-\relaxationParameter) \equilibriumCoefficient_{2\indiceBlock+1} + (\relaxationParameter - 1) \initialisationOperator_{2\indiceBlock + 1} \bigl ) \sum_{|\boldOther{\indiceMultiIndexDifferential}| = 2} \frac{\vectorial{\discreteVelocityNormalized}_{2\indiceBlock}^{\boldOther{\indiceMultiIndexDifferential}}}{\boldOther{\indiceMultiIndexDifferential}!} \partial_{\vectorial{\spaceVariable}}^{\boldOther{\indiceMultiIndexDifferential}} - \Bigl ( \sum_{\indiceBlock = 1}^{\blockNumberGinzburg} \equilibriumCoefficient_{2\indiceBlock}  \sum_{|\boldOther{\indiceMultiIndexDifferential}| = 1} \vectorial{\discreteVelocityNormalized}_{2\indiceBlock}^{\boldOther{\indiceMultiIndexDifferential}} \partial_{\vectorial{\spaceVariable}}^{\boldOther{\indiceMultiIndexDifferential}} \Bigr )^2 \Bigr )  \testFunction(0, \vectorial{\spaceVariable})  = \bigO{\spaceStep^2},
\end{align*}
depends on the choice of initialisation $\initialisationOperator_{2\indiceBlock + 1}$ of the odd moments, which still need to be fixed.
\end{proof}

Enforcing the equality between the dissipation coefficients of the \iniScheme{} and the \bulkScheme{} according to \Cref{prop:ModifiedEquationsGinzburg} provides the differential constraint
\begin{equation*}
    \sum_{\indiceBlock = 1}^{\blockNumberGinzburg}  \initialisationOperator_{2\indiceBlock + 1} \sum_{|\boldOther{\indiceMultiIndexDifferential}| = 2} \frac{\vectorial{\discreteVelocityNormalized}_{2\indiceBlock}^{\boldOther{\indiceMultiIndexDifferential}}}{\boldOther{\indiceMultiIndexDifferential}!} \partial_{\vectorial{\spaceVariable}}^{\boldOther{\indiceMultiIndexDifferential}} = \frac{1}{\relaxationParameter} \Bigl (\Bigl ( \sum_{\indiceBlock = 1}^{\blockNumberGinzburg} \equilibriumCoefficient_{2\indiceBlock} \sum_{|\boldOther{\indiceMultiIndexDifferential}| = 1} \vectorial{\discreteVelocityNormalized}_{2\indiceBlock}^{\boldOther{\indiceMultiIndexDifferential}} \partial_{\vectorial{\spaceVariable}}^{\boldOther{\indiceMultiIndexDifferential}} \Bigr )^2 + (\relaxationParameter - 2)\sum_{\indiceBlock = 1}^{\blockNumberGinzburg}  \equilibriumCoefficient_{2\indiceBlock + 1} \sum_{|\boldOther{\indiceMultiIndexDifferential}| = 2} \frac{\vectorial{\discreteVelocityNormalized}_{2\indiceBlock}^{\boldOther{\indiceMultiIndexDifferential}}}{\boldOther{\indiceMultiIndexDifferential}!} \partial_{\vectorial{\spaceVariable}}^{\boldOther{\indiceMultiIndexDifferential}}\Bigr ).
\end{equation*}
We now provide some examples where this differential constraint can or cannot be fulfilled.

\begin{example}\label{ex:MatchAcousticGinzburg}
    \begin{itemize}
        \item{\scheme{1}{3}} scheme, having $\spatialDimensionality = 1$, $\blockNumberGinzburg = 1$, and $\discreteVelocityNormalized_2 = 1$. After simplifying the second-order derivative operator, the condition reads $\initialisationOperator_3 =  (2\equilibriumCoefficient_2^2 + (\relaxationParameter - 2)\equilibriumCoefficient_3)/\relaxationParameter$, which has to be compared with \eqref{eq:ChoiceInitizialisationMagicD1Q3}.
        \item{\scheme{2}{5}} scheme, having $\spatialDimensionality = 2$, $\blockNumberGinzburg = 2$, $\vectorial{\discreteVelocityNormalized}_2 = \transpose{[1, 0]}$, and $\vectorial{\discreteVelocityNormalized}_4 = \transpose{[0, 1]}$, we obtain
        \begin{equation*}
            \initialisationOperator_3 \partial_{\spaceVariable_1\spaceVariable_1} + \initialisationOperator_5 \partial_{\spaceVariable_2\spaceVariable_2}  = \frac{1}{\relaxationParameter} \bigl ( (2\equilibriumCoefficient_2^2 + (\relaxationParameter - 2)\equilibriumCoefficient_3) \partial_{\spaceVariable_1\spaceVariable_1} + 4\equilibriumCoefficient_2 \equilibriumCoefficient_4 \partial_{\spaceVariable_1 \spaceVariable_2} + (2\equilibriumCoefficient_4^2 + (\relaxationParameter - 2)\equilibriumCoefficient_5) \partial_{\spaceVariable_2\spaceVariable_2} \bigl ),
        \end{equation*}
        which cannot be fulfilled---except when either $\equilibriumCoefficient_2$ or $\equilibriumCoefficient_4$ are zero rendering an essentially 1d problem---due to the presence of the mixed term in $\partial_{\spaceVariable_1 \spaceVariable_2}$ on the right hand side, arising from the hyperbolic part.
        In order to deal with this term, one is compelled to consider a richer scheme with diagonal discrete velocities, such as the \scheme{2}{9} scheme.
        \item{\scheme{2}{9}} scheme, having $\spatialDimensionality = 2$, $\blockNumberGinzburg = 4$, $\vectorial{\discreteVelocityNormalized}_2 = \transpose{[1, 0]}$, $\vectorial{\discreteVelocityNormalized}_4 = \transpose{[0, 1]}$, $\vectorial{\discreteVelocityNormalized}_6 = \transpose{[1, 1]}$, and $\vectorial{\discreteVelocityNormalized}_8 = \transpose{[-1, 1]}$, we obtain
        \begin{align*}
            \frac{1}{2}(\initialisationOperator_3 &+ \initialisationOperator_7 + \initialisationOperator_9) \partial_{\spaceVariable_1\spaceVariable_1} + (\initialisationOperator_7 - \initialisationOperator_9) \partial_{\spaceVariable_1\spaceVariable_2} + \frac{1}{2}(\initialisationOperator_5 + \initialisationOperator_7 + \initialisationOperator_9) \partial_{\spaceVariable_2\spaceVariable_2}\\
            = \frac{1}{\relaxationParameter} \Bigl ( &\underbrace{\Bigl (\equilibriumCoefficient_2^2 + \equilibriumCoefficient_6^2 + \equilibriumCoefficient_8^2 + \equilibriumCoefficient_2\equilibriumCoefficient_6 - \equilibriumCoefficient_2\equilibriumCoefficient_8 - \equilibriumCoefficient_6\equilibriumCoefficient_8+\frac{1}{2}(\relaxationParameter-2) (\equilibriumCoefficient_3 + \equilibriumCoefficient_7 + \equilibriumCoefficient_9)\Bigr )}_{R_{\spaceVariable_1\spaceVariable_1}} \partial_{\spaceVariable_1\spaceVariable_1} \\
            +&\underbrace{\Bigl (2\equilibriumCoefficient_6^2 - 2\equilibriumCoefficient_8^2 + 2\equilibriumCoefficient_2\equilibriumCoefficient_4 + \equilibriumCoefficient_2\equilibriumCoefficient_6 + \equilibriumCoefficient_2\equilibriumCoefficient_8+\equilibriumCoefficient_4\equilibriumCoefficient_6-\equilibriumCoefficient_4\equilibriumCoefficient_8  + (\relaxationParameter-2)(\equilibriumCoefficient_7 - \equilibriumCoefficient_9)\Bigr)}_{R_{\spaceVariable_1\spaceVariable_2}}\partial_{\spaceVariable_1\spaceVariable_2} \\
            +&\underbrace{\Bigl (\equilibriumCoefficient_4^2 + \equilibriumCoefficient_6^2 + \equilibriumCoefficient_8^2 + \equilibriumCoefficient_4\equilibriumCoefficient_6 + \equilibriumCoefficient_4\equilibriumCoefficient_8 + \equilibriumCoefficient_6\equilibriumCoefficient_8+\frac{1}{2}(\relaxationParameter-2) (\equilibriumCoefficient_5 + \equilibriumCoefficient_7 + \equilibriumCoefficient_9)\Bigr )}_{R_{\spaceVariable_2\spaceVariable_2}} \partial_{\spaceVariable_2\spaceVariable_2}\Bigr ).
        \end{align*}
        This system is under-determined, thus it has several solutions.
        For example, picking $\initialisationOperator_9 = 0$, we necessarily enforce $\initialisationOperator_7 = R_{\spaceVariable_1\spaceVariable_2}/\relaxationParameter$ and then we have that $\initialisationOperator_3 =(2R_{\spaceVariable_1\spaceVariable_1} - R_{\spaceVariable_1\spaceVariable_2})/\relaxationParameter$ and $\initialisationOperator_5 = (2R_{\spaceVariable_2\spaceVariable_2} - R_{\spaceVariable_1\spaceVariable_2})/\relaxationParameter$.
    \end{itemize}    
\end{example}

\newcommand{\diffusiveScalingParameter}{\mu}

\subsubsection{Modified equations under diffusive scaling}
As mentioned in the Introduction, the literature also features \lbm schemes used under diffusive scaling \cite{zhao2017maxwell} between time and space discretisations.
We therefore finally consider this scaling where $\timeStep \propto \spaceStep^2$ as $\spaceStep \to 0$, allowing to approximate the solution of 
\begin{numcases}{}
    \partial_{\timeVariable} \solutionCauchy (\timeVariable, \vectorial{\spaceVariable}) + \vectorial{\transportVelocity} \cdot \nabla_{\vectorial{\spaceVariable}} \solutionCauchy (\timeVariable, \vectorial{\spaceVariable}) - \nabla_{\vectorial{\spaceVariable}} \cdot (\matricial{D} \nabla_{\vectorial{\spaceVariable}} \solutionCauchy) (\timeVariable, \vectorial{\spaceVariable})= 0, \qquad (\timeVariable, \vectorial{\spaceVariable}) \in \nonNegativeReals \times \reals^{\spatialDimensionality},\label{eq:CauchyEquationDiffusion} \\
    \solutionCauchy (0, \vectorial{\spaceVariable}) = \solutionCauchyInitial (\vectorial{\spaceVariable}), \qquad \vectorial{\spaceVariable} \in \reals^{\spatialDimensionality}, \label{eq:CauchyInitialDatumDiffusion}
\end{numcases}
where the diffusion matrix is $\matricial{D} \in \matrixSpace{\spatialDimensionality}{\reals}$.
This scaling is difficult to treat in full generality because it requires a consistency study up to order $\bigO{\timeStep} = \bigO{\spaceStep^2}$ included.
Still, as previously highlighted, the unobservable framework of the current \Cref{sec:Ginzburg} allows us to circumvent these difficulties.
The assumptions are slightly different than the rest of the paper.

\begin{theorem}[\cite{bellotti2021equivalentequations} Modified equation of the bulk scheme]
    Under diffusive scaling, that is, when $\latticeVelocity = \diffusiveScalingParameter / \spaceStep$ with $\diffusiveScalingParameter > 0$ fixed as $\spaceStep \to 0$, assuming that $\equilibriumCoefficient_{2\indiceBlock} = \spaceStep \tilde{\equilibriumCoefficient}_{2\indiceBlock}$ where $\tilde{\equilibriumCoefficient}_{2\indiceBlock}$ and $\equilibriumCoefficient_{2\indiceBlock + 1}$ are fixed as $\spaceStep \to 0$ for $\indiceBlock \in \integerInterval{1}{\blockNumberGinzburg}$,   the modified equation for the \bulkScheme{} \eqref{eq:BulkSchemeGinzburg} where the \lbm scheme is determined by \eqref{eq:LinkSchemeVelocities}, \eqref{eq:LinkSchemeMoments}, and \eqref{eq:LinkSchemeRelaxation} is
    \begin{equation*}
        \partial_{\timeVariable} \testFunction(\timeVariable, \vectorial{\spaceVariable}) + \diffusiveScalingParameter \sum_{\indiceBlock = 1}^{\blockNumberGinzburg} \tilde{\equilibriumCoefficient}_{2\indiceBlock} \sum_{|\boldOther{\indiceMultiIndexDifferential}| = 1} \vectorial{\discreteVelocityNormalized}_{2\indiceBlock}^{\boldOther{\indiceMultiIndexDifferential}} \partial_{\vectorial{\spaceVariable}}^{\boldOther{\indiceMultiIndexDifferential}} \testFunction(\timeVariable, \vectorial{\spaceVariable}) - 2 \diffusiveScalingParameter \Bigl ( \frac{1}{\relaxationParameter} - \frac{1}{2}\Bigr )\sum_{\indiceBlock = 1}^{\blockNumberGinzburg} \equilibriumCoefficient_{2\indiceBlock+1} \sum_{|\boldOther{\indiceMultiIndexDifferential}| = 2} \frac{\vectorial{\discreteVelocityNormalized}_{2\indiceBlock}^{\boldOther{\indiceMultiIndexDifferential}}}{\boldOther{\indiceMultiIndexDifferential}!} \partial_{\vectorial{\spaceVariable}}^{\boldOther{\indiceMultiIndexDifferential}}   \testFunction(\timeVariable, \vectorial{\spaceVariable})  = \bigO{\spaceStep^2},
    \end{equation*}
    for $(\timeVariable, \vectorial{\spaceVariable}) \in \nonNegativeReals \times \reals^{\spatialDimensionality}$.
\end{theorem}
However, we observe that since $\timeStep \propto \spaceStep^2$, the second-order consistency of the bulk scheme is preserved even when the initialisation schemes are not consistent, provided that $\initialisationOperator_1 = 1$.
This is radically different from the acoustic scaling $\timeStep \propto \spaceStep$ and comes from the fact that the errors from the initialisation routine are now of order $\bigO{\timeStep} = \bigO{\spaceStep^2}$.
Hence, under diffusive scaling, enforcing that the initialisation schemes are consistent is merely a question of obtaining time smoothness of the numerical solution.
\begin{proposition}
    Under diffusive scaling, that is, when $\latticeVelocity = \diffusiveScalingParameter / \spaceStep$ with $\diffusiveScalingParameter > 0$ fixed as $\spaceStep \to 0$, assuming that $\equilibriumCoefficient_{2\indiceBlock} = \spaceStep \tilde{\equilibriumCoefficient}_{2\indiceBlock}$ where $\tilde{\equilibriumCoefficient}_{2\indiceBlock}$ and $\equilibriumCoefficient_{2\indiceBlock + 1}$ are fixed as $\spaceStep \to 0$ for $\indiceBlock \in \integerInterval{1}{\blockNumberGinzburg}$,   the modified equation of the unique initialisation scheme for the \lbm scheme determined by \eqref{eq:LinkSchemeVelocities}, \eqref{eq:LinkSchemeMoments}, and \eqref{eq:LinkSchemeRelaxation}--- considering a local initialisation $\boldOther{\initialisationOperator} \in \reals^{\velocityNumber}$ with $\initialisationOperator_1 = 1$---is
    \begin{align*}
        \partial_{\timeVariable} \testFunction(0, \vectorial{\spaceVariable}) &+ \diffusiveScalingParameter \sum_{\indiceBlock = 1}^{\blockNumberGinzburg} (\relaxationParameter \tilde{\equilibriumCoefficient}_{2\indiceBlock}  + (1-\relaxationParameter) \tilde{\initialisationOperator}_{2\indiceBlock} )  \sum_{|\boldOther{\indiceMultiIndexDifferential}| = 1} \vectorial{\discreteVelocityNormalized}_{2\indiceBlock}^{\boldOther{\indiceMultiIndexDifferential}} \partial_{\vectorial{\spaceVariable}}^{\boldOther{\indiceMultiIndexDifferential}} \testFunction(0, \vectorial{\spaceVariable}) \\
        &- \diffusiveScalingParameter \sum_{\indiceBlock = 1}^{\blockNumberGinzburg} ((2-\relaxationParameter)\equilibriumCoefficient_{2\indiceBlock+1} + (\relaxationParameter - 1)\initialisationOperator_{2\indiceBlock + 1}) \sum_{|\boldOther{\indiceMultiIndexDifferential}| = 2} \frac{\vectorial{\discreteVelocityNormalized}_{2\indiceBlock}^{\boldOther{\indiceMultiIndexDifferential}}}{\boldOther{\indiceMultiIndexDifferential}!} \partial_{\vectorial{\spaceVariable}}^{\boldOther{\indiceMultiIndexDifferential}}   \testFunction(0, \vectorial{\spaceVariable}) = \bigO{\spaceStep},
    \end{align*}
    for $\vectorial{\spaceVariable} \in \reals^{\spatialDimensionality}$, where ${\initialisationOperator}_{2\indiceBlock} = \spaceStep \tilde{\initialisationOperator}_{2\indiceBlock}$ with fixed $\tilde{\initialisationOperator}_{2\indiceBlock}$ as $\spaceStep\to 0$ for $\indiceBlock \in \integerInterval{1}{\blockNumberGinzburg}$.
\end{proposition}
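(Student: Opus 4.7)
The plan is to mirror the asymptotic expansion performed in the proof of \Cref{prop:ModifiedEquationsGinzburg}, but with two modifications: the new scaling $\timeStep = \spaceStep^2/\diffusiveScalingParameter$ gives a different expansion of the time-shift, namely $\timeShift \asymptoticEquivalence 1 + (\spaceStep^2/\diffusiveScalingParameter)\partial_{\timeVariable} + \bigO{\spaceStep^4}$, so that the first dynamical information now appears at order $\bigO{\spaceStep^2}$ instead of $\bigO{\spaceStep}$; and the data carry an additional factor of $\spaceStep$ through the rescalings $\equilibriumCoefficient_{2\indiceBlock} = \spaceStep\, \tilde{\equilibriumCoefficient}_{2\indiceBlock}$ and $\initialisationOperator_{2\indiceBlock} = \spaceStep\, \tilde{\initialisationOperator}_{2\indiceBlock}$.

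First, I would recycle the explicit rows $\termAtOrder{\schemeMatrixAsymptotic}{0}_{1,\cdot}$, $\termAtOrder{\schemeMatrixAsymptotic}{1}_{1,\cdot}$ and $\termAtOrder{\schemeMatrixAsymptotic}{2}_{1,\cdot}$ computed in the proof of \Cref{prop:ModifiedEquationsGinzburg}: these arise from the asymptotic equivalents of $\transportMoment$ and $\collisionMatrix$ alone and are insensitive to the link between $\timeStep$ and $\spaceStep$. Second, I would form $(\schemeMatrixAsymptotic \boldOther{\initialisationOperatorAsymptotic})_1$ with $\initialisationOperator_1 = 1$, $\initialisationOperator_{2\indiceBlock} = \spaceStep\, \tilde{\initialisationOperator}_{2\indiceBlock}$ and unchanged $\initialisationOperator_{2\indiceBlock+1}$. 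The key observation to push through is that the naive $\bigO{\spaceStep}$ contribution vanishes: the piece $\termAtOrder{\schemeMatrixAsymptotic}{1}_{1,1}\cdot \initialisationOperator_1$ is proportional to $\equilibriumCoefficient_{2\indiceBlock}$ which is $\bigO{\spaceStep}$, and the pieces $\termAtOrder{\schemeMatrixAsymptotic}{1}_{1,2\indiceBlock} \cdot \initialisationOperator_{2\indiceBlock}$ carry an explicit $\spaceStep$ factor through $\initialisationOperator_{2\indiceBlock}$; together they drop to order $\bigO{\spaceStep^2}$, which is exactly the order matching the left-hand side.

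At order $\bigO{\spaceStep^2}$ of $(\schemeMatrixAsymptotic \boldOther{\initialisationOperatorAsymptotic})_1$, I would collect two distinct contributions. The hyperbolic one, of the form $\sum_{\indiceBlock} (\relaxationParameter \tilde{\equilibriumCoefficient}_{2\indiceBlock} + (1-\relaxationParameter)\tilde{\initialisationOperator}_{2\indiceBlock}) \sum_{|\boldOther{\indiceMultiIndexDifferential}|=1}\vectorial{\discreteVelocityNormalized}_{2\indiceBlock}^{\boldOther{\indiceMultiIndexDifferential}}\partial_{\vectorial{\spaceVariable}}^{\boldOther{\indiceMultiIndexDifferential}}$, comes from $\termAtOrder{\schemeMatrixAsymptotic}{1}_{1,\cdot}$ combined with the $\bigO{\spaceStep}$ pieces of the rescaled data. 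The diffusive one, of the form $\sum_{\indiceBlock} ((2-\relaxationParameter)\equilibriumCoefficient_{2\indiceBlock+1} + (\relaxationParameter-1)\initialisationOperator_{2\indiceBlock+1})\sum_{|\boldOther{\indiceMultiIndexDifferential}|=2} \frac{\vectorial{\discreteVelocityNormalized}_{2\indiceBlock}^{\boldOther{\indiceMultiIndexDifferential}}}{\boldOther{\indiceMultiIndexDifferential}!}\partial_{\vectorial{\spaceVariable}}^{\boldOther{\indiceMultiIndexDifferential}}$, comes from $\termAtOrder{\schemeMatrixAsymptotic}{2}_{1,\cdot}$ combined with the unscaled odd components. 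Equating the resulting expression to $(\spaceStep^2/\diffusiveScalingParameter)\partial_{\timeVariable}\testFunction(0,\vectorial{\spaceVariable})$ and dividing by $\spaceStep^2/\diffusiveScalingParameter$ delivers the claim, with the remainder $\bigO{\spaceStep}$ absorbing the $\bigO{\spaceStep^3}$ error terms.

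The main obstacle will be careful order accounting: when a factor of $\spaceStep$ from a rescaled coefficient promotes a term by one order, one must verify that no higher-order pieces of the asymptotic series of $\schemeMatrixAsymptotic$ leak into order $\bigO{\spaceStep^2}$ once combined with the $\bigO{\spaceStep}$ rescaled coefficients, and that the $\bigO{\spaceStep^4}$ remainder of $\timeShift$ is truly subdominant after division by $\spaceStep^2/\diffusiveScalingParameter$. Because $\boldOther{\initialisationOperator} \in \reals^{\velocityNumber}$ is local (so $\boldOther{\initialisationOperatorAsymptotic}$ reduces to its constant part plus the $\spaceStep$-rescaling of even entries) and because the sparsity pattern of the rows $\termAtOrder{\schemeMatrixAsymptotic}{\indiceOrder}_{1,\cdot}$ recalled from the proof of \Cref{prop:ModifiedEquationsGinzburg} is very restrictive, this bookkeeping is mechanical but must be executed explicitly to confirm that all spurious terms fall at order $\bigO{\spaceStep^3}$ or higher.
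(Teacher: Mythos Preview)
Your proposal is correct and follows exactly the route the paper implicitly relies on: the proposition is stated without proof, precisely because it is the diffusive-scaling analogue of \Cref{prop:ModifiedEquationsGinzburg}, obtained by reusing the rows $\termAtOrder{\schemeMatrixAsymptotic}{0}_{1,\cdot}$, $\termAtOrder{\schemeMatrixAsymptotic}{1}_{1,\cdot}$, $\termAtOrder{\schemeMatrixAsymptotic}{2}_{1,\cdot}$ and re-ordering in $\spaceStep$ after inserting $\timeShift \asymptoticEquivalence 1 + (\spaceStep^2/\diffusiveScalingParameter)\partial_{\timeVariable} + \bigO{\spaceStep^4}$ together with the rescalings $\equilibriumCoefficient_{2\indiceBlock} = \spaceStep\,\tilde{\equilibriumCoefficient}_{2\indiceBlock}$ and $\initialisationOperator_{2\indiceBlock} = \spaceStep\,\tilde{\initialisationOperator}_{2\indiceBlock}$. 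The bookkeeping concern you raise is real but benign here, since the sparsity pattern $\termAtOrder{\schemeMatrixAsymptotic}{1}_{1,2\indiceBlock+1} = 0$ and $\termAtOrder{\schemeMatrixAsymptotic}{2}_{1,2\indiceBlock} = 0$ recorded in the proof of \Cref{prop:ModifiedEquationsGinzburg} rules out any cross-leakage into order $\bigO{\spaceStep^2}$.
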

Therefore, the initialisation scheme is consistent with the bulk scheme under the conditions
\begin{equation*}
    \tilde{\initialisationOperator}_{2\indiceBlock} = \tilde{\equilibriumCoefficient}_{2\indiceBlock}, \qquad \initialisationOperator_{2\indiceBlock + 1} = \frac{\relaxationParameter-2}{\relaxationParameter} \equilibriumCoefficient_{2\indiceBlock + 1}, \qquad \indiceBlock\in\integerInterval{1}{\blockNumberGinzburg},
\end{equation*}
which are only set to ensure---as previously stated---time smoothness.

To numerically verify the previous claims, we consider the \scheme{1}{3} scheme introduced in \Cref{ex:MatchAcousticGinzburg}.
Considering the bounded domain $[0, 1]$ with periodic boundary conditions, using $\solutionCauchyInitial(\spaceVariable) = \cos(2\pi \spaceVariable)$ renders the exact solution $\solutionCauchy(\timeVariable, \spaceVariable) = e^{-4\pi^2 D \timeVariable}\cos(2\pi(\spaceVariable - \transportVelocity\timeVariable))$ to \eqref{eq:CauchyEquationDiffusion}/\eqref{eq:CauchyInitialDatumDiffusion}. We utilize $\diffusiveScalingParameter = 1$, $\transportVelocity = 2$ and $D = 1/32$.
These physical constants are set taking $\tilde{\equilibriumCoefficient}_2 = \transportVelocity$, $\equilibriumCoefficient_3 = 1$ and $\relaxationParameter = 1/(D + 1/2)$.
We consider two kinds of initialisations, which are
\begin{align*}
    \text{(a)} \qquad \initialisationOperator_1 = 1, \quad \initialisationOperator_2 = \spaceStep \tilde{\equilibriumCoefficient}_2, \quad \initialisationOperator_3 = \frac{\relaxationParameter - 2}{\relaxationParameter} \equilibriumCoefficient_3, \\
    \text{(b)} \qquad \initialisationOperator_1 = 1, \quad \initialisationOperator_2 = \frac{\spaceStep \tilde{\equilibriumCoefficient}_2}{2}, \quad \initialisationOperator_3 = 10 \frac{\relaxationParameter - 2}{\relaxationParameter} \equilibriumCoefficient_3,
\end{align*}
with the first condition $\text{(a)}$ yielding a consistent initialisation scheme and the second condition $\text{(b)}$ giving an inconsistent one. 

\paragraph{Study of the order of convergence}

\begin{figure} 
    \begin{center}
        \includegraphics{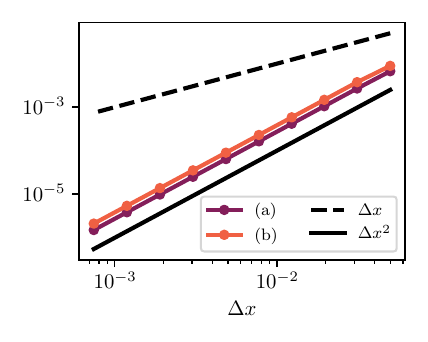}             
    \end{center}\caption{\label{fig:d1q3_magic_diffusive_convergence}$L^2$ errors at the final time for the initialisations $\text{(a)}$ and $\text{(b)}$. We observe second-order convergence irrespective of the consistency of the initialisation scheme.}
  \end{figure}

We simulate until the final time $0.05$ and measure the $L^2$ errors progressively decreasing the space step $\spaceStep$.
The results are given in \Cref{fig:d1q3_magic_diffusive_convergence}, confirming that, regardless of the consistency of the initialisation scheme, the overall method is second-order convergent, since $\timeStep \propto \spaceStep^2$.
As expected, the error constant is slightly better when the initialisation scheme is consistent.

\paragraph{Study of the time smoothness of the numerical solution}

\begin{figure} 
    \begin{center}
        \includegraphics[scale=0.99]{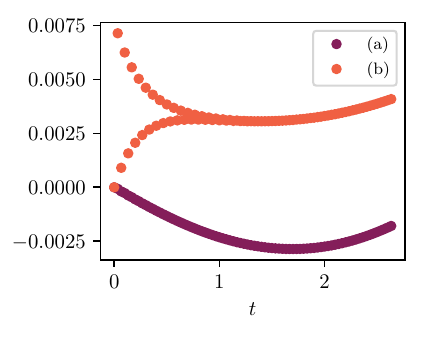}
        \includegraphics[scale=0.99]{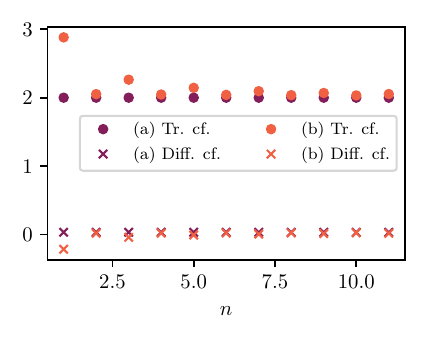}
    \end{center}\caption{\label{fig:d1q3_magic_diffusive_time_smoothness} Left: test for smoothness in time close to $\timeVariable = 0$  for the initialisations $\text{(a)}$ and $\text{(b)}$: difference between exact and numerical solution at the eighth lattice point. The first one gives a smooth profile whereas the second one oscillates with damping. Right: transport and diffusion coefficients in the modified equations for different $\indiceTime$.}
  \end{figure}

We consider a framework analogous to the one of \Cref{sec:TimeSmoothnessD1Q2} with $\spaceStep = 1/30$ and the previously introduced parameters, measuring the discrepancy between numerical and exact solution at the eighth point of the lattice.
The results in \Cref{fig:d1q3_magic_diffusive_time_smoothness} confirm the previous theoretical discussion: considering consistent initialisation schemes allows to avoid initial oscillating boundary layers in the simulation.
Furthermore, we see that for the even steps, the transport and diffusion coefficients from the modified equations of the \stSchemes{} are always closer to the one in the bulk than the ones for the odd steps, explaining why the discrepancies in terms of error with respect to the exact solution are smaller for even steps than for odd steps.

\subsection{Conclusions}

In \Cref{sec:Ginzburg}, we have defined a notion of observability for \lbm schemes, allowing to identify schemes with very little initialisation schemes as those being strongly unobservable.
In control theory, it is well known that unobservable systems can be represented by other systems which orders have been reduced by removing unobservable modes.
It is therefore easy to analyze the initialisation phase of these schemes with the technique of the modified equation.
In particular, we have found that a well-known and vast class of \lbm schemes, namely the so-called link two-relaxation-times schemes with magic parameters equal to one-fourth, fits this framework. We have exploited this fact in order to provide the constraints on the initial data for having a smooth initialisation, both under acoustic and diffusive scaling.


\section{General conclusions and perspectives}\label{sec:Conclusions}

Due to the fact that \lbm schemes feature more unknowns than variables of interest, their initialisation---especially for the non-conserved moments---can have an important impact on the outcomes of the simulations.
This is a side effect due to the fact that the discrete scheme supports parasitic---or spurious---modes.
The aim of the present contribution was indeed to study the role of the initialisation on the numerical behaviour of general \lbm schemes.
To this end, we have introduced a modified equation analysis which has ensured to propose initialisations yielding consistent \stSchemes{}, which is crucial to preserve the second-order accuracy of many methods.
The modified equation has also allowed to precisely describe the behaviour of the \lbm schemes close to the beginning of the numerical simulation---where the different dynamics were essentially driven by numerical dissipation---and identify initialisations yielding smooth discrete solutions without oscillatory initial layers.
Finally, we have introduced a notion of observability for \lbm schemes, which has allowed to characterize those with a small number of \iniSchemes{}, which are somehow almost ``non-kinetic''. 
This feature makes the study of the initialisation for these schemes way more accessible than for general ones.
Consistent and smooth initialisations have been a hot topic in the \lbm community for quite a long time \cite{van2009smooth, caiazzo2005analysis,junk2015l2, huang2015initial}.
However, to the best of our knowledge, no general approaches to the analysis of these features were available.
They are important in order to ensure that the order of the schemes is preserved.
From another perspective, although the novel notion of observability for \lbm schemes has been exploited solely to study the number of needed initialisation schemes, we do believe that it can be useful to investigate other features of these schemes. 
For example, one interesting topic would be the one linked to ``realisation'' \cite[Chapter 4]{brewer1986linear} and ``minimal realisations'' \cite{de2000minimal}: given a target \fd scheme (\emph{i.e.} a transfer function), how can we construct the smallest \lbm scheme of which it is the corresponding \fd scheme.
This will be the object of future investigations.

\section*{Acknowledgments}
The author thanks his PhD advisors B. Graille and M. Massot for the useful advice, and L. Fran\c cois for having hinted the importance of dealing with simple eigenvalues of modulus one in the study of initial conditions.
Finally, the author thanks the two anonymous referees for their valuable inputs towards the improvement of this work.

\section*{Funding}
The author was supported by a PhD funding (year 2019) from Ecole polytechnique.
His current post-doctoral position is funded by a ITI IRMIA++ post-doctoral funding (year 2022) from Universit\'e de Strasbourg.
This work of the Interdisciplinary Thematic Institute IRMIA++, as part of the ITI 2021-2028 program of the University of Strasbourg, CNRS and Inserm, was supported by IdEx Unistra (ANR-10-IDEX-0002), and by SFRI-STRAT'US project (ANR-20-SFRI-0012) under the framework of the French Investments for the Future Program.

\bibliographystyle{apalike} 
\bibliography{biblio}

\newcommand{\wavenumber}{\theta}

\appendix

\section{Stability of the \scheme{1}{2} scheme of \Cref{sec:D1Q2}}\label{app:StabilityD1Q2}

There are several ways of checking the roots of amplification polynomial of the corresponding bulk scheme for \Cref{sec:D1Q2}.
In this case, we can proceed directly by solving the characteristic equation or by using the procedure by \cite{graille2014approximation}.
Thanks to its generality, we here present the computations using the technique by Miller \cite{miller1971location,strikwerda2004finite}.
The amplification polynomial reads $\fourierTransformed{\amplificationPolynomial}_2(\timeShift, \frequency\spaceStep) = \timeShift^2 +  ( (\relaxationParameter_2 - 2) \cos{(\frequency \spaceStep)} + i \relaxationParameter_2 \equilibriumCoefficient_2 \sin{(\frequency \spaceStep)}  ) \timeShift + (1-\relaxationParameter_2)$, where $\frequency  \in [-\pi/\spaceStep, \pi/\spaceStep]$.
We have that 
\begin{equation*}
    \fourierTransformed{\amplificationPolynomial}_2^{\star}(\timeShift, \frequency\spaceStep) \definitionEquality \timeShift^{2} {\fourierTransformed{\amplificationPolynomial}_2({\timeShift^{-1}}, -\frequency\spaceStep)} = (1-\relaxationParameter_2) \timeShift^{2} +   ( (\relaxationParameter_2 - 2) \cos{(\frequency \spaceStep)} - i \relaxationParameter_2 \equilibriumCoefficient_2  \sin{(\frequency \spaceStep)} ) \timeShift + 1.
\end{equation*}
\begin{itemize}
    \item Let $\relaxationParameter_2 \in ]0, 2[$. A first condition to bound the roots of $\fourierTransformed{\amplificationPolynomial}_2(\timeShift, \frequency\spaceStep)$ in modulus by one regardless of the frequency is that $|\fourierTransformed{\amplificationPolynomial}_2(0, \frequency\spaceStep)| < |\fourierTransformed{\amplificationPolynomial}_2^{\star}(0, \frequency\spaceStep)|$, which yields the condition $0 < \relaxationParameter_2 < 2$.
    Then we compute $\fourierTransformed{\amplificationPolynomial}_1(\timeShift, \frequency\spaceStep)$ as 
\begin{equation*}
    \fourierTransformed{\amplificationPolynomial}_1(\timeShift, \frequency\spaceStep) \definitionEquality \timeShift^{-1}  ( \fourierTransformed{\amplificationPolynomial}_2^{\star}(0, \frequency\spaceStep) \fourierTransformed{\amplificationPolynomial}_2(\timeShift, \frequency\spaceStep) - \fourierTransformed{\amplificationPolynomial}_2(0, \frequency\spaceStep)\fourierTransformed{\amplificationPolynomial}_2^{\star}(\timeShift, \frequency\spaceStep) ) = \relaxationParameter_2 (2-\relaxationParameter_2) ( \timeShift - \cos{(\frequency \spaceStep)} + i \equilibriumCoefficient_2 \sin{(\frequency \spaceStep)}  ).
\end{equation*}
The final condition to check is that the root of $\fourierTransformed{\amplificationPolynomial}_1(\timeShift, \frequency\spaceStep)$ is bounded by one in modulus for any frequency. This is $\cos^2{(\frequency \spaceStep)} + \equilibriumCoefficient_2^2  \sin^2{(\frequency \spaceStep)} = 1 + (\equilibriumCoefficient_2^2 - 1) \sin^2{(\frequency \spaceStep)} \leq 1$
taking place for any $\frequency \in [-\pi/\spaceStep, \pi/\spaceStep]$ if and only if $\equilibriumCoefficient_2^2  \leq 1$.
\item Let $\relaxationParameter_2 = 2$. In this case $\fourierTransformed{\amplificationPolynomial}_1 \equiv 0$. We then have to use the second condition from \cite[Theorem 4.3.2]{strikwerda2004finite}, hence we check 
\begin{equation*}
    \frac{\text{d}\fourierTransformed{\amplificationPolynomial}_2(\timeShift, \frequency\spaceStep) }{\text{d} \timeShift} = 2\timeShift + 2 i \equilibriumCoefficient_2 \sin{(\frequency \spaceStep)},
\end{equation*}
which unique root should be strictly in the unit circle for any frequency $\frequency \in [-\pi/\spaceStep, \pi/\spaceStep]$. This is achieved by $|\equilibriumCoefficient_2| < 1$. 
\end{itemize}
This concludes the proof of the stability conditions \eqref{eq:StabilityConditionsD1Q2}.

\section{Derivation of the forward centered initialisation schemes for the \scheme{1}{2} scheme of \Cref{sec:D1Q2}}\label{app:derivationForwardCenteredD1Q2}

We can first unsuccessfully attempt to obtain a forward centered scheme as initialisation scheme, using a local initialisation of the conserved moment, that is $\initialisationOperator_1 = 1$ and prepared initialisation of the non-conserved one, thus $\initialisationOperator_2 \in \ringSpaceOperatorsOneD$.
Using the notation \eqref{eq:FormInitilisationNonLocal}, this corresponds to find a compactly supported solution of the following infinite system
\begin{align*}
    \dots, \quad
    \initialisationOperatorCoefficients_{2, 1 } - \initialisationOperatorCoefficients_{2, 3 } = 0, \quad
    \initialisationOperatorCoefficients_{2, 0 } - \initialisationOperatorCoefficients_{2, 2 } &=  - \frac{1 - \equilibriumCoefficient_2+\relaxationParameter_2 \equilibriumCoefficient_2}{1-\relaxationParameter_2} , \quad 
    \initialisationOperatorCoefficients_{2, -1} - \initialisationOperatorCoefficients_{2, 1 } = \frac{2}{1-\relaxationParameter_2}, \\
    \initialisationOperatorCoefficients_{2, -2} - \initialisationOperatorCoefficients_{2, 0 } &= - \frac{1 + \equilibriumCoefficient_2 -\relaxationParameter_2 \equilibriumCoefficient_2}{1-\relaxationParameter_2} , \quad
    \initialisationOperatorCoefficients_{2, -3} - \initialisationOperatorCoefficients_{2, -1} = 0, \quad
    \dots
\end{align*}
This problem cannot be solved by a compactly supported sequence, in particular, because of the median term.
This would go back to perform a deconvolution in the ring of \fd operators, which is not solvable because the operator $\antisymmetricPart(\basicShiftLetter_1)$ is not invertible in such ring.
If we consider to work on a bounded domain with $\boundedDomainNumberPoints \in \nonZeroNaturals$ points and endow the shift operators with periodic boundary conditions \cite{van2009smooth}, some of these deconvolution problems become solvable at the price of dealing with non-compactly supported solutions, \emph{i.e.} stemming from a full inverse of a sparse matrix. 
The previous problem can be seen as the one of inverting a circulant matrix, which eigenvalues are $\sigma_{\indiceFreeOne} = \text{exp} (2\pi i {(\boundedDomainNumberPoints-1)} \indiceFreeOne/{\boundedDomainNumberPoints}  ) - \text{exp} (2\pi i \indiceFreeOne /{\boundedDomainNumberPoints}  )$ for $\indiceFreeOne \in \integerIntervalClosedOpen{0}{\boundedDomainNumberPoints}$.
Since $\sigma_0 = 0$, the circulant matrix is not invertible.
Therefore, even in the periodic setting, this procedure does not work.
This can be interpreted---if we see the equilibria as a control on the system---as due to the lack of ``reachability'' of the system at hand, \emph{cf.} \cite[Chapter 2]{brewer1986linear}. Since the term $\antisymmetricPart (\basicShiftLetter_1 )$ is not a unit, which causes the lack of reachability, it cannot be compensated by its inverse contained in the equilibrium to generate the desired initialisation scheme. This is why we are compelled to consider $\initialisationOperator_1 \in \ringSpaceOperatorsOneD$ to obtain the requested forward centered scheme.

Considering a prepared initialisation for both moments, thus $\initialisationOperator_1, \initialisationOperator_2 \in \ringSpaceOperatorsOneD$, several choices are possible to recover this scheme.
The infinite system to solve reads
\begin{align*}
    \dots\\
    \frac{1+\relaxationParameter_2 \equilibriumCoefficient_2 }{2} \initialisationOperatorCoefficients_{1,  1} + \frac{1-\relaxationParameter_2}{2 } \initialisationOperatorCoefficients_{2,  1}  + \frac{1-\relaxationParameter_2 \equilibriumCoefficient_2 }{2} \initialisationOperatorCoefficients_{1,  3} - \frac{1-\relaxationParameter_2}{2 } \initialisationOperatorCoefficients_{2,  3} = 0, \\
    \frac{1+\relaxationParameter_2 \equilibriumCoefficient_2 }{2} \initialisationOperatorCoefficients_{1,  0} + \frac{1-\relaxationParameter_2}{2 } \initialisationOperatorCoefficients_{2,  0}  + \frac{1-\relaxationParameter_2 \equilibriumCoefficient_2 }{2} \initialisationOperatorCoefficients_{1,  2} - \frac{1-\relaxationParameter_2}{2 } \initialisationOperatorCoefficients_{2,  2} = \frac{\equilibriumCoefficient_2}{2 }, \\
    \frac{1+\relaxationParameter_2 \equilibriumCoefficient_2 }{2} \initialisationOperatorCoefficients_{1, -1} + \frac{1-\relaxationParameter_2}{2 } \initialisationOperatorCoefficients_{2, -1}  + \frac{1-\relaxationParameter_2 \equilibriumCoefficient_2 }{2} \initialisationOperatorCoefficients_{1,  1} - \frac{1-\relaxationParameter_2}{2 } \initialisationOperatorCoefficients_{2,  1} = 1, \\
    \frac{1+\relaxationParameter_2 \equilibriumCoefficient_2 }{2} \initialisationOperatorCoefficients_{1, -2} + \frac{1-\relaxationParameter_2}{2 } \initialisationOperatorCoefficients_{2, -2}  + \frac{1-\relaxationParameter_2 \equilibriumCoefficient_2 }{2} \initialisationOperatorCoefficients_{1,  0} - \frac{1-\relaxationParameter_2}{2 } \initialisationOperatorCoefficients_{2,  0} = -\frac{\equilibriumCoefficient_2}{2 }, \\
    \frac{1+\relaxationParameter_2 \equilibriumCoefficient_2 }{2} \initialisationOperatorCoefficients_{1, -3} + \frac{1-\relaxationParameter_2}{2 } \initialisationOperatorCoefficients_{2, -3}  + \frac{1-\relaxationParameter_2 \equilibriumCoefficient_2 }{2} \initialisationOperatorCoefficients_{1, -1} - \frac{1-\relaxationParameter_2}{2 } \initialisationOperatorCoefficients_{2, -1} = 0, \\
    \dots
\end{align*}
In order to construct a (non-unique) solution, we first enforce the compactness: $\initialisationOperatorCoefficients_{1, \indiceFreeOne} = \initialisationOperatorCoefficients_{2, \indiceFreeOne} = 0$ for $|\indiceFreeOne| \geq 2$.
From this, we obtain the finite system
\begin{align*}
    {  (1+{\relaxationParameter_2 \equilibriumCoefficient_2}{ }  )} \initialisationOperatorCoefficients_{1,  1} + ({1-\relaxationParameter_2}{ }) \initialisationOperatorCoefficients_{2, 1}  &= 0, \\
    {  (1+{\relaxationParameter_2 \equilibriumCoefficient_2}{ }  )} \initialisationOperatorCoefficients_{1,  0} + ({1-\relaxationParameter_2}{ }) \initialisationOperatorCoefficients_{2, 0} &= {\equilibriumCoefficient_2}{ }, \\
    {  (1+{\relaxationParameter_2 \equilibriumCoefficient_2}{ }  )} \initialisationOperatorCoefficients_{1, -1} + ({1-\relaxationParameter_2}{ } )\initialisationOperatorCoefficients_{2, -1}  + {(1-\relaxationParameter_2 \equilibriumCoefficient_2 )}  \initialisationOperatorCoefficients_{1, 1} - {(1-\relaxationParameter_2)}{ } \initialisationOperatorCoefficients_{2, 1} &= 2, \\
    {  (1-{\relaxationParameter_2 \equilibriumCoefficient_2}{ }  )} \initialisationOperatorCoefficients_{1,  0} - ({1-\relaxationParameter_2}{ }) \initialisationOperatorCoefficients_{2, 0} &= -{\equilibriumCoefficient_2}{ }, \\
    {  (1-{\relaxationParameter_2 \equilibriumCoefficient_2}{ }  )} \initialisationOperatorCoefficients_{1, -1} - ({1-\relaxationParameter_2}{ }) \initialisationOperatorCoefficients_{2, -1} &= 0.
\end{align*}
We then split the central equation using a parameter $\theta \in \reals$, having ${(1+{\relaxationParameter_2 \equilibriumCoefficient_2}{ })} \initialisationOperatorCoefficients_{1, -1} + {(1-\relaxationParameter_2)} \initialisationOperatorCoefficients_{2, -1} = \theta$ and ${(1-{\relaxationParameter_2 \equilibriumCoefficient_2} { })} \initialisationOperatorCoefficients_{1,  1} - {(1-\relaxationParameter_2)} { } \initialisationOperatorCoefficients_{2,  1} = 2 - \theta$.
Introducing the matrix
\begin{equation*}
\matricial{A} = 
\begin{bmatrix}
    1+ \relaxationParameter_2 \equilibriumCoefficient_2 &   1-\relaxationParameter_2\\
    1-\relaxationParameter_2 \equilibriumCoefficient_2 & \relaxationParameter_2 -1
\end{bmatrix},
\end{equation*}
we solve the systems $\matricial{A} \transpose{[\initialisationOperatorCoefficients_{1, 1}, \initialisationOperatorCoefficients_{2, 1} ]} = \transpose{[0, 2 - \theta]}$, $\matricial{A}  \transpose{[\initialisationOperatorCoefficients_{1, 0}, \initialisationOperatorCoefficients_{2, 0} ]} = \transpose{[\equilibriumCoefficient_2, - \equilibriumCoefficient_2]}$ and $\matricial{A}  \transpose{[\initialisationOperatorCoefficients_{1, -1}, \initialisationOperatorCoefficients_{2, -1} ]} = \transpose{[\theta, 0]}$, yielding
\begin{align*}
&\initialisationOperatorCoefficients_{1,1} = \frac{2 - \theta}{2} , \qquad \initialisationOperatorCoefficients_{2,1} = -\frac{  (1 + \relaxationParameter_2 \equilibriumCoefficient_2 )(2 - \theta)}{2 (1-\relaxationParameter_2)}, \qquad \initialisationOperatorCoefficients_{1, 0} = 0, \qquad \initialisationOperatorCoefficients_{2, 0} = \frac{\equilibriumCoefficient_2}{1-\relaxationParameter_2}, \\
&\initialisationOperatorCoefficients_{1, -1} = \frac{\theta}{2}, \qquad \initialisationOperatorCoefficients_{2, -1} = \frac{  (1 - \relaxationParameter_2 \equilibriumCoefficient_2 ) \theta}{2 (1-\relaxationParameter_2)}. 
\end{align*}
Unsurprisingly, these coefficients are defined for $\relaxationParameter_2 \neq 1$, since otherwise there is no \iniScheme{} to devise.
The only way to fulfill \eqref{eq:PreparedInitial}, \eqref{eq:PreparedInitial2}, and \eqref{eq:PreparedBulk} is to take $\theta = 1$, giving
\begin{equation*}
    \initialisationOperatorCoefficients_{1, \pm 1} = \frac{1}{2}, \qquad \initialisationOperatorCoefficients_{2, \pm 1} = \mp\frac{ 1  \pm \relaxationParameter_2 \equilibriumCoefficient_2}{2(1 - \relaxationParameter_2)},  \qquad \initialisationOperatorCoefficients_{2, 0} = \frac{\equilibriumCoefficient_2}{1 - \relaxationParameter_2}.
\end{equation*}
Allowing more non-vanishing coefficients and through a similar procedure, another possible choice to obtain the desired scheme would be
\begin{equation*}
    \initialisationOperatorCoefficients_{1, \pm 2} = \pm \frac{\equilibriumCoefficient_2}{2}, \qquad \initialisationOperatorCoefficients_{1, \pm 1} = \frac{1}{2}, \qquad \initialisationOperatorCoefficients_{2, \pm 2} = -\frac{\equilibriumCoefficient_2(1 \pm \relaxationParameter_2 \equilibriumCoefficient_2)}{2(1-\relaxationParameter_2)}, \qquad \initialisationOperatorCoefficients_{2, \pm 1} = \mp \frac{1 \pm \relaxationParameter_2 \equilibriumCoefficient_2}{2(1 - \relaxationParameter_2)}.
\end{equation*}

\section{Derivation of the remaining modified equations of the \stSchemes{} for the \scheme{1}{2} scheme of \Cref{sec:ModEqD1Q2order2}}\label{app:DerivationModifiedEquationsD1Q2}

In \Cref{sec:ModEqD1Q2order2}, we have left the derivation of several modified equations of the \stSchemes{} for the \scheme{1}{2} scheme.
We now explain how to reach them.
\begin{itemize}
    \item{\strong{Forward centered scheme} \eqref{eq:CenteredGood}}.
    This scheme fulfills the conditions of \Cref{prop:PreparedInitialisation}, hence for $\indiceTime \in \nonZeroNaturals$
    \begin{equation*}
        \partial_{\timeVariable}\testFunction(0, \spaceVariable) + \latticeVelocity \equilibriumCoefficient_2 \partial_{\spaceVariable} \testFunction(0, \spaceVariable) - \latticeVelocity \spaceStep \Bigl ( - \frac{\indiceTime}{2} \equilibriumCoefficient_2^2 \partial_{\spaceVariable\spaceVariable} + \frac{1}{\indiceTime} \Bigl (  \termAtOrder{(\schemeMatrixAsymptotic^{\indiceTime})}{2}_{11} +  \termAtOrder{(\schemeMatrixAsymptotic^{\indiceTime})}{2}_{12} \equilibriumCoefficient_2 + \termAtOrder{(\schemeMatrixAsymptotic^{\indiceTime})}{1}_{12} \termAtOrder{\initialisationOperatorAsymptotic_2}{1} + \termAtOrder{\initialisationOperatorAsymptotic_1}{2}\Bigr ) \Bigr ) \testFunction(0, \spaceVariable) = \bigO{\spaceStep^2},
    \end{equation*}
    for $\spaceVariable \in \reals$, where only the terms $\termAtOrder{\initialisationOperatorAsymptotic_2}{1} = {1}/{(1-\relaxationParameter_2)}\partial_{\spaceVariable}$ and $\termAtOrder{\initialisationOperatorAsymptotic_1}{2} = \tfrac{1}{2} \partial_{\spaceVariable \spaceVariable}$ introduce discrepancies from the Lax-Friedrichs initialisation \eqref{eq:LaxFriedrichs}.
    Using \eqref{eq:OrderOneMatrixD1Q2} we obtain for $\indiceTime \in \nonZeroNaturals$ and $\spaceVariable \in \reals$ 
    \begin{align*}
        \partial_{\timeVariable} \testFunction(0, \spaceVariable) &+ \latticeVelocity \equilibriumCoefficient_2 \partial_{\spaceVariable} \testFunction(0, \spaceVariable) \\
        &- \latticeVelocity \spaceStep \Bigl ( \Bigl ( \frac{1}{2} + \sum_{\indiceFreeOne = 1}^{\indiceTime - 1}\Bigl (1 - \frac{\indiceFreeOne}{\indiceTime}\Bigr )(1-\relaxationParameter_2)^{\indiceFreeOne} \Bigr )  ( 1 - \equilibriumCoefficient_2^2  )+ \frac{1}{2\indiceTime} \Bigl ( 1 - 2\sum_{\indiceFreeOne = 0}^{\indiceTime - 1} (1- \relaxationParameter_2)^{\indiceFreeOne} \Bigr ) \Bigl ) \partial_{\spaceVariable \spaceVariable} \testFunction(0, \spaceVariable) = \bigO{\spaceStep^2}.
    \end{align*}
    
    \item{\strong{Forward centered scheme} \eqref{eq:CenteredBad}}. 
    We have
    \begin{equation*}
        \partial_{\timeVariable}\testFunction(0, \spaceVariable) - \frac{\latticeVelocity}{\indiceTime} \Bigl ( \termAtOrder{(\schemeMatrixAsymptotic^{\indiceTime})}{1}_{11} +  \termAtOrder{(\schemeMatrixAsymptotic^{\indiceTime})}{1}_{12} \termAtOrder{\initialisationOperatorAsymptotic_2}{0} + \termAtOrder{\initialisationOperatorAsymptotic_1}{1}\Bigr ) \testFunction(0, \spaceVariable)
        = \bigO{\spaceStep}, \qquad \indiceTime \in \nonZeroNaturals, \quad \spaceVariable \in \reals,
    \end{equation*}
    where in this case $\termAtOrder{\initialisationOperatorAsymptotic_2}{0} = -{(1+\relaxationParameter_2)}/{(1 - \relaxationParameter_2)}\equilibriumCoefficient_2$ and $\termAtOrder{\initialisationOperatorAsymptotic_1}{1} = -2 {\equilibriumCoefficient_2} \partial_{\spaceVariable}$.
    Recalling that
    \begin{equation}\label{eq:OrderOneMatrixD1Q2}
        \termAtOrder{(\schemeMatrixAsymptotic^{\indiceTime})}{1}_{11} = -\equilibriumCoefficient_2 \sum_{\indiceFreeOne = 0}^{\indiceTime - 1} \polynomialEquilibrium_{\indiceTime - \indiceFreeOne}(\relaxationParameter_2) \partial_{\spaceVariable}, \qquad \termAtOrder{(\schemeMatrixAsymptotic^{\indiceTime})}{1}_{12} = - \sum_{\indiceFreeOne = 0}^{\indiceTime - 1} (1 - \relaxationParameter_2)^{\indiceTime - \indiceFreeOne} \partial_{\spaceVariable}, \qquad \indiceTime \in \nonZeroNaturals,
    \end{equation}
    yields
    \begin{equation*}
        \termAtOrder{(\schemeMatrixAsymptotic^{\indiceTime})}{1}_{11} +  \termAtOrder{(\schemeMatrixAsymptotic^{\indiceTime})}{1}_{12} \termAtOrder{\initialisationOperatorAsymptotic_2}{0} + \termAtOrder{\initialisationOperatorAsymptotic_1}{1} = - \equilibriumCoefficient_2  \Bigl ( \indiceTime + 2 \Bigl ( 1 - \sum_{\indiceFreeOne = 0}^{\indiceTime - 1}(1 - \relaxationParameter_2)^{\indiceFreeOne} \Bigr ) \Bigr )  \partial_{\spaceVariable}, \qquad \indiceTime \in \nonZeroNaturals, 
    \end{equation*}
    thus
    \begin{equation*}
        \partial_{\timeVariable}\testFunction(0, \spaceVariable)+ \latticeVelocity \equilibriumCoefficient_2 \Bigl ( 1 + \frac{2}{\indiceTime} \Bigl ( 1 - \sum_{\indiceFreeOne = 0}^{\indiceTime - 1}(1 - \relaxationParameter_2)^{\indiceFreeOne} \Bigr ) \Bigr ) \partial_{\spaceVariable} \testFunction(0, \spaceVariable)
        = \bigO{\spaceStep}, \qquad \indiceTime \in \nonZeroNaturals, \quad \spaceVariable \in \reals.
    \end{equation*}

    \item{\strong{Lax-Wendroff} \eqref{eq:LawWendroff}}. 
    The computation is similar to the previous ones, taking into account that the only terms to change are $\termAtOrder{\initialisationOperatorAsymptotic_2}{1} = (1 - {\equilibriumCoefficient_2^2})/{(1-\relaxationParameter_2)}  \partial_{\spaceVariable}$ and $\termAtOrder{\initialisationOperatorAsymptotic_1}{2} = (1 - \equilibriumCoefficient_2^2 )/{2} \partial_{\spaceVariable \spaceVariable}$.
    This provides the modified equations, for  $\indiceTime \in \nonZeroNaturals$ and $\spaceVariable \in \reals$
    \begin{align*}
        \partial_{\timeVariable} \testFunction(0, \spaceVariable) &+ \latticeVelocity \equilibriumCoefficient_2 \partial_{\spaceVariable} \testFunction(0, \spaceVariable)\\
        &- \latticeVelocity \spaceStep \Bigl (\frac{1}{2} + \sum_{\indiceFreeOne = 1}^{\indiceTime - 1}\Bigl (1 - \frac{\indiceFreeOne}{\indiceTime}\Bigr )(1-\relaxationParameter_2)^{\indiceFreeOne} + \frac{1}{2\indiceTime} \Bigl ( 1 - 2\sum_{\indiceFreeOne = 0}^{\indiceTime - 1} (1- \relaxationParameter_2)^{\indiceFreeOne} \Bigr ) \Bigl )  ( 1 -  \equilibriumCoefficient_2^2 ) \partial_{\spaceVariable \spaceVariable} \testFunction(0, \spaceVariable) = \bigO{\spaceStep^2}. \nonumber
    \end{align*}

    \item{\strong{Smooth initialisation RE1} \eqref{eq:CoefficientInitialisationSmoothInTime}}. 
    This scheme fulfills \Cref{prop:PreparedInitialisation} and we have for $\indiceTime \in \nonZeroNaturals$ and $ \spaceVariable \in \reals$
    \begin{equation*}
        \partial_{\timeVariable}\testFunction(0, \spaceVariable) + \latticeVelocity \equilibriumCoefficient_2 \partial_{\spaceVariable} \testFunction(0, \spaceVariable) - \latticeVelocity \spaceStep \Bigl ( - \frac{\indiceTime}{2}\equilibriumCoefficient_2^2 \partial_{\spaceVariable\spaceVariable} + \frac{1}{\indiceTime} \Bigl (  \termAtOrder{(\schemeMatrixAsymptotic^{\indiceTime})}{2}_{11} +  \termAtOrder{(\schemeMatrixAsymptotic^{\indiceTime})}{2}_{12} \equilibriumCoefficient_2 + \termAtOrder{(\schemeMatrixAsymptotic^{\indiceTime})}{1}_{12} \termAtOrder{\initialisationOperatorAsymptotic_2}{1} \Bigr ) \Bigr ) \testFunction(0, \spaceVariable) = \bigO{\spaceStep^2},
    \end{equation*}
    where only $\termAtOrder{\initialisationOperatorAsymptotic_2}{1} = - ( 1 - {\equilibriumCoefficient_2^2})/{\relaxationParameter_2} \partial_{\spaceVariable}$ introduces differences compared to the Lax-Friedrichs initialisation \eqref{eq:LaxFriedrichs}.
    We therefore obtain for $\indiceTime \in \nonZeroNaturals$ and $\spaceVariable \in \reals$ 
    \begin{equation*}
        \partial_{\timeVariable} \testFunction(0, \spaceVariable) + \equilibriumCoefficient_2 \partial_{\spaceVariable} \testFunction(0, \spaceVariable) - \latticeVelocity \spaceStep \Bigl ( \frac{1}{2} - \sum_{\indiceFreeOne = 1}^{\indiceTime - 1}\Bigl (1 - \frac{\indiceFreeOne}{\indiceTime}\Bigr )(1-\relaxationParameter_2)^{\indiceFreeOne} + \frac{1}{\indiceTime \relaxationParameter_2} \sum_{\indiceFreeOne = 1}^{\indiceTime}(1 - \relaxationParameter_2)^{\indiceFreeOne} \Bigr )  ( 1 - \equilibriumCoefficient_2^2 )\partial_{\spaceVariable\spaceVariable}  \testFunction(0, \spaceVariable) = \bigO{\spaceStep^2}.
    \end{equation*}
    
\end{itemize}

\section{Conditions to obtain dissipation match for the \scheme{1}{3} scheme of \Cref{sec:D1Q3}}\label{app:ConditionsMatchDissipatinD1Q3}

Here, we present the detailed discussion of the conditions to obtain dissipation match for the \scheme{1}{3} scheme of \Cref{sec:D1Q3}
\begin{itemize}
    \item $\FirstRelParDOneQThree = 1$. Then the equation is trivially satisfied for any choice of $\SecondRelParDOneQThree$. Enforcing the choice of $\FirstRelParDOneQThree = 1$ in the first equation of \eqref{eq:SystemMatchedDiffusionD1Q3} yields $(1 - \SecondRelParDOneQThree) (\initialisationOperator_3 - \kDOneQThree) = 0$.
    This equation is trivially satisfied for $\SecondRelParDOneQThree = 1$. If $\SecondRelParDOneQThree \neq 1$, then we must initialize at equilibrium, that is, consider $\initialisationOperator_3 = \kDOneQThree$.
    \item $\FirstRelParDOneQThree \neq 1$. Then the equation for $\SecondRelParDOneQThree$ reads $( {2}/{3} - {\velocityDOneQThree^2}+ {\kDOneQThree}/{3}  )\SecondRelParDOneQThree = (2-\FirstRelParDOneQThree)  ( {2}/{3} - {\velocityDOneQThree^2} + {\kDOneQThree}/{3}  )$.
    We distinguish two cases
    \begin{itemize}
        \item $\kDOneQThree > -2 + 3\velocityDOneQThree^2$. 
        In this case, we have to enforce
        \begin{equation}
            \SecondRelParDOneQThree = 2 - \FirstRelParDOneQThree.
        \end{equation}
        Using this choice of $\SecondRelParDOneQThree$ into the first equation from \eqref{eq:SystemMatchedDiffusionD1Q3}, we obtain that $\initialisationOperator_3$ has to be taken as
        \begin{equation}\label{eq:ChoiceInitizialisationMagicD1Q3}
            \initialisationOperator_3 = \frac{1}{\FirstRelParDOneQThree}  ( 2 (-2 + 3\velocityDOneQThree^2) + ( \FirstRelParDOneQThree - 2) \kDOneQThree  ).
        \end{equation}

        Remark that in this case, the only way of making the bulk scheme to be of second-order is to take $\FirstRelParDOneQThree = 2$. This results in $\SecondRelParDOneQThree = 0$, which means that one more moment is conserved by the scheme. Still, the equilibria do not depend on it.
        Moreover, the initialisation has to be $\initialisationOperator_3 = -2 + 3\velocityDOneQThree^2 \neq \kDOneQThree$.
        
        \item $\kDOneQThree = -2 + 3\velocityDOneQThree^2$. The equation is trivially true. Considering the first equation in \eqref{eq:SystemMatchedDiffusionD1Q3} once more, we obtain $(1 - \SecondRelParDOneQThree) (\initialisationOperator_3 + 2 - 3\velocityDOneQThree^2) = 0$.
        If $\SecondRelParDOneQThree = 1$, this equation is satisfied regardless of the choice of $\initialisationOperator_3$. If $\SecondRelParDOneQThree \neq 1$, then the initialisation should be $\initialisationOperator_3 = - 2 + 3\velocityDOneQThree^2 = \kDOneQThree$.
    \end{itemize}
\end{itemize}

\section{Stability of the \scheme{1}{3} scheme of \Cref{sec:D1Q3} when $\relaxationParameter_2 + \relaxationParameter_3 = 2$}\label{app:StabilityD1Q3}

We again employ the technique by Miller \cite{miller1971location,strikwerda2004finite}.
We have to control the roots of $\annhilitaingPolyGinzburgFourier(\timeShift, \frequency\spaceStep) = \timeShift^2 + ((\relaxationParameter_2 - 2)(2\cos{(\frequency \spaceStep)} + 1)/3 + (\relaxationParameter_2 - 2) \kDOneQThree (\cos{(\frequency \spaceStep)} -1)/3 + i \relaxationParameter_2 \velocityDOneQThree \sin{(\frequency \spaceStep)}  )\timeShift + (1-\relaxationParameter_2)$, by computing
\begin{equation*}
    \annhilitaingPolyGinzburgFourier^{\star}(\timeShift, \frequency\spaceStep) = (1-\relaxationParameter_2)\timeShift^2 + ((\relaxationParameter_2 - 2)(2\cos{(\frequency \spaceStep)} + 1)/3 + (\relaxationParameter_2 - 2) \kDOneQThree (\cos{(\frequency \spaceStep)} -1)/3 - i \relaxationParameter_2 \velocityDOneQThree \sin{(\frequency \spaceStep)} )\timeShift + 1.
\end{equation*}
\begin{itemize}
    \item Let $\relaxationParameter_2 \in ]0, 2[$. Checking the first condition $|\annhilitaingPolyGinzburgFourier(0, \frequency\spaceStep)| < |\annhilitaingPolyGinzburgFourier^{\star}(0, \frequency\spaceStep)|$ trivially gives  $0 < \relaxationParameter_2 < 2$.
    Then we have
\begin{equation*}
    \annhilitaingPolyGinzburgWithOrderFourier{1}(\timeShift, \frequency\spaceStep) = \relaxationParameter_2 (2-\relaxationParameter_2)  ( \timeShift - (2\cos{(\frequency \spaceStep)} + 1)/3 - \kDOneQThree (\cos{(\frequency \spaceStep)} -1)/3 + i  \velocityDOneQThree \sin{(\frequency \spaceStep)} ).
\end{equation*}

Checking that the unique root of this polynomial has modulus bounded by one comes back at considering $((2\cos{(\frequency \spaceStep)} + 1) + \kDOneQThree (\cos{(\frequency \spaceStep)} -1)   )^2 / 9 +  \velocityDOneQThree^2 \sin^2{(\frequency \spaceStep)} \leq 1$.
Using the trigonometric identities $\cos{(\frequency \spaceStep)} = 1 - 2 \sin^2{(\frequency \spaceStep/2)}$ and $\sin^2{(\frequency \spaceStep)}  = 4 \sin^2{(\frequency \spaceStep/2)}(1-\sin^2{(\frequency \spaceStep/2)} )$ and calling $\mu = \sin^2{(\frequency \spaceStep/2)} \in [0, 1]$, we obtain
\begin{equation*}
    \mu  ( (\kDOneQThree + 2)^2/9 - \velocityDOneQThree^2  ) + ( -(\kDOneQThree + 2)/3 + \velocityDOneQThree^2  ) \leq 0, \qquad \forall \mu \in [0, 1].
\end{equation*}
This is an affine expression on $\mu$, thus the maximum is reached on the boundary of $[0, 1]$.
Assume, without loss of generality that $\velocityDOneQThree > 0$ and the standard CFL condition $\velocityDOneQThree \leq 1$.
\begin{itemize}
    \item $(\kDOneQThree + 2)^2/9 - \velocityDOneQThree^2 \geq 0$, corresponding to 
    \begin{equation*}
        \kDOneQThree \leq -2 - 3\velocityDOneQThree, \qquad \text{or} \qquad \kDOneQThree \geq -2 + 3 \velocityDOneQThree.
    \end{equation*}
    In this case the maximum is reached at $\mu = 1$, thus we want $(\kDOneQThree + 2)(\kDOneQThree - 1) \leq 0$, hence $-2 \leq \kDOneQThree \leq 1$.
    Under the CFL condition $\velocityDOneQThree \leq 1$ (otherwise all the computations can be adapted accordingly but no stability can be deduced), we easily find the first overall condition $-2 + 3 \velocityDOneQThree \leq \kDOneQThree \leq 1$.
    \item  $(\kDOneQThree + 2)^2/9 - \velocityDOneQThree^2 < 0$, corresponding to 
    \begin{equation*}
        -2 - 3 \velocityDOneQThree < \kDOneQThree < -2 + 3\velocityDOneQThree.
    \end{equation*}
    In this case the maximum is reached on $\mu = 0$, providing $-(\kDOneQThree + 2)/3 + \velocityDOneQThree^2 \leq 0$ thus comparing with the other conditions taking the CFL condition into account, we have $-2 + 3\velocityDOneQThree^2 \leq \kDOneQThree \leq -2 + 3 \velocityDOneQThree$.
\end{itemize}
In this case, the necessary and sufficient stability condition  reads  $|\velocityDOneQThree|  \leq 1$ and $-2 + 3\velocityDOneQThree^2 \leq \kDOneQThree\leq 1$.
\item Let $\relaxationParameter_2 = 2$. In this case $\annhilitaingPolyGinzburgWithOrderFourier{1} \equiv 0$, hence we compute
\begin{equation*}
    \frac{\text{d}\annhilitaingPolyGinzburgWithOrderFourier{2} (\timeShift, \frequency \spaceStep)}{\text{d} \timeShift} = 2\timeShift + 2 i \velocityDOneQThree \sin (\frequency \spaceStep),
\end{equation*}
hence to have its roots strictly in the unit circle for any frequency, we have the strict CFL condition $|\velocityDOneQThree| < 1$.
\end{itemize}

\section{Numerical experiments on the unobservable sub-space for \Cref{ex:D1Q2Again}}\label{app:numExpUnobsD1Q2}

\begin{figure} 
    \begin{center}
        \includegraphics{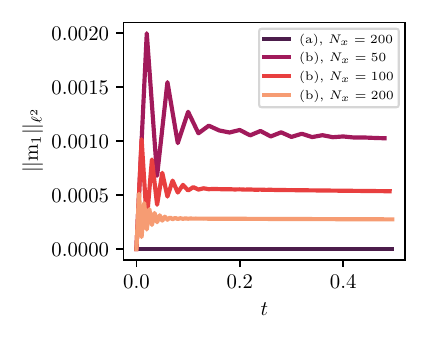}        
    \end{center}\caption{\label{fig:d1q2_unobservable}$L^2$ norm of the conserved moment as function of the time for the \scheme{1}{2} scheme choosing $\latticeVelocity = 1$, $\equilibriumCoefficient_2 = 1/2$, and $\relaxationParameter_2 = 1.8$. The test is performed for different initial data (both observable and unobservable) with different $\spaceStep$.}
  \end{figure}

To validate the finding concerning the unobservable sub-space $\unObservableSubSpace$ given in \Cref{ex:D1Q2Again}, we consider two sets of initial data
\begin{equation*}
    \text{(a)} \qquad \discreteMoment_1(0, \cdot) = 0, \qquad \discreteMoment_2(0, j\spaceStep) = \frac{1 + 3 (-1)^{j}}{8},
\end{equation*}
\begin{equation*}
    \text{(b)} \qquad \discreteMoment_1(0, \cdot) = 0, \qquad \discreteMoment_2(0, j\spaceStep) = 
    \frac{1}{10} \text{exp} \Bigl ( -\frac{1}{1-(4(j\spaceStep - 0.5))^2}\Bigr ).
\end{equation*}
The first datum (a) lies in $\unObservableSubSpace$ whereas the second one (b) does not.
Observe that both data do not adhere to the guidelines to choose initial data according to the analysis in \Cref{sec:ModifiedEquations}: they are uniquely selected for the current test.
We shall take $j \in  \integerIntervalClosedOpen{0}{N_x}$ in the simulations and $\spaceStep = 1/N_x$.
Periodic boundary conditions are enforced.
The results of the simulation given in \Cref{fig:d1q2_unobservable} confirm the theory. 
The unobservable initial datum (a) yields zero conserved (observed) moment for any time step, whereas the observable one (b) does not, even if the conserved moment is initialized as zero everywhere. 
For the observable datum (b), we see that the solution converges linearly to the exact solution of the Cauchy problem, meaning the zero solution.

\section{Numerical experiments on the unobservable sub-space for \Cref{ex:D1Q3Ginzburg}}\label{app:numExpUnobsD1Q3}

  \begin{figure}
      \begin{center}
          \begin{tabular}{cc}
            $\relaxationParameter_2 = 1.8, \qquad \relaxationParameter_3 = 2 - \relaxationParameter_2$ & $\relaxationParameter_2  = 1.8, \qquad \relaxationParameter_3 = 1.2$ \\
            \includegraphics[width=0.49\textwidth]{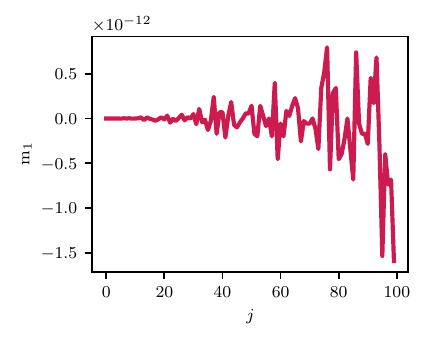} & \includegraphics[width=0.49\textwidth]{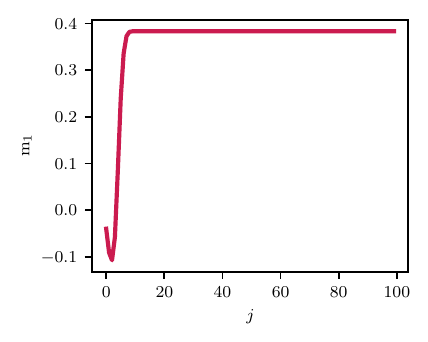}
          \end{tabular}
      \end{center}\caption{\label{fig:d1q3_unobservable_extrapolation}Conserved moment after 10 iterations for the \scheme{1}{3} scheme choosing $\latticeVelocity = 1$, $\equilibriumCoefficient_2 = 1/2$, $\equilibriumCoefficient_3 = 1/10$, $N_x = 100$, and different relaxation parameters.}
  \end{figure}

  To check the findings concerning $\unObservableSubSpace$ for the scheme in \Cref{ex:D1Q3Ginzburg}, we select 
  \begin{equation*}
      \discreteMoment_1 (0, \cdot) = 0, \qquad \discreteMoment_2 (0, j\spaceStep) = j, \qquad \discreteMoment_3 (0, j\spaceStep) = -3 j^2,
  \end{equation*}
  which thus belongs to $\unObservableSubSpace$.
  We discretize with $j \in \integerIntervalClosedOpen{0}{100}$ using an anti-bounce-back boundary condition $\discreteDistributionDensity_2((\indiceTime + 1)\timeStep, 0) = -\discreteDistributionDensity_3^{\collided}(\indiceTime \timeStep, 0)$ on the inflow and a second-order extrapolation $\discreteDistributionDensity_3((\indiceTime + 1)\timeStep, 99\spaceStep) = 3\discreteDistributionDensity_3^{\collided}(\indiceTime \timeStep, 99\spaceStep) - 3\discreteDistributionDensity_3^{\collided}(\indiceTime \timeStep, 98\spaceStep) + 3\discreteDistributionDensity_3^{\collided}(\indiceTime \timeStep, 97\spaceStep)$ on the outflow. 
  These boundary conditions are exact with the initial data, because they are polynomials of degree less or equal than two.
  The result of the simulation is proposed in \Cref{fig:d1q3_unobservable_extrapolation}.
  For the choice where $\relaxationParameter_3 = 2-\relaxationParameter_2$, thus for which the initial datum belongs to $\unObservableSubSpace$, we see that the conserved moment remains zero (up to machine precision).
  When $\relaxationParameter_3 \neq 2-\relaxationParameter_2$, thus the initial datum is observable, we remark that inside the domain, the conserved moment is non-zero (around $0.383$).

\end{document}